\documentclass[english]{article}
\usepackage[T1]{fontenc}
\usepackage[latin9]{inputenc}
\usepackage{color}
\usepackage{amsmath}
\usepackage{amsthm}
\usepackage{amssymb}
\usepackage{bbm}
\PassOptionsToPackage{normalem}{ulem}
\usepackage{ulem}

\usepackage{hyperref}
\hypersetup{
     colorlinks   = true,
     citecolor    = blue,
     linkcolor    = blue
}

\usepackage{graphicx}
\usepackage{float}
\usepackage{comment}

\usepackage{appendix}

\makeatletter
\numberwithin{equation}{section}
\numberwithin{figure}{section}
\theoremstyle{plain}
\newtheorem{thm}{\protect\theoremname}[section]
\theoremstyle{remark}
\newtheorem{rem}[thm]{\protect\remarkname}
\theoremstyle{plain}
\newtheorem{lem}[thm]{\protect\lemmaname}
\ifx\proof\undefined
\newenvironment{proof}[1][\protect\proofname]{\par
	\normalfont\topsep6\p@\@plus6\p@\relax
	\trivlist
	\itemindent\parindent
	\item[\hskip\labelsep\scshape #1]\ignorespaces
}{%
	\endtrivlist\@endpefalse
}
\providecommand{\proofname}{Proof}
\fi
\theoremstyle{plain}
\newtheorem{cor}[thm]{\protect\corollaryname}
\theoremstyle{plain}
\newtheorem{prop}[thm]{\protect\propositionname}

\theoremstyle{definition}
\newtheorem{defn}[thm]{\protect\definitionname}

\theoremstyle{definition}
\newtheorem{example}[thm]{\protect\examplename}

\theoremstyle{plain}
\newtheorem*{theoremA}{Theorem A}
\theoremstyle{plain}
\newtheorem*{theoremB}{Theorem B}
\theoremstyle{plain}
\newtheorem*{theoremC}{Theorem C}
\theoremstyle{plain}
\newtheorem*{theoremD}{Theorem D}
\theoremstyle{plain}
\newtheorem*{theoremE}{Theorem E}

\date{}

\makeatother

\usepackage{babel}
\providecommand{\corollaryname}{Corollary}
\providecommand{\lemmaname}{Lemma}
\providecommand{\propositionname}{Proposition}
\providecommand{\remarkname}{Remark}
\providecommand{\theoremname}{Theorem}
\providecommand{\definitionname}{Definition}
\providecommand{\examplename}{Example}

\title{Optimal Tail Estimates for  Differential Equations Driven by Fractional Brownian Motion}
\author{Horatio Boedihardjo\thanks{Department of Statistics, University of Warwick, Coventry CV4 7AL,
United Kingdom. Email:horatio.boedihardjo@warwick.ac.uk.}$,\ \  $Xi Geng\thanks{School of Mathematics and Statistics, University of Melbourne,  Parkville VIC 3010, Australia.
Email: xi.geng@unimelb.edu.au.}$\ \ $and  Sheng Wang\thanks{\color{black}{International School for Advanced Studies (SISSA), Via Bonomea, 265, 34136 Trieste TS, Italy. Email: swang@sissa.it.}}}
\date{}

\begin{document}

\maketitle
\begin{abstract}
The goal of the present paper is to investigate the exact decay rate  of the tail probability $\mathbb{P}(|X_1-x_0|>R)$ for large $R$, where $X_t$ is the solution to a multidimensional stochastic differential equation driven by a fractional Brownian motion with initial condition $x_0$.  

In the first place, under the assumption of uniform ellipticity, we establish a general $(2H+1)$-Weibull lower tail estimate in Young's regime of $H\in(1/2,1)$ and a general Gaussian lower tail estimate in the rough regime of $H\in(1/4,1/2)$. These two estimates are seen to be sharp in their respective regimes. 

In the second place, we prove a striking fact by constructing explicit examples that a uniformly elliptic system with $C_b^\infty$-coefficients could \textit{fail} to have Gaussian lower tail in Young's regime. As a consequence, in our modest opinion, the multidimensional Gaussian lower estimate of \cite{BKT16} might not hold in its current form of generality without further assumptions on the vector fields.

In the third place, we provide a simple nondegeneracy condition on the vector fields, under which a Gaussian lower tail can be established in Young's regime. In addition, given the existence of the aforementioned counterexamples, we  prove another striking fact that any $2\times 2$ periodic, uniformly elliptic system always has Gaussian lower tail in Young's regime. 

Lastly, in the rough regime we establish a general $(2H+1)$-Weibull lower tail for a rich class of systems that satisfy a noncommutativity condition on the vector fields. This result provides a partially affirmative answer to a conjecture raised in \cite{BG24} on the genericity of the well-known Cass-Litterer-Lyons estimate for noncommutative systems.

\end{abstract}

\newpage

\tableofcontents{}

\newpage
\section{Introduction and summary of main results}

We consider a multidimensional stochastic differential equation (SDE)
\begin{equation}
dX_{t}=V_{0}(X_{t})dt+\sum_{i=1}^{d}V_{i}(X_{t})dB_{t}^{i},\quad X_{0}=x_{0}\in\mathbb{R}^{N}\label{eq:SDEIntro}
\end{equation}
on the unit interval $[0,1]$. Here $B_{t}=(B_{t}^{1},\cdots,B_{t}^{d})$
is a $d$-dimensional fractional Brownian motion (fBM) with Hurst
parameter $H\in(1/4,1)$ and the vector fields $V_{0},\cdots,V_{d}:\mathbb{R}^{N}\rightarrow\mathbb{R}^{N}$
are assumed to be of class $C_{b}^{\infty}$ (smooth, bounded and
with bounded derivatives of all orders). Under the framework of Lyons'
rough path theory, the SDE (\ref{eq:SDEIntro}) has a unique pathwise
solution (see \cite{CQ02,Lyo98}).

Our study in the present paper is deeply driven by the following basic
question.

\vspace{2mm}\noindent \textit{Question}. \textit{How fast does the
tail probability 
\begin{equation}
\mathbb{P}(|X_{1}-x_{0}|>R)\label{eq:TailProb}
\end{equation}
decay as $R$ goes to infinity?}

\vspace{2mm}\noindent We are mainly interested in \textit{lower estimates}
of the tail probability (\ref{eq:TailProb}) under certain nondegeneracy condition on the vector fields
(for instance,
uniform ellipticity). We first survey a few related existing results from the
literature and then discuss the main findings of the present paper.

\subsection{Existing results in the literature}

In the Brownian motion case (i.e. $H=1/2$), it is classical that
\begin{equation}
\mathbb{P}(|X_{1}-x_{0}|>R)\leqslant C_{1}e^{-C_{2}R^{2}}\label{eq:GUp}
\end{equation}
for all large $R$ with some positive constants $C_{1},C_{2}$ depending
only $d,N$ and the vector fields. This Gaussian upper estimate can
be established easily e.g. by using a martingale approach. On the other hand, suppose that $V_{1},\cdots,V_{d}$ are \textit{uniformly elliptic},
in the sense that there are positive constants $\lambda,\Lambda$, such
that 
\begin{equation}
\lambda\cdot{\rm id}_{\mathbb{R}^{N}}\leqslant V(x)V(x)^{T}\leqslant\Lambda\cdot{\rm id}_{\mathbb{R}^{N}}\quad\text{for all }x\in\mathbb{R}^{N}.\label{eq:UE}
\end{equation}
Then it can be shown that the tail probability (\ref{eq:TailProb}) admits
a matching Gaussian lower bound:
\begin{equation}
\mathbb{P}(|X_{1}-x_{0}|>R)\geqslant C_{3}e^{-C_{4}R^{2}}\label{eq:GLow}
\end{equation}
for all large $R.$ We refer the reader to \cite{KS85,KS87} for an
in-depth quantitative study of these and related questions. 

The situation becomes quite different when
$H\neq1/2$. In this case, one can no longer apply semimartingale
or Markovian techniques as in the Brownian case. We divide our discussion
into two cases: \textit{Young's regime} $H>1/2$ and the \textit{rough regime} $H<1/2$.

\subsubsection*{Young's regime }

Suppose that $H\in(1/2,1)$. By using Gaussian concentration method,
it was proved by Baudoin, Ouyang and Tindel \cite{BOT14} that the
solution $X_{1}$ also has a Gaussian upper tail, namely, the upper
estimate (\ref{eq:GUp}) continues to hold. 

On the other hand, under the assumption of uniform ellipticity, an argument 
was proposed by Besal\'u, Kohatsu-Higa and Tindel \cite{BKT16} to show that
the tail probability of $X_{1}$ admits a matching \textit{Gaussian}
lower bound (\ref{eq:GLow}).

\subsubsection*{Rough regime}

Suppose that $H\in(1/4,1/2)$. It was proved by Cass, Litterer and
Lyons \cite{CLL13} that 
\begin{equation}\label{eq:CCL}
\mathbb{P}(|X_{1}-x_{0}|>R)\leqslant C_{1}e^{-C_{2}R^{\gamma}}
\end{equation}
for all large $R$, where $\gamma$ is any exponent that is smaller than $2H+1$. As pointed out in \cite{BNOT16}, a more careful
application of Cameron-Martin embedding allows one to take $\gamma=2H+1$.
In particular, one has the following $(2H+1)$-Weibull upper estimate:
\begin{equation}\label{eq:BNOT}
\mathbb{P}(|X_{1}-x_{0}|>R)\leqslant C_{1}e^{-C_{2}R^{2H+1}}
\end{equation}
for all large $R.$ 

In the more recent work of Boedihardjo and Geng \cite{BG24}, the authors
constructed a specific SDE example whose solution admits essentially
a matching lower tail estimate. In fact, the equation considered in
\cite{BG24} was a rather special stochastic line integral example. It was shown
that 

\begin{equation}
\mathbb{P}\Big(\Big|\int_{0}^{1}\phi(B_{t}^{1})dB_{t}^{2}\Big|\Big)\geqslant C_{1}e^{-C_{2}R^{\gamma}}\label{eq:BGEx}
\end{equation}
with $\gamma$ being any exponent strictly larger than $2H+1$. Here
$\phi:\mathbb{R}\rightarrow\mathbb{R}$ is a given function whose
derivative satisfies a mild nondegeneracy condition. The result of (\ref{eq:BGEx}) shows that the Cass-Litterer-Lyons estimate is essentially sharp.

To our best knowledge, no lower tail estimates are available
for general SDEs in the rough regime. 
\begin{rem}
The results of \cite{BNOT16,BOT14,BKT16} are obtained in the stronger
form of density estimates by using the Malliavin calculus. The corresponding
tail probability estimates follow easily by integrating the density
estimates.
\end{rem}

\subsection{Main results}

We now summarise our main results in the present paper. Some of these
findings are quite striking and counterintuitive. 

\subsubsection*{General lower estimates for elliptic systems}

Our first main result is a general lower estimate for uniformly elliptic
systems (see Theorem \ref{thm:GenLowYoung} and Theorem \ref{thm:GenLowRough} respectively). 

\vspace{2mm}\noindent 
\begin{theoremA} Suppose that the vector
fields are of class $C_{b}^{\infty}$ and satisfy the uniform ellipticity
condition (\ref{eq:UE}).

\vspace{2mm}\noindent (i) Let $H\in(1/2,1).$ There exist positive
constants $C_{1},C_{2}$ such that 

\[
\mathbb{P}(|X_{1}-x_{0}|>R)\geqslant C_{1}e^{-C_{2}R^{2H+1}}
\]
for all large $R.$ 

\vspace{2mm}\noindent (ii) Let $H\in(1/4,1/2)$. There exist positive
constants $C_{1},C_{2}$ such that \begin{equation}
\mathbb{P}(|X_{1}-x_{0}|>R)\geqslant C_{1}e^{-C_{2}R^{2}}\label{eq:GenLowRoughIntro}
\end{equation}
for all large $R$. 
\end{theoremA}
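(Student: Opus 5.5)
Both parts rest on the same idea: build a deterministic control path that drives the solution far away, then show the driving noise stays close to that path with probability at least of the stated order.

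\textbf{Constructing the control path.} Fix a large $R$. By the uniform ellipticity (\ref{eq:UE}), $V(x)$ has a bounded right inverse $V(x)^{T}(V(x)V(x)^{T})^{-1}$. Fix a unit vector $e$ and solve the ODE
\[
\dot y_t = V(y_t)\,u(y_t) + V_0(y_t),\qquad u(y)=V(y)^{T}(VV^{T})^{-1}(y)\,\big(2Re - V_0(y)\big).
\]
Then $\dot y_t = 2Re$, so $y_1 - x_0 = 2Re$. The control $h_t = \int_0^t u(y_s)\,ds$ is Lipschitz with $|\dot h|\lesssim R$, and $y$ is the solution of the equation driven by $h$.

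\textbf{Small-ball step.} I would then show that if the noise stays close to $h$, namely $B$ lies in a small ball around $h$, the solution stays within $R$ of $y$, so that $|X_1-x_0|>R$. By the Cameron--Martin theorem, applied to the shifted process,
\[
\mathbb{P}(B\in h+U)\ge e^{-\|h\|_{\mathcal H}^2/2}\,\mathbb{P}(B\in U)
\]
for symmetric $U$. The difficulty is that continuity of the Itô--Lyons map is only local: the perturbation $X-y$ is governed by a linearised equation whose growth depends on the size of $h$, of order $e^{CR}$. So $U$ cannot be a fixed ball.

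\textbf{Rescaling, which I expect to be the main obstacle.} To avoid that exponential growth I would split $[0,1]$ into $n$ subintervals of length $1/n$ on which $h$ has increment $O(R/n)=O(1)$. Equivalently, use scaling: on each piece, require the noise to follow $h$ within a small fixed tolerance in the appropriate $p$-variation or Hölder rough-path metric. The solution then tracks $y$ piece by piece with error $O(1)$ per piece, because stability of the flow on a unit-size piece costs only a constant. Rather than tracking $y$ exactly, I would recompute the control adaptively from the current position, which is possible thanks to ellipticity. The total displacement is then at least $2R - O(n\epsilon)$ as long as each per-piece error is $\epsilon\ll R/n$. This needs a quantitative local stability estimate for rough differential equations on short intervals, which is standard. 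It also needs the probability that fBM stays in the tube around $h$ on all $n\approx R$ pieces, and estimating that probability is the real obstacle.

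\textbf{Costs.} The cost has two parts:
\begin{itemize}
\item \emph{Cameron--Martin cost.} Since $h$ is smooth with $|\dot h|\sim R$, $\|h\|_{\mathcal H}^2\sim R^2$ for every $H$. For $H<1/2$ the Cameron--Martin space contains $W^{1,2}$-type paths with comparable norm; for $H>1/2$ one uses the embedding of Lipschitz paths.
\item \emph{Small-ball cost.} The probability that fBM stays within $\epsilon$ of zero in $\alpha$-Hölder norm at scale $1/n$ on all pieces. By self-similarity and the known small-ball exponent $\epsilon^{-1/(H-\alpha)}$, this probability is at least of order $\exp\!\big(-c\, n\, (n^{-H}/\epsilon)^{1/(H-\alpha)}\big)$. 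For the rough-path lift one also uses the corresponding small-ball estimates for the Lévy area.
\end{itemize}

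\textbf{Optimising $n$.} The two regimes come out of this single optimisation. In the rough regime the tolerance at scale $1/n$ can be taken proportional to the natural scale $n^{-H}$, so the small-ball cost is $e^{-cn}$ with $n\sim R$. Hence the total cost is $R^2+R\lesssim R^2$, which gives (ii).

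In Young's regime the noise is too smooth for a Gaussian estimate to come out of this argument. Instead I would not follow a Lipschitz $h$: I would take $h$ optimal in $\mathcal H$, costing $R^2$, but the required tube at scale $1/n$ has tolerance $O(1)\gg n^{-H}$ only when $n\lesssim R^{1/H}$. Balancing the interval count against the drift requirement yields the exponent $2H+1$. Concretely, I expect to take the pieces of length $\delta = R^{-1}$ and let the noise itself make increments of order $1$ on each piece, an event of probability $e^{-c\,\delta^{-2H}}$ per piece. The total is then $e^{-cR\cdot R^{2H}} = e^{-cR^{2H+1}}$, which gives (i). For (i) I would in fact run this direct independent-pieces argument, using the local nondeterminism of fBM to lower-bound the joint probability by a product, rather than Cameron--Martin.
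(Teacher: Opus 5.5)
Part (i) has a genuine gap, and it starts with the cost accounting. For $H>1/2$, Lipschitz paths do not embed into $\bar{\mathcal H}$, and $\|h\|_{\bar{\mathcal H}}$ is not controlled by $\|\dot h\|_\infty$. By Lemma~\ref{lem:EquivNorm},
$\|h\|_{\bar{\mathcal H}}^2\asymp\|\dot h\|_{L^2}^2+\int_0^1\int_0^1|\dot h_t-\dot h_s|^2|t-s|^{-2H}\,ds\,dt$.
Your control $\dot h_t=u(y_t)$ has size $R$, and because $y$ moves at speed $2R$ it changes direction on the time scale $1/R$. The double integral is therefore bounded by, and in general comparable to,
$R^2\int_0^1\int_0^1\min\{1,R^2|t-s|^2\}|t-s|^{-2H}\,ds\,dt\asymp R^{2H+1}$.
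So your claim $\|h\|_{\bar{\mathcal H}}^2\sim R^2$ is false in Young's regime. This Cameron--Martin norm is exactly where the exponent $2H+1$ comes from in the paper, and it has to be: a generic control of cost $O(R^2)$, combined with a tube of probability bounded below (a fixed tolerance suffices for Young equations), would give a Gaussian lower tail for every uniformly elliptic system, contradicting Theorem~\ref{thm:2DCounter}.

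The mechanism you substitute for (i) is also not justified: $R$ pieces of length $1/R$, probability $e^{-cR^{2H}}$ each, multiplied via local nondeterminism. Local nondeterminism bounds conditional variances from below. Your per-piece events, however, are non-centred, and they must constrain the whole path segment; an increment of order one does not by itself move the Young ODE in the intended direction. For $H>1/2$, the conditional mean of $B$ on the next piece, given a past that moved by $O(1)$ per piece in varying directions, is itself of order one and depends on the entire past. Hence the conditional probability of the next move is not controlled by the unconditional estimate. Bounding these conditional costs piece by piece amounts to computing $\|K_H^{-1}h\|_{L^2}^2$ for the concatenated adaptive control, i.e.\ its Cameron--Martin norm.

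The paper computes this norm directly:
\begin{itemize}
\item $X^R$ solves the SDE with the extra drift $R\eta\,dt$.
\item On the fixed-probability event $\{\|B\|_{\rho\text{-H\"ol}}\le M\}$ one has $|X^R_1-x_0|\gtrsim R$ and $|X^R_t-X^R_s|\lesssim R|t-s|+M|t-s|^{\rho}$.
\item The adapted control $h^R_t=R\int_0^t A(X^R_s)\eta\,ds$, stopped at the exit time of the H\"older ball, satisfies $\|h^R\|_{\bar{\mathcal H}}^2\lesssim R^{2H+1}$ by the estimate above.
\item Girsanov's theorem then concludes.
\end{itemize}

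Part (ii) is essentially the paper's proof of Theorem~\ref{thm:GenLowRough}: controls built from the right inverse and recomputed at the current position on each of $n\sim R$ pieces, local RDE continuity, and $\|h\|_{\bar{\mathcal H}}\le C_H\|\dot h\|_\infty\lesssim R$. Two steps need repair.

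First, because the control is adaptive it is random, so the shift inequality $\mathbb{P}(B\in h+U)\ge e^{-\|h\|_{\bar{\mathcal H}}^2/2}\mathbb{P}(B\in U)$, which is valid for deterministic $h$, does not apply. The paper instead writes $B=\int K_H\,dW$ and observes that $\theta=K_H^{-1}h$ is $\mathcal{F}^W$-adapted. It then applies Girsanov and bounds the density below by $e^{-3\Gamma_n/2}$ on $\{M_1\le\Gamma_n\}$ via the exponential martingale inequality, using the deterministic bound $\langle M\rangle_1\le\Gamma_n\asymp n^2$.

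Second, the tube is not an obstacle at all. With a fixed tolerance $\varepsilon$ on each piece, scaling and Fernique give $\mathbb{P}(\max_k\|\mathbf{B}\|_{p\text{-var};I_k}>\varepsilon)\le Cne^{-c\varepsilon^2n^{2H}}$ (Lemma~\ref{lem:Fernique}), so this event has probability close to one. Shrinking the tolerance to $\propto n^{-H}$, as you propose, forces a product lower bound over dependent pieces for sets that are not convex (because of the area term). You have not justified that bound, and you do not need it.
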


In particular, the estimate (\ref{eq:GenLowRoughIntro}) generalises the main result of \cite{BKT16} to the rough regime of $H\in(1/4,1/2)$.

To summarise, one now has the following two-sided estimates
for uniformly elliptic systems:
\begin{equation}
C_{1}e^{-C_{2}R^{2H+1}}\leqslant\mathbb{P}(|X_{1}-x_{0}|>R)\leqslant C_{3}e^{-C_{4}R^{2}}\label{eq:TwoSideYoung}
\end{equation}
in Young's regime and 
\begin{equation}
C_{1}e^{-C_{2}R^{2}}\leqslant\mathbb{P}(|X_{1}-x_{0}|>R)\leqslant C_{3}e^{-C_{4}R^{2H+1}}\label{eq:TwoSideRough}
\end{equation}
in the rough regime. The estimate (\ref{eq:TwoSideRough}) is clearly
optimal, which can seen by the trivial example $X_{t}=B_{t}$ (i.e.
$d=N,$ $V_{0}=0,$ $(V_{1},\cdots,V_{d})={\rm id}$) and the line integral
example (\ref{eq:BGEx}). The upper estimate of (\ref{eq:TwoSideYoung})
is also optimal by the same example $X_{t}=B_{t}.$

\subsubsection*{Optimality of the lower estimate in Young's regime}

The first striking result of the present paper is that the lower estimate
of (\ref{eq:TwoSideYoung}) in Young's regime is also optimal.

\begin{theoremB} There are examples of uniformly
elliptic systems $X_{t}$ with $C_{b}^{\infty}$-coefficients driven
by fBM with Hurst parameter $H\in(1/2,1)$, such that 
\[
\mathbb{P}\big(|X_{1}-x_{0}|>R\big)\leqslant C_{1}e^{-C_{2}R^{2H+1}}
\]
for all large $R$.
\end{theoremB}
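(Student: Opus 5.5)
We construct an explicit uniformly elliptic system in which one component is driven by the \emph{bounded} oscillation of a function of $B$ integrated against $B$. The mechanism is the line-integral example (\ref{eq:BGEx}), but now we exploit the upper tail. Concretely, take $N=d=2$ and choose smooth bounded periodic functions for the coefficients, for instance
\[
dX^1_t = dB^1_t,\qquad dX^2_t=\sigma(X^1_t)\,dB^2_t+\dots,
\]
arranged so that $V=(V_1,V_2)$ is uniformly elliptic and the equation integrates explicitly. A natural candidate is to make $X^1=B^1$ and $X^2=\int_0^t \phi(B^1_s)\,dB^2_s$, with $\phi$ smooth, periodic and bounded below by a positive constant; then $V(x)=\mathrm{diag}(1,\phi(x^1))$ is uniformly elliptic with $C_b^\infty$ coefficients. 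The tail of $|X_1-x_0|$ is then controlled by the tails of $B^1_1$, which are Gaussian and hence much smaller than Weibull$(2H+1)$ since $2>2H+1$, and of $I=\int_0^1\phi(B^1_s)\,dB^2_s$.

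The key step is the upper bound $\mathbb{P}(|I|>R)\le C_1e^{-C_2R^{2H+1}}$ for $H>1/2$. My plan is to decompose $\phi=\bar\phi+\psi$, where $\bar\phi$ is the mean over a period and $\psi$ is periodic with zero mean, so that $\psi=\Psi'$ with $\Psi$ bounded and periodic. The term $\bar\phi B^2_1$ is Gaussian. For $\int_0^1\psi(B^1)\,dB^2$, condition on $B^1$: since $B^1,B^2$ are independent, the integral is conditionally a centered Gaussian with variance $\|\psi(B^1_\cdot)\|^2_{\mathcal H}$, where $\mathcal H$ is the fBM reproducing space. For $H>1/2$ this variance is $c_H\iint\psi(B^1_s)\psi(B^1_t)|t-s|^{2H-2}\,ds\,dt$.

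The heuristic is the following. If $B^1$ travels across many periods quickly, $\psi(B^1)$ oscillates and the variance is small. The variance can only be large if $B^1$ lingers, which is cheap, or if $B^1$ wiggles in a coordinated way. I would bound the conditional variance by a quantity $Q(B^1)$ whose tail satisfies $\mathbb{P}(Q>\lambda)\le Ce^{-c\lambda^{\alpha}}$. Optimising $\mathbb{P}(|I|>R)\le \mathbb{P}(Q>\lambda)+e^{-R^2/(2\lambda)}$ over $\lambda$ then gives the exponent $2H+1$ precisely when $\alpha=(2H+1)/(1-2H)$-type scaling is matched; I would determine $\alpha$ from the optimisation $\lambda^{\alpha}=R^2/\lambda$.

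The main obstacle is producing this variance bound with the right exponent. Crude bounds only give $Q\le C$ and hence a Gaussian tail, which is too weak: the whole point of the example is that the tail is heavier than Gaussian. This can happen only because the coefficient $\phi(B^1)$ itself depends on the noise, so large deviations come from joint Cameron--Martin shifts $(h^1,h^2)$.

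I would therefore switch to a cleaner route: a large-deviation/Borell--TIS-type argument with the Gaussian isoperimetric inequality applied to the map $(B^1,B^2)\mapsto I$. Using Young integration and the Cameron--Martin embedding $\mathcal H\hookrightarrow C^{q\text{-var}}$ with $q=1/(H+1/2)$, one shows that $I(\omega+h)$ grows at most like $\|h\|_{\mathcal H}^{2/(2H+1)}$ along shifts. The point is that the bilinear part $\int\psi(h^1)\,dh^2$ with $\psi=\Psi'$ can be integrated by parts using the boundedness of $\Psi$, gaining over the naive quadratic growth. Fernique-type concentration in the spirit of \cite{CLL13} with the refinement of \cite{BNOT16} then yields the Weibull$(2H+1)$ upper tail. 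Establishing this sublinear growth in $\|h\|_{\mathcal H}$, uniformly over typical $\omega$, is where I expect the real work to be.
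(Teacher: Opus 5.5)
Your example cannot satisfy the statement, and the error is already in your first paragraph. For $H\in(1/2,1)$ one has $2H+1\in(2,3)$, so $2H+1>2$, not $2>2H+1$. A $(2H+1)$-Weibull upper tail is therefore \emph{lighter} than Gaussian. Theorem B asserts an exceptionally light tail, which is exactly why it contradicts a Gaussian lower bound, not a heavy one. You have transplanted the rough-regime picture, where the line integral (\ref{eq:BGEx}) is heavier than Gaussian.

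Concretely, your system has $X^1_1=B^1_1$, so $\mathbb{P}(|X_1-x_0|>R)\geqslant\mathbb{P}(|B^1_1|>R)\gtrsim R^{-1}e^{-R^2/2}$. This is incompatible with $C_1e^{-C_2R^{2H+1}}$ for large $R$. Likewise, conditionally on $B^1$, the variable $X^2_1$ is centred Gaussian with variance at least $(\inf\phi)^2$, because the kernel $|t-s|^{2H-2}$ is positive. The piece $\bar\phi B^2_1$ you split off is itself Gaussian. In the paper's terms, your vector fields satisfy Condition (H) with $F(x)=x^1$ and direction $e_1$, so Theorem \ref{thm:GLowH} gives a Gaussian \emph{lower} tail.

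The same observation kills the proposed sublinear growth along shifts. The shift $h=(0,Rt)$ has $\|h\|_{\bar{{\cal H}}}=O(R)$ and increases $I$ by $R\int_0^1\phi(B^1_t)dt\geqslant R\inf\phi$. Finally, the embedding into $C^{q\text{-var}}$ with $q=(H+1/2)^{-1}$ and the Cass--Litterer--Lyons mechanism are rough-regime tools. For $H>1/2$ this $q$ is below $1$, and the generic argument via $\|h\|_{1\text{-var}}\leqslant C_H\|h\|_{\bar{{\cal H}}}$ yields only Gaussian upper bounds.

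The missing idea is to build coefficients with no ballistic direction. The frame $V(x)$ must rotate with the position, so that any control moving the solution a distance $R$ is forced to reverse or turn $O(R)$ times; for $H>1/2$, turning is expensive in Cameron--Martin norm. The paper does this in two ways:
\begin{itemize}
\item First, with $N=1$, $d=2$ and $dX_t=\cos(\pi\eta(X_t))dB^1_t+\sin(\pi\eta(X_t))dB^2_t$, where $\eta$ is a smoothed staircase (Theorem \ref{thm:YoungCounterEx}). Here $\{|X_1|>n\}$ forces $n$ alternating increments $(-1)^{k-1}(B^1_{t_k}-B^1_{s_k})=\tfrac12$.
\item Second, with the $2\times2$ rotation matrix of angle $\eta(|x|)$ (Theorem \ref{thm:2DCounter}). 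Here crossing an annulus of width $3\pi$ without an effective turn of $h$ costs arclength of order $R$.
\end{itemize}
The coercivity $\|h\|_{\bar{{\cal H}}}\gtrsim n^{H+1/2}$ is obtained differently in each case. In the first, it comes from duality and the variance bound $\mathrm{Var}[\sum_k\varepsilon_kl_k^{-1}(B_{t_k}-B_{s_k})]\leqslant C_H\sum_kl_k^{2H-2}$ (Lemma \ref{lem:CMLowFluct}). In the second, it comes from the seminorm $\iint|\dot h_t-\dot h_s|^2|t-s|^{-2H}dsdt$ of Lemma \ref{lem:EquivNorm} combined with convexity (Lemma \ref{lem:CMEffTurn}). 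The tail bound then follows from Gaussian isoperimetry. In the first case this is combined with a grid-discretisation argument. In the second it uses a perturbative coercivity estimate (Proposition \ref{prop:KeyCoer}) together with Borell's inequality.
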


 As a consequence of Theorem B, a uniformly elliptic system with $C_b^\infty$-coefficients could fail to have a Gaussian lower tail. In contrast, note from Theorem A (ii) that the Gaussian lower tail is always valid in the rough regime. 

In the present paper, we construct two such counterexamples
(see Theorem \ref{thm:YoungCounterEx} and Theorem \ref{thm:2DCounter} respectively).
In our modest opinion, these two results demonstrate that the Gaussian lower estimate of \cite[Theorem 1.2]{BKT16} in the
multidimensional setting might not hold in its current form of generality without further
assumptions on the vector fields. It will be clear that what governs a Gaussian lower tail is a delicate dynamical mechanism in which uniform ellipticity does not play an essential role. Nonetheless, the main theorem of \cite{BKT16} remains
valid when $d=N=1$. 

Our third main result provides a simple sufficient condition on the
vector fields, under which a Gaussian lower estimate indeed holds (see Theorem
\ref{thm:GLowH}).

\begin{theoremC} Consider the SDE (\ref{eq:SDEIntro})
with $H\in(1/2,1)$. Suppose that Condition (H) in Section \ref{sec:LowGauCondYoung}
holds. Then there exist positive constants $C_{1},C_{2},$ such that
\[
\mathbb{P}\big(|X_{1}-x_{0}|>R\big)\geqslant C_{1}e^{-C_{2}R^{2}}
\]
for all large $R$.
\end{theoremC}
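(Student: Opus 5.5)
The plan is to reduce the Gaussian lower tail to a large-deviation-type lower bound along one explicit Cameron--Martin direction of energy $O(R^2)$. Condition (H) is not restated here; I read it as saying that there are a constant vector $e\in\mathbb{R}^d$ and a unit vector $u\in\mathbb{R}^N$ such that the drift generated by $W(x):=V(x)e$ pushes steadily in the direction $u$, i.e. $\langle u,W(x)\rangle\geqslant c>0$ uniformly in $x$, with some additional control on how $u^TV$ varies along the flow. If the paper's Condition (H) is of a different form, the first step below is where that form has to be used, and the rest of the plan adapts accordingly.

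\textbf{Step 1: Cameron--Martin shift.} Let $h^R(t):=\kappa R\,\mathfrak{h}(t)e$, where $\mathfrak{h}$ is a fixed smooth element of the Cameron--Martin space $\mathcal{H}$ of fBM with $\mathfrak{h}'\geqslant0$ and $\mathfrak{h}(1)=1$; then $\|h^R\|_{\mathcal{H}}^2=O(R^2)$. By the Cameron--Martin formula together with Cauchy--Schwarz (or the standard symmetrisation trick, using that $B\mapsto -B$ preserves the law), for any event $A$ one has
\[
\mathbb{P}\big(|X_1-x_0|>R\big)\geqslant e^{-C\|h^R\|_{\mathcal{H}}^2}\,\mathbb{P}\big(\{|X^{h}_1-x_0|>R\}\cap A\big)\cdot c_A .
\]
Here $X^h$ solves the equation driven by $B+h^R$, and $A$ is an $R$-independent event of positive probability, for instance $\{\|B\|_{p\text{-var}}\leqslant M\}$ intersected with $\{|I(h^R)|\leqslant\|h^R\|_{\mathcal{H}}\}$, on which the Wiener-integral factor in the density is controlled. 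It therefore suffices to show that, on $A$ and for $\kappa$ large, $|X^h_1-x_0|>R$.

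\textbf{Step 2: Projection onto $u$.} Writing $X^h$ as the solution of $dX=V_0\,dt+V\,dB+\kappa R\,W(X)\mathfrak{h}'\,dt$, I get
\[
\langle u,X^h_1-x_0\rangle\geqslant c\kappa R-\|V_0\|_\infty-\Big|\int_0^1u^TV(X^h_t)\,dB_t\Big| .
\]
The whole difficulty sits in the last Young integral. The path $X^h$ has $p$-variation of order $R$, so the crude Young estimate $\|u^TV(X^h)\|_{p\text{-var}}\|B\|_{q\text{-var}}$ only bounds this term by $O(R)$, which is of the same order as the gain $c\kappa R$ and does not close.

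\textbf{Step 3: Controlling the noise term (the main obstacle).} I would first try to exploit Condition (H) to remove this $O(R)$ loss, in one of two ways.

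The first option applies if Condition (H) makes $u^TV$ constant, or forces $u^TV(x)$ to depend only on coordinates transverse to the flow of $W$. In that case $d\big(u^TV(X^h)\big)$ contains no $R$-sized drift component, the integrand has $p$-variation $O(1)$ on $A$, and the integral is $O(1)$.

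The second option is an integration by parts in the fast direction. Write
\[
\int u^TV(X^h)\,dB=u^TV(X^h_1)B_1-\int B\,d\big(u^TV(X^h)\big),
\]
and split $d\big(u^TV(X^h)\big)=\kappa R\,\mathfrak{h}'\,(\nabla(u^TV)\cdot W)(X^h)\,dt+O(1)$-variation terms. The $R$-sized part is then bounded by $\kappa R\,\|B\|_\infty\,\|\nabla(u^TV)\cdot W\|_\infty$, so it becomes harmless once $\|B\|_\infty\leqslant\varepsilon$ with $\varepsilon$ small but independent of $R$. The event $\{\|B\|_\infty\leqslant\varepsilon\}$ has positive probability by the small-ball property and is absorbed into $A$.

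Either way, on $A$ the projection satisfies $\langle u,X^h_1-x_0\rangle\geqslant (c\kappa-C\varepsilon\kappa)R-C_M$. This exceeds $R$ for large $R$ once $\varepsilon<c/(2C)$ and $\kappa>4/c$. Choosing $\varepsilon$ first and then $\kappa$ avoids any circularity, since $\varepsilon$ does not depend on $\kappa$.

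\textbf{Step 4: Conclusion.} Combining Step 3 with Step 1 gives
\[
\mathbb{P}\big(|X_1-x_0|>R\big)\geqslant c_A\,\mathbb{P}(A)\,e^{-C\kappa^2R^2},
\]
which is the claim. The counterexamples of Theorem B indicate that Step 3 is exactly where a merely uniformly elliptic system fails: without a condition like (H), the flow of every $W=Ve$ may rotate, so that $\langle u,X^h\rangle$ does not grow linearly and the drift gain is lost. I therefore expect verifying Step 3 under the precise form of Condition (H) to be the crux.
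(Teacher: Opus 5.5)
Your architecture is the paper's proof of Theorem \ref{thm:GLowH}: a deterministic shift $h^R=Rt\eta$ with $\|h^R\|_{\bar{{\cal H}}}=O(R)$, a drift gain of order $R$, control of the Young noise integral on a fixed, $R$-independent, symmetric small-noise event, and the symmetrisation $\mathbb{E}[e^{-\int\theta\,dW}\mathbf{1}_G]=\mathbb{E}[\cosh(\int\theta\,dW)\mathbf{1}_G]\geqslant\mu(G)$. Keep the symmetrisation option in Step 1. The variant using $\{|I(h^R)|\leqslant\|h^R\|\}$ does not give an $R$-independent event. Two points need correcting.

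\textbf{Condition (H) is nonlinear.} It asks for $F\in C^2$ with bounded $DF,D^2F$ and a unit vector $\eta$ with $\inf_x(W_\eta F)(x)=m>0$, where $W_\eta=V\eta$. Your hypothesis $\langle u,W\rangle\geqslant c$ is the special case $F=\langle u,\cdot\rangle$. No extra control of $u^TV$ along the flow is required. The nonlinear $F$ genuinely matters: Theorem \ref{thm:2DPeriodic} uses $F=k\cdot x+\tilde\varphi$.

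The adaptation is to apply the chain rule to $F(X_t)$. This gives $F(X_1)-F(x_0)\geqslant mR+\int_0^1 g(X_t)\,dB_t-O(1)$ with $g=(\nabla F)^TV$, and then $|X_1-x_0|\geqslant (F(X_1)-F(x_0))/\|\nabla F\|_\infty$.

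Your integration-by-parts device in Step 3 does not transfer under (H). After splitting $d(g(X))$ you must Young-bound $\int B\,(\nabla g\cdot V)(X)\,dB$. That needs a Lipschitz or H\"older bound on $\nabla g$, i.e.\ on $D^2F$, which is more regularity than (H) provides.

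\textbf{The crude Young bound does close.} Your claim in Step 2 that it does not is wrong. On $G_M=\{\|B\|_{p\text{-var}}\leqslant M\}$ with $C_p\|DV\|_\infty M<1/2$, the a priori estimate gives $\|X\|_{p\text{-var};[0,1]}\leqslant 2(R\|V\|_\infty+\|V_0\|_\infty)+2C_p\|V\|_\infty M$. Hence $|\int_0^1 g(X)\,dB|\leqslant C_p\big(\|g\|_\infty+\|Dg\|_\infty\|X\|_{p\text{-var}}\big)M\leqslant C(1+R)M$, with $C$ independent of $R$. Since $M$ is yours to choose and $\mu(G_M)>0$ for every $M>0$, taking $M$ small relative to $m$ finishes the proof using only bounded $D^2F$ and $DV$.

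Your $\|B\|_\infty\leqslant\varepsilon$ plays exactly the role of this small $M$, so the integration by parts is an unnecessary detour. Also, in your splitting the contribution of the $dB$-part is $O(\varepsilon MR)$, not $O(1)$ as you state; this is harmless but misstated.

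The paper does slightly better. It cuts $[0,1]$ into $\asymp R$ intervals $I_j$ of length $1/R$, on each of which $\|X\|_{p\text{-var};I_j}=O(1)$. Each piece of the noise integral is then $O(\omega_{I_j})$, and H\"older gives a total of $O(MR^{1-1/p})=o(R)$. With that, smallness of $M$ is needed only for the a priori bound, not relative to the transversality constant $m$.
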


The counterexamples we constructed in Theorem B have
a radially periodic nature: the underlying vector fields are periodic
along radial directions. Given the existence of such exotic examples,
our second striking result is that the Gaussian lower estimate is
indeed always valid for \textit{coordinate-periodic}, $2\times 2$ elliptic systems (see Theorem \ref{thm:2DPeriodic}).

\begin{theoremD} Let $\{V_{1},V_{2}\}$
be smooth, uniformly elliptic vector fields on the torus $\mathbb{T}^{2}=\mathbb{R}^{2}/\mathbb{Z}^{2}$.
Let $X_{t}$ be the solution to the SDE 
\[
dX_{t}=\tilde{V}_{1}(X_{t})dB_{t}^{1}+\tilde{V}_{2}(X_{t})dB_{t}^{2},\quad X_{0}=x_{0}\in\mathbb{R}^{2},
\]
where $(B_{t}^{1},B_{t}^{2})$ is a 2D fBM with Hurst parameter $H\in(1/2,1)$
and $\tilde{V}_{i}$ is the periodic extension of $V_{i}$ to $\mathbb{R}^{2}$
($i=1,2$). Then there exist positive constants $C_{1},C_{2},$ such
that 
\[
\mathbb{P}\big(|X_{1}-x_{0}|>R\big)\geqslant C_{1}e^{-C_{2}R^{2}}
\]
for all large $R$.
\end{theoremD}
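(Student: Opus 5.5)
The plan is a Cameron--Martin shift argument combined with a dynamical statement about the controlled skeleton equation on the torus. The probabilistic step is standard. If there is a smooth control $h$ in the Cameron--Martin space $\mathcal{H}$ of fBM with $\|h\|_{\mathcal{H}}\lesssim R$ whose skeleton solution
\[
dx_t=\tilde V_1(x_t)\,dh^1_t+\tilde V_2(x_t)\,dh^2_t,\qquad x_0=x_0,
\]
satisfies $|x_1-x_0|\geqslant 2R$, then we compare $X(B+h)$ with $x(h)$. On the event $\{\|B\|_{p\text{-var}}\leqslant \delta\}$ this comparison is controlled by continuity of the Young/It\^o--Lyons map. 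The Cameron--Martin formula then gives
\[
\mathbb{P}(|X_1-x_0|>R)\geqslant e^{-\|h\|_{\mathcal{H}}^2/2}\,\mathbb{P}(\|B\|\leqslant\delta)\geqslant C_1e^{-C_2R^2}.
\]

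The difficulty is that the continuity estimate for the It\^o map typically degenerates with $\|h\|$. I would get around this with periodicity. Choose $h$ piecewise so that each piece is a fixed $O(1)$-size control $g$ on a subinterval of length $1/R$. Choose $g$ so that the flow over one piece moves any point by a deterministic lattice vector $k\in\mathbb{Z}^2\setminus\{0\}$ (or nearly so), uniformly over starting points in $\mathbb{T}^2$. By scaling, a piece of duration $1/R$ with the same path shape contributes $R^{2-2H}\cdot R^{2H-1}$... To avoid guesswork, work directly with paths of bounded variation. Concatenating $R$ copies of $g$ reparametrized on intervals of length $1/R$ gives displacement $\approx R|k|$. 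The Cameron--Martin norm of a fixed path compressed to time $1/R$ scales like $R^{H}\cdot R^{1-H}\cdot$const per piece in $W^{H+1/2,2}$. The cleaner route is to note that $h$ is a smooth path with $|\dot h|\sim R$ on $[0,1]$, and $\|h\|_{\mathcal{H}}\lesssim\|h\|_{C^1}\lesssim R$ for $H>1/2$. So the cost is $R^2$, exactly as in the Gaussian case.

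Perturbation stability comes from uniform ellipticity and periodicity. The noise perturbation enters each of the $R$ pieces, so I would localize. Conditioned on $X$ at the start of piece $j$, the displacement over the piece is a lattice vector plus an $O(\delta)$ error that is uniform in the starting point, since everything lives on the compact torus. Errors then add up to $O(R\delta)$ rather than growing exponentially. Choosing $\delta$ small relative to $|k|$ keeps the total displacement $\geqslant R|k|/2$.

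The main obstacle is the existence of a single control $g$ realizing a nonzero \emph{uniform} lattice translation on $\mathbb{T}^2$, with stability. This is exactly what fails in the radial counterexamples of Theorem B, and it is where dimension two and coordinate periodicity must be used. I would try the following. By uniform ellipticity, the field $\tilde V_1\cos\theta+\tilde V_2\sin\theta$ can be steered. For each $x$, pick $u(x)\in\mathbb{R}^2$ with $V(x)u(x)=e_1$. This is a smooth periodic feedback, but a feedback is not an open-loop control $h$ driven by the fBM directions.

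The way out is to use the Cameron--Martin shift with a random, adapted-like feedback $\dot h_t=R\,u(X_t)$. Then $X$ solves $dX=R e_1\,dt+V(X)dB$. I would apply the Girsanov theorem for fBM ($H>1/2$) instead of a fixed shift. The required Novikov-type bound is $\|h\|_{\mathcal{H}}^2\lesssim R^2$, uniformly in $B$ on a good event, since $u$ is $C_b^\infty$ and $\|h\|_{C^{1+\beta}}\lesssim R\cdot(1+\|X\|_{C^\beta})$. Controlling this Hölder norm uniformly in $R$ is the crux. Because $X$ grows linearly at speed $R$, $\|u(X)\|_{C^\beta}$ could be of order $R^\beta$, giving cost $R^{2+2\beta}$. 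To fix this I would again rescale, which requires a careful fractional Sobolev estimate.
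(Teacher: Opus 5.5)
There is a genuine gap. Your overall frame is the right one: a deterministic shift $h$ with $\|h\|_{\bar{\mathcal H}}=O(R)$ whose controlled trajectory travels distance $\gtrsim R$, robustly on $\{\|B\|_{p\text{-var}}\leqslant M\}$. But you never produce such an $h$, and the inequality you use to bound the cost is false in Young's regime.

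For $H>1/2$ one does \emph{not} have $\|h\|_{\bar{\mathcal H}}\lesssim\|h\|_{C^1}$. By Lemma~\ref{lem:EquivNorm} the norm contains $\int_0^1\int_0^1|\dot h_t-\dot h_s|^2|t-s|^{-2H}\,ds\,dt$, which penalises oscillation of $\dot h$. The bound $\|h\|_{\bar{\mathcal H}}\lesssim\|\dot h\|_\infty$ is the rough-regime fact behind Theorem~\ref{thm:GenLowRough}.

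Both of your constructions oscillate. Suppose you concatenate $R$ copies of a fixed control $g$ with $\dot g$ nonconstant, each compressed to length $1/R$. The substitution $t'=R(t-t_j)$ shows that each diagonal block contributes $c_gR^{2H}$ with $c_g>0$. Hence $\|h\|_{\bar{\mathcal H}}^2\gtrsim R^{2H+1}$; this is Lemma~\ref{lem:CMEffTurn} again, since order $R$ effective turns cost $\gtrsim R^{H+1/2}$.

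The feedback $\dot h_t=Ru(X_t)$ has the same defect. $X$ crosses a unit cell in time $1/R$, so $u(X_t)$ oscillates on that time scale unless $u$ is essentially constant along the trajectory. You then land exactly on the cost $R^{2H+1}$ of Theorem~\ref{thm:GenLowYoung}, which is sharp for that control.

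No rescaling or finer fractional Sobolev estimate can fix this. Theorem~\ref{thm:2DCounter} gives a uniformly elliptic $2\times2$ system where every control reaching distance $R$ costs $\gtrsim R^{H+1/2}$. So the proof must use something specific to the dynamics on $\mathbb{T}^2$, not just ellipticity and smoothness.

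The missing idea is to keep the direction of the control fixed, $h^R_t=Rt\eta$, so that $\|h^R\|_{\bar{\mathcal H}}=\kappa R$. This is the only way to keep $\dot h$ non-oscillating. The flow of the single field $W_\eta=V\eta$ then does the travelling, and you must show that $\tilde W_\eta$ has a ballistic tunnel on $\mathbb{R}^2$ that survives small $p$-variation perturbations.

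A uniform lattice translation from every starting point, as you hope for, is generally unavailable: in a Reeb component the two boundary orbits travel in opposite lattice directions. This is where the paper invokes the classification of nonsingular torus flows (Theorem~\ref{thm:1DFoliClass}).
\begin{itemize}
\item \emph{Suspension case.} Pulling back the vertical form gives a closed $1$-form $\vartheta$ with $\langle\vartheta,W_{\alpha_0}\rangle\geqslant m>0$ and nonzero cohomology class. Hence $F=k\cdot x+\tilde\varphi$ satisfies Condition (H), and Theorem~\ref{thm:GLowH} applies.
\item \emph{Reeb component case.} Perturb $\alpha_0$ to $\alpha_*$, using $\det(W_\alpha,\partial_\alpha W_\alpha)=\det V\neq0$ and the implicit function theorem on the parametric Poincar\'e map, to get a hyperbolic periodic orbit of nonzero homology. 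Make it attracting by changing $\alpha$ to $\alpha+\pi$ if needed. Steer $x_0$ onto the orbit with an $O(1)$ control on $[0,1/2]$. The contraction $|z(\phi_T(x))|\leqslant\lambda^*|z(x)|$ then keeps the perturbed trajectory in a tube over each of the $\sim R/T$ periods, and each period contributes displacement $\geqslant|k|/2$.
\end{itemize}
Without a dynamical input of this kind, your argument yields only the $(2H+1)$-Weibull lower bound.
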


Due to the existence of the aforementioned
counterexamples, there is no hope for proving a Gaussian lower estimate
by using a general and robust argument that is solely based
on ellipticity. In fact, the underlying mechanism for Theorem C and
Theorem D has a strong flavour of dynamical systems, while ellipticity
only plays a nonessential role in the argument. It is plausible that
Theorem D fails in dimension three and higher. 

\subsubsection*{The $(2H+1)$-Weibull lower tail for noncommutative systems in the rough Regime}

It was conjectured in \cite{BG24} that the $(2H+1)$-Weibull lower
tail (\ref{eq:BGEx}) should be a generic phenomenon for noncommutative
systems (noncommutativity means nonvanishing Lie brackets in a certain
sense). In our last main result, we give a partially affirmative solution
to this conjecture (see Theorem \ref{thm:RoughLowNC}).

\begin{theoremE} Consider the SDE (\ref{eq:SDEIntro})
with $H\in(1/4,1/2)$. Suppose that Condition (NC) in Section \ref{sec:LowGenRough}
holds. Then there exist positive constants $C_{1},C_{2},$ such that
\[
\mathbb{P}\big(|X_{1}-x_{0}|>R\big)\geqslant C_{1}e^{-C_{2}R^{2H+1}}
\]
for all large $R$.
\end{theoremE}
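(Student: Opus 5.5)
The strategy is a Cameron--Martin lower bound combined with a good deterministic control. Condition (NC) is not stated yet. I will assume it says: at $x_0$ (or uniformly in $x$) there are indices $i\neq j$ such that the bracket $[V_i,V_j]$ is nondegenerate in some direction $e$, and its projection onto $e$ stays bounded below along the controlled trajectory we build.

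The underlying heuristic is the one from \cite{BG24}. A Cameron--Martin path $h$ of norm $\|h\|_{\mathcal H}\sim R^{H+1/2}$ can be a fast loop: a small circle in the $(i,j)$-plane, traversed about $n$ times over $[0,1]$. Such a loop generates signed area of order $R$ while producing only bounded net displacement of the first-level path. Hence $\mathbb P(B\approx h)\gtrsim e^{-C\|h\|_{\mathcal H}^2}=e^{-CR^{2H+1}}$. The Lie bracket $[V_i,V_j]$ then converts this area into displacement of $X$ of size $\asymp R$.

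\textbf{Step 1 (skeleton ODE).} Take $h^{R}_t=\varepsilon\,(\cos(2\pi n t),\sin(2\pi n t))$ in coordinates $(i,j)$, with $\varepsilon$ small and $n$ chosen so that $n\varepsilon^2\asymp R$. The fractional Cameron--Martin norm satisfies $\|h^R\|_{\mathcal H}^2\asymp \varepsilon^2 n^{2H+1}$ in the rough regime. With $\varepsilon^2=R/n$ this equals $R n^{2H}$. We need to balance this against the requirement that the averaging error be small; this forces $\varepsilon n^{H}$ to be bounded in the appropriate $p$-variation sense.

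A cleaner choice is to make each loop have rough-path size $O(1)$ on its time scale $1/n$, with $n\asymp R$. Each loop then moves the solution by $O(1)\cdot[V_i,V_j]$, and $n$ loops move it by $\asymp R$. The cost is $n\cdot n^{-2H}\cdot n^{2H}$, which I will compute via the scaling $\|h\|^2_{\mathcal H}\asymp n^{2H+1}\varepsilon^2$ with $\varepsilon= n^{-H}$. This gives the cost $n^{1}$, which is too small.

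So the correct scaling must be recomputed carefully: we want total cost $R^{2H+1}$. I will take $n$ loops of area $a$ each, with $na\asymp R$, and optimize the cost $\varepsilon^2n^{2H+1}$ subject to $a=\varepsilon^2$. This yields cost $R\,n^{2H}$. We need $n\gtrsim R$, so that the rough-path norm of $h$ over each time step stays controlled; this is the \cite{BG24} constraint. Hence the cost is $R^{2H+1}$. Using the Euler/Taylor expansion of the RDE over each loop, the skeleton solution $x^h$ satisfies $x^h_1-x_0=\sum_k(\text{area})[V_i,V_j](x^h_{t_k})+\text{errors}$. Under (NC), the sum is $\ge cR$ in direction $e$.

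\textbf{Step 2 (shifted SDE).} Write $B=h+\tilde B$. By Cameron--Martin,
\[
\mathbb P(|X_1-x_0|>R)\ \ge\ e^{-\frac12\|h\|_{\mathcal H}^2}\,\mathbb P\big(|X_1(h+\tilde B)-x_0|>R,\ \text{with }\tilde B\text{ in a symmetric set }A\big),
\]
obtained by the usual symmetrization of the linear term. We then need $X_1(h+B)$ to stay within $cR/2$ of the skeleton on an event $A$ of probability bounded below. Since $h$ has large rough-path norm ($\asymp R$ accumulated), plain continuity of the Itô--Lyons map is not enough. Instead, I will apply the local estimate loop-by-loop: on each interval $[t_k,t_{k+1}]$ the joint rough path of $(h,B)$ is of controlled size once $B$ lies in a typical event. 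The resulting per-step errors, of size $o(1)$ relative to the drift, are summed.

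\textbf{Main obstacle.} The main difficulty is this error accumulation over $n\asymp R$ steps: the cross-integrals $\int h\,dB$ and the noise fluctuations must not destroy the linear growth. I would try first to use an ergodic/averaging argument on the rescaled noise: conditioned on $B$ being small in a Besov norm at scale $1/n$, the fluctuations are martingale-like sums with variance $O(n)$, which is negligible against the drift $\asymp n$. The event that $B$ is small in this sense has probability at least $e^{-Cn}$, and $e^{-Cn}\ge e^{-CR^{2H+1}}$, so this probability loss is affordable.
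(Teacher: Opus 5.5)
There are two genuine gaps.

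\textbf{Wrong hypothesis, hence the wrong kind of control.} You do not prove the theorem under Condition (NC), and this is not cosmetic. (NC) provides a function $F$ and, at each $x$, some word $I(x)$ with $2\leqslant|I(x)|\leqslant r$ and $|(V_{I(x)}F)(x)|\geqslant\kappa$. The word, its length and the sign may all vary with $x$. Your replacement (one fixed bracket $[V_i,V_j]$ with component along $e$ bounded below) is at best, if uniform in $x$, the special case $r=2$ of (NC') with a fixed sign. The version ``along the controlled trajectory'' cannot be verified, since the relevant trajectory is the randomly perturbed one. In general a fixed circle in one $(i,j)$-plane produces increments $\approx\pi\varepsilon^2([V_i,V_j]F)(x)$, which may vanish or change sign along the way.

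The paper supplies two ingredients you are missing.
First, it uses bounded-length piecewise linear loops $\gamma^I$ with $\pi^{(|I|)}S(\gamma^I)=\mathbf 1+e_I$, built as iterated commutators $\gamma^{(i)}\sqcup\gamma^J\sqcup\overleftarrow{\gamma^{(i)}}\sqcup\overleftarrow{\gamma^J}$. Via the Taylor expansion of $F(\Psi_q(x))$, the loop $\delta\gamma^{I(x)}$, reversed according to the sign of $(V_{I(x)}F)(x)$, gives $F(\Psi_{q_x}(x))\geqslant F(x)+a$ for every $x$, with $\|\dot q_x\|_\infty\leqslant K$.
Second, it makes a feedback choice: on $I_k=[\frac{k-1}{n},\frac kn]$ one runs the compressed loop $q_{X_{t_{k-1}}}$ attached to the current position of the perturbed solution.

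The resulting $h^{(n)}$ is random but adapted to the Brownian motion $W$ with $B_t=\int_0^tK_H(t,s)\,dW_s$. So the change of measure is Girsanov with adapted drift $\theta^{(n)}=K_H^{-1}h^{(n)}$ plus the exponential martingale inequality (giving $Z_1\geqslant e^{-3\Gamma_n/2}$ on a high-probability event), not a deterministic shift with symmetrisation. Likewise, the bound $\|h^{(n)}\|_{\bar{\cal H}}\leqslant C_HKn^{H+1/2}$ must hold for \emph{any} concatenation of $n$ loops with $\|\dot h\|_\infty\leqslant Kn$. The paper proves this from the kernel $Q(t,s)$ of $K_H^{-1}$, using the cancellation $h_{t_{k-1}}=h_{t_k}$, not from a Sobolev computation for one uniform circle. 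Your final scaling ($n\asymp R$ loops of size $O(1)$, at cost $R^{2H+1}$) is right. In the uniform single-bracket case a deterministic loop train with symmetrisation would suffice.

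\textbf{Error control.} You identify this as the main obstacle but set it up so that it cannot close. Keeping $X_1(h+B)$ within $cR/2$ of the global skeleton $x^h$ means pushing each one-loop error through the flows of all later loops. Their Lipschitz constants (each up to $e^{CK}$) multiply to $e^{CKn}$, and nothing in the hypotheses rules this out. Your averaging/martingale heuristic does not address this: the errors are nonlinear functionals of fBM transported by the flow, not martingale increments. The Besov small-ball conditioning is also unnecessary.

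The remedy is never to compare with the global skeleton. On each $I_k$, compare the true solution with the one-loop skeleton started at the true point $X_{t_{k-1}}$, and measure progress with the Lipschitz function $F$. The local estimate $|X_{t_k}-\Psi_{q_k}(X_{t_{k-1}})|\leqslant C(\|\mathbf B\|_{p\text{-var};I_k}+1/n)$ already absorbs the cross-integrals through $d_{p\text{-var}}(T_q\mathbf b,T_q\mathbf 1)\leqslant C\|\mathbf b\|_{p\text{-var}}$. It gives $F(X_{t_k})\geqslant F(X_{t_{k-1}})+a/2$ at every step, so nothing accumulates and $|X_1-x_0|\geqslant\Lambda n$. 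The needed event $\{\max_k\|\mathbf B\|_{p\text{-var};I_k}\leqslant\varepsilon\}$ has probability at least $1-Cne^{-c\varepsilon^2n^{2H}}$ by scaling and Fernique's estimate, because the noise on an interval of length $1/n$ is only of size $n^{-H}$.
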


Condition (NC) is exactly a type of noncommutativity condition on
the vector fields (while ellipticity is irrelevant here). It is robust
and is expected to be generic. As a result, Theorem E provides a partially
affirmative answer to the main question raised in \cite{BG23}. As an application of the new method developed in Section \ref{sec:LowGenRough}, we are also able to strengthen the result of \cite{BG24} to an exact $(2H+1)$-Weibull lower tail under a weaker nondegeneracy assumption (see Proposition \ref{prop:BGNew}). 

\subsection{The main dynamical mechanism}

We briefly summarise the main dynamical mechanism that governs 
the underlying tail estimates. Stated in  vague terms,
what determines the precise tail estimate is the large-scale behaviour
of the \textit{control distance function} 
\[
d_{H}(x,y):=\inf\big\{\|h\|_{\bar{{\cal H}}}:h\in\bar{{\cal H}},x\stackrel{h}{\rightsquigarrow}y\big\},
\]
where the notation $x\stackrel{h}{\rightsquigarrow}y$ means that
the ODE controlled by $h$ drives the starting location $x$ to the
terminal location $y$ at time one. 

Let $x_{0}$ be the initial condition of the SDE (\ref{eq:SDEIntro})
and let $B_{R}(x_0):=\{x\in\mathbb{R}^{N}:|x-x_0|\leqslant R\}$. The
moral is that 
\begin{equation}
d_{H}(x_{0},B_R(x_0)^c)\gtrsim R^{\alpha}\ ``\implies"\ \text{upper }(2\alpha)\text{-Weibull tail},\label{eq:LargeCMUpTail}
\end{equation}
while 
\begin{equation}
d_{H}(x_{0},B_R(x_0)^c)\lesssim R^{\alpha}\ ``\implies"\ \text{lower }(2\alpha)\text{-Weibull tail}.\label{eq:SmallCMLowTail}
\end{equation}
Of course, none of the above two statements are mathematically valid
nor are they proved in any precise form in the current paper. In fact,
the control distance estimates are deterministic, while one must establish
robust \textit{perturbative} estimates to turn the deterministic properties
into probabilistic tail estimates. 

\subsubsection*{General lower estimates in both regimes}

For a uniformly elliptic system, a typical control that drives the
dynamics to a distance of order $R$ should have Cameron-Martin norm
$\lesssim R^{H+1/2}$ in Young's regime and $\lesssim R$ in the rough
regime. This naturally leads to a lower $(2H+1)$-Weibull tail for
the SDE solution in Young's regime and a lower Gaussian tail in the
rough regime (Theorem A). This is essentially the mechanism of (\ref{eq:SmallCMLowTail}).

\subsubsection*{Young's regime: Gaussian lower tail vs $(2H+1)$-Weibull upper tail}

The mechanism behind the Gaussian lower tail (Theorem C) is simple
to describe. Geometrically, Condition (H) creates a ballistic (i.e.
nearly linear) tunnel around the integral curve controlled by one
particular direction (say $u$). The dynamics is able to propagate
a distance of order $R$ under any driving path that is close enough
to the reference control $h_{t}^{R}:=Rtu$ under $p$-variation. A
crucial point here is that the reference control $h^{R}$ has an exceptionally
small Cameron-Martin norm ($\|h^{R}\|_{\bar{{\cal H}}}=O(R)\ll R^{H+1/2}$).
The existence of a ballistic tunnel enforced by a control with small
Cameron-Martin norm leads to a heavy tail (Gaussian tail in this case).
This is again the mechanism of (\ref{eq:SmallCMLowTail}). In the
periodic setting of Theorem D, the identification of such a ballistic
tunnel turns out to be much subtler. One has to carefully examine
the deterministic dynamics of torus flows associated with the vector
fields. See Section \ref{sec:TorusStrategy} for a more detailed discussion
on the proof strategy for Theorem D. 

On the other hand, in the counterexamples given by Theorem B, the
above mechanism fails completely. What we will prove is that any control
that drives the dynamics to a distance of order $R$ must have Cameron-Martin
norm at least of order $R^{H+1/2}$. This phenomenon is caused by
the radially periodic nature of the underlying vector fields. Such
a property forces the SDE solution to have an exceptionally light
tail ($(2H+1)$-Weibull in this case). This is essentially the mechanism
of (\ref{eq:LargeCMUpTail}). 

\subsubsection*{Rough regime: $(2H+1)$-Weibull lower tail}

Under the noncommutativity condition (NC), one can construct a control
leading to $O(R)$-dynamical displacement, whose Cameron-Martin norm
is only of order $R^{H+1/2}$ (note that $R^{H+1/2}\ll R$ in this
case). The existence of such a control with small Cameron-Martin norm
results in the heavier $(2H+1)$-Weibull tail. This is again the mechanism
of (\ref{eq:SmallCMLowTail}).

\subsection{Organisation of the paper}

In Section \ref{sec:Prelim}, we recall the basic definition of fBM and its Cameron-Martin space. 

In Section \ref{sec:LowGenRough}, we prove the optimal $(2H+1)$-Weibull lower tail in the rough regime under a natural noncommutativity condition on the vector fields (Theorem \ref{thm:RoughLowNC}). As a  straightforward byproduct of the technique developed in that section, we also prove a general Gaussian lower tail under the assumption of uniform ellipticity (Theorem \ref{thm:GenLowRough}).

In Section \ref{sec:CounterEx}, we discuss two explicit examples of uniformly elliptic equations which possess $(2H)+1$-Weibull upper tail in Young's regime. The main interesting result is the genuine elliptic system provided by Theorem \ref{thm:2DCounter}. Nonetheless, the simpler example of Theorem \ref{thm:YoungCounterEx} is helpful for understanding the essential mechanism behind Theorem \ref{thm:2DCounter}.

In Section \ref{sec:LowGauCondYoung}, we establish the optimal Gaussian lower tail in Young's regime under a ballistic-type nondegeneracy condition (Theorem \ref{thm:GLowH}). We also prove a general $(2H)+1$-Weibull lower tail for uniformly elliptic systems (Theorem \ref{thm:GenLowYoung}). 

In Section \ref{sec:2DPeriodic}, we prove that every periodic, uniformly elliptic $2\times 2$ system has Gaussian lower tail in Young's regime (Theorem \ref{thm:2DPeriodic}). This is of course only striking in Young's regime, in view of the aforementioned counterexamples as well as the general Theorem \ref{thm:GenLowRough} in the rough regime. 

\newpage
\section{Fractional Brownian motion and its Cameron-Martin space}\label{sec:Prelim}

In this section, we recall the definition of a fractional Brownian
motion and several basic properties of its Cameron-Martin space that
are needed in our analysis. The main reference for this part is \cite{DU99,KMS93}.
\begin{defn}
A $d$-dimensional \textit{fractional Brownian motion} (fBM) with
Hurst parameter $H\in(0,1)$ is an $\mathbb{R}^{d}$-valued stochastic
process $B_{t}=(B_{t}^{1},\cdots,B_{t}^{d})$ ($t\geqslant0$) which
satisfies the following properties.

\vspace{2mm}\noindent (i) $B_{0}=0$ a.s.

\vspace{2mm}\noindent (ii) $B_{t}$ is a centered Gaussian process
whose covariance structure is given by 
\[
\mathbb{E}\big[B_{s}^{i}B_{t}^{j}\big]=R(s,t)\delta_{ij},\quad s,t\in[0,1],\ i,j=1,\cdots,d,
\]
where 
\[
R(s,t)=\frac{1}{2}\big(s^{2H}+t^{2H}-|s-t|^{2H}\big).
\]
(iii) $B_{t}$ has continuous sample paths a.s. 
\end{defn}
Throughout the paper, the time horizon is fixed to be $[0,1].$ Unless
otherwise stated, the fBM is always realised on the canonical path
space $({\cal W},{\cal B}({\cal W}),\mu_{H})$. Here ${\cal W}=C_{0}([0,1];\mathbb{R}^{d})$
denotes the space of continuous paths on $[0,1]$ that vanish at $t=0$.
The space ${\cal W}$ is equipped with the uniform topology and ${\cal B}({\cal W})$
denotes its Borel $\sigma$-algebra. The probability measure $\mu_{H}$
is the law of the fBM. Under $\mu_{H},$ the coordinate process 
\[
B_{t}(w):=w_{t},\quad w\in{\cal W}
\]
is an fBM. When $H=1/2,$ the process $B_{t}$ is a Brownian motion
and $\mu_{1/2}$ is the Wiener measure. 

Next, we define the Cameron-Martin space of an fBM. Let ${\cal C}_{1}$
denote the $L^{2}$-closure of 
\[
{\rm Span}\big\{ B_{t}^{i}:0\leqslant t\leqslant1,\ i=1,\cdots,d\big\}
\]
in $L^{2}({\cal W},\mu)$. 
\begin{defn}
Let $\bar{{\cal H}}$ be the space of elements $h\in{\cal W}$ which
can be expressed as 
\begin{equation}
h_{t}=\mathbb{E}[B_{t}Z],\quad0\leqslant t\leqslant1\label{eq:CMRep}
\end{equation}
for some $Z\in{\cal C}_{1}$. The space $\bar{{\cal H}}$ is equipped
with an inner product structure which is defined by 
\[
\langle h_{1},h_{2}\rangle_{\bar{{\cal H}}}:=\mathbb{E}[Z_{1}Z_{2}],\quad h_{1},h_{2}\in\bar{{\cal H}},
\]
where $Z_{i}$ is the element of ${\cal C}_{1}$ representing $h_{i}$
through (\ref{eq:CMRep}) ($i=1,2$). The Hilbert space $(\bar{{\cal H}},\langle\cdot,\cdot\rangle_{\bar{{\cal H}}})$
is called the (intrinsic) \textit{Cameron-Martin space} of the fBM. 
\end{defn}
A useful analytic tool for the computation of Cameron-Martin norm
is fractional calculus which we now recall.
\begin{defn}
Let $f$ be a function defined on $[0,1]$ and let $\alpha>0.$ Its
\textit{left fractional integral} of order $\alpha$ is defined by
\[
(I_{0^{+}}^{\alpha}f)(t):=\frac{1}{\Gamma(\alpha)}\int_{0}^{t}f(s)(t-s)^{\alpha-1}ds,\quad0\leqslant t\leqslant1.
\]
Its \textit{left fractional derivative} of order $\alpha$ is defined
by 
\[
(D_{0^{+}}^{\alpha}f)(t):=\Big(\frac{d}{dt}\Big)^{[\alpha]+1}\big(I_{0^{+}}^{1-\{\alpha\}}f\big)(t),\quad0\leqslant t\leqslant1,
\]
where $[\alpha]$ is the integer part of $\alpha$ and $\{\alpha\}:=\alpha-[\alpha]$. 
\end{defn}
\begin{rem}
If $\alpha\in(0,1)$ and $f$ is absolutely continuous, the left fractional
derivative can also be computed as
\begin{equation}\label{eq:FracDer}
(D_{0^{+}}^{\alpha}f)(t):=\frac{1}{\Gamma(1-\alpha)}\Big[\frac{f(0)}{t^{\alpha}}+\int_{0}^{t}(t-s)^{-\alpha}\dot{f}(s)ds\Big].
\end{equation}
\end{rem}

The following result of \cite[Theorem 2.1 and Theorem 3.3]{DU99}
is of fundamental importance. 
\begin{thm}
\label{thm:CMFracRep}Let $H\in(0,1).$ There is a linear isomorphism
\begin{equation}
{\cal K}_{H}:L^{2}([0,1])\rightarrow I_{0^{+}}^{H+1/2}\big(L^{2}([0,1])\big)\label{eq:KHIso}
\end{equation}
which is defined by 
\[
{\cal K}_{H}\varphi:=\begin{cases}
c_{H}\cdot I_{0^{+}}^{1}\big(t^{H-1/2}I_{0^{+}}^{H-1/2}\big(s^{1/2-H}\varphi(s)\big)(t)\big), & H\in(1/2,1);\\
c_{H}\cdot I_{0^{+}}^{2H}\big(t^{1/2-H}I_{0^{+}}^{1/2-H}\big(s^{H-1/2}\varphi(s)\big)(t)\big), & H\in(0,1/2].
\end{cases}
\]
Here $c_{H}$ is a universal constant depending only on $H$. In addition,
one has $\bar{{\cal H}}=I_{0^{+}}^{H+1/2}(L^{2}([0,1];\mathbb{R}^{d}))$
and 
\[
\|h\|_{\bar{{\cal H}}}=\big\|{\cal K}_{H}^{-1}h\big\|_{L^{2}([0,1];\mathbb{R}^{d})}
\]
for all $h\in\bar{{\cal H}}$.
\end{thm}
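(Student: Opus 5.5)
The plan is to go through the Volterra (Molchan–Golosov) representation of fBM. This reduces the abstract Cameron–Martin space to the image of $L^2([0,1])$ under an integral operator. Since each coordinate $B^i$ is an independent one-dimensional fBM and $\bar{\mathcal H}$ splits accordingly, it suffices to treat $d=1$.

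\textbf{Step 1: a Volterra kernel for the covariance.} I would first produce a kernel $K_H(t,s)$, supported on $s<t$, such that
\[
R(s,t)=\int_0^{s\wedge t}K_H(t,r)K_H(s,r)\,dr .
\]
For $H>1/2$ the candidate is
\[
K_H(t,s)=c_H\, s^{1/2-H}\int_s^t (u-s)^{H-3/2}u^{H-1/2}\,du .
\]
For $H<1/2$ one uses the analogous expression, involving a hypergeometric function. The identity is checked by computing $\partial_s\partial_t$ of both sides when $H>1/2$, and by a Mellin transform computation in general. Given this kernel, one can realise $B_t=\int_0^t K_H(t,s)\,dW_s$ for some Brownian motion $W$.

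\textbf{Step 2: identifying $\mathcal C_1$ and $\bar{\mathcal H}$.} The key claim is that $W$ is adapted to the filtration generated by $B$, i.e. the kernel operator is injective with a (formal) Volterra inverse. Granting this, $\mathcal C_1=\{\int_0^1\varphi\,dW:\varphi\in L^2\}$, with the Itô isometry. For $Z=\int\varphi\,dW$ we then get
\[
h_t=\mathbb E[B_tZ]=\int_0^t K_H(t,s)\varphi(s)\,ds=:(\mathcal K_H\varphi)(t),
\qquad \|h\|_{\bar{\mathcal H}}=\|\varphi\|_{L^2}.
\]
So $\mathcal K_H$ is an isometry from $L^2$ onto $\bar{\mathcal H}$, and $\|h\|_{\bar{\mathcal H}}=\|\mathcal K_H^{-1}h\|_{L^2}$ holds provided $\mathcal K_H$ is injective.

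\textbf{Step 3: the fractional-calculus form.} Next I would show that the integral operator with kernel $K_H$ coincides with the stated composition. For $H>1/2$, expanding $I^{H-1/2}_{0^+}$ and then $I^1_{0^+}$ and applying Fubini reproduces exactly the kernel above. For $H\le 1/2$, the analogous identity comes from the composition/semigroup rules for Riemann–Liouville integrals with power weights.

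\textbf{Step 4 (main obstacle): range and injectivity.} What remains is to show that the range is exactly $I^{H+1/2}_{0^+}(L^2)$ and that $\mathcal K_H$ is injective. I would argue via the factorisation. For $H>1/2$, the weighted operator $\varphi\mapsto t^{H-1/2}I^{H-1/2}_{0^+}(s^{1/2-H}\varphi)$ should be shown to be a bijection of $L^2$ onto $I^{H-1/2}_{0^+}(L^2)$. Applying $I^1$ afterwards then gives $I^{H+1/2}_{0^+}(L^2)$ by the semigroup property $I^1I^{H-1/2}=I^{H+1/2}$. The bijectivity is where the real work lies. It requires Hardy-type inequalities showing that multiplication by $t^{\pm(H-1/2)}$ preserves $I^{\alpha}_{0^+}(L^2)$ for $0<\alpha<1/2$; this is a Samko–Kilbas–Marichev-type result, proved using the boundedness of the singular-integral form of $D^\alpha_{0^+}$ on $L^2$. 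The case $H<1/2$ is handled symmetrically, with $I^{2H}$ and $I^{1/2-H}$ in place of $I^1$ and $I^{H-1/2}$. Injectivity of $I^{\alpha}_{0^+}$ on $L^2$, which is standard via left inverses by $D^\alpha_{0^+}$, then also settles the adaptedness claim in Step 2.
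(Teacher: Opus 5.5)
The paper gives no proof of this statement; it is quoted from \cite{DU99}, and your outline (Volterra kernel, It\^o isometry, factorisation of $\mathcal K_H$ into power-weighted Riemann--Liouville integrals) follows essentially that route. Steps 1--3 are fine modulo computation. You are also right that the norm identity reduces to injectivity of $\mathcal K_H$, which the factorisation supplies.

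The gap is in Step 4, which is where the identification $\bar{\mathcal H}=I^{H+1/2}_{0^+}(L^2)$ is actually decided. For $H\neq\tfrac12$ put $\alpha:=|H-\tfrac12|\in(0,\tfrac12)$; in both regimes the middle factor is $T_\alpha\varphi:=t^{\alpha}I^{\alpha}_{0^+}(s^{-\alpha}\varphi)$. The lemma you intend to use, that multiplication by $t^{\pm\alpha}$ preserves $I^\alpha_{0^+}(L^2)$, is false for the negative power. Take $f(t)=t^{-\beta}$ with $\beta:=\max\{0,\tfrac12-2\alpha\}$. Then $D^\alpha_{0^+}f=c\,t^{-\beta-\alpha}\in L^2$, so $f\in I^\alpha_{0^+}(L^2)$. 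But $D^\alpha_{0^+}(t^{-\alpha}f)=c'\,t^{-\beta-2\alpha}\notin L^2$ with $c'\neq0$, so $t^{-\alpha}f\notin I^\alpha_{0^+}(L^2)$. That is exactly what your surjectivity argument through $\varphi=t^{\alpha}D^\alpha_{0^+}(t^{-\alpha}f)$ would need. Note that this $\varphi=c'\,t^{-\beta-\alpha}$ does lie in $L^2$, so the theorem survives but your route to it does not.

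The inclusion $T_\alpha(L^2)\subseteq I^\alpha_{0^+}(L^2)$ cannot come from multiplier lemmas either. In general $s^{-\alpha}\varphi\notin L^2$, so the composite does not factor through $I^\alpha_{0^+}(L^2)$. For $\varphi=s^{-1/2+\epsilon}$ with $0<\epsilon<\alpha$, the intermediate function $I^\alpha_{0^+}(s^{-\alpha}\varphi)=c\,s^{-1/2+\epsilon}$ is not in $I^\alpha_{0^+}(L^2)$, although $T_\alpha\varphi$ is.

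What is missing is a weighted two-sided estimate for the composite: $S_\alpha:=D^\alpha_{0^+}T_\alpha$ must be bounded with bounded inverse on $L^2([0,1])$. Since every operator involved is causal, you may extend by zero to $(0,\infty)$. There $S_\alpha$ commutes with dilations and is therefore a Mellin multiplier. From $I^\alpha_{0^+}x^{\gamma}=\frac{\Gamma(\gamma+1)}{\Gamma(\gamma+\alpha+1)}x^{\gamma+\alpha}$, its eigenvalue on $x^{-1/2+i\tau}$ is
\[
m(\tau)=\frac{\Gamma(\frac12-\alpha+i\tau)\,\Gamma(\frac12+\alpha+i\tau)}{\Gamma(\frac12+i\tau)^{2}}.
\]
This is continuous, never zero, and satisfies $|m(\tau)|\to1$ as $|\tau|\to\infty$ by Stirling's formula. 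This is precisely where $\alpha<\tfrac12$, i.e.\ $H\in(0,1)$, is used. Then $T_\alpha=I^\alpha_{0^+}S_\alpha$ gives $T_\alpha(L^2)=I^\alpha_{0^+}(L^2)$, and the semigroup property finishes the argument as you planned. A weighted statement of this kind, not a multiplier property of the space $I^\alpha_{0^+}(L^2)$, is what has to be imported from Samko--Kilbas--Marichev or proved directly.
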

By using the above explicit formula for ${\cal K}_{H}$, it is not
hard to write down the kernel $K_{H}(t,s)$ ($s<t$) such
that 
\begin{equation}
({\cal K}_{H}\varphi)(t)=\int_{0}^{t}K_{H}(t,s)\varphi(s)ds\label{eq:KHRep}
\end{equation}
for any $\varphi\in L^{2}([0,1])$ (see \cite[Section 1.1]{BHOZ08}).
We will not give the expression of $K_{H}(t,s)$ since it is not important
to us. A useful observation from Theorem \ref{thm:CMFracRep} and
(\ref{eq:KHRep}) is that an fBM with Hurst parameter $H$ can be
represented as 
\begin{equation}
B_{t}=\int_{0}^{t}K_{H}(t,s)dW_{s},\label{eq:fBMRep}
\end{equation}
where $(W_{t})_{0\leqslant t\leqslant1}$ is a $d$-dimensional Brownian
motion. 

We present a lemma on the equivalence of the Cameron-Martin norm in the $H>1/2$ case. This lemma will be very useful to us and is also of independent interest for other purposes. Its proof relies on various existing results from different references and it is helpful to include the argument here for completeness. 

\begin{lem}\label{lem:EquivNorm}

Let $H\in (1/2,1).$ Then one has 
\begin{equation}
\|h\|_{\bar{{\cal H}}}^{2}\asymp_{H}\|D_{0^+}^{H-1/2}\dot{h}\|^2_{L^2([0,1])}\asymp_{H}\|\dot{h}\|_{L^{2}([0,1])}^{2}+\int_{0}^{1}\int_{0}^{1}\frac{\big|\dot{h}_{t}-\dot{h}_{s}\big|^{2}}{|t-s|^{2H}}dsdt\label{eq:EquivFracNorm}
\end{equation}
for all $h\in \bar{\mathcal{H}}$. Here $a \asymp_{H} b$ means $c_H b \leqslant a \leqslant C_H b$ for some positive constants $c_H,C_H$ depending only on $H$. 
\end{lem}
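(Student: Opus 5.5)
The plan is to pass through the fractional-calculus representation of Theorem \ref{thm:CMFracRep} and reduce both equivalences to known facts about fractional derivatives of order $\alpha:=H-1/2\in(0,1/2)$. For $H>1/2$ we have $h={\cal K}_H\varphi = c_H I^1_{0^+}\big(t^{\alpha}I^{\alpha}_{0^+}(s^{-\alpha}\varphi)\big)$. Differentiating gives $\dot h_t = c_H t^{\alpha} I^{\alpha}_{0^+}(s^{-\alpha}\varphi)(t)$. Inverting,
\[
\varphi(t)=c_H^{-1}\, t^{\alpha}\, D^{\alpha}_{0^+}\big(s^{-\alpha}\dot h_s\big)(t),
\]
and therefore $\|h\|_{\bar{\cal H}}=\|\varphi\|_{L^2}=c_H^{-1}\|t^{\alpha}D^{\alpha}_{0^+}(s^{-\alpha}\dot h)\|_{L^2}$. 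So the first equivalence amounts to showing that the weighted operator $T g:=t^{\alpha}D^{\alpha}_{0^+}(s^{-\alpha}g)$ and the unweighted $D^{\alpha}_{0^+}g$ have comparable $L^2$ norms on the relevant class of $g=\dot h$. Along the way we also check that the two conditions ``$\dot h\in I^\alpha_{0^+}(L^2)$'' and ``$t^{-\alpha}\dot h\in t^{-\alpha}I^\alpha_{0^+}(L^2)$'' describe the same class.

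\textbf{Step 1 (commutator estimate).} I will use the Marchaud form of the fractional derivative,
\[
D^\alpha_{0^+}f(t)=\frac{1}{\Gamma(1-\alpha)}\Big[\frac{f(t)}{t^\alpha}+\alpha\int_0^t\frac{f(t)-f(s)}{(t-s)^{1+\alpha}}ds\Big].
\]
Applying it to $f=s^{-\alpha}g$ and multiplying by $t^\alpha$, the difference $Tg-D^\alpha_{0^+}g$ collapses to a multiple of
\[
\int_0^t g(s)\,\frac{1-(t/s)^{\alpha}}{(t-s)^{1+\alpha}}\,ds=\frac1t\int_0^t g(s)\,k(s/t)\,ds,\qquad k(x)=\frac{1-x^{-\alpha}}{(1-x)^{1+\alpha}} .
\]
This is a Hardy-type operator whose kernel is homogeneous of degree $-1$. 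By the Schur test (equivalently, Minkowski's inequality in multiplicative coordinates), it is bounded on $L^2(0,1)$ provided $\int_0^1|k(x)|x^{-1/2}dx<\infty$. Near $x=0$ the integrand behaves like $x^{-\alpha-1/2}$, which is integrable precisely because $\alpha<1/2$, i.e.\ $H<1$. Near $x=1$ we have $|k(x)|\sim\alpha(1-x)^{-\alpha}$, which is also integrable. Hence $\|Tg-D^\alpha_{0^+}g\|_{L^2}\le C\|g\|_{L^2}$.

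\textbf{Step 2 (absorbing the error).} Since $g=I^\alpha_{0^+}D^\alpha_{0^+}g$ and $I^\alpha_{0^+}$ is bounded on $L^2(0,1)$, we get $\|g\|_{L^2}\lesssim\|D^\alpha_{0^+}g\|_{L^2}$, which yields $\|Tg\|\lesssim\|D^\alpha g\|$. For the reverse direction, write $g=t^{\alpha}I^\alpha_{0^+}(s^{-\alpha}\varphi)$ with $\varphi=Tg$. Boundedness of $\varphi\mapsto t^\alpha I^\alpha_{0^+}(s^{-\alpha}\varphi)$ on $L^2$ follows by the same Schur-test argument; its kernel is $t^\alpha s^{-\alpha}(t-s)^{\alpha-1}$, again homogeneous of degree $-1$ with an integrable profile. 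This gives $\|g\|\lesssim\|Tg\|$, and the commutator bound then yields $\|D^\alpha g\|\lesssim\|Tg\|$. To make the computations rigorous, I would first run them for smooth $\dot h$ and then extend by density, which also settles the equality of the two function classes.

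\textbf{Step 3 (second equivalence).} For $0<\alpha<1/2$ it is classical (Samko--Kilbas--Marichev) that $I^\alpha_{0^+}(L^2(0,1))$ coincides with the Sobolev--Slobodeckij space $W^{\alpha,2}(0,1)$, with
\[
\|D^\alpha_{0^+}g\|_{L^2}^2\asymp\|g\|_{L^2}^2+\int_0^1\!\!\int_0^1\frac{|g(t)-g(s)|^2}{|t-s|^{1+2\alpha}}dsdt .
\]
The boundary term $g(t)/t^\alpha$ causes no trouble because, for $\alpha<1/2$, extension by zero is bounded on $W^{\alpha,2}$ (Hardy's inequality $\int|g|^2t^{-2\alpha}\lesssim\|g\|_{W^{\alpha,2}}^2$). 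Since $1+2\alpha=2H$, the right-hand side is exactly the last expression in (\ref{eq:EquivFracNorm}). If a self-contained argument is preferred, one can reduce to the real line via the zero extension and use Plancherel, comparing $|\xi|^{2\alpha}$ with the Gagliardo multiplier.

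\textbf{Main obstacle.} I expect the hardest step to be the weighted-to-unweighted comparison in Steps 1--2. It requires checking that the Marchaud representation is legitimate on the whole class and that the kernel $k$ is integrable at $x=0$. That integrability is exactly where the restriction $H<1$ enters, and the same constraint controls the domain identification.
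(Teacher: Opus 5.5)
Your Step 1 has a homogeneity error, and the commutator bound it produces is false. The kernel $\frac{1-(t/s)^{\alpha}}{(t-s)^{1+\alpha}}$ is homogeneous of degree $-1-\alpha$, not $-1$. The substitution $s=xt$ gives
\[
\int_0^t g(s)\,\frac{1-(t/s)^{\alpha}}{(t-s)^{1+\alpha}}\,ds=t^{-\alpha}\cdot\frac{1}{t}\int_0^t g(s)\,k(s/t)\,ds ,
\]
so your identity drops a factor $t^{-\alpha}$, which is unbounded on $(0,1)$. In fact $\|Tg-D^{\alpha}_{0^+}g\|_{L^2}\leqslant C\|g\|_{L^2}$ fails even for smooth $g$. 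Take $g_n(s)=(s+1/n)^{-\beta}$ with $\frac12-\alpha<\beta<\frac12$. Then $\sup_n\|g_n\|_{L^2}<\infty$. But the kernel is negative on $\{s<t\}$, so by monotone convergence $|(Tg_n-D^{\alpha}_{0^+}g_n)(t)|\uparrow c\,t^{-\alpha-\beta}$ with $c>0$. This limit is not in $L^2(0,1)$ because $2(\alpha+\beta)>1$. So the error term is of the same order as the main terms. The absorption in Step 2 uses only the unweighted bounds $\|g\|_{L^2}\lesssim\|D^{\alpha}_{0^+}g\|_{L^2}$ and $\|g\|_{L^2}\lesssim\|Tg\|_{L^2}$, and therefore does not close. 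The same degree slip occurs for $t^{\alpha}s^{-\alpha}(t-s)^{\alpha-1}$, which has degree $\alpha-1$; there it is harmless because $t^{\alpha}\leqslant 1$.

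The route can be repaired with weighted Hardy inequalities. Write the difference as a multiple of $\int_0^t\frac{s^{\alpha}-t^{\alpha}}{(t-s)^{1+\alpha}}\,s^{-\alpha}g(s)\,ds$. This kernel really is of degree $-1$, with profile $\frac{x^{\alpha}-1}{(1-x)^{1+\alpha}}$, bounded near $x=0$ and comparable to $(1-x)^{-\alpha}$ near $x=1$. Minkowski's inequality then gives $\|Tg-D^{\alpha}_{0^+}g\|_{L^2}\lesssim\|t^{-\alpha}g\|_{L^2}$. Two further bounds are needed:
\[
\|t^{-\alpha}g\|_{L^2}\lesssim\|D^{\alpha}_{0^+}g\|_{L^2}\quad\text{and}\quad\|t^{-\alpha}g\|_{L^2}\lesssim\|Tg\|_{L^2}.
\]
The first is boundedness of $f\mapsto t^{-\alpha}I^{\alpha}_{0^+}f$, with profile $(1-x)^{\alpha-1}$. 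The second is boundedness of $\varphi\mapsto I^{\alpha}_{0^+}(s^{-\alpha}\varphi)$, with profile $x^{-\alpha}(1-x)^{\alpha-1}$ and Schur constant a multiple of $B(\frac12-\alpha,\alpha)$. It is this second bound, not the integrability of $k$, that requires $H<1$. With this fix, your Steps 1--2 give a self-contained proof of the first equivalence, which the paper does not prove but quotes from Theorem 19 of Picard's survey. Your Step 3 follows the paper's strategy for the second equivalence, but it delegates to citations what the paper proves by hand in parts (iv)--(v). Namely, the paper shows that $I^{\mu}_{0^+}(L^2([0,1]))$ coincides with the closure of $C_c^\infty((0,1))$ under $\|\cdot\|_{J^\mu(0,1)}$, using a cutoff near $t=1$. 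It also proves the Hardy inequality bounding $\int_0^1 t^{-2\mu}u_t^2\,dt$ by $\|u\|_{L^2}^2$ plus the Gagliardo seminorm on $[0,1]$.
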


\begin{proof}
(i) The equivalence 
\[
\|h\|_{\bar{{\cal H}}}\asymp_{H}\|D_{0^{+}}^{H-1/2}\dot{h}\|_{L^{2}([0,1])}
\]
is contained in \cite[Theorem 19]{Pic10} (with obvious restriction
from $\mathbb{R}_{+}$ to $[0,1]$). The equivalence 
\begin{equation}
\|D_{0^{+}}^{H-1/2}u\|_{L^{2}([0,1])}^{2}\asymp_{H}\|u\|_{L^{2}([0,1])}^{2}+\int_{0}^{1}\int_{0}^{1}\frac{|u_{t}-u_{s}|^{2}}{|t-s|^{2H}}dsdt\label{eq:Equiv2}
\end{equation}
on $I_{0^{+}}^{H-1/2}(L^{2}([0,1]))$ requires some effort which we
now address. We set $\mu:=H-1/2$. It is important to note that $\mu\in(0,1/2)$.

\vspace{2mm}\noindent (ii) We claim that $I_{0+}^{\mu}:L^{2}([0,1])\rightarrow L^{2}([0,1])$
is a bounded linear operator, i.e. 
\begin{equation}
\|I_{0+}^{\mu}f\|_{2}\leqslant C_{\mu}\|f\|_{2}.\label{eq:CtyImu}
\end{equation}
Indeed, one can write $I_{0+}^{\mu}f=k_{\mu}*f$ where 
\[
k_{\mu}(t):=\frac{t^{\mu-1}}{\Gamma(\mu)}{\bf 1}_{(0,1)}(t).
\]
The estimate (\ref{eq:CtyImu}) follows from Young's convolution inequality
with $C_{\mu}:=\|k_{\mu}\|_{1}.$ Equivalently, one has 
\[
\|f\|_{2}\leqslant C_{\mu}\|D_{0^{+}}^{\mu}f\|_{2}
\]
for all $f\in I_{0^{+}}^{\mu}(L^{2}([0,1]))$. 

\vspace{2mm}\noindent (iii) Define the following norms (cf. \cite{ER06}):
\[
\|f\|_{J^{\mu}(0,1)}:=\|f\|_{L^{2}([0,1])}+\|D_{0^{+}}^{\mu}f\|_{L^{2}([0,1])},\quad f\in C_{c}^{\infty}((0,1));
\]
\[
\|g\|_{H^{\mu}(\mathbb{R})}:=\|g\|_{L^{2}(\mathbb{R})}+\|[\omega\mapsto|\omega|^{\mu}\hat{g}(\omega)]\|_{L^{2}(\mathbb{R})},\quad g\in C_{c}^{\infty}(\mathbb{R}),
\]
where $\hat{g}$ denotes the Fourier transform of $g$. Let $J_{0}^{\mu}(0,1)$ (respectively, $H^{\mu}(0,1)$)
be the closure of $C_{c}^{\infty}((0,1))$ under $\|\cdot\|_{J^{\mu}(0,1)}$ (respectively, $\|\cdot\|_{H^{\mu}(\mathbb{R})}$; here functions on $(0,1)$ are
extended by zero to the whole $\mathbb{R}$). According to \cite[Theorem 2.13]{ER06},
one has $J_{0}^{\mu}(0,1)=H^{\mu}(0,1)$ and the two norms $\|\cdot\|_{J^{\mu}(0,1)}$,
$\|\cdot\|_{H^{\mu}(\mathbb{R})}$ are equivalent. 

\vspace{2mm}\noindent (iv) We claim that 
\begin{equation}
J_{0}^{\mu}(0,1)=I_{0^{+}}^{\mu}(L^{2}([0,1])),\label{eq:J0JEq}
\end{equation}
where the latter space is equipped with the same norm. One only needs
to show that $I_{0^{+}}^{\mu}(L^{2}([0,1]))\subseteq J_{0}^{\mu}(0,1)$
as the other direction is trivial. To this end, let $u=I_{0^{+}}^{\mu}f$
with some $f\in L^{2}([0,1])$. Pick a sequence $f_{n}\in C_{c}^{\infty}((0,1))$
with $f_{n}\rightarrow f$ in $L^{2}$ and set $v_{n}:=I_{0^{+}}^{\mu}f_{n}$.
By (\ref{eq:CtyImu}), one has $v_{n}\rightarrow u$ under $\|\cdot\|_{J^{\mu}(0,1)}$. 

It remains to show that $v_{n}\in J_{0}^{\mu}(0,1)$. Indeed, given
small $\varepsilon>0$, let $\chi_{\varepsilon}\in C^{\infty}([0,1])$
be such that 
\[
\chi_{\varepsilon}(t)=1\quad\text{for }t\leqslant1-2\varepsilon,\quad\chi_{\varepsilon}(t)=0\quad\text{for }t\geqslant1-\varepsilon.
\]
\[0\leqslant \chi_\varepsilon\leqslant 1,\quad |\dot{\chi}_\varepsilon|\leqslant C\varepsilon^{-1}.\]
Define $v_{n,\varepsilon}:=\chi_{\varepsilon}v_{n}\in C_{c}^{\infty}((0,1))$
and set 
\[
w_{n,\varepsilon}:=v_{n}-v_{n,\varepsilon}=(1-\chi_{\varepsilon})v_{n}.
\]
It is apparent that $\|w_{n,\varepsilon}\|_{2}\rightarrow0$ as $\varepsilon\rightarrow0^{+}$
($n$ is fixed). In addition, since 
\[
\|\dot{w}_{n,\varepsilon}\|_{\infty}\leqslant\frac{C_{v_{n}}}{\varepsilon}\quad\text{and}\quad w_{n,\varepsilon}(t)=0\ \text{for }t\leqslant1-2\varepsilon,
\]
explicit calculation based on the formula (\ref{eq:FracDer}) shows
that 
\[
\|D_{0^{+}}^{\mu}w_{n,\varepsilon}\|_{2}\leqslant C_{\mu,v_{n}}\varepsilon^{1/2-\mu}.
\]
Since $\mu\in(0,1/2),$ it follows that $\|D_{0^{+}}^{\mu}w_{n,\varepsilon}\|_{2}\rightarrow0$
as $\varepsilon\rightarrow0^{+}$. Consequently, $v_{n,\varepsilon}\rightarrow v_{n}$
under $\|\cdot\|_{J^{\mu}(0,1)}$ and thus $v_{n}\in J_{0}^{\mu}(0,1)$.
One therefore concludes that $J_{0}^{\mu}(0,1)=I_{0^{+}}^{\mu}(L^{2}([0,1]))$. 

As a consequence of Parts (ii), (iii) and the relation (\ref{eq:J0JEq}), one arrives at the
following equivalence of norms 
\[
\|D_{0^{+}}^{\mu}u\|_{L^{2}([0,1])}\asymp_{H}\|\tilde{u}\|_{H^{\mu}(\mathbb{R})}
\]
for all $u\in I_{0^{+}}^{\mu}(L^{2}([0,1]))$. Here $\tilde{u}$ denotes
the extension of $u$ by zero to the whole $\mathbb{R}$. 

\vspace{2mm}\noindent (v) Define the fractional Sobolev norm (cf.
\cite{DPV12})
\[
\|g\|_{W^{\mu,2}(\mathbb{R})}^{2}:=\|g\|^2_{L^{2}(\mathbb{R})}+\int_{\mathbb{R}}\int_{\mathbb{R}}\frac{|g_{t}-g_{s}|^{2}}{|t-s|^{1+2\mu}}dsdt,\quad g\in C_{c}^{\infty}(\mathbb{R}).
\]
According to \cite[Proposition 3.4]{DPV12}, 
\[
\int_{\mathbb{R}}\int_{\mathbb{R}}\frac{|g_{t}-g_{s}|^{2}}{|t-s|^{1+2\mu}}dsdt=c_{\mu}\|[\omega\mapsto|\omega|^{\mu}\hat{g}(\omega)]\|_{L^{2}(\mathbb{R})}^{2}
\]
with some explicit constant $c_{\mu}.$ As a result, the $H^{\mu}$-
and $W^{\mu,2}$-norms are also equivalent. To complete the proof
of (\ref{eq:Equiv2}), it remains to show that 
\begin{equation}
\int_{\mathbb{R}}\int_{\mathbb{R}}\frac{|\tilde{u}_{t}-\tilde{u}_{s}|^{2}}{|t-s|^{1+2\mu}}dsdt\leqslant C_{\mu}\Big[\|u\|_{L^{2}([0,1])}^{2}+\int_{0}^{1}\int_{0}^{1}\frac{|u_{t}-u_{s}|^{2}}{|t-s|^{1+2\mu}}dsdt\Big]\label{eq:FracSobEst}
\end{equation}
for all $u\in I_{0^{+}}^{\mu}(L^{2}([0,1]))$.

To this end, one first writes 
\begin{align}
 & \int_{\mathbb{R}}\int_{\mathbb{R}}\frac{|\tilde{u}_{t}-\tilde{u}_{s}|^{2}}{|t-s|^{1+2\mu}}dsdt\nonumber \\
 & ={\cal I}(u)+2\int_{0}^{1}u_{t}^{2}\Big(\int_{-\infty}^{0}\frac{ds}{(t-s)^{1+2\mu}}+\int_{1}^{\infty}\frac{ds}{(s-t)^{1+2\mu}}\Big)dt\Big)\nonumber \\
 & ={\cal I}(u)+\frac{1}{\mu}(A(u)+B(u)),\label{eq:FracSobDecomp}
\end{align}
where 
\[
{\cal I}(u):=\int_{0}^{1}\int_{0}^{1}\frac{|u_{t}-u_{s}|^{2}}{|t-s|^{1+2\mu}}dsdt,
\]
\[
A(u):=\int_{0}^{1}t^{-2\mu}u_{t}^{2}dt,\quad B(u):=\int_{0}^{1}(1-t)^{-2\mu}u_{t}^{2}dt.
\]
We only estimate $A(u)$. The integral $B(u)$ can be handled in the
same way by a change of variables $s=1-t.$

Let $\delta\in(0,1/3)$ and $\eta>0$. Define 
\[
A_{\delta}:=\int_{\delta}^{1}t^{-2\mu}u_{t}^{2}dt.
\]
It is obvious that 
\begin{equation}
\int_{1/3}^{1}t^{-2\mu}u_{t}^{2}dt\leqslant3^{2\mu}\|u\|_{2}^{2}.\label{eq:Third1Int}
\end{equation}
We now estimate the $\int_{\delta}^{1/3}$-integral. First of all,
one has the following elementary inequality 
\begin{equation}
u_{t}^{2}\leqslant(1+\eta)u_{s}^{2}+(1+\eta^{-1})(u_{t}-u_{s})^{2}\label{eq:EleIneq}
\end{equation}
for all $s,t$. By averaging (\ref{eq:EleIneq}) over $s\in(2t,3t)$
(i.e. applying $t^{-1}\int_{2t}^{3t}ds$ to both sides) and
then integrating against $t^{-2\mu}dt$ over $(\delta,1/3)$, one
finds that 
\begin{equation}
\int_{\delta}^{1/3}t^{-2\mu}u_{t}^{2}dt\leqslant(1+\eta)I_{\delta}+(1+\eta^{-1})J_{\delta},\label{eq:DelThirdInt}
\end{equation}
where 
\begin{align*}
I_{\delta} & :=\int_{\delta}^{1/3}t^{-1-2\mu}\int_{2t}^{3t}u_{s}^{2}dsdt,\\
J_{\delta} & :=\int_{\delta}^{1/3}t^{-1-2\mu}\int_{2t}^{3t}(u_{t}-u_{s})^{2}dsdt.
\end{align*}
By interchanging the iterated integrals, one has 
\begin{equation}
I_{\delta}  \leqslant\int_{2\delta}^{1}u_{s}^{2}\Big(\int_{s/3}^{s/2}t^{-1-2\mu}dt\Big)ds =a_{\mu}\int_{2\delta}^{1}s^{-2\mu}u_{s}^{2}ds\leqslant a_{\mu}A_{\delta},\label{eq:IDelEst}
\end{equation}
where 
\begin{equation}
a_{\mu}:=\int_{2}^{3}r^{2\mu-1}dr<1\label{eq:amu}
\end{equation}
and the last inequality follows from the fact that $(2\delta,1)\subseteq(\delta,1)$.
In addition, by using the inequality 
\[
t^{-1-2\mu}\leqslant\frac{2^{1+2\mu}}{(s-t)^{1+2\mu}}\quad\text{for all }s\in(2t,3t),
\]
one easily finds that 
\begin{equation}
J_{\delta}\leqslant2^{1+2\mu}{\cal I}(u).\label{eq:JDelEst}
\end{equation}
By substituting (\ref{eq:IDelEst}, \ref{eq:JDelEst}) into (\ref{eq:DelThirdInt})
and noting (\ref{eq:Third1Int}), it follows that 
\begin{equation}
A_{\delta}\leqslant(1+\eta)a_{\mu}A_{\delta}+(1+\eta^{-1})2^{1+2\mu}{\cal I}(u)+3^{2\mu}\|u\|_{2}^{2}.\label{eq:ADelIneq}
\end{equation}

Since $a_{\mu}<1$ (see (\ref{eq:amu})), one can fix a small $\eta$
so that $(1+\eta)a_{\mu}<1$. The estimate (\ref{eq:ADelIneq}) becomes
\[
A_{\delta}\leqslant\frac{1}{1-(1+\eta)a_{\mu}}\big((1+\eta^{-1})2^{1+2\mu}{\cal I}(u)+3^{2\mu}\|u\|_{2}^{2}\big).
\]
Note the the right hand side does not depend on $\delta.$ By taking
$\delta\downarrow0$, one concludes that 
\[
A(u)\leqslant C_{\mu}\big(\|u\|_{2}^{2}+{\cal I}(u)\big).
\]
The same estimate holds for $B(u)$. By applying these inequalities to
(\ref{eq:FracSobDecomp}), one obtains the desired estimate (\ref{eq:FracSobEst}).

Now the proof of Lemma \ref{lem:EquivNorm} is complete. 
\end{proof}

We also recall a formula from \cite[Equation (13)]{NO02} that will be useful for a later estimate of the Cameron-Martin norm in the $H<1/2$ case. 

\begin{lem} Let $H\in (0,1/2)$. Then one has \begin{equation}
\big(K_{H}^{-1}h\big)(t)=\int_{0}^{t}Q(t,s)\dot{h}_{s}ds\label{eq:InvKForm}
\end{equation}
for any absolutely continuous $h$, where 
\[
Q(t,s):=c_{H}t^{H-1/2}s^{1/2-H}(t-s)^{-(1/2+H)}
\]
and $c_{H}$ is an explicit constant depending only on $H$.
\end{lem}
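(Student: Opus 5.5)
We need to prove that for $H<1/2$ and absolutely continuous $h$, the function $\mathcal{K}_H^{-1}h$ is given by $\int_0^t Q(t,s)\dot h_s\,ds$. The plan is to invert the fractional-calculus representation of Theorem \ref{thm:CMFracRep} explicitly. For $H\in(0,1/2)$ we have $h=\mathcal{K}_H\varphi=c_H I_{0^+}^{2H}\big(t^{1/2-H}I_{0^+}^{1/2-H}(s^{H-1/2}\varphi)\big)$. Inverting step by step gives
\[
\varphi(t)=c_H^{-1}\,t^{1/2-H}\,D_{0^+}^{1/2-H}\Big(s^{H-1/2}D_{0^+}^{2H}h\Big)(t),
\]
which is valid on the range $I_{0^+}^{H+1/2}(L^2)$ because $D^\alpha_{0^+}I^\alpha_{0^+}=\mathrm{id}$. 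Writing $D^{2H}_{0^+}h=I^{1-2H}_{0^+}\dot h$ for absolutely continuous $h$ with $h_0=0$ (formula (\ref{eq:FracDer}) with $f(0)=0$), this becomes
\[
\varphi=c\,t^{1/2-H}D^{1/2-H}_{0^+}\big(s^{H-1/2}I^{1-2H}_{0^+}\dot h\big).
\]
That is the standard Nualart-type expression.

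Next I reduce to a single kernel by a weighted commutation identity. The key fact is that, with the power weights here, the composition $t^{1/2-H}D^{1/2-H}\,s^{H-1/2}I^{1-2H}$ can be rewritten as $t^{H-1/2}I^{1/2-H}\big(s^{1/2-H}\,\cdot\,\big)$. Equivalently, I verify it on monomials $\dot h=s^{\beta}$ using the Beta-integral formulas $I^\alpha s^\beta=\frac{\Gamma(\beta+1)}{\Gamma(\beta+1+\alpha)}s^{\beta+\alpha}$ and the analogous formula for $D^\alpha$. I then compute $\int_0^t Q(t,s)s^\beta\,ds$ directly via the substitution $s=tu$, which gives $c_H t^{\beta-H+1/2-H-1/2+\ldots}$ times $B(\beta+3/2-H,\,1/2-H)$. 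I will match exponents and Gamma factors; the exponent of $t$ should come out to $\beta+1/2-2H\cdot$ appropriate. Both sides are homogeneous of the same degree, so only the constants must agree. Here the required identity becomes a Gamma-function identity, and this is where $c_H$ gets fixed.

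Alternatively, and more cleanly, I would verify the formula forward. Set $\psi(t):=\int_0^tQ(t,s)\dot h_s\,ds$ and check that $\mathcal{K}_H\psi=h$ by computing $\mathcal{K}_H\psi$ with Fubini. The inner iterated kernel integrals reduce, via $r=s+(t-s)u$ substitutions, to Beta integrals producing $\int_0^t\dot h_s\,ds=h_t$. This requires the kernel $\int_s^t K_H(t,r)Q(r,s)\,dr$ to be identically equal to $1$ for $s<t$. That fact is itself checked by a scaling plus hypergeometric/Beta computation.

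I expect the main obstacle to be this kernel identity, together with justifying Fubini and density. The rest is routine. I would prove the formula first for polynomial $h$ (via the monomial computation, using linearity) and then extend it to all absolutely continuous $h$ with $\dot h\in L^1$, with the identity understood whenever both sides make sense. The extension uses density of polynomials together with continuity of both sides: the right-hand side is an integral operator with kernel bounded by $Q$, which is integrable in $s$ since $1/2+H<1$, and the left-hand side is continuous by Theorem \ref{thm:CMFracRep}.
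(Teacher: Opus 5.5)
The paper does not prove this lemma; it quotes \cite[Equation (13)]{NO02}. Your argument supplies a proof, and its core is correct. On $s^{\beta}$ ($\beta>-1$), both sides of your weighted identity $t^{1/2-H}D^{1/2-H}_{0^+}s^{H-1/2}I^{1-2H}_{0^+}=t^{H-1/2}I^{1/2-H}_{0^+}s^{1/2-H}$ give $\frac{\Gamma(\beta+3/2-H)}{\Gamma(\beta+2-2H)}t^{\beta+1/2-H}$. The substitution $s=tu$ gives $\int_0^t t^{H-1/2}s^{1/2-H}(t-s)^{-1/2-H}s^{\beta}\,ds=B(\beta+3/2-H,\tfrac12-H)\,t^{\beta+1/2-H}$. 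So the exponent you left garbled is $\beta+1/2-H$, and the constant in $Q$ is $1/(c\,\Gamma(\tfrac12-H))$, where $c$ is the constant of $\mathcal K_H$.

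Your weighted identity is equivalent to the alternative representation $\mathcal K_H\varphi=c\,I^1_{0^+}\big(s^{H-1/2}D^{1/2-H}_{0^+}(s^{1/2-H}\varphi)\big)$, from which the formula follows by one differentiation. Your monomial check is thus a concrete proof of the fact the paper imports. For polynomial $\dot h$ your inversion formula applies because $h=I^{H+1/2}_{0^+}(I^{1/2-H}_{0^+}\dot h)\in\bar{\mathcal H}$. You do not need the forward route at all: by scaling, $\int_s^tK_H(t,r)Q(r,s)\,dr$ depends only on $s/t$, and the monomial case already fixes all its moments, which forces it to equal $1$.

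The extension step, as you wrote it, does not work. Theorem \ref{thm:CMFracRep} gives continuity of $\mathcal K_H^{-1}$ only for the Cameron--Martin norm, and $L^1$-convergence of $\dot h_n$ does not control that norm. In fact, for $\dot h\in L^1$ the path $h$ need not lie in $\bar{\mathcal H}$ at all: for $\dot h_s=s^{-a}$ with $1-H\leqslant a<1$ one gets $D^{H+1/2}_{0^+}h\propto t^{1/2-H-a}\notin L^2$. So the lemma must be read for $h\in\bar{\mathcal H}$.

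The clean fix is to assume $\dot h\in L^2$, which covers every use in the paper (there $\dot h\in L^\infty$).
\begin{itemize}
\item Since $0\leqslant Q(t,s)\leqslant c_H(t-s)^{-1/2-H}$, the map $T:\dot h\mapsto\int_0^{\cdot}Q(\cdot,s)\dot h_s\,ds$ is bounded on $L^2$ by Young's inequality.
\item Take polynomials $\dot h_n\to\dot h$ in $L^2$. The polynomial case and the isometry of Theorem \ref{thm:CMFracRep} give $\|h_n-h_m\|_{\bar{\mathcal H}}=\|T(\dot h_n-\dot h_m)\|_{L^2}\to0$.
\item The $\bar{\mathcal H}$-limit is $h$, because Cameron--Martin convergence implies uniform convergence.
\item Hence $\mathcal K_H^{-1}h=\lim_nT\dot h_n=T\dot h$.
\end{itemize}
Treating $h\in\bar{\mathcal H}$ with merely $\dot h\in L^1$ would require extending $\mathcal K_H$ injectively to a weighted $L^1$ space and using a closed-graph argument, not mere continuity.
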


Finally, we discuss the SDE (\ref{eq:SDEIntro}) driven by an fBM with Hurst parameter $H\in(1/4,1)$. We always assume that the vector fields $V_{0},V_{1},\cdots,V_{d}$ are of class $C_b^\infty.$ There are two basic regularity regimes to be considered: \textit{Young's
regime} $H\in(1/2,1)$ and \textit{the rough regime }$H\in(1/4,1/2]$.
It is classical that with probability one, sample paths of fBM are
$\alpha$-H\"older continuous for any $\alpha<H$ and have finite
$p$-variation for any $p>1/H$. We use $C^{p\text{-var}}([0,1];\mathbb{R}^{d})$
to denote the space of paths $w\in{\cal W}$ with finite $p$-variation. 

\textit{Young's regime}. If $H\in(1/2,1)$, by using Young integration
theory one can define pathwise integrals of the type $\int_{s}^{t}f(x_{s})dy_{s}$
for any smooth function $f$ and any $x,y\in C^{p\text{-var}}([0,1];\mathbb{R})$.
The SDE (\ref{eq:SDEIntro}) is well-posed with the stochastic integral
being interpreted in the sense of Young (see \cite{Lyo94}).

\textit{Rough regime}. If $H\in(1/4,1/2]$, one needs rough path theory
to make sense of the SDE (\ref{eq:SDEIntro}). The moral is that with
probability one, $B_{t}$ can be canonically regarded (more precisely, enhanced) as a geometric
rough path with roughness $p\in(1/H,4)$. Under the framework of rough
path theory, the SDE (\ref{eq:SDEIntro}) is well-posed in the space
of geometric rough paths provided that the stochastic integral is
interpreted as a rough path integral (see \cite{CQ02,Lyo98}). If $H\leqslant 1/4$, one can no longer regard $B_t$ as a geometric rough path in any canonical way. We therefore restrict our study to the setting of $H > 1/4$.

\newpage
\section{General lower tail estimates in the rough regime }\label{sec:LowGenRough}

In this section, we consider the SDE (\ref{eq:SDEIntro}) in the rough regime of $H\in(1/4,1/2)$. The vector fields $V_{i}$ are assumed to be of class $C_b^\infty$. Recall from the   Cass-Litterer-Lyons estimate (\ref{eq:CCL}) (see also (\ref{eq:BNOT})) that the solution to \textit{any} system with $C_b^\infty$-coefficients always possesses a $(2H+1)$-Weibull upper tail in the rough regime of $H\in (1/4,1/2)$. In addition, the example of \cite{BG24} demonstrates the sharpness of this estimate (see (\ref{eq:BGEx})). It was conjectured in the same paper 
that the matching lower estimate (\ref{eq:BGEx})
should be a generic phenomenon for uniformly elliptic, noncommutative
systems in the rough regime.

Our aim in this section is to confirm the aforementioned conjecture
for a broad class of vector fields. As we will see, ellipticity is
not a relevant condition while noncommutativity plays an essential
role. We first introduce the following quantitative noncommutativity
condition (NC). We define the vector field
\[
V_{I}:=[V_{i_{1}},[V_{i_{2}},\cdots[V_{i_{k-1}},V_{i_{k}}]\cdots]]
\]
for any word $I=(i_{1},\cdots,i_{k})$. For $l\leqslant r,$ we use
${\cal V}_{l}^{r}$ to denote the collection of vector fields of the
form $V_{I}$, where $I=(i_{1},\cdots,i_{k})$ with $l\leqslant k\leqslant r$
and $i_{j}=1,\cdots,d.$

\vspace{2mm}\noindent (\textbf{NC}) There exist an integer $r\geqslant2$,
a positive constant $\kappa$ and a smooth function $F:\mathbb{R}^{N}\rightarrow\mathbb{R}$
with uniformly bounded derivatives $DF,D^{2}F,\cdots,D^{r+1}F$, such
that 
\begin{equation}
\inf_{x\in\mathbb{R}^{N}}\max_{I\in{\cal V}_{2}^{r}}\big|(V_{I}F)(x)\big|\geqslant\kappa.\label{eq:NC}
\end{equation}
The main result of this section is stated as follows.
\begin{thm}
\label{thm:RoughLowNC}Let $X_{t}$ be the solution to the SDE (\ref{eq:SDEIntro}) with $H\in(1/4,1/2)$
and $V_{i}\in C_{b}^{\infty}$. Suppose that the vector fields $\{V_{1},\cdots,V_{d}\}$
satisfy Condition (NC). Then there exist positive constants $C_{1},C_{2}$
depending only on $H,V$ and dimensions, such that 
\[
\mathbb{P}(|X_{1}-x_{0}|>R)\geqslant C_{1}e^{-C_{2}R^{2H+1}}
\]
for all large $R$. 
\end{thm}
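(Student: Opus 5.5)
We plan to bound $\mathbb{P}(|X_1-x_0|>R)$ from below through the scalar observable $F$ from Condition (NC). Since $DF$ is bounded, $|F(X_1)-F(x_0)|\leqslant\|DF\|_\infty|X_1-x_0|$, so it suffices to show that
\[
\mathbb{P}\big(|F(X_1)-F(x_0)|>cR\big)\geqslant C_1e^{-C_2R^{2H+1}}
\]
with $c=\|DF\|_\infty$. The mechanism is (\ref{eq:SmallCMLowTail}). We will build a control $h^R$ with $\|h^R\|_{\bar{\mathcal H}}\lesssim R^{H+1/2}$ that pushes $F$ by order $R$ through bracket effects. We then show that this displacement survives when $h^R$ is perturbed by an fBM path of moderate size, and finish with the Cameron--Martin theorem.

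\textbf{Step 1: deterministic construction.} Split $[0,1]$ into $n\asymp R$ intervals of length $\delta=1/n$. On each interval we let $h$ run a small closed loop of size $\varepsilon$ in a plane $(e_{i},e_{j})$, or more generally the standard loops of Lie-group (Chow) type that generate a chosen bracket $V_I$ with $I\in\mathcal V_2^r$ and $|I|=k$. Since $h$ is smooth on scale $\delta$, Taylor expansion of the ODE flow along $h$ gives
\[
F(x_{t+\delta})-F(x_t)=\pm c_I\varepsilon^{k}(V_IF)(x_t)+O(\varepsilon^{k+1}),
\]
where the lower-order terms cancel because the loop has vanishing signatures of lower degree. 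By (NC), at each $x_t$ some $I$ satisfies $|V_IF(x_t)|\geqslant\kappa$. We select that $I$ adaptively and choose the orientation of the loop so that the increment is positive. Taking $\varepsilon$ a fixed small constant makes each increment at least $c\kappa\varepsilon^{r}$, so after $n$ steps the total displacement is $\gtrsim n\asymp R$. Adaptivity is allowed because the construction is deterministic and only needs to be done once for each $R$. For the Cameron--Martin norm, each piece has size $O(1)$ over time $\delta=1/R$. Using the formula (\ref{eq:InvKForm}) and the scaling of the fBM Cameron--Martin space, a piece of amplitude $a$ and duration $\delta$ costs roughly $a\,\delta^{1/2-H}$ in $L^2$. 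Combining $R$ nearly orthogonal pieces, with the cross terms controlled by the decay of $Q$, gives $\|h^R\|^2_{\bar{\mathcal H}}\lesssim R\cdot\delta^{1-2H}= R^{2H}$. This is even better than needed. To be safe, we aim only for the weaker bound $\|h^R\|_{\bar{\mathcal H}}\lesssim R^{H+1/2}$, since the nonlocal interactions in $Q$ have to be estimated carefully.

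\textbf{Step 2: perturbative stability (the main obstacle).} We need an event $A_R$ with $\mathbb{P}(B\in A_R)\geqslant e^{-CR^{2H+1}}$ such that for $w\in A_R$ the RDE solution driven by the rough-path translate $T_{h^R}\mathbf B$ still satisfies $F(X_1)-F(x_0)\geqslant cR/2$. A global continuity estimate will not work, because the Lipschitz constants of the Itô--Lyons map grow exponentially in the size of $h^R$. Instead we localize on each interval $[t_m,t_{m+1}]$. Local rough-path estimates for the translated path show that if the $p$-variation norm of $\mathbf B$ on each interval, rescaled to unit time, is at most a small constant $\eta$, then the increment of $F$ on that interval differs from the deterministic one by at most $\frac12c\kappa\varepsilon^r$. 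The error must not accumulate through the initial point. This is handled by the adaptive argument of Step 1: bracket selection and the lower bound use only the current point $x_{t_m}$, and (NC) is uniform in $x$. The positive-increment bound then holds step by step no matter where the path has drifted. To make this work, the loop in the perturbed dynamics has to be chosen from the current state, so $h^R$ should be defined on the actual realized trajectory. Under a pathwise change of measure this is awkward. The remedy we intend is to fix $h^R$ via a finite dictionary of loops and show that the sign of the bracket contribution is chosen correctly up to an $O(\eta)$ error. Alternatively, one can use a single orientation together with a monotonicity argument.

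\textbf{Step 3: probabilistic estimates.} Let $A_R=\{\text{rescaled }p\text{-var of }\mathbf B\text{ on each }[t_m,t_{m+1}]\leqslant\eta\}$. By fBM scaling, the event for a single interval has probability bounded below by a constant, and each event is symmetric and convex in the Gaussian sense. Small-ball estimates for fBM under the homogeneous $p$-variation rough norm, together with Gaussian correlation or Anderson-type inequalities applied to the $n$ conditions, give $\mathbb{P}(A_R)\geqslant e^{-Cn}=e^{-CR}$. Finally, Cameron--Martin together with the symmetry of $A_R$ and Jensen's inequality yields
\[
\mathbb{P}(B\in h^R+A_R)\geqslant e^{-\|h^R\|^2_{\bar{\mathcal H}}/2}\,\mathbb{P}(A_R)\geqslant e^{-CR^{2H+1}},
\]
and this proves the claim for large $R$.
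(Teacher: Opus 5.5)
Your architecture is the paper's: loops realising brackets on $n\asymp R$ grid intervals, a uniform gain of $F$ per interval, Cameron--Martin cost $n^{H+1/2}$, localised rough-path estimates, then a change of measure. Your Step 1 is essentially Proposition \ref{prop:NCImpliesG}. However, the argument breaks exactly where you flag it in Step 2, and neither remedy closes the gap.

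\textbf{The gap.} Step 3 uses $\mathbb P(B\in h^R+A_R)\geqslant e^{-\|h^R\|^2_{\bar{\mathcal H}}/2}\mathbb P(A_R)$, which requires $h^R$ to be deterministic. With a deterministic $h^R$, the loops are selected at the unperturbed points $x_{t_m}$. But the gain of $F$ on $[t_m,t_{m+1}]$ is governed by the sign and size of $V_IF$ at the \emph{perturbed} state $X_{t_m}$, and under (NC) both the good word and its sign depend on the point. The discrepancy $|X_{t_m}-x_{t_m}|$ is not $O(\eta)$: one-step errors accumulate over $\sim R$ steps and are amplified by the Lipschitz constants of the loop flows. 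So a fixed dictionary can select the wrong sign, and a single orientation fails as soon as $V_IF$ changes sign. Keeping the two trajectories $O(1)$-close would in general require $\eta$ exponentially small in $R$, at a prohibitive small-ball cost.

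\textbf{The missing idea.} A state-dependent control is harmless for Girsanov's theorem as long as it is adapted.
\begin{itemize}
\item Define $h^{(n)}$ recursively: on $I_k$, run the loop chosen at $X^{(n)}_{t_{k-1}}$, where $X^{(n)}$ solves the RDE driven by $T_{h^{(n)}}\mathbf B$.
\item Since $B=\int K_H\,dW$ generates the filtration of $W$, and $K_H^{-1}$ is a Volterra operator by \eqref{eq:InvKForm}, the drift $\theta^{(n)}:=K_H^{-1}h^{(n)}$ is $\mathcal F^W$-adapted.
\item Lemma \ref{lem:CMEst} only uses $\|\dot h^{(n)}\|_{L^\infty}\leqslant Kn$ and the loop property, so $\int_0^1|\theta^{(n)}_s|^2ds\leqslant\Gamma_n:=C_H^2K^2n^{2H+1}$ holds for every realisation.
\item Set $M=\int_0^\cdot\langle\theta^{(n)}_s,dW_s\rangle$, $Z_1:=\exp(-M_1-\tfrac12\langle M\rangle_1)$ and $d\mathbb Q=Z_1\,d\mathbb P$. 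Then $B+h^{(n)}$ is a $\mathbb Q$-fBM, and Lemma \ref{lem:LinDisplace} gives $\mathbb P(|X_1-x_0|\geqslant\Lambda n)\geqslant\mathbb E[Z_1\mathbf 1_{A_n(\varepsilon)}]$.
\item On $\{M_1\leqslant\Gamma_n\}$ one has $Z_1\geqslant e^{-3\Gamma_n/2}$, and the exponential martingale inequality gives $\mathbb P(M_1>\Gamma_n)\leqslant e^{-\Gamma_n/2}$. This yields the lower bound $\frac12e^{-3\Gamma_n/2}$ for large $n$.
\end{itemize}

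\textbf{Smaller corrections.}
\begin{itemize}
\item Your heuristic $\|h^R\|^2_{\bar{\mathcal H}}\lesssim R^{2H}$ is false; if it held, it would contradict the upper bound \eqref{eq:BNOT}. By self-similarity, a loop of spatial size $a$ traversed in time $1/n$ costs $\asymp a\,n^{H}$ in $\bar{\mathcal H}$; you dropped the speed $an$ of the control. So $n$ loops cost $\asymp a^2n^{2H+1}$, which is exactly the required order.
\item The near-orthogonality of the pieces does not come from the decay $(t-s)^{-(1/2+H)}$ of $Q$. Summing that decay over past intervals only gives $\|h\|_{\bar{\mathcal H}}\lesssim n$, i.e.\ the Gaussian tail of Theorem \ref{thm:GenLowRough}. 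It comes from the loop property $h_{t_{k-1}}=h_{t_k}$, which lets you replace $Q(t,s)$ by $Q(t,s)-Q(t,t_k)$ on $I_k$ and gain the summable decay $(j-1-k)^{-(3/2+H)}$.
\item No rescaled small-ball event or correlation inequality is needed. Those inequalities also do not apply directly, since sublevel sets of the rough-path norm need not be convex (the second level is quadratic in $B$). The $p$-variation is invariant under reparametrisation, and Lemma \ref{lem:LocCty} involves only the unrescaled $\|\mathbf B\|_{p\text{-var};I_k}$. By scaling, Fernique's estimate and a union bound, $A_n(\varepsilon)=\{\max_k\|\mathbf B\|_{p\text{-var};I_k}\leqslant\varepsilon\}$ has probability at least $1-Cne^{-c\varepsilon^2n^{2H}}$.
\end{itemize}
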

\begin{rem}
For the above theorem to hold, it is critical that $|I|\geqslant2$
in Condition (NC). The condition therefore captures the noncommutativity
of the vector fields $V_{1},\cdots,V_{d}$. If one allows $|I|=1$
(i.e., including the original vector fields $V_{i}$), one can at
best obtain a Gaussian lower bound 
\[
\mathbb{P}(|X_{1}-x_{0}|>R)\geqslant C_{1}e^{-C_{2}R^{2}},
\]
which is consistent with the trivial example $(V_{1},\cdots,V_{d})={\rm id}$
($X_{t}=B_{t}$) (cf. Theorem \ref{thm:GenLowYoung}).
\end{rem}
\begin{rem}
A sufficient condition for (NC) is the following uniform nondegeneracy
condition on $V_{I}$.

\vspace{2mm}\noindent (NC') There exist an integer $r\geqslant2$,
a positive constant $\kappa$ and a unit vector $\eta\in\mathbb{R}^{N}$,
such that 
\[
\inf_{x\in\mathbb{R}^{N}}\max_{I\in{\cal V}_{2}^{r}}\big|\langle V_{I}(x),\eta\rangle_{\mathbb{R}^{N}}\big|\geqslant\kappa.
\]
In fact, Condition (NC) follows by taking $F(x)=\langle\eta,x\rangle_{\mathbb{R}^{N}}.$ 
\end{rem}

\subsection{A general criterion}\label{sec:CondG}

In this subsection, we first prove an abstract theorem under the following
more general structural Condition (G) (see Theorem \ref{thm:RoughLowG}
below). In the next subsection, we will prove that Condition (NC)
implies Condition (G). 

\vspace{2mm}\noindent (\textbf{G}) There exist a globally Lipschitz
function $F:\mathbb{R}^{N}\rightarrow\mathbb{R}$, a measurable map $q:\mathbb{R}^{N}\rightarrow W^{1,\infty}([0,1];\mathbb{R}^{d})$
(absolutely continuous paths with a.e. bounded derivatives) and positive
constants $K,a$, such that the following properties hold true:

\vspace{2mm}\noindent (G1) $q_{x}(0)=q_{x}(1)$ and $\|\dot{q}_{x}\|_{L^{\infty}}\leqslant K$;

\vspace{2mm}\noindent (G2) $F(\Psi_{q_{x}}(x))\geqslant F(x)+a$
for all $x\in\mathbb{R}^{N}$. Here $\Psi_{q}:\mathbb{R}^{N}\rightarrow\mathbb{R}^{N}$
denotes the time-one solution map for the skeleton ODE 
\[
dz_{t}=\sum_{i=1}^{d}V_{i}(z_{t})dq_{t}^{i},\quad z_{0}=x
\]
given any BV path $q:[0,1]\rightarrow\mathbb{R}^{d}$, namely, $\Psi_{q}(x):=z_{1}$. 
\begin{thm}
\label{thm:RoughLowG}Let $X_{t}$ be the solution to the SDE (\ref{eq:SDEIntro}) with  $H\in(1/4,1/2)$
and $V_{i}\in C_{b}^{\infty}$. Suppose that Condition (G) holds.
Then there exist positive constants $C_{1},C_{2}$ depending only
on $H,V,F,K,a$, such that 
\begin{equation}
\mathbb{P}(|X_{1}-x_{0}|>R)\geqslant C_{1}e^{-C_{2}R^{2H+1}}\label{eq:RoughLowG}
\end{equation}
for all large $R$. 
\end{thm}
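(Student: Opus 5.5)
We construct an explicit control $h^R$ with Cameron-Martin norm of order $R^{H+1/2}$ that pushes the skeleton ODE a distance of order $R$. We then show that fBM paths in a small rough-path neighbourhood of $h^R$ keep a large displacement, and lower-bound the probability of that neighbourhood with the Cameron-Martin theorem.

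\textbf{Step 1: concatenated loops.} Fix $n\asymp R$, say $n:=\lceil (c_F R)/a\rceil$ with $c_F$ built from the Lipschitz constant of $F$. Split $[0,1]$ into $n$ intervals $[t_{k-1},t_k]$ of length $1/n$. Build a path $h=h^{R}$ recursively. Set $x_{0}$ to be the initial point. On $[t_{k-1},t_k]$, let $h$ run the loop $q_{x_{k-1}}$ reparametrised to that interval, and set $x_k:=\Psi_{q_{x_{k-1}}}(x_{k-1})$. By (G1), $h$ returns to its starting value on each interval, so $h$ is a bounded sequence of loops, $\|h\|_\infty\lesssim K$, with $|\dot h|\leqslant Kn$. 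Condition (G2) iterates to $F(x_n)\geqslant F(x_0)+na$. Since $F$ is globally Lipschitz, this gives $|x_n-x_0|\geqslant na/\mathrm{Lip}(F)\geqslant 2R$.

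\textbf{Step 2: Cameron-Martin norm.} By Theorem \ref{thm:CMFracRep} and the kernel formula (\ref{eq:InvKForm}), $\|h\|_{\bar{\mathcal H}}$ is controlled by the Besov/Sobolev-type norm $W^{H+1/2,2}$. For a path made of $n$ loops, each of amplitude $O(1)$ and duration $1/n$, scaling gives
\[
\|h\|_{\bar{\mathcal H}}^2\lesssim n\cdot n^{2H+1-1}=n^{2H+1}.
\]
To verify this, split $(K_H^{-1}h)(t)=\int_0^t Q(t,s)\dot h_s\,ds$ into the part from the current interval and the part from earlier intervals. The current interval contributes $\lesssim Kn\cdot n^{-(1/2-H)}$ pointwise. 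The earlier intervals are handled using the loop property: $\int_{I_j}\dot h=0$, so one can integrate against $\partial_s Q$. This is the source of cancellation that makes the far contributions summable, since $(t-s)^{-3/2-H}$ summed over the past intervals converges. The result is $\|h\|_{\bar{\mathcal H}}\lesssim R^{H+1/2}$. The weights $t^{H-1/2}s^{1/2-H}$ near $0$ only add harmless constants.

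\textbf{Step 3: perturbation and Cameron-Martin shift.} Write $\mathbf{B}$ for the lifted fBM. We aim to show there exist $\delta>0$ and $C$, independent of $R$, such that on the event
\[
E:=\{\text{the translated rough path } T_{-h}\mathbf B \text{ has small } p\text{-variation/inhomogeneous norm on every } [t_{k-1},t_k] \text{ at the natural scale}\}
\]
one still has $F(X_{t_k})\geqslant F(X_{t_{k-1}})+a/2$ for each $k$. Hence $|X_1-x_0|>R$ on $E$.

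This is the main obstacle. The errors must not accumulate over $n\asymp R$ steps, and working locally in $F$ handles this. On each interval, rescale time to $[0,1]$. The rescaled driver is the loop $q_{x_{k-1}}$, which has bounded derivative, plus the rescaled noise. By fBM self-similarity, the rescaled noise has size $n^{-H}$ times the original scale, which allows a standard noise of size $O(1)$. Continuity of the Itô--Lyons map on a uniform class (vector fields in $C_b^\infty$, $\|\dot q\|\leqslant K$) gives $|F(\text{endpoint})-F(\Psi_{q_x}(x))|\leqslant a/2$ whenever the rescaled noise has rough-path norm $\leqslant\delta$. This holds uniformly in the start point, since $X_{t_{k-1}}$ replaces $x_{k-1}$ and the loop is chosen adaptively via $q_{X_{t_{k-1}}}$.

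Here lies a subtlety: $h$ should depend on the actual position, which is random. To avoid an adapted shift, I would instead define $E$ as "$\mathbf{B}$ lies within $\delta$ of some path built of admissible loops". More cleanly, I would use (G) with the fixed deterministic sequence $x_k$ and rely on robustness. Since errors don't accumulate in $F$, it suffices that each step produces increment $\geqslant a/2$ from wherever it starts. I would therefore modify the statement to hold for $x$ in a neighbourhood, using continuity of $\Psi_q$ in $x$ uniformly. This works only if $x\mapsto q_x$ admits a locally constant choice. If that fails, I would take $q$ piecewise constant in $x$ on a fine grid, which measurability plus uniform continuity of $\Psi$ permits.

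\textbf{Step 4: probability estimate.} By the Cameron-Martin theorem and the standard shift inequality $\mathbb P(B\in h+A)\geqslant e^{-\|h\|_{\bar{\mathcal H}}^2/2}\,\mathbb P(B\in A)$ for symmetric $A$, we get
\[
\mathbb P(E)\geqslant e^{-\|h\|^2_{\bar{\mathcal H}}/2}\,\mathbb P(\text{noise small on all } n \text{ blocks}).
\]
The second factor is a small-ball probability for fBM at scale $n^{-H}$ on blocks of length $1/n$, which is $\geqslant e^{-cn}$ by standard small-ball estimates or a Gaussian correlation argument. Since $n\lesssim R\ll R^{2H+1}$, this factor does not dominate. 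Together with Step 2 this yields $\mathbb P(|X_1-x_0|>R)\geqslant C_1e^{-C_2R^{2H+1}}$.
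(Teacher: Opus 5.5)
Your Steps 1 and 2 are sound and match the paper. The $n^{H+1/2}$ bound is exactly Lemma \ref{lem:CMEst}, and it uses only $\|\dot h\|_{L^\infty}\leqslant Kn$ and the loop property. The gap is in Steps 3--4, at the point you flag and then try to sidestep.

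With a \emph{deterministic} $h$ built from the skeleton points $x_k$, the loop run on $[t_{k-1},t_k]$ is $q_{x_{k-1}}$, and (G2) says nothing about its effect at the actual position $X_{t_{k-1}}$. The $F$-increment argument avoids error accumulation only when each loop is selected at the true current position. The \emph{position} error does accumulate, roughly as $|X_{t_k}-x_k|\leqslant\mathrm{Lip}(\Psi_q)\,|X_{t_{k-1}}-x_{k-1}|+C\delta$, and $\mathrm{Lip}(\Psi_q)$ may exceed $1$.

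Continuity of $\Psi_q$ in $x$ makes $q_{x_{k-1}}$ good only on a fixed neighbourhood of $x_{k-1}$. So no $R$-independent $\delta$ keeps $X_{t_{k-1}}$ inside that neighbourhood for all $k\leqslant n\asymp R$. Shrinking $\delta$ with $n$ to compensate turns the noise constraint into a genuine small-ball event, which destroys the $R^{2H+1}$ exponent. Discretising $x\mapsto q_x$ on a grid does not help either, because the loop you must use is still a function of the random position, so $h$ is still random. Hence your event $E$ with $R$-independent $\delta$ is not established. Also, the shift inequality $\mathbb P(B\in h+A)\geqslant e^{-\|h\|_{\bar{\mathcal H}}^2/2}\,\mathbb P(B\in A)$ you use in Step 4 requires $h$ deterministic, since the symmetrisation behind it fails for a path-dependent $h$.

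The missing idea is that the adapted shift you tried to avoid is harmless here. The paper sets $q_k^{(n)}:=q_{X^{(n)}_{t_{k-1}}}$, where $X^{(n)}$ solves the RDE driven by $T_{h^{(n)}}\mathbf B$. Then $h^{(n)}$ is random, but on $I_k$ it is $\mathcal F^W_{t_{k-1}}$-measurable. Since $K_H^{-1}$ is a Volterra operator (formula (\ref{eq:InvKForm})), $\theta^{(n)}:=K_H^{-1}h^{(n)}$ is $\mathcal F^W$-adapted.

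Your Step 2 bound holds for every realisation, so $\int_0^1|\theta^{(n)}_s|^2ds\leqslant\Gamma_n:=Cn^{2H+1}$ deterministically. Girsanov therefore applies, and $B+h^{(n)}$ is an fBM under $d\mathbb Q^{(n)}=Z^{(n)}_1d\mathbb P$. Your per-block continuity argument then works verbatim with the adapted loops (this is Lemma \ref{lem:LinDisplace}), giving
$\mathbb P(|X_1-x_0|\geqslant\Lambda n)\geqslant\mathbb E[Z^{(n)}_1\mathbf 1_{A_n(\varepsilon)}]$. Here $A_n(\varepsilon)$ is the event that $\mathbf B$ itself has $p$-variation at most $\varepsilon$ on every block.

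In place of symmetry, one uses the exponential martingale inequality: $Z^{(n)}_1\geqslant e^{-3\Gamma_n/2}$ outside an event of probability at most $2e^{-\Gamma_n/2}$. Finally, for fixed $\varepsilon$, $\mathbb P(A_n(\varepsilon))\to1$ by a union bound, fBM scaling and Fernique's estimate. So no small-ball estimate or Gaussian correlation argument is needed.
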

In what follows, we develop the main ingredients towards the proof
of Theorem \ref{thm:RoughLowG}.

\subsubsection{Local continuity estimate}

This part is just a collection of known estimates from rough path
theory. To set up the lemma properly, let $I=[s,s+\delta]$ be a given
interval. Let $({\bf b}_{t})_{t\in I}$ be a given weakly geometric
$p$-rough path over $\mathbb{R}^{d}$. Recall from \cite{LQ02} that
the $p$-variation of ${\bf b}$ is defined by 
\[
\|{\bf b}\|_{p\text{-var}}:=\Big(\sum_{i=1}^{\lfloor p\rfloor}\sup_{{\cal P}}\sum_{t_{j}\in{\cal P}}\big|{\bf b}_{t_{j-1},t_{j}}^{i}\big|^{p/i}\Big)^{1/p},
\]
where ${\cal P}$ ranges over all finite partitions of $I$. Let $q^{I}:I\rightarrow\mathbb{R}^{d}$
be a BV path. For each $x\in\mathbb{R}^{N}$, we define $(Y_{t}^{x})_{t\in I}$
to be the solution to the RDE 
\begin{equation}
dY_{t}^{x}=\sum_{i=1}^{d}V_{i}(Y_{t}^{x})d(T_{q^{I}}{\bf b})^{i}+V_{0}(Y_{t}^{x})dt,\quad Y_{s}^{x}=x.\label{eq:YxIEqn}
\end{equation}
Here $T_{q^{I}}{\bf b}$ denotes the translation of ${\bf b}$ along
direction $q^{I}$ as a weakly geometric $p$-rough path (see \cite{CLL13}). 
\begin{lem}
\label{lem:LocCty}Let $K>0$ be a given number. There exists a positive
constant $C_{p,V,K}$ depending only on $p,V,K$, such that the following
estimate
\[
\big|Y_{s+\delta}^{x}-\Psi_{q^{I}}(x)\big|\leqslant C_{p,V,K}\big(\|{\bf b}\|_{p\text{-var}}+\delta\big)
\]
holds uniformly for all $s\geqslant0,\delta>0$, $x\in\mathbb{R}^{N}$,
all weakly geometric $p$-rough paths ${\bf b}$ and BV paths $q^{I}$
satisfying $\|{\bf b}\|_{p\text{-var}}\leqslant1$ and $\|q^{I}\|_{1\text{-var}}\leqslant K$.
\end{lem}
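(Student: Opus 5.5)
The plan is to reduce to a standard continuity estimate for RDEs, using the translated rough path structure. Write $\mathbf{c}:=T_{q^{I}}\mathbf{b}$ and let $S(q^I)$ be the canonical lift of $q^I$. Rather than compare $Y^x$ with the skeleton ODE through a general Lipschitz estimate for the Itô--Lyons map (which would require $\mathbf c$ and $S(q^I)$ to be close in inhomogeneous $p$-variation metric, something that only holds up to cross terms), I would use the following two facts in turn.

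\textbf{Step 1 (size of the translated path).} By the standard translation estimate (e.g.\ \cite{CLL13}, or Friz--Victoir), the homogeneous $p$-variation of $\mathbf c$ is controlled by $\|\mathbf b\|_{p\text{-var}}+\|q^I\|_{1\text{-var}}\leqslant 1+K$. More precisely, the cross integrals $\int q\otimes d\mathbf b$, $\int \mathbf b\otimes dq$ and the third-level mixed terms are Young/rough integrals whose $p/i$-variation is bounded by products of $\|\mathbf b\|_{p\text{-var}}$ and powers of $\|q^I\|_{1\text{-var}}$, each containing at least one factor of $\|\mathbf b\|_{p\text{-var}}$. This is the essential point.

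\textbf{Step 2 (Euler/Davie-type comparison).} I would treat the difference $Y^x_{s+\delta}-\Psi_{q^I}(x)$ via the perturbation formula of Lyons for RDEs: the solution flow $\Phi^{\mathbf c}$ of an RDE driven by $\mathbf c$ with $C_b^\infty$ (hence $\mathrm{Lip}^\gamma$, $\gamma>p$) vector fields satisfies a local Lipschitz estimate in the driving signal, with constants depending only on an upper bound for the $p$-variation of both drivers (here $\leqslant 1+K$, from Step 1). The distance $\rho_{p\text{-var}}(\mathbf c, S(q^I))$ between the driving paths is computed level by level. At level one it equals $\|\mathbf b^1\|_{p\text{-var}}$. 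At levels $i=2,3$ it consists of the pure $\mathbf b$ terms together with the mixed terms, all bounded (after taking $i$-th roots) by $C_K\|\mathbf b\|_{p\text{-var}}$ when $\|\mathbf b\|_{p\text{-var}}\leqslant1$. Note that we use the inhomogeneous metric $\rho$, whose level-$i$ part enters unrooted. Since $\|\mathbf b\|\leqslant1$ implies $\|\mathbf b\|^i\leqslant\|\mathbf b\|$, and the mixed terms are linear in $\mathbf b$ up to bounded factors, one gets $\rho_{p\text{-var}}(\mathbf c,S(q^I))\leqslant C_K\|\mathbf b\|_{p\text{-var}}$. The drift $V_0\,dt$ is handled by including $t$ as an extra (smooth, bounded-variation) component of the driver. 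Comparing with the ODE without drift then costs $C\delta$ by a Gronwall argument, since the drift contributes a path of $1$-variation $\delta$. An alternative way to absorb the drift is to put $V_0\,dt$ into $q$ with $1$-variation $\delta$, using $\delta\leqslant$ const as the relevant regime; if $\delta$ is large the bound is trivial anyway, since the left-hand side is bounded by $C(1+K+\delta)$, which is $\lesssim\delta$ for $\delta\geqslant1$.

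\textbf{Step 3 (uniformity).} Uniformity in $s$ and $x$ follows from time-shift invariance of the setup and from the fact that all constants in the Lipschitz estimate depend only on $\|V\|_{C_b^{\lfloor p\rfloor+1}}$, $p$ and the $p$-variation bounds, not on the base point. Combining the steps gives $|Y^x_{s+\delta}-\Psi_{q^I}(x)|\leqslant C_{p,V,K}(\rho_{p\text{-var}}(\mathbf c,S(q^I))+\delta)\leqslant C_{p,V,K}(\|\mathbf b\|_{p\text{-var}}+\delta)$.

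\textbf{Main obstacle.} The main difficulty is the level-$2$/$3$ bookkeeping of the mixed translation terms, and ensuring that the Lipschitz estimate is stated for the inhomogeneous metric with constants uniform over the bounded set $\{\|\cdot\|\leqslant 1+K\}$. I would quote Friz--Victoir's local Lipschitz theorem for RDEs rather than reprove it.
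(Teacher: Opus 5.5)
Your argument is correct and is essentially the paper's proof. The two inputs are the same:
\begin{itemize}
\item the translation estimate $\rho_{p\text{-var}}(T_{q^{I}}\mathbf{b},S(q^{I}))\leqslant C_{p,K}\|\mathbf{b}\|_{p\text{-var}}$ in the inhomogeneous metric, which the paper quotes from \cite[Theorem 9.33]{FV10} rather than doing your level-by-level bookkeeping;
\item the local Lipschitz continuity of the It\^o--Lyons map on bounded sets.
\end{itemize}
The only real difference is how the drift is removed. The paper drops $V_{0}$ at the RDE level, rescaling $dt=\delta\,d\tau^{I}$ and applying the continuity estimate with drift \cite[Theorem 12.10]{FV10}. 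You instead absorb $t$ into the translating BV path and remove the drift at the ODE level by a classical Gronwall bound. You also treat $\delta\geqslant 1$ separately, which the paper leaves implicit.
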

\begin{proof}
First of all, recall from \cite[Theorem 9.33]{FV10} that 
\begin{equation}
d_{p\text{-var}}\big(T_{q^{I}}{\bf b},T_{q^{I}}{\bf 1}\big)\leqslant C_{p,K}\|{\bf b}\|_{p\text{-var}}\label{eq:TranCty}
\end{equation}
uniformly for all ${\bf b}$ and $q^{I}$ satisfying $\|{\bf b}\|_{p\text{-var}}\leqslant1$
and $\|q^{I}\|_{1\text{-var}}\leqslant K$. Let $(Y_{t}^{0})_{t\in I}$
be the solution to the RDE 
\begin{equation}
d\bar{Y}_{t}^{x}=\sum_{i=1}^{d}V_{i}(\bar{Y}_{t}^{x})d(T_{q^{I}}{\bf b})^{i},\quad\bar{Y}_{s}^{x}=x.\label{eq:BarYxEqn}
\end{equation}
Respectively, let $z_{t}$ be the solution to the ODE
\[
dz_{t}^{x}=\sum_{i=1}^{d}V_{i}(z_{t}^{x})d(q_{t}^{I})^{i},\quad z_{s}^{x}=x.
\]
Note that $z_{s+\delta}^{x}=\Psi_{q^{I}}(x).$ According to the standard
RDE continuity estimate (see \cite[Theorem 10.38]{FV10}) and (\ref{eq:TranCty}),
one has 
\begin{equation}
\big|\bar{Y}_{s+\delta}^{x}-\Psi_{q^{I}}(x)\big|\leqslant C_{p,V,K}\|{\bf b}\|_{p\text{-var}}.\label{eq:BarYPsi}
\end{equation}
On the other hand, the equation (\ref{eq:YxIEqn}) of $Y_{t}^{x}$
can be written as
\begin{equation}
dY_{t}^{x}=\sum_{i=1}^{d}V_{i}(Y_{t}^{x})d(T_{q^{I}}{\bf b})^{i}+\delta V_{0}(Y_{t}^{x})d\tau_{t}^{I},\label{eq:YxEqnDel}
\end{equation}
where $\tau_{t}^{I}\triangleq\frac{t-s}{\delta}$ satisfies $\|\tau^{I}\|_{1\text{-var}}=1$.
By applying the RDE continuity estimate with drift (see \cite[Theorem 12.10]{FV10})
to the equations (\ref{eq:BarYxEqn}, \ref{eq:YxEqnDel}), one finds
that 
\begin{equation}
\big|Y_{s+\delta}^{x}-\bar{Y}_{s+\delta}^{x}\big|\leqslant C_{p,V,K}\delta.\label{eq:YBarY}
\end{equation}
The desired estimate follows by combining (\ref{eq:BarYPsi}) and
\ref{eq:YBarY}.
\end{proof}

\subsubsection{\label{subsec:LinGrowth}Linear growth under iterated loop controls}

We use $\Phi(t;s,x,{\bf b})$ ($t\geqslant s$) to denote the time-$t$
solution to the RDE 
\[
dy_{t}=\sum_{i=1}^{d}V_{i}(y_{t})d{\bf b}_{t}^{i}+V_{0}(y_{t})dt
\]
with starting time $s$ and starting location $x$.

Let $n\geqslant1$. Under Condition (G), we construct two (random)
sequences $\{X_{t_{k}}^{(n)}\}_{k=1}^{n}$ and $\{q_{k}^{(n)}\}_{k=1}^{n}$
in the following inductive way. Let $I_{k}:=[t_{k-1},t_{k}]$ where
$t_{k}:=k/n$ ($0\leqslant k\leqslant n$). Define
\[
X_{t_{1}}^{(n)}:=\Phi(t_{1};t_{0},x_{0},T_{q_{1}^{(n)}}{\bf B}|_{I_{1}}),
\]
where $q_{1}^{(n)}$ is the loop $q_{x_{0}}$ arising from Condition
(G) but reparametrised on the interval $I_{1}$, namely, 
\[
q_{1}^{(n)}(t):=q_{x_{0}}(nt),\quad t\in I_{1}.
\]
Inductively, suppose that $X_{t_{k-1}}^{(n)}$ and $q_{k-1}^{(n)}$
have been constructed. We then define 
\[
X_{t_{k}}^{(n)}:=\Phi(t_{k};t_{k-1},X_{t_{k-1}}^{(n)},T_{q_{k}^{(n)}}{\bf B}|_{I_{k}}),
\]
where $q_{k}^{(n)}$ is the loop $q_{X_{t_{k-1}}^{(n)}}$ from Condition
(G) but reparametrised on $I_{k}$, namely, 
\[
q_{k}^{(n)}(t):=q_{X_{t_{k-1}}^{(n)}}\big(nt-(k-1)\big),\quad t\in I_{k}.
\]
It is clear that 
\[
X_{t_{k}}^{(n)}=\Phi(t_{k};0,x_{0},T_{h^{(n)}}{\bf B})
\]
for all $k=1,\cdots,n,$ where 
\begin{equation}
h^{(n)}:=q_{1}^{(n)}\sqcup\cdots\sqcup q_{n}^{(n)}\label{eq:CMhn}
\end{equation}
with $\sqcup$ denoting path concatenation. 

For each $\varepsilon>0,$ we also define the event 
\begin{equation}
A_{n}(\varepsilon):=\Big\{\max_{1\leqslant k\leqslant n}\|{\bf B}\|_{p\text{-var};I_{k}}\leqslant\varepsilon\Big|.\label{eq:AnEps}
\end{equation}
Here $p\in(2,4)$ is a fixed number such that $pH>1.$
\begin{lem}
\label{lem:LinDisplace}Under Condition (G), there exist constants
$\Lambda,\varepsilon_{0}$ $n_{0}$ depending only on $p,V,F,K,a$,
such that the following estimate 
\[
\big|X_{1}^{(n)}-x_{0}\big|\geqslant\Lambda n
\]
holds on the event $A_{n}(\varepsilon)$, uniformly for all $n>n_{0}$
and $\varepsilon<\varepsilon_{0}.$ 
\end{lem}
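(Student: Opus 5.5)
The plan is to track the potential $F$ along the discrete chain $\{X^{(n)}_{t_k}\}$ and show that it gains a definite amount on each step. Fix $k$ and apply Lemma \ref{lem:LocCty} on $I_k$ with $s=t_{k-1}$, $\delta=1/n$, starting point $x=X^{(n)}_{t_{k-1}}$, rough path ${\bf b}={\bf B}|_{I_k}$ and BV path $q^{I_k}=q^{(n)}_k$.

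First check the hypotheses of Lemma \ref{lem:LocCty}. Reparametrisation does not change the $1$-variation, so (G1) gives
\[
\|q^{(n)}_k\|_{1\text{-var}}=\|q_{X^{(n)}_{t_{k-1}}}\|_{1\text{-var}}\leqslant\|\dot q\|_{L^\infty}\leqslant K.
\]
On $A_n(\varepsilon)$ with $\varepsilon\leqslant 1$ we also have $\|{\bf B}\|_{p\text{-var};I_k}\leqslant\varepsilon$.

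Two further facts are needed. The translation by $q^{(n)}_k$ depends on the starting point only through the deterministic choice $x\mapsto q_x$, so the lemma, being uniform in $x$, ${\bf b}$ and $q^{I}$, still applies. Second, the skeleton solution map $\Psi$ is invariant under reparametrisation of the control. Hence $\Psi_{q^{(n)}_k}(x)=\Psi_{q_x}(x)$.

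Lemma \ref{lem:LocCty} therefore gives
\[
\big|X^{(n)}_{t_k}-\Psi_{q_{X^{(n)}_{t_{k-1}}}}(X^{(n)}_{t_{k-1}})\big|\leqslant C_{p,V,K}(\varepsilon+1/n).
\]
Let $L$ be the Lipschitz constant of $F$. Combining with (G2),
\[
F(X^{(n)}_{t_k})\geqslant F(X^{(n)}_{t_{k-1}})+a-LC_{p,V,K}(\varepsilon+1/n).
\]
Choose $\varepsilon_0$ and $n_0$ so that $LC_{p,V,K}(\varepsilon_0+1/n_0)\leqslant a/2$. Then each step gains at least $a/2$.

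Summing over $k=1,\dots,n$ gives $F(X^{(n)}_1)-F(x_0)\geqslant na/2$. Since $F$ is $L$-Lipschitz, $|X^{(n)}_1-x_0|\geqslant F$-increment$/L\geqslant na/(2L)$. So $\Lambda:=a/(2L)$ works; if $L=0$ then (G2) cannot hold, so $L>0$.

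The only delicate point is the uniformity: the constant in Lemma \ref{lem:LocCty} must not depend on $\delta$, on the random starting point, or on the particular loop. This is exactly what that lemma provides, given the bound $K$ from (G1). Everything else is bookkeeping.
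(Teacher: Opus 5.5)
Your proposal is correct and follows essentially the same route as the paper: apply Lemma \ref{lem:LocCty} on each $I_k$, combine it with (G2) and the Lipschitz bound on $F$ to gain $a/2$ per step, then sum over $k$ and use the Lipschitz property of $F$ once more. One bookkeeping point: build $\varepsilon_0\leqslant 1$ into your choice of $\varepsilon_0$ (the paper takes $\varepsilon_0=\frac{a}{4C}\wedge 1$), since Lemma \ref{lem:LocCty} requires $\|{\bf B}\|_{p\text{-var};I_k}\leqslant 1$.
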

\begin{proof}
By the previous construction and Property (G2), one has 
\[
F\Big(\Psi_{q_{k}^{(n)}}(X_{t_{k-1}}^{(n)})\Big)\geqslant F\big(X_{t_{k-1}}^{(n)}\big)+a
\]
for every $k$. On the other hand, since $F$ is globally Lipschitz
(say, with Lipschitz constant $C_{F}$) and $\|q_{k}^{(n)}\|_{1\text{-var}}\leqslant K$
by assumption, one knows from Lemma \ref{lem:LocCty} that 
\[
\Big|F\big(X_{t_{k}}^{(n)}\big)-F\Big(\Psi_{q_{k}^{(n)}}(X_{t_{k-1}}^{(n)})\Big)\Big|\leqslant C_{F}C_{p,V,K}\Big(\|{\bf B}\|_{p\text{-var};I_{k}}+\frac{1}{n}\Big),
\]
provided that $\|{\bf B}\|_{p\text{-var};I_{k}}\leqslant1.$ It follows
that 
\begin{align*}
F\big(X_{t_{k}}^{(n)}\big) & \geqslant F\big(X_{t_{k-1}}^{(n)}\big)+a-C_{p,V,K,F}\Big(\|{\bf B}\|_{p\text{-var};I_{k}}+\frac{1}{n}\Big),
\end{align*}
provided that $\|{\bf B}\|_{p\text{-var};I_{k}}\leqslant1.$ Now one
chooses
\[
\varepsilon_{0}\triangleq\frac{a}{4C_{p,V,K,F}}\wedge1,\quad n_{0}\triangleq\frac{4C_{p,V,K,F}}{a}.
\]
Then 
\[
F\big(X_{t_{k}}^{(n)}\big)\geqslant F\big(X_{t_{k-1}}^{(n)}\big)+\frac{a}{2}
\]
holds on the event $A_{n}(\varepsilon)$ uniformly for all $n>n_{0},$
$\varepsilon<\varepsilon_{0}$ and $k=1,\cdots,n$. The conclusion
of the lemma thus follows by summing over $k$ and applying the global
Lipschitz property of $F$. 
\end{proof}

\subsubsection{A Cameron-Martin norm estimate}

In this part, we regard the previously constructed $h^{(n)}$ (see
(\ref{eq:CMhn})) as a Cameron-Martin path in $\bar{{\cal H}}$ and
estimate its Cameron-Martin norm. We first state a general lemma. 
\begin{lem}
\label{lem:CMEst}Let $K>0$ be a given number. There exists a positive
constant $C_{H,K}$, such that the following estimate 
\begin{equation}\label{eq:CmEstRoughLow}
\|h^{(n)}\|_{\bar{{\cal H}}}\leqslant C_{H}Kn^{H+1/2}
\end{equation}
holds uniformly for all $n\geqslant1$ and all absolutely continuous
paths $h^{(n)}$ satisfying $\|\dot{h}^{(n)}\|_{L^{\infty}}\leqslant Kn$
and $h_{t_{k-1}}^{^{(n)}}=h_{t_{k}}^{(n)}$ for every $k=1,\cdots,n.$
\end{lem}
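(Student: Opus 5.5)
We use the explicit inverse formula (\ref{eq:InvKForm}) for $H<1/2$: $\|h\|_{\bar{\cal H}}=\|K_H^{-1}h\|_{L^2}$ with $(K_H^{-1}h)(t)=c_H\int_0^t t^{H-1/2}s^{1/2-H}(t-s)^{-(1/2+H)}\dot h_s\,ds$. Write $\dot h=\dot h^{(n)}$ and fix $t\in I_k=[t_{k-1},t_k]$. We split the integral into the contribution of the current interval, $\int_{t_{k-1}}^t$, and the contributions of completed earlier intervals $I_j$, $j<k$.

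\emph{Current interval.} We have $s^{1/2-H}t^{H-1/2}\le1$ for $s\le t$. Hence the current-interval part is bounded by $c_HKn\int_{t_{k-1}}^t(t-s)^{-1/2-H}ds\le C_HKn\cdot n^{-(1/2-H)}=C_HKn^{H+1/2}$, since $1/2-H>0$ makes the kernel integrable.

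\emph{Earlier intervals.} Here we use the loop property $\int_{I_j}\dot h_s\,ds=h_{t_j}-h_{t_{j-1}}=0$. This lets us subtract the value of the kernel $g_t(s):=s^{1/2-H}(t-s)^{-1/2-H}$ at $t_{j-1}$:
\[
\Big|\int_{I_j}g_t(s)\dot h_s\,ds\Big|\le Kn\int_{I_j}|g_t(s)-g_t(t_{j-1})|\,ds\le Kn\cdot n^{-2}\sup_{s\in I_j}|\partial_sg_t(s)|.
\]
We then bound $|\partial_s g_t(s)|\lesssim s^{-1/2-H}(t-s)^{-1/2-H}+s^{1/2-H}(t-s)^{-3/2-H}$.
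For intervals with $t-s\ge 1/n$ (all $j\le k-2$), the second term sums like a Riemann sum. Multiplying by $t^{H-1/2}$, it gives
\[
Kn^{-1}\sum_{m\ge1}(m/n)^{-3/2-H}\lesssim Kn^{H+1/2}\sum_m m^{-3/2-H}\lesssim Kn^{H+1/2}.
\]
The first term contributes $Kn^{-1}t^{H-1/2}\sum_j n\int_{I_j}s^{-1/2-H}(t-s)^{-1/2-H}ds$, which is at most $K\,t^{H-1/2}\int_0^{t-1/n}s^{-1/2-H}(t-s)^{-1/2-H}ds\lesssim K t^{-1/2-H}\cdot(\text{a factor at most }n^{2H})$. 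Its $L^2$-norm in $t$ is $O(Kn^{H+1/2})$ after a crude bound. The interval $j=1$, where $s^{-1/2-H}$ is singular near $0$, should be handled without differentiating: bound it directly by $Kn\int_0^{1/n}s^{1/2-H}(t-s)^{-1/2-H}ds$, which is fine.

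\emph{Adjacent interval.} The interval $j=k-1$ is adjacent to $t$ and also cannot be handled by differentiation, because the kernel is not differentiable up to $t$. We treat it like the current interval: $\int_{t_{k-2}}^{t}(t-s)^{-1/2-H}ds\lesssim n^{H-1/2}$, giving another $Kn^{H+1/2}$.

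\emph{Conclusion.} Collecting the pieces gives $|(K_H^{-1}h)(t)|\le C_HKn^{H+1/2}(1+\text{terms that are }L^2\text{-integrable in }t\text{ uniformly in }n)$. Taking the $L^2([0,1])$ norm yields (\ref{eq:CmEstRoughLow}).

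The main obstacle is the bookkeeping in the far-field sum near $s\approx0$, where $s^{1/2-H}$ has an unbounded derivative. I would handle this by splitting at $s=t/2$. For $s\le t/2$, the factor $(t-s)^{-1/2-H}$ is smooth with derivative $\lesssim t^{-3/2-H}$, and $s^{1/2-H}$ is only H\"older. Within each loop interval, $|s^{1/2-H}-t_{j-1}^{1/2-H}|\le n^{-(1/2-H)}$, so the total contribution from $s\le t/2$ is at most $Kn\cdot n^{H-1/2}t^{-1/2-H}\cdot t^{H-1/2}\cdot t$, i.e. $\lesssim Kn^{H+1/2}t^{-2H+... }$. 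I would check that this is square-integrable for $H<1/2$, falling back on the trivial bound $Knt$ for $t\lesssim1/n$ if needed.
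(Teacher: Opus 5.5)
Your proposal is correct and follows the paper's route. The current, adjacent and first intervals are bounded directly using $t^{H-1/2}s^{1/2-H}\leqslant 1$ and the integrability of $(t-s)^{-1/2-H}$. On the remaining intervals the loop property lets you subtract the kernel at an endpoint and bound the two pieces of $\partial_sQ$, which yields exactly the Beta-type sum (\ref{eq:kSumBound1}) and the tail sum $\sum_m m^{-3/2-H}$ of (\ref{eq:kSumBound2}).

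The step to tidy is the first derivative term. The Beta integral gives $t^{H-1/2}\int_0^t s^{-1/2-H}(t-s)^{-1/2-H}\,ds=B(\tfrac12-H,\tfrac12-H)\,t^{-1/2-H}$ with no extra power of $n$. A genuine factor $n^{2H}$ would in fact spoil your $L^2$ claim when $H>1/4$. Since that range of $j$ forces $t\geqslant t_{k-1}\geqslant 3/n$, the term is at most $C_HKn^{H+1/2}$. You therefore get the same uniform pointwise bound on $K_H^{-1}h^{(n)}$ as the paper. The ``obstacle'' at $s\approx 0$ in your last paragraph does not arise once $I_1$ is treated separately.
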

\begin{proof}

Now let $h^{(n)}$ be a given absolutely continuous path such that
$\|\dot{h}^{(n)}\|_{L^{\infty}}\leqslant Kn$ and $h_{t_{k-1}}^{^{(n)}}=h_{t_{k}}^{(n)}$
for every $k=1,\cdots,n.$ To prove the lemma, it suffices to show
that 
\begin{equation}
\big|\big(K_{H}^{-1}h^{(n)}\big)(t)\big|\leqslant C_{H}Kn^{H+1/2}\label{eq:InvKEst}
\end{equation}
for all $t\in[0,1]$. To this end, let $t\in I_{j}$ for some $j$.
We decompose the $s$-integral on the RHS of (\ref{eq:InvKForm})
into four parts:
\[
\int_{I_{1}}+\sum_{k=2}^{j-2}\int_{I_{k}}+\int_{I_{j-1}}+\int_{t_{j-1}}^{t}
\]
We estimate each part separately. 

Firstly, one has by assumption that
\begin{align*}
\Big|\int_{t_{j-1}}^{t}Q(t,s)\dot{h}_{s}^{(n)}ds\Big| & \leqslant C_{H}Kn\int_{t_{j-1}}^{t}(t-s)^{-(1/2+H)}(s/t)^{1/2-H}ds\\
 & \leqslant C_{H}Kn\int_{t_{j-1}}^{t}(t-s)^{-(1/2+H)}ds\\
 & \leqslant C_{H}Kn(t-t_{j-1})^{1/2-H}\leqslant C_{H}Kn^{H+1/2}.
\end{align*}
The integrals $\int_{I_{1}}$ and $\int_{I_{j-1}}$ are estimated
in the same way with the same $n^{H+1/2}$-growth. 

Next, consider the integral $\int_{I_{k}}$ with $2\leqslant k\leqslant j-2$.
Since $h_{t_{k-1}}^{(n)}=h_{t_{k}}^{(n)}$, one has 
\begin{align*}
&\Big|\int_{I_{k}}Q(t,s)\dot{h}_{s}^{(n)}ds\Big|\\
 & = \Big|\int_{I_{k}}\big(Q(t,s)-Q(t,t_{k})\big)\dot{h}_{s}^{(n)}ds\Big|\\
& \leqslant  C_{H}Kn\cdot\Big(\frac{j-1}{n}\Big)^{H-1/2}\cdot\Big[\Big(\frac{k-1}{n}\Big)^{-(1/2+H)}\Big(\frac{j-1-k}{n}\Big)^{-(1/2+H)}\\
 & \ \ \ +\Big(\frac{k}{n}\Big)^{1/2-H}\Big(\frac{j-1-k}{n}\Big)^{-(3/2+H)}\Big]\cdot\frac{1}{n^{2}},
\end{align*}
where the last inequality follows by estimating $\partial_{s}Q(t,s)$
in the obvious way. The exponent of $n$ on the RHS is precisely $H+1/2$.
It remains to see that 
\begin{equation}
(j-1)^{H-1/2}\sum_{k=2}^{j-2}(k-1)^{-(1/2+H)}(j-1-k)^{-(1/2+H)}\leqslant C_{H},\label{eq:kSumBound1}
\end{equation}
\begin{equation}
(j-1)^{H-1/2}\sum_{k=2}^{j-2}k^{1/2-H}(j-1-k)^{-(3/2+H)}\leqslant C_{H}\label{eq:kSumBound2}
\end{equation}
uniformly for all $j$. The inequality (\ref{eq:kSumBound1}) follows
by observing that the LHS is a Riemann sum approximation of the Beta
integral 
\[
\int_{0}^{1}x^{-(1/2+H)}(1-x)^{-(1/2+H)}dx.
\]
The inequality (\ref{eq:kSumBound1}) follows by observing that the
LHS is bounded above by 
\[
\sum_{k=2}^{j-2}(j-1-k)^{-(3/2+H)}\leqslant\sum_{l=1}^{\infty}l^{-(3/2+H)}=:C_{H}<\infty.
\]
Therefore, one concludes that 
\[
\Big|\sum_{k=2}^{j-2}\int_{I_{k}}Q(t,s)\dot{h}_{s}^{(n)}ds\Big|\leqslant C_{H}Kn^{H+1/2}.
\]

The desired claim (\ref{eq:InvKEst}) is easily obtained by combining
the above estimates. This completes the proof of the lemma.
\end{proof}
\begin{cor}
\label{cor:CMEsthn}Let $h^{(n)}$ be the path constructed by (\ref{eq:CMhn})
in Section \ref{subsec:LinGrowth}. Then one has 
\[
\|h^{(n)}\|_{\bar{{\cal H}}}\leqslant C_{H}Kn^{H+1/2}
\]
for all $n$.
\end{cor}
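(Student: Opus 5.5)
The corollary should follow by checking that $h^{(n)}$ satisfies the two hypotheses of Lemma \ref{lem:CMEst}, and then applying that lemma.

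\emph{Loop property.} By construction, $h^{(n)}=q_{1}^{(n)}\sqcup\cdots\sqcup q_{n}^{(n)}$. Each piece $q_{k}^{(n)}$ is the reparametrisation $t\mapsto q_{x}(nt-(k-1))$ on $I_{k}$ of the path $q_{x}$ from Condition (G), with $x=X_{t_{k-1}}^{(n)}$. By (G1), $q_{x}(0)=q_{x}(1)$, so each piece returns to its starting point. I would read the concatenation $\sqcup$ in the rough-path sense: each new piece is translated to begin where the previous one ended, since only increments matter for the translation $T_{h}$. We may also normalise $h^{(n)}_{0}=0$, so that $h^{(n)}\in\mathcal{W}$. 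With these conventions $h^{(n)}$ is absolutely continuous and satisfies $h^{(n)}_{t_{k-1}}=h^{(n)}_{t_{k}}$ for every $k$.

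\emph{Derivative bound.} On $I_{k}$ the chain rule gives $\dot h^{(n)}(t)=n\,\dot q_{x}(nt-(k-1))$. By (G1), $\|\dot q_{x}\|_{L^{\infty}}\leqslant K$ uniformly in $x$, so $\|\dot h^{(n)}\|_{L^{\infty}}\leqslant Kn$.

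Neither hypothesis depends on the randomness of the points $X_{t_{k-1}}^{(n)}$: the bounds in (G1) are uniform over $x\in\mathbb{R}^N$, and measurability of $x\mapsto q_x$ is not needed for this deterministic, pathwise estimate. Lemma \ref{lem:CMEst} therefore applies pathwise and gives $\|h^{(n)}\|_{\bar{\mathcal{H}}}\leqslant C_{H}Kn^{H+1/2}$ for every $n\geqslant1$.

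The only point needing care is the concatenation convention in the loop property; the rest is immediate from Lemma \ref{lem:CMEst}.
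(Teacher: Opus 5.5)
Your proposal is correct and is exactly the paper's argument: check $\|\dot h^{(n)}\|_{L^\infty}\leqslant Kn$ via the chain rule and (G1), check $h^{(n)}_{t_{k-1}}=h^{(n)}_{t_k}$ from the loop property of each $q_k^{(n)}$, and then invoke Lemma \ref{lem:CMEst}. Your remark that $\sqcup$ must be read as concatenation of increments, with each loop translated to start where the previous one ends, is a useful clarification that the paper leaves implicit.
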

\begin{proof}
One simply observes that $\|\dot{h}^{(n)}\|_{L^{\infty}}\leqslant Kn$
(because $\|\dot{q}_{k}^{(n)}\|_{L^{\infty}}\leqslant Kn$ by Condition
(G) and the definition of $q_{k}^{(n)}$) and $h_{t_{k-1}}^{(n)}=h_{t_{k}}^{(n)}$
for every $k$ (because all the $q_{k}^{(n)}$'s are loops by assumption).
\end{proof}

\subsubsection{Completing the proof of Theorem \ref{thm:RoughLowG}: Girsanov's transform}

Recall that $A_{n}(\varepsilon)$ is the event defined by (\ref{eq:AnEps}). 
\begin{lem}
\label{lem:Fernique}There exist positive constants $C_{1},C_{2}$
depending only on $H,$ such that 
\[
\mathbb{P}(A_{n}(\varepsilon)^{c})\leqslant C_{1}ne^{-C_{2}\varepsilon^{2}n^{2H}}
\]
for all $n\geqslant1$ and $\varepsilon\in(0,1)$.
\end{lem}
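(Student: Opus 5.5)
The plan is a union bound over the $n$ subintervals, combined with fBM scaling/stationarity and Gaussian tail bounds for the $p$-variation of the lifted fBM. First,
\[
\mathbb{P}(A_n(\varepsilon)^c)\leqslant\sum_{k=1}^n\mathbb{P}\big(\|{\bf B}\|_{p\text{-var};I_k}>\varepsilon\big),
\]
so it suffices to show that each summand is bounded by $C_1e^{-C_2\varepsilon^2n^{2H}}$ with constants independent of $k$ and $n$.

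Second, I reduce every interval to $[0,1]$. Since fBM has stationary increments, the increment process $(B_{t_{k-1}+u}-B_{t_{k-1}})_{u\in[0,1/n]}$ has the same law as $(B_u)_{u\in[0,1/n]}$. The canonical lift is a measurable functional of the increments (limit of piecewise-linear lifts), so $\|{\bf B}\|_{p\text{-var};I_k}$ has the same law as $\|{\bf B}\|_{p\text{-var};[0,1/n]}$. By self-similarity, $(B_{u/n})_{u\in[0,1]}\stackrel{d}{=}(n^{-H}B_u)_{u\in[0,1]}$. Under the dilation $\delta_\lambda$, the level-$i$ component scales by $\lambda^i$, so the homogeneous norm $\|\cdot\|_{p\text{-var}}$ defined in Section \ref{sec:CondG} (with exponents $p/i$) scales exactly by $\lambda$; it is invariant under time reparametrisation. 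Hence
\[
\|{\bf B}\|_{p\text{-var};[0,1/n]}\stackrel{d}{=}n^{-H}\|{\bf B}\|_{p\text{-var};[0,1]}.
\]
It then remains to show the Gaussian tail
\[
\mathbb{P}(\|{\bf B}\|_{p\text{-var};[0,1]}>\lambda)\leqslant C_1e^{-C_2\lambda^2},
\]
applied with $\lambda=\varepsilon n^H$.

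Third, for this Gaussian tail I invoke the known Fernique-type theorem for Gaussian rough paths (Friz--Victoir \cite{FV10}, Theorem 15.33, or Cass--Litterer--Lyons \cite{CLL13}): for $H\in(1/4,1/2)$ and $p>1/H$, the covariance of fBM has finite $\rho$-variation with $\rho=1/(2H)<2$, and the complementary Young regularity of the Cameron--Martin space gives the Borell-type inequality. More precisely, the map $\omega\mapsto\|{\bf B}(\omega)\|_{p\text{-var}}$ satisfies
\[
\|{\bf B}(\omega+h)\|_{p\text{-var}}\leqslant C\big(\|{\bf B}(\omega)\|_{p\text{-var}}+\|h\|_{\bar{\cal H}}\big),
\]
and Borell's isoperimetric inequality then yields $\mathbb{E}\exp(\eta\|{\bf B}\|_{p\text{-var}}^2)<\infty$ for some $\eta>0$. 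Markov's inequality converts this integrability into the desired tail bound with constants depending only on $H$ (and $p$, which is fixed in terms of $H$).

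The main obstacle is justifying the scaling step at the level of the second and third iterated integrals, not merely the path. I would handle this by noting that the canonical lift is the a.s. limit of lifts of dyadic piecewise-linear approximations. For these smooth approximations, the translation and dilation identities hold trivially, and they pass to the limit in distribution.
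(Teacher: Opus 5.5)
Your proposal is correct and follows the same route as the paper. The paper also uses a union bound over the $n$ subintervals, then stationarity and self-similarity of fBM to get $\|{\bf B}\|_{p\text{-var};I_k}\stackrel{d}{=}n^{-H}\|{\bf B}\|_{p\text{-var};[0,1]}$, then a Fernique-type Gaussian tail for the lifted fBM. Your extra care in transferring the scaling to the higher levels of the lift, and your appeal to the Friz--Victoir Gaussian rough path theory \cite{FV10} (which allows covariance of finite $\rho$-variation with $\rho=1/(2H)<2$, so the full range $H\in(1/4,1/2)$ including the level-3 lift), is a harmless refinement of the paper's one-line citation of \cite[Theorem 11.9]{FH14}.
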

\begin{proof}
By definition, 
\[
\mathbb{P}(A_{n}(\varepsilon)^{c})=\mathbb{P}\big(\|{\bf B}\|_{p\text{-var};I_{k}}>\varepsilon\ \text{for some }k\big)\leqslant n\mathbb{P}\big(\|{\bf B}\|_{p\text{-var};[0,1]}>n^{H}\varepsilon\big),
\]
where we used a simple union bound and fBM scaling to reach the last
inequality. The result then follows from Fernique's estimate for Gaussian
rough paths (cf. \cite[Theorem 11.9]{FH14}).
\end{proof}
We are now in a position to prove Theorem \ref{thm:RoughLowG}. The
argument is a simple application of Girsanov's theorem together with
the exponential martingale inequality.

\begin{proof}[Proof of Theorem \ref{thm:RoughLowG}]

Let $h^{(n)}\in\bar{{\cal H}}$ be constructed as before and set $\theta^{(n)}:=K_{H}^{-1}h^{(n)}$.
It is clear from construction that $\theta^{(n)}$ is ${\cal F}^{W}$-adapted,
where $W$ is the BM in the fBM representation. Consider the martingale
\[
M_{t}^{(n)}:=\int_{0}^{t}\langle\theta_{s}^{(n)},dW_{s}\rangle_{\mathbb{R}^{d}}.
\]
According to Corollary \ref{cor:CMEsthn}, one has 
\[
\langle M^{(n)}\rangle_{1}=\|\theta^{(n)}\|_{L^{2}([0,1])}^{2}\leqslant C_{H}^{2}K^{2}n^{2H+1}=:\Gamma_{n}
\]
for all $n$. 

Define the martingale 
\[
Z_{t}^{(n)}\triangleq\exp\Big(-\int_{0}^{t}\langle\theta_{s}^{(n)},dW_{s}\rangle_{\mathbb{R}^{d}}-\frac{1}{2}\int_{0}^{t}\big|\theta_{s}^{(n)}\big|^{2}ds\Big),\quad0\leqslant t\leqslant1.
\]
According to Girsanov's theorem, under the new probability measure
$d\mathbb{Q}^{(n)}:=Z_{1}^{(n)}d\mathbb{P}$, the process 
\[
\tilde{W}_{t}^{(n)}:=W_{t}+\int_{0}^{t}\theta_{s}^{(n)}ds
\]
is a BM and hence $\tilde{B}_{t}^{(n)}:=B_{t}+h_{t}^{(n)}$ is an
fBM. Let $X_{t}$ be the solution to the SDE (\ref{eq:SDEIntro})
driven by ${\bf B}$ under $\mathbb{P}.$ Then one has
\begin{align*}
  \mathbb{P}\big(|X_{1}-x_{0}|\geqslant\Lambda n\big)
 & =\mathbb{Q}^{(n)}\big(\big|\Phi(1;0,x_{0},\tilde{B}^{(n)})-x_{0}\big|\geqslant\Lambda n\big)\\
 & =\mathbb{E}^{\mathbb{P}}\big[Z_{1}^{(n)}{\bf 1}_{\{|\Phi(1;0,x_{0},\tilde{B}^{(n)})-x_{0}|\geqslant\Lambda n\}}\big]\\
 & \geqslant\mathbb{E}^{\mathbb{P}}\big[Z_{1}^{(n)}{\bf 1}_{A_{n}(\varepsilon)}\big]
\end{align*}
for all $n>n_{0},$ $\varepsilon<\varepsilon_{0}$, where we used
Lemma \ref{lem:LinDisplace} to reach the last inequality with $\Lambda$
being the constant in that Lemma.

Now let 
\[
G_{n}\triangleq\big\{ M_{1}^{(n)}\leqslant\Gamma_{n}\big\}=\big\{ M_{1}^{(n)}\leqslant\Gamma_{n},\ \langle M^{(n)}\rangle_{1}\leqslant\Gamma_{n}\big\}.
\]
On the event $G_{n},$ one knows that $Z_{1}^{(n)}\geqslant e^{-3\Gamma_{n}/2}.$
In addition, by the exponential martingale inequality, one has 
\[
\mathbb{P}(G_{n}^{c})=\mathbb{P}\big(M_{1}^{(n)}>\Gamma_{n},\ \langle M^{(n)}\rangle_{1}\leqslant\Gamma_{n}\big)\leqslant2e^{-\Gamma_{n}/2}.
\]
It follows from Lemma \ref{lem:Fernique} that 
\begin{align*}
\mathbb{P}\big(A_{n}(\varepsilon)\cap G_{n}\big) & =\mathbb{P}\big(A_{n}(\varepsilon)\big)-\mathbb{P}\big(A_{n}(\varepsilon)\cap G_{n}^{c}\big)\\
 & \geqslant1-C_{1}ne^{-C_{2}\varepsilon^{2}n^{2H}}-2e^{-\Gamma_{n}/2}.
\end{align*}
We fix $\varepsilon<\varepsilon_{0}.$ There exists $n_{1}>n_{0},$
such that 
\[
\mathbb{P}\big(A_{n}(\varepsilon)\cap G_{n}\big)\geqslant\frac{1}{2}
\]
for all $n>n_{1}.$ As a consequence, one concludes that 
\[
\mathbb{P}\big(|X_{1}-x_{0}|\geqslant\Lambda n\big)\geqslant\mathbb{E}^{\mathbb{P}}\big[Z_{1}^{(n)}{\bf 1}_{A_{n}(\varepsilon)\cap G_{n}}\big]\geqslant\frac{1}{2}e^{-3\Gamma_{n}/2}=\frac{1}{2}e^{-3C_{H}^{2}K^{2}n^{2H+1}/2}
\]
for all $n>n_{1}$. 

The desired estimate (\ref{eq:RoughLowG}) follows by renaming $R:=\Lambda n$.

\end{proof}

\subsection{Condition (NC) implies Condition (G)}

In this section, we prove Theorem \ref{thm:RoughLowNC} by showing
that Condition (NC) implies Condition (G). The main result is stated
as follows. 
\begin{prop}
\label{prop:NCImpliesG}Let $\{V_{1},\cdots,V_{d}\}$ be a family
of $C_{b}^{\infty}$-vector fields that satisfy Condition (NC). Then
Condition (G) holds.
\end{prop}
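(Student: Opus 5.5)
The plan is to build, for each $x$, a small closed loop $q_x$ whose signature approximates the exponential of a single Lie bracket $\pm\varepsilon^{k}e_I$, where $I$ is a word with $|(V_IF)(x)|\geqslant\kappa$. A Taylor expansion of $F$ along the skeleton flow then gives $F(\Psi_{q_x}(x))-F(x)\approx \varepsilon^{k}|(V_IF)(x)|$, which will supply (G2). The function $F$ is the one from Condition (NC); it is globally Lipschitz because $DF$ is bounded.

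\textbf{Step 1: loops realising brackets.} For each $i$, let $\ell_i^{\pm}$ be the straight segment of length $\varepsilon$ in direction $\pm e_i$. For a word $I=(i_1,\dots,i_k)$ with $2\leqslant k\leqslant r$, define recursively the group-commutator loop
\[
\gamma_{(i_k)}^{\varepsilon}:=\ell_{i_k}^{+},\qquad \gamma_{(i_j,\dots,i_k)}^{\varepsilon}:=\ell_{i_j}^{+}\sqcup\gamma_{(i_{j+1},\dots,i_k)}^{\varepsilon}\sqcup\ell_{i_j}^{-}\sqcup\overleftarrow{\gamma}_{(i_{j+1},\dots,i_k)}^{\varepsilon},
\]
where $\overleftarrow{\gamma}$ denotes time reversal. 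For $j\leqslant k-1$ each such path is closed. Its total length is at most $4^{k}\varepsilon$. By Baker--Campbell--Hausdorff, its truncated log-signature at level $r$ has the form
\[
\log S_r(\gamma_I^{\varepsilon})=\varepsilon^{k}[e_{i_1},[\cdots,e_{i_k}]]+\varepsilon^{k+1}P_I,
\]
where $P_I$ is a fixed Lie polynomial involving only levels $\geqslant k+1$. Reversing the orientation, $\overleftarrow{\gamma}_I^{\varepsilon}$, flips the sign of the leading term. Reparametrising each loop to $[0,1]$ with constant speed gives $q(0)=q(1)$ and $\|\dot q\|_{L^\infty}\leqslant 4^{r}\varepsilon=:K$, which is (G1).

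\textbf{Step 2: Taylor expansion along the flow.} For any BV path $q$ with $\|q\|_{1\text{-var}}\leqslant L$, iterating the chain rule $r$ times along the skeleton ODE gives
\[
F(\Psi_q(x))=\sum_{m=0}^{r}\sum_{|J|=m}(V_{j_1}\cdots V_{j_m}F)(x)\,S^{J}(q)+O\big(L^{r+1}\big).
\]
The constant in the remainder depends only on the $C_b^{r}$-norms of the $V_i$ and on the bounds for $DF,\dots,D^{r+1}F$, so it is uniform in $x$; this is exactly why (NC) asks for bounded derivatives up to order $r+1$. Viewing $\sum_J (V_JF)(x)S^J$ as the action of the algebra homomorphism $e_j\mapsto V_j$ applied to $S_r(q)=\exp(\log S_r(q))$, and inserting the log-signature from Step 1, I expect
\[
F(\Psi_{\gamma_I^{\varepsilon}}(x))-F(x)=\varepsilon^{k}(V_IF)(x)+O(\varepsilon^{k+1}),
\]
uniformly in $x$. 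The key point is that all levels below $k$ vanish exactly, so there is no competing lower-order contribution, even from words $J$ of shorter length for which $V_JF(x)$ is tiny but nonzero.

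\textbf{Step 3: selection and constants.} Enumerate the finitely many words in $\mathcal V_2^r$. For each $x$, let $I(x)$ be the first word with $|(V_IF)(x)|\geqslant\kappa$; such a word exists by (NC). Set $q_x:=\gamma_{I(x)}^{\varepsilon}$ if $(V_{I(x)}F)(x)>0$ and $q_x:=\overleftarrow{\gamma}_{I(x)}^{\varepsilon}$ otherwise. The map $x\mapsto q_x$ takes finitely many values on Borel sets, hence is measurable. Now fix $\varepsilon$ small enough that the $O(\varepsilon^{k+1})$ error is at most $\kappa\varepsilon^{k}/2$ for every $k\leqslant r$. Then (G2) holds with $a:=\kappa\varepsilon^{r}/2$, and (G1) with $K=4^{r}\varepsilon$.

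\textbf{Main obstacle.} The delicate step is Step 2. One must make the remainder estimate rigorous and uniform in $x$, using only bounded derivatives of $F$ rather than boundedness of $F$ itself. One must also correctly identify the level-$m$ terms ($m\leqslant r$) with the action of the log-signature, checking that the BCH terms at levels $>k$ are genuinely $O(\varepsilon^{k+1})$ after the $V$-action. I would handle this by working in the truncated tensor algebra. Its algebra structure ensures that $\exp(\varepsilon^{k}\ell+\varepsilon^{k+1}P)=1+\varepsilon^{k}\ell+O(\varepsilon^{k+1})$ with all coefficients bounded. Composition with the (anti-)homomorphism to differential operators, together with the bounded coefficients $V_JF$ for $1\leqslant|J|\leqslant r$, then gives the uniform bound.
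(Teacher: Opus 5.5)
Your proposal is correct and follows essentially the same route as the paper: you use group-commutator loops whose truncated signature is $\mathbf{1}+\varepsilon^{|I|}e_I$, and you Taylor-expand $F$ along the skeleton ODE via the algebra homomorphism $e_i\mapsto V_i$. As in the paper, you then fix the sign by orientation reversal and choose a small scaling parameter, which gives $a=\kappa\varepsilon^{r}/2$. The only differences are cosmetic: you expand to full order $r$ rather than to order $|I(x)|$, and you make the measurability of $x\mapsto q_x$ explicit, which the paper leaves implicit.
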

Before discussing its proof, we first recall some basic notions from
\cite{FV10} on path signatures. The algebra of formal tensor series
over $\mathbb{R}^{d}$ (respectively, the step-$m$ truncated one
for any given $m\geqslant1$) is denoted as $T((\mathbb{R}^{d}))$
(respectively, $T^{(m)}(\mathbb{R}^{d})$). The map $\pi^{(m)}:T((\mathbb{R}^{d}))\rightarrow T^{(m)}(\mathbb{R}^{d})$
denotes the canonical projection. The exponential and logarithmic
transforms over $T((\mathbb{R}^{d}))$ are denoted as $\exp$ and
$\log$ respectively. The \textit{signature} of a BV path $\gamma:[a,b]\rightarrow\mathbb{R}^{d}$
is the element of $T((\mathbb{R}^{d}))$ defined by 
\[
S(\gamma)=\Big(1,\gamma_{b}-\gamma_{a},\int_{a<s<t<b}d\gamma_{s}\otimes d\gamma_{t},\cdots,\int_{a<t_{1}<\cdots<t_{n}<b}d\gamma_{t_{1}}\otimes\cdots\otimes d\gamma_{t_{n}},\cdots\Big).
\]
The signature satisfies the so-called \textit{Chen's identity}, i.e.
\[
S(\alpha\sqcup\beta)=S(\alpha)\otimes S(\beta)
\]
where $\sqcup$ means path concatenation. If $\gamma$ is the line
segment represented by a vector $v,$ one has $S(\gamma)=\exp(v).$
Given a word $I=(i_{1},\cdots,i_{k})$ with $1\leqslant i_{j}\leqslant d$,
we define 
\[
e_{I}:=[e_{i_{1}},[e_{i_{2}},\cdots[e_{i_{k-1}},e_{i_{k}}]\cdots]],
\]
where $\{e_{1},\cdots,e_{d}\}$ is the canonical basis of $\mathbb{R}^{d}$
and $[a,b]:=a\otimes b-b\otimes a.$ The set of all non-empty words
over $\{1,\cdots,d\}$ is denoted as ${\cal W}.$

The proof of Proposition \ref{prop:NCImpliesG} relies crucially on
the following construction, which is standard from rough path
theory.
\begin{lem}
\label{lem:CanPath}There exists a family $\{\gamma^{I}:I\in{\cal W}\}$
of piecewise linear paths such that the following properties hold
true:

\vspace{2mm}\noindent (i) $\pi^{(|I|)}S(\gamma^{I})={\bf 1}+e_{I}$
for every $I\in{\cal W}$;

\vspace{2mm}\noindent (ii) for each $m\geqslant1$, there exists
a positive constant $L_{m}$ depending only on $m$, such that 
\[
\|\gamma^{I}\|_{1\text{-var}}\leqslant L_{m}
\]
for all $I\in{\cal W}$ with $|I|\leqslant m$.
\end{lem}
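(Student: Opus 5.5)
The plan is to build the paths $\gamma^{I}$ by induction on the length $|I|$, using Chen's identity and the commutator construction from Lie group theory. For $|I|=1$, i.e. $I=(i)$, take $\gamma^{(i)}$ to be the line segment from $0$ to $e_{i}$. Then $S(\gamma^{(i)})=\exp(e_{i})$, so $\pi^{(1)}S(\gamma^{(i)})={\bf 1}+e_{i}$, and its $1$-variation equals $1$.

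For the inductive step, write $I=(i_{1},J)$ with $|J|=k-1\geqslant1$, so that $e_{I}=[e_{i_{1}},e_{J}]$. Suppose $\gamma^{J}$ has already been built with $\pi^{(k-1)}S(\gamma^{J})={\bf 1}+e_{J}$. Define $\gamma^{I}$ as the commutator loop
\[
\gamma^{I}:=\gamma^{(i_{1})}\sqcup\gamma^{J}\sqcup\overleftarrow{\gamma^{(i_{1})}}\sqcup\overleftarrow{\gamma^{J}},
\]
where $\overleftarrow{\gamma}$ denotes time reversal. Reversal inverts the signature, so Chen's identity gives $S(\gamma^{I})=xyx^{-1}y^{-1}$ with $x=S(\gamma^{(i_{1})})$ and $y=S(\gamma^{J})$. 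Set $x={\bf 1}+a$ and $y={\bf 1}+b$, where $a$ has lowest nonzero degree $1$ with leading term $e_{i_{1}}$, and $b$ has lowest degree $k-1$ with leading term $e_{J}$. Expanding $x^{-1}=\sum_{m}(-a)^{m}$ and $y^{-1}=\sum_{m}(-b)^{m}$, the standard group-commutator identity gives
\[
xyx^{-1}y^{-1}-{\bf 1}=(ab-ba)x^{-1}y^{-1}.
\]
The lowest-degree part of $ab-ba$ is $[e_{i_{1}},e_{J}]=e_{I}$, which sits in degree $k$; everything else has degree $>k$. Projecting to $T^{(k)}$ therefore leaves exactly ${\bf 1}+e_{I}$, which proves (i).

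The step needing the most care is exactly this degree bookkeeping. One must make sure that the higher-order corrections in $y$ do not generate terms of degree $\leqslant k$ other than $e_{I}$. The cleanest way is to note that $ab-ba$ is a sum of products of homogeneous components of total degree $\geqslant1+(k-1)=k$. The only degree-$k$ piece is $a_{1}b_{k-1}-b_{k-1}a_{1}$, and since $a_{1}=e_{i_{1}}$ and $b_{k-1}=e_{J}$ by the inductive hypothesis, this is $e_{I}$. Because $x^{-1}y^{-1}={\bf 1}+(\text{terms of degree}\geqslant1)$, the truncation to degree $k$ is unaffected. One should also check that $\pi^{(k-1)}S(\gamma^{I})={\bf 1}$ in the lower degrees, which follows from the same computation.

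Property (ii) follows directly from the construction. The $1$-variation of a concatenation is at most the sum of the $1$-variations, so $\|\gamma^{I}\|_{1\text{-var}}\leqslant2+2\|\gamma^{J}\|_{1\text{-var}}$. Setting $L_{1}=1$ and $L_{m}:=2+2L_{m-1}$ gives a bound depending only on $m$. The paths are piecewise linear because they are concatenations of line segments.
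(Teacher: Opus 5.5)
Your proposal is correct and follows the paper's proof: you use the same inductive commutator loop $\gamma^{(i)}\sqcup\gamma^{J}\sqcup\overleftarrow{\gamma^{(i)}}\sqcup\overleftarrow{\gamma^{J}}$ and verify (i) via Chen's identity. Your identity $xyx^{-1}y^{-1}-\mathbf{1}=(ab-ba)x^{-1}y^{-1}$ replaces the paper's expansion of the exponential of $\pi^{(m)}\log S(\gamma^{J})$, and your recursion $L_m=2+2L_{m-1}$ gives an explicit bound for (ii) that is independent of $d$, where the paper only appeals to the finiteness of the set of words.
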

\begin{proof}
We prove the lemma by induction on the length of the word. If $I=(i)$,
one simply defines $\gamma_{t}^{I}=te_{i}$ ($0\leqslant t\leqslant1$).
Suppose that $\gamma^{J}$ has been constructed to satisfy the required
properties for every word $J$ with $|J|<m$. Let $I$ be a word with
length $m$ and write $I=(i,J)$. One then defines 
\[
\gamma^{I}:=\gamma^{(i)}\sqcup\gamma^{J}\sqcup\overleftarrow{\gamma^{(i)}}\sqcup\overleftarrow{\gamma^{J}},
\]
where $\overleftarrow{\cdot}$ denotes the reversal of a path. By
the induction hypothesis, one can write 
\[
\pi^{(m)}\log S(\gamma^{J})=e_{J}+\xi
\]
where $\xi$ is some homogeneous Lie polynomial $\xi$ of degree $m$.
According to Chen's identity, one has
\begin{align*}
\pi^{(m)}\big[S(\gamma^{I})\big]= & \pi^{(m)}\big[\exp(e_{i})\otimes\exp(e_{J}+\xi)\otimes\exp(-e_{i})\otimes\exp(-e_{J}-\xi)\big]\\
= & \pi^{(m)}\Big[\Big(\sum_{p=0}^{\infty}\frac{1}{p!}e_{i}^{\otimes p}\Big)\otimes\Big(\sum_{q=0}^{\infty}\frac{1}{q!}(e_{J}+\xi)^{\otimes q}\Big)\otimes\Big(\sum_{r=0}^{\infty}\frac{1}{r!}(-e_{i})^{\otimes r}\Big)\\
 & \ \ \ \otimes\Big(\sum_{s=0}^{\infty}\frac{1}{s!}(-e_{J}-\xi)^{\otimes s}\Big)\Big].
\end{align*}
After expanding the above tensor products, it is plain algebra to
check that 
\[
\pi^{(m)}S(\gamma^{I})={\bf 1}+[e_{i},e_{J}]={\bf 1}+e_{I}.
\]
This proves Property (i) for $\gamma^{I}$. Property (ii) is obvious
because there are at most finitely many words of length $\leqslant m$.
\end{proof}
We also recall the following stochastic Taylor expansion for ODEs
(cf. \cite[P. 214]{FV10}). 
\begin{lem}
\label{lem:StocTaylor}Consider the following ODE:
\[
dy_{t}=\sum_{i=1}^{d}V_{i}(y_{t})dq_{t}^{i},\quad y_{0}=x\in\mathbb{R}^{N},
\]
where $V_{1},\cdots,V_{d}$ are $C_{b}^{\infty}$-vector fields and
$q:[0,1]\rightarrow\mathbb{R}^{d}$ is a BV path. Then one has 
\begin{align*}
f(y_{1})= & f(x)+\sum_{k=1}^{r}\sum_{i_{1},\cdots,i_{k}=1}^{d}(V_{i_{1}}\cdots V_{i_{k}}f)(x)\int_{0<t_{1}<\cdots<t_{k}<1}dq_{t_{1}}^{i_{1}}\cdots dq_{t_{k}}^{i_{k}}\\
 & \ \ \ +\sum_{i_{1},\cdots,i_{r+1}=1}^{d}\int_{0<t_{1}<\cdots<t_{r+1}<1}(V_{i_{1}}\cdots V_{i_{r+1}}f)(y_{t_{1}})dq_{t_{1}}^{i_{1}}\cdots dq_{t_{r+1}}^{i_{r+1}}
\end{align*}
for any $r\geqslant1$ and any function $f:\mathbb{R}^{N}\rightarrow\mathbb{R}$
that is continuously differentiable up to order $r+1.$
\end{lem}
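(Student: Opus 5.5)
This is the classical Taylor expansion for controlled ODEs. I would prove it by induction on $r$, iterating the chain rule along the flow. The basic identity comes from applying the chain rule to $f(y_t)$ for a BV driver. Since $y$ is BV and $f\in C^1$, the Riemann--Stieltjes chain rule gives
\[
f(y_t)=f(x)+\sum_{i=1}^d\int_0^t (V_if)(y_s)\,dq_s^i,
\]
because $dy_s=\sum_i V_i(y_s)dq_s^i$ and $Df(y_s)V_i(y_s)=(V_if)(y_s)$. This identity holds for every $g\in C^1$ in place of $f$ and every $t\in[0,1]$.

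For the base case $r=1$ and the inductive step, I substitute the identity into itself. Applying it with $g=V_{i_1}f$ (which is $C^1$ when $f\in C^2$, since the $V_i$ are smooth) inside the integral gives
\[
(V_{i_1}f)(y_{t_1})=(V_{i_1}f)(x)+\sum_{i_2}\int_0^{t_1}(V_{i_2}V_{i_1}f)(y_{t_2})\,dq^{i_2}_{t_2}.
\]
Relabeling variables so that the time of the outermost-applied vector field is the smallest, this produces the term $(V_{i_1}f)(x)\,q^{i_1}$-increment plus a double iterated integral with integrand evaluated along $y$ at the earliest time. I would then state the inductive claim for general $r$: the remainder term is
\[
\sum_{i_1,\dots,i_{r+1}}\int_{0<t_1<\cdots<t_{r+1}<1}(V_{i_1}\cdots V_{i_{r+1}}f)(y_{t_1})\,dq^{i_1}_{t_1}\cdots dq^{i_{r+1}}_{t_{r+1}}.
\]
To pass from $r$ to $r+1$, I apply the basic identity to $g=V_{i_1}\cdots V_{i_{r+1}}f$ on $[0,t_1]$. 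This is legitimate when $f\in C^{r+2}$, since differential operators with $C_b^\infty$ coefficients lower regularity by exactly one order each. The substitution splits the remainder into the $k=r+1$ term with coefficient $g(x)$, times the $(r+1)$-fold iterated integral, plus a new $(r+2)$-fold integral whose innermost time variable carries $y$. After renaming indices to match the ordering convention $V_{i_1}\cdots V_{i_k}$ paired with $dq^{i_1}_{t_1}\cdots dq^{i_k}_{t_k}$ on the simplex $t_1<\cdots<t_k$, this has the same form as the claimed remainder at level $r+1$.

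The only delicate points are bookkeeping and justification. For the bookkeeping, I need to check that the word order and the time order are consistent: the operator applied last (leftmost, $V_{i_1}$) corresponds to the earliest time $t_1$, since it is the one obtained from the innermost expansion. For the justification, Fubini-type manipulations of iterated Riemann--Stieltjes integrals against BV paths are valid because the integrands are continuous and $q$ has finite variation. I expect the index and ordering bookkeeping to be the main, though minor, obstacle; the analytic content is just the BV chain rule.
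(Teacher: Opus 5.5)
Your proposal is correct and follows the paper's argument: the paper's proof is only the remark that the lemma follows from iterated applications of the chain rule. Your induction (apply the chain rule to $V_{i_1}\cdots V_{i_{r+1}}f$ on $[0,t_1]$, and check that the outermost operator $V_{i_1}$ pairs with the earliest time $t_1$) supplies exactly those details, with the caveat that you should state explicitly that $q$ is a continuous BV path, as in all of the paper's applications, so the Riemann--Stieltjes chain rule holds without jump terms.
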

\begin{proof}
This is a simple consequence of iterative applications of the chain
rule. 
\end{proof}
Now we are in a position to prove Proposition \ref{prop:NCImpliesG}.

\begin{proof}[Proof of Proposition \ref{prop:NCImpliesG}]

Let $r$ and $F$ be given by Condition (NC). According to Lemma \ref{lem:StocTaylor},
one has 
\begin{align}
F(\Psi_{q}(x))= & F(x)+\sum_{k=1}^{l}\sum_{i_{1},\cdots,i_{k}=1}^{d}(V_{i_{1}}\cdots V_{i_{k}}f)(x)\int_{0<t_{1}<\cdots<t_{k}<1}dq_{t_{1}}^{i_{1}}\cdots dq_{t_{k}}^{i_{k}}\nonumber \\
 & \ \ \ +R_{l+1}(x,q)\label{eq:FTay}
\end{align}
for all $l\leqslant r$, $x\in\mathbb{R}^{N}$ and BV paths $q:[0,1]\rightarrow\mathbb{R}^{d},$
where the remainder $R_{l+1}(x,q)$ satisfies
\begin{equation}
\sup_{x\in\mathbb{R}^{N}}|R_{l+1}(x,q)|\leqslant C_{V,F,r}\|q\|_{1\text{-var }}^{l+1}\label{eq:TayRem}
\end{equation}
with some positive constant $C_{V,F,r}$ depending only on $r$, the
vector fields and $\|D^{l}F\|_{\infty}$ for $1\leqslant l\leqslant r+1$. 

Let ${\cal D}$ denote the space of smooth differential operators
over $\mathbb{R}^{d}$. Let $\rho:T(\mathbb{R}^{d})\rightarrow{\cal D}$
be the algebra homomorphism induced by $\rho(e_{i}):=V_{i}$. The
expansion (\ref{eq:FTay}) can be rewritten as 
\begin{equation}
F(\Psi_{q}(x))=\big(\rho(\pi^{(l)}S(q))F\big)(x)+R_{l+1}(x,q).\label{eq:FTayAlg}
\end{equation}
In addition, one also notes that $\rho(e_{I})=V_{I}$ for any word
$I$. 

According to the assumption (\ref{eq:NC}), for each $x\in\mathbb{R}^{N}$,
there exists a word $I(x)$ such that $2\leqslant|I(x)|\leqslant r$
and $|(V_{I(x)}F)(x)|\geqslant\kappa.$ Let 
\[
\sigma(x):={\rm sgn}\big[(V_{I(x)}F)(x)\big]
\]
so that 
\[
|(V_{I(x)}F)(x)|=\sigma(x)(V_{I(x)}F)(x)\geqslant\kappa.
\]
Given $\delta\in(0,1)$, we define for each $x\in\mathbb{R}^{N}$
the absolutely continuous path 
\[
q_{x}^{\delta}:=(\delta\gamma^{I(x)})^{\sigma(x)}
\]
Here $\alpha{}^{-1}$ means the reversal of the path $\alpha$. One
knows from Lemma \ref{lem:CanPath} that 
\begin{equation}
\pi^{(|I(x)|)}S(q_{x}^{\delta})={\bf 1}+\delta^{|I(x)|}\sigma(x)e_{I(x)},\quad\|q_{x}^{\delta}\|_{1\text{-var}}\leqslant C_{r}\delta\label{eq:qxBd}
\end{equation}
for all $x$. 

It follows from the relation (\ref{eq:FTayAlg}) along with the properties
(\ref{eq:TayRem}, \ref{eq:qxBd}) that 
\[
\big|F\big(\Psi_{q_{x}^{\delta}}(x)\big)-F(x)-\delta^{|I(x)|}\sigma(x)(V_{I(x)}F)(x)\big|\leqslant C'_{V,F,r}\delta^{|I(x)|+1}.
\]
Therefore, 
\begin{align*}
F\big(\Psi_{q_{x}^{\delta}}(x)\big) & \geqslant F(x)+\delta^{|I(x)|}\sigma(x)(V_{I(x)}F)(x)-C'_{V,F,r}\delta^{|I(x)|+1}\\
 & \geqslant F(x)+\delta^{r}\big(\kappa-C'_{V,F,r}\delta\big).
\end{align*}
We choose $\delta:=\kappa/(2C'_{V,F,r})$ and define $a:=\kappa\delta^{r}/2$.
Then one has 
\[
F\big(\Psi_{q_{x}^{\delta}}(x)\big)\geqslant F(x)+a.
\]

To complete the proof, one simply chooses the same $F$ and takes
$q_{x}:=q_{x}^{\delta}$ (assuming that they are all parametrised
on $[0,1]$). Since $DF$ is bounded, $F$ is globally Lipschitz.
In addition, since $I(x)\geqslant2$ , one knows from the first relation
of (\ref{eq:qxBd}) that $q_{x}$ is a loop. It is also clear that
\[
K\triangleq\sup_{x}\|\dot{q}_{x}\|_{L^{\infty}}<\infty,
\]
since the family $\{q_{x}\}$ only involves paths of the form $(\delta\gamma^{I})^{\pm1}$
($|I|\leqslant r$) whose totality is finite.

\end{proof}

\subsection{Recovering the line integral estimate of Boedihardjo-Geng}

In this subsection, we reproduce the main result of \cite{BG24} by using
the current method. In fact, we will obtain a stronger result under
a weaker assumption.
\begin{prop}\label{prop:BGNew}
Let $\phi:\mathbb{R}\rightarrow\mathbb{R}$ be a $C_{b}^{\infty}$-function
such that $\phi'(w_{0})\neq0$ at some $w_{0}\in\mathbb{R}$. Let
$B_{t}=(X_{t},Y_{t})$ be a two-dimensional fBM with Hurst parameter
$H\in(1/4,1/2).$ Then there exist positive constants $C_{1},C_{2}$,
such that the following estimate
\[
\mathbb{P}\Big(\Big|\int_{0}^{1}\phi(X_{t})dY_{t}\Big|\Big)>R\big)\geqslant C_{1}e^{-C_{2}R^{2H+1}}
\]
holds for all large $R$.
\end{prop}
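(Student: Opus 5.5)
We will realise the line integral as a component of the RDE solution and verify Condition (G) directly, so that Theorem \ref{thm:RoughLowG} applies. Set $N=3$, $d=2$ and consider
\[
dZ^1_t=dB^1_t,\quad dZ^2_t=dB^2_t,\quad dZ^3_t=\phi(Z^1_t)\,dB^2_t,\qquad Z_0=0,
\]
that is, $V_1=(1,0,0)$ and $V_2=(0,1,\phi(z^1))$. Then $Z^3_1=\int_0^1\phi(X_t)dY_t$ in the rough path (Stratonovich-type) sense. The vector fields are not bounded because of the constant components, but $\phi\in C_b^\infty$ makes them globally Lipschitz with bounded derivatives. The estimates used in Lemma \ref{lem:LocCty} (\cite[Theorems 9.33, 10.38]{FV10}) hold under these assumptions ($\mathrm{Lip}^\gamma$ fields), so Theorem \ref{thm:RoughLowG} still applies.

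Since $|Z_1|\le |B_1|+|Z^3_1|$ and $|B_1|$ has Gaussian tail, it suffices to bound $\mathbb{P}(|Z^3_1|>2R)$ from below. To this end we take $F(z)=z^3$, which is globally Lipschitz. The bracket is $[V_1,V_2]=(0,0,\phi'(z^1))$, so $[V_1,V_2]F=\phi'(z^1)$. This need not be bounded away from zero globally, since $\phi'(w_0)\ne0$ holds only at one point. This failure of (NC) is why we verify (G) by hand with state-dependent loops, and it is the main obstacle.

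The construction of the loops goes as follows. Choose $\delta,c>0$ such that, say, $\phi'\ge c$ on $[w_0-\delta,w_0+\delta]$. For a state $x=(x^1,x^2,x^3)$, let $q_x$ be the loop that:
\begin{enumerate}
\item moves in the $e_1$ direction by $w_0-x^1$ to bring $z^1$ to $w_0$;
\item traverses a small rectangle of side $\delta$ in the $(e_1,e_2)$ plane;
\item moves back by $x^1-w_0$.
\end{enumerate}
Because the $z^3$-increment depends only on the $(z^1,z^2)$ trajectory, it equals $\oint\phi(z^1)\,dz^2$. The two straight legs contribute nothing. The rectangle, oriented appropriately, contributes $\int_{w_0}^{w_0+\delta}\int_0^\delta\phi'(u)\,dv\,du$ up to sign, which is $\ge c\delta^2=:a$ by Green's theorem. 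The path is a closed loop in $\mathbb{R}^2$, so (G1)'s endpoint condition holds.

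The speed bound in (G1), however, requires $|w_0-x^1|$ bounded, which fails for general $x$. We fix this by exploiting periodicity of the argument: only $x^1$ matters, and on the event $A_n(\varepsilon)$ the first coordinate stays close to the controlled path. We therefore ensure that $X^{(n),1}_{t_k}$ remains in a bounded set.
\begin{itemize}
\item Start from $x^1_0=0$, so we only need $|w_0|+1$ room.
\item Each loop returns $z^1$ to its start, and the noise perturbs $z^1$ by at most $C\|\mathbf B\|_{p\text{-var};I_k}$ per step. By the Fernique bound these perturbations could accumulate to $n\varepsilon$.
\item Hence we let $q_x$ recenter: leg 1 moves from $x^1$ to $w_0$, and leg 3 returns to $0$ rather than to $x^1$. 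This keeps $z^1$ near $0$ after every step, so $|w_0-x^1|\le |w_0|+1$.
\end{itemize}
Since recentring breaks the loop property, we restore it by adding a compensating $e_1$ leg of length $x^1$ to the next loop's first leg; equivalently, we define the loop as going $x^1\to w_0$, around the rectangle, and back to $x^1$, so it is a genuine loop. The drift of $z^1$ is then just the accumulated noise on $[0,t_k]$, which is $B^1_{t_k}$ and bounded by $\|B\|_\infty$. That is a single Gaussian quantity with $\mathbb{P}(\|B\|_\infty>1)<1/4$, so we intersect $A_n(\varepsilon)\cap G_n$ with $\{\|B\|_\infty\le1\}$. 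In the Girsanov step, under $\mathbb{Q}^{(n)}$, the shifted path's first component equals the noise plus the loops, and the loops return $z^1$ exactly. Thus $K\le 2(|w_0|+1)+4\delta$ is uniform on this event, and Lemma \ref{lem:LinDisplace}, Lemma \ref{lem:CMEst} and the Girsanov argument go through verbatim, giving $Z^3_1\ge an/2$ with probability $\ge\frac14 e^{-Cn^{2H+1}}$. Setting $R\asymp n$ concludes.
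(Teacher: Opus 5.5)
Your argument works, and it rests on the same two facts as the paper's proof. First, the rectangle loop raises $F$ by $\delta(\phi(w_0+\delta)-\phi(w_0))\geqslant c\delta^2$. Second, the first state coordinate is exactly noise plus control, so it equals $B^1_{t_k}$ at grid times because every loop closes.

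The difference is where you put the transport to $w_0$. The paper uses one fixed loop $q^\delta$, preceded by a single deterministic path $\bar q$ from $0$ to $(w_0,0)$ on $[0,1/2]$. Then $W^{(n)}_{t_k}=X_{t_k}+w_0$, and (G2) is only needed on the strip $|w-w_0|\leqslant\theta$, which the event $\{\|{\bf B}\|_{p\text{-var};[0,1]}\leqslant\theta\}$ enforces. You instead build the excursion $x^1\to w_0\to x^1$ into every loop. As a result (G2) holds at every state, and you only need $B^1$ to be bounded, not small. Like the paper, you are not really verifying (G), since your $K$ is not uniform in $x$; you are rerunning the proof of Theorem \ref{thm:RoughLowG} on a good event.

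The paper's choice buys a deterministic $h^{(n)}$ with a uniform speed bound, so the change of measure is the plain Cameron--Martin formula. Your choice gives an adapted random $h^{(n)}$ whose speed, of order $n(|w_0|+|B^1_{t_{k-1}}|)$, is controlled only on your good event. So the Girsanov step is not ``verbatim'': the paper uses $\langle M^{(n)}\rangle_1\leqslant\Gamma_n$ almost surely, both for the validity of the change of measure and in the exponential martingale bound. The easy repair is to replace $x^1$ by its projection onto $[-L,L]$ in the definition of $q_x$. This is still a loop with uniformly bounded speed, and it agrees with your loop on $\{\|B^1\|_\infty\leqslant L\}$. (G2) then holds whenever $|x^1|\leqslant L$, which is all you use.

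There are also some small slips:
\begin{itemize}
\item $\mathbb{P}(\|B\|_\infty>1)<1/4$ is false; already $\mathbb{P}(|B^1_1|>1)\approx 0.32$. Any $L$ with $\mathbb{P}(\|B^1\|_\infty\leqslant L)>0$ suffices, so this does not affect the argument.
\item $V_1,V_2$ are in fact $C_b^\infty$, since constant fields are bounded.
\item The detour through $|Z_1|\leqslant|B_1|+|Z^3_1|$ is unnecessary, because $F=z^3$ gives $Z^3_1\geqslant an/2$ directly.
\end{itemize}
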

\begin{rem}
In \cite{BG24}, the assumption on the nondegeneracy of $\phi'$ is
slightly stronger and the authors only obtained a weaker estimate
with Weibull exponent being any $\gamma>2H+1$. 
\end{rem}
\begin{proof}
Consider the RDE 
\begin{equation}
dW_{t}=dX_{t},\ dZ_{t}=\phi(W_{t})dY_{t}\label{eq:RDELineInt}
\end{equation}
with $(W_{0},Z_{0})=(0,0).$ Then one has $Z_{1}=\int_{0}^{1}\phi(X_{t})dY_{t}$.
The underlying vector fields are given by $V_{1}=\partial_{w},$ $V_{2}=\phi(w)\partial_{z}$
and they satisfy $[V_{1},V_{2}]=\phi'(w)\partial_{z}$. Note that
the proposition is not an immediate consequence of Theorem \ref{thm:RoughLowNC},
since we do not impose a uniform nondegeneracy property on $\phi'$.
Instead, due to the rather special structure of the underlying RDE
(\ref{eq:RDELineInt}), the previous method can be easily adapted
to the current setting. We will only point out the essential changes. Without loss of generality, assume that $\phi'(w_0)>0$.

We consider the function $F(w,z):=z.$ Let $\kappa,\theta>0$ be such
that $\phi'(x)\geqslant\kappa$ for all $x\in[w_{0}-2\theta,w_{0}+2\theta]$.
Given $\delta>0$, consider the loop 
\[
q^{\delta}:=(\delta e_{w})\sqcup(\delta e_{z})\sqcup(-\delta e_{w})\sqcup(-\delta e_{z}),
\]
where $\{e_{w},e_{z}\}$ is the canonical basis of of the $(w,z)$-plane.
It is readily checked that 
\[
\Psi_{q^{\delta}}(w,z)=\big(w,z+\delta(\phi(w+\delta)-\phi(w))\big).
\]
In particular, 
\begin{align}
F\big(\Psi_{q^{\delta}}(w,z)\big)-F(w,z) & =\delta(\phi(w+\delta)-\phi(w))\nonumber \\
 & =\delta\int_{w}^{w+\delta}\phi'(u)du\geqslant\kappa\delta^{2}=:a\label{eq:Line1SDisp}
\end{align}
for any $w\in[w_{0}-\theta,w_{0}+\theta]$ and $\delta<\theta.$ This
essentially gives Condition (G) with the main difference being that
Condition (G2) only holds on the strip $\{(w,z):w\in[w_{0}-\theta,w_{0}+\theta]\}$. 

Let $n\geqslant1$. Construct the Cameron-Martin path 
\[
h^{(n)}:=\bar{q}\sqcup q_{1}^{(n)}\sqcup\cdots\sqcup q_{n}^{(n)}.
\]
Here $\bar{q}$ is the horizontal path from the origin to the point
$(w_{0},0)$ parametrised on $[0,1/2]$. The path $q_{k}^{(n)}$ is
$q^{\delta}$ reparametrised on $I_{k}:=[\frac{1}{2}+\frac{k-1}{2n},\frac{1}{2}+\frac{k}{2n}]$.
Let $U_{t}^{(n)}=(W_{t}^{(n)},Z_{t}^{(n)})$ ($0\leqslant t\leqslant1$)
be the solution to the RDE 
\[
\begin{cases}
dW_{t}^{(n)}=dX_{t}+dh_{t}^{(n),1}\\
dZ_{t}^{(n)}=\phi(W_{t}^{(n)})d\big(Y_{t}+h_{t}^{(n),2}\big)
\end{cases}
\]
with initial condition $(0,0)$. 

The key observation is that on the event $\{\|{\bf B}\|_{p\text{-var};[0,1]}\leqslant\theta\}$,
one has $W_{t_{k}}^{(n)}\in[w_{0}-\theta,w_{0}+\theta]$ for all $k=0,1,\cdots,n$.
Indeed, the construction of $h^{(n)}$ yields that 
\[
W_{t_{k}}^{(n)}=X_{t_{k}}+w_{0}
\]
for every $k$. As a result, 
\[
\big|W_{t_{k}}^{(n)}-w_{0}\big|=|X_{t_{k}}-0|\leqslant\|{\bf B}\|_{p\text{-var};[0,1]}\leqslant\theta.
\]
It follows from (\ref{eq:Line1SDisp}) that 
\[
F\big(\Psi_{q_{k}^{(n)}}(U_{t_{k-1}}^{(n)})\big)\geqslant F\big(U_{t_{k-1}}^{(n)}\big)+a
\]
for every $k$. 

One can now apply the same argument as in Section \ref{subsec:LinGrowth}
to conclude that 
\[
|U_{1}^{(n)}-U_{1/2}^{(n)}|\gtrsim n
\]
on the event 
\[
A_{n}'(\varepsilon):=\{\|{\bf B}\|_{p\text{-var};[0,1]}\}\bigcap\Big\{\max_{1\leqslant k\leqslant n}\|{\bf B}\|_{p\text{-var};I_{k}}\leqslant\varepsilon\Big\}.
\]
In addition, since each $q_{k}^{(n)}$ is a loop, one can argue in
the same way as in the proof of Lemma \ref{lem:CMEst} to show that
\[
\|h^{(n)}\|_{\bar{{\cal H}}}\lesssim n^{H+1/2}
\]
for all $n$ (the additional part $\bar{q}$ only produces a bounded
contribution). The desired lower tail estimate is then established
by applying the same Girsanov-type argument as before.
\end{proof}

\subsection{Gaussian lower tail under uniform ellipticity}\label{sec:GenLowRough}

We now prove a general Gaussian lower tail estimate under the uniform ellipticity assumption. As we will see, this is a rather trivial corollary of the previous argument for Theorem \ref{thm:RoughLowG}.

\begin{thm}\label{thm:GenLowRough}
Consider the SDE (\ref{eq:SDEIntro}) with $H\in (1/4,1/2)$. Suppose that the vector fields are $C_b^\infty$ and uniformly elliptic in the sense of (\ref{eq:UE}). Then there exist positive constants depending only on $H$ and the vector fields, such that the estimate
\begin{equation}\label{eq:GenLowRough}
\mathbb{P}\big(|X_{1}-x_{0}|>R\big)\geqslant C_{1}e^{-C_{2}R^{2}}
\end{equation}
holds for all large $R$.
\end{thm}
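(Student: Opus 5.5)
Under uniform ellipticity I will run the same scheme as in the proof of Theorem \ref{thm:RoughLowG}, but with non-loop controls. The proof of Theorem \ref{thm:RoughLowG} rests on three ingredients: a one-step displacement property of a control, the local continuity estimate (Lemma \ref{lem:LocCty}), and a Cameron--Martin bound followed by the Girsanov argument. Loops produced displacement $O(1)$ per unit block at Cameron--Martin cost $n^{H+1/2}$. Here straight-line controls produce displacement $O(1)$ per block at Cameron--Martin cost $O(n)$.

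\emph{One-step displacement.} Fix a unit vector $\eta\in\mathbb{R}^N$ and take $F(x):=\langle\eta,x\rangle$. For each $x$, uniform ellipticity (\ref{eq:UE}) gives $u(x):=V(x)^T\eta$ with $|u(x)|\geqslant\sqrt{\lambda}$. Define the linear control $q_x(t):=\delta t\,u(x)/|u(x)|$ on $[0,1]$. A first-order Taylor expansion (Lemma \ref{lem:StocTaylor} with $r=1$) gives
\[
F(\Psi_{q_x}(x))\geqslant F(x)+\delta|u(x)|-C\delta^2\geqslant F(x)+\delta\sqrt{\lambda}/2=:F(x)+a
\]
once $\delta$ is small. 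This is Condition (G), except that (G1) fails because $q_x$ is not a loop; we still have $\|\dot q_x\|_{L^\infty}\leqslant\delta=:K$. Lemma \ref{lem:LocCty} and Lemma \ref{lem:LinDisplace} do not use the loop property, so they apply verbatim. Thus, with $h^{(n)}$ the concatenation (\ref{eq:CMhn}) of rescaled $q_{X^{(n)}_{t_{k-1}}}$'s, we get $|X_1^{(n)}-x_0|\geqslant\Lambda n$ on $A_n(\varepsilon)$ for $n>n_0$ and $\varepsilon<\varepsilon_0$.

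\emph{Cameron--Martin bound.} This is the only step that changes, and I expect it to be the main obstacle. We have $\|\dot h^{(n)}\|_{L^\infty}\leqslant Kn$, but $h^{(n)}$ is no longer periodic on the blocks, so the cancellation in Lemma \ref{lem:CMEst} is unavailable. I would aim for the bound $|(K_H^{-1}h^{(n)})(t)|\leqslant C_HKn$ for all $t$, using (\ref{eq:InvKForm}) directly:
\[
\Big|\int_0^tQ(t,s)\dot h^{(n)}_s\,ds\Big|\leqslant C_HKn\,t^{H-1/2}\int_0^t s^{1/2-H}(t-s)^{-(1/2+H)}\,ds=C_HKn\,B(3/2-H,1/2-H).
\]
The right-hand side is finite and independent of $t$ because $H<1/2$. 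Hence $\|h^{(n)}\|_{\bar{\mathcal H}}\leqslant C_HKn$. The point to check is that this crude bound, with no cancellation needed, is exactly what the rough regime allows, in contrast with $n^{H+1/2}$ for loops. Adaptedness of $\theta^{(n)}=K_H^{-1}h^{(n)}$ to $\mathcal F^W$ holds as before, since each $q_k^{(n)}$ depends only on $X^{(n)}_{t_{k-1}}$.

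\emph{Girsanov step.} Set $\Gamma_n:=C_H^2K^2n^2$ and repeat the Girsanov and exponential-martingale argument from the proof of Theorem \ref{thm:RoughLowG}, together with Lemma \ref{lem:Fernique}. For large $n$ one has $\mathbb{P}(A_n(\varepsilon)\cap G_n)\geqslant1/2$, because $n e^{-C\varepsilon^2n^{2H}}$ and $e^{-\Gamma_n/2}$ both tend to zero. This gives
\[
\mathbb{P}(|X_1-x_0|\geqslant\Lambda n)\geqslant\tfrac12e^{-3\Gamma_n/2}.
\]
Setting $R=\Lambda n$, and interpolating between integers by monotonicity, yields (\ref{eq:GenLowRough}).
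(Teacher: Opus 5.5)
Your proof is correct and follows the paper's argument: Condition (G) without the loop requirement, Lemmas \ref{lem:LocCty} and \ref{lem:LinDisplace} applied verbatim, the crude bound $\|h\|_{\bar{\mathcal H}}\leqslant C_H\|\dot h\|_{\infty}$ from (\ref{eq:InvKForm}), and the same Girsanov step. The paper instead steers exactly along $x+t\eta$ with the right-inverse control $q_x(t)=\int_0^tA(x+s\eta)\eta\,ds$ (so $a=1$, $K=\|A\|_\infty$), whereas your constant-direction control plus first-order Taylor bound works equally well and only uses $\eta^{T}VV^{T}\eta\geqslant\lambda$; one harmless slip is that your Beta integral equals $t^{1/2-H}B(3/2-H,1/2-H)$, so it is bounded in $t\in[0,1]$ rather than independent of $t$.
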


\begin{proof}
The argument is identical to the proof of Theorem \ref{thm:RoughLowG}. The main observation is that Condition (G) in Section \ref{sec:CondG} remains valid but without requiring $q_x$ to be a loop in (G1). Indeed, let $\eta$ be a fixed unit vector in $\mathbb{R}^N$ and define $F(x):=\langle x,\eta\rangle_{\mathbb{R}^N}$. Let 
\begin{equation}\label{eq:RightInv}
A(x):=V(x)^T\big(V(x)V(x)^T\big)^{-1}
\end{equation}be the right inverse of $V=(V_1,\cdots,V_d)$, which is well-defined due to ellipticity. Given $x\in\mathbb{R}^N$, the control  
\[
q_x(t):=\int_0^t A(x+s\eta)\eta ds
\]satisfies $\|\dot{q_x}\|_{\infty}\leqslant \|A\|_\infty$ and $F(\Psi_{q_x}(x)) = F(x) + 1$. Therefore, Condition (G) without the loop requirement in (G1) holds with $K = \|A\|_\infty$ and $a=1$.

The only change to the proof of Theorem \ref{thm:RoughLowG} is that the Cameron-Martin estimate (\ref{eq:CmEstRoughLow}) becomes 
\begin{equation}\label{eq:CMEstUE}
\|h^{(n)}\|_{\bar{{\cal H}}}\leqslant C_{H}Kn.
\end{equation}The estimate (\ref{eq:CMEstUE}) follows directly from the elementary inequality $\|h\|_{\bar{\cal H}}\leqslant C_H \|\dot{h}\|_\infty$, which can be seen by using the same representation (\ref{eq:InvKForm}). With the change of exponent from $H+1/2$ to $1$, one can now carry through exactly the same argument as in the proof of Theorem \ref{thm:RoughLowG} to deduce the desired Gaussian lower bound (\ref{eq:GenLowRough}). 

\end{proof}

\begin{rem}
It is clear from the above argument that the upgrade of a Gaussian lower tail to $(2H+1)$-Weibull is precisely due to the additional loop property in Condition (G1). This loop property has led to nice cancellations in the Cameron-Martin estimate, resulting in a smaller Cameron-Martin norm of order $O(n^{H+1/2})$ vs $O(n)$. From a geometric viewpoint, the possibility of moving the dynamics around by using \textit{loop} controls is a consequence of the noncommutativity of the vector fields (Condition (NC)).

\end{rem}

\newpage
\section{Examples with  $(2H+1)$-Weibull upper tail in Young's regime}\label{sec:CounterEx}

In this section, we construct an example of a $2\times 2$ uniformly elliptic system with $C_b^\infty$-coefficients in Young's regime, whose solution admits a $(2H+1)$-Weibull upper tail. Since $2H+1>2$, this example demonstrates a striking fact that a uniformly elliptic system with $C_b^\infty$-coefficients could fail to have a Gaussian lower tail in Young's regime.

\subsection{A simple example in dimension one with 2D noise}

Before presenting the aforementioned $2\times 2$ example, it is helpful to first discuss a simpler example with dimensions $N=1$ and $d=2$. Such an example already encodes the essential mechanism why a Gaussian lower bound could fail in general. 

\begin{thm}
\label{thm:YoungCounterEx}Let $\eta:\mathbb{R}\rightarrow\mathbb{R}$
be a smooth function such that $\eta(x+1)=\eta(x)+1$ and $\eta(x)=k$
for all $x\in[k+1/4,k+3/4]$ and $k\in\mathbb{Z}$. Consider the solution
$\{X_{t}\}$ to the ODE
\[
dX_{t}=\cos(\pi\eta(X_{t}))dB_{t}^{1}+\sin(\pi\eta(X_{t}))dB_{t}^{2},\quad X_{0}=0,
\]
where $(B_{t}^{1},B_{t}^{2})$ is a two-dimensional fBM with Hurst
parameter $H>1/2$. Then there exist positive constants $C_{1},C_{2}$
depending only on $H,$ such that 
\[
\mathbb{P}\big(|X_{1}|>R\big)\leqslant C_{1}e^{-C_{2}R^{2H+1}}
\]
for all large $R$. 
\end{thm}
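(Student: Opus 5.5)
The plan is to turn the plateau structure of $\eta$ into a purely pathwise constraint on $B^{1}$, namely many alternating oscillations of size $1/2$, and then to bound the probability of that constraint with Borell's isoperimetric inequality, using Lemma \ref{lem:EquivNorm} to bound Cameron--Martin norms from below.

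\textbf{Step 1 (deterministic reduction).} On the plateau $P_k:=[k+1/4,k+3/4]$ one has $\cos(\pi\eta)=(-1)^k$ and $\sin(\pi\eta)=0$. Hence, while $X_t\in P_k$, the Young equation reduces to $dX_t=(-1)^k dB^{1}_t$. Suppose $X_1>R$ (the case $X_1<-R$ is symmetric) and take $n:=\lfloor R\rfloor-1$. By continuity, for each $k=1,\dots,n$ there is a last time $s_k$ at which $X=k+1/4$, and then a first later time $t_k$ at which $X=k+3/4$.

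On $[s_k,t_k]$ the path stays in $P_k$, so $B^{1}_{t_k}-B^{1}_{s_k}=(-1)^k/2$. The choice of last entrance times makes the intervals $[s_k,t_k]$ disjoint and increasing in $k$. Therefore $\{|X_1|>R\}\subseteq A_n$, where $A_n$ is the event that $B^{1}$ admits $n$ ordered disjoint intervals whose increments equal $(-1)^k/2$. This event concerns $B^{1}$ alone, so $B^{2}$ plays no role from here on.

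\textbf{Step 2 (Borell's inequality).} Fix $\gamma\in(1/2,H)$ and $M$ such that $\mathcal{G}:=\{g:\|g\|_{\gamma\text{-H\"ol}}\le M\}$ has $\mu_H(\mathcal{G})\ge1/2$. The aim is the following claim: if $w=g+h\in A_n$ with $g\in\mathcal{G}$ and $h\in\bar{\mathcal H}$, then $\|h\|_{\bar{\mathcal H}}\ge c\,n^{H+1/2}$ for large $n$. Borell's inequality then gives $\mu_H(A_n)\le e^{-c^2n^{2H+1}/2}$ up to constants, which is the theorem with $R\asymp n$.

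To prove the claim, note that the interval lengths $\ell_k=t_k-s_k$ sum to at most $1$. Hence at least $n/2$ of the consecutive pairs $(k,k+1)$ have hull $J_k:=[s_k,t_{k+1}]$ of length at most $8/n$, and one can pass to a subfamily of such hulls that are pairwise disjoint and still number $\gtrsim n$. On these, $|g_{t}-g_{s}|\le M(8/n)^{\gamma}<1/8$ for large $n$. So on each selected $J_k$ the path $h$ rises by at least $3/8$ on one subinterval and falls by at least $3/8$ on a later one, in either order according to parity.

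\textbf{Step 3 (Cameron--Martin lower bound; main obstacle).} By Lemma \ref{lem:EquivNorm},
\[
\|h\|_{\bar{\mathcal H}}^{2}\gtrsim\sum_{k}\int_{J_k}\int_{J_k}\frac{|\dot h_t-\dot h_s|^{2}}{|t-s|^{2H}}\,ds\,dt ,
\]
the sum running over the disjoint selected $J_k$. The needed local estimate is that for any interval $J$ and any $h$ that moves up by $3/8$ and then down by $3/8$ (or the reverse) inside $J$,
\[
\int_J\int_J\frac{|\dot h_t-\dot h_s|^{2}}{|t-s|^{2H}}\,ds\,dt\ \ge\ c\,|J|^{-2H}.
\]
Rescaling $J$ to $[0,1]$ reduces this to a single positive constant: the exponent works out because the seminorm of $h$ has order $H+1/2$ in dimension one, giving the factor $|J|^{1-2(H+1/2)}=|J|^{-2H}$.

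Positivity of that constant is the delicate point. I would argue by contradiction and compactness. Along a minimizing sequence, subtract from $\dot h$ its mean; the fractional Poincar\'e inequality then bounds the $L^2$ norm of $\dot h-\overline{\dot h}$, so $h-\overline{\dot h}\,t$ is precompact in $C([0,1])$. The limit has vanishing seminorm, so $h$ converges to a linear function. A linear function cannot rise by $3/8$ on one subinterval and then fall by $3/8$ on a later one, which gives the contradiction.

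Summing the local estimate over $\gtrsim n$ hulls with $|J_k|\le 8/n$ yields $\|h\|_{\bar{\mathcal H}}^{2}\gtrsim n\cdot n^{2H}$, completing the claim and hence the proof.
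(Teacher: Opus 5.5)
Your proof is correct, but it is organised differently from the paper's. The paper first fixes the grid $0<s_1<t_1<\cdots<s_n<t_n<1$. It proves the \emph{non-perturbative} bound $\|h\|_{\bar{\mathcal H}}\ge C_H a n^{H+1/2}$ by duality (Lemma~\ref{lem:CMLowVar}) and an explicit variance computation for $\sum_k\varepsilon_k l_k^{-1}(B_{t_k}-B_{s_k})$ (Lemmas~\ref{lem:fFId} and~\ref{lem:VarEst}). It then bounds the orthant probability by a finite-dimensional half-space argument (Proposition~\ref{prop:GauIso}). Finally it removes the grid by discretising at mesh $n^{-q}$, controlling the modulus of continuity, and taking a union bound over $e^{2nq\log n}$ grids (Lemma~\ref{lem:GenProbEst}).

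You instead prove a \emph{perturbative} coercivity statement, uniform over all grids, for paths $g+h$ with $g$ in a H\"older ball, and then apply Borell's inequality on path space once. This is precisely the template the paper uses for the genuine $2\times2$ example (Proposition~\ref{prop:KeyCoer} and the proof of Theorem~\ref{thm:2DCounter}). Your local Gagliardo-seminorm bound on short disjoint hulls plays the role of Lemma~\ref{lem:CMEffTurn}. Your route dispenses with the variance computation and the discretisation/union-bound step. The paper's route gives a cleaner deterministic statement, valid for every grid and every $a>0$, with no need to select short hulls or to take $n$ large relative to a H\"older bound.

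Two minor points. First, the count of short hulls does not follow from $\sum_k\ell_k\le 1$ alone, since a hull also contains the gap $[t_k,s_{k+1}]$. The correct reason is that each point of $[0,1]$ lies in at most two of the $J_k$, so $\sum_k|J_k|\le 2$. Second, the compactness step can be replaced by an explicit bound. After rescaling $J$ to $[0,1]$, write $f=\dot h$, $\bar f=\int_0^1 f$ and $h_t-h_s=\bar f(t-s)+\int_s^t(f-\bar f)$. Whichever sign $\bar f$ has, the move of the opposite sign forces $\|f-\bar f\|_{L^1[0,1]}\ge 3/8$, whence
\[
\int_0^1\!\!\int_0^1\frac{|f(u)-f(v)|^2}{|u-v|^{2H}}\,du\,dv\ \ge\ 2\|f-\bar f\|_{L^2[0,1]}^2\ \ge\ \frac{9}{32}.
\]
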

The rest of this section is devoted to the proof of Theorem \ref{thm:YoungCounterEx}.

\subsubsection{Reduction to an oscillation event }

We begin with the following simple observation.
\begin{lem}
\label{lem:RedOsEv}The inclusion 
\begin{align}
\big\{|X_{1}|>n\big\}\subseteq & \Big\{\text{there exist }0<s_{1}<t_{1}<\cdots<s_{n}<t_{n}<1,\nonumber \\
 & \ \ \ \text{such that }(-1)^{k-1}(B_{t_{k}}^{1}-B_{s_{k}}^{1})=\frac{1}{2}\ \text{for every }k\Big\}\label{eq:OscilEv}
\end{align}
holds for all $n\geqslant1.$
\end{lem}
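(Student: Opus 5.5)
The plan is a purely pathwise argument that exploits the plateaus of $\eta$. On the strip $P_k:=[k+1/4,k+3/4]$ one has $\eta\equiv k$, so $\cos(\pi\eta)=(-1)^k$ and $\sin(\pi\eta)=0$. Hence, whenever the solution stays in $P_k$ over a time interval $[s,t]$, the equation reduces there to
\[
dX_u=(-1)^k\,dB_u^1 .
\]
More precisely, $X_t-X_s=(-1)^k(B^1_t-B^1_s)$ on any interval during which $X_u\in P_k$ for all $u$. This identity follows directly from the Young integral: the integrand $\cos(\pi\eta(X_u))$ is the constant $(-1)^k$ and $\sin(\pi\eta(X_u))$ is $0$ on that interval. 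So no uniqueness argument is needed, only additivity of the Young integral over subintervals.

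Suppose first that $X_1>n$. Since $X$ is continuous and $X_0=0$, it must traverse each strip $P_{k}$ for $k=0,1,\dots,n-1$ from left to right. I would define:
\begin{itemize}
\item $\tau_k:=\inf\{u:X_u=k+3/4\}$, which lies in $(0,1)$ because $X_1>n\geqslant k+3/4$;
\item $\sigma_k:=\sup\{u<\tau_k:X_u=k+1/4\}$, which exists because $X_0=0<k+1/4$.
\end{itemize}
On $[\sigma_k,\tau_k]$ the path stays in $P_k$: it cannot exceed $k+3/4$ before $\tau_k$, and it cannot fall below $k+1/4$ after $\sigma_k$ by the definition of the supremum together with the intermediate value theorem. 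Therefore
\[
\tfrac12=X_{\tau_k}-X_{\sigma_k}=(-1)^k\bigl(B^1_{\tau_k}-B^1_{\sigma_k}\bigr).
\]
For the ordering, $X_{\sigma_{k+1}}=k+5/4>k+3/4$, so $\sigma_{k+1}\geqslant\inf\{u:X_u=k+5/4\}>\tau_k$. Setting $s_{k+1}:=\sigma_k$ and $t_{k+1}:=\tau_k$ for $k=0,\dots,n-1$ gives the required times with sign pattern $(-1)^{k-1}$.

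In the case $X_1<-n$, the path traverses the strips $P_{-k}=[-k+1/4,-k+3/4]$ for $k=1,\dots,n$ from right to left. The same construction applies with $\tau_k$ taken as the first hitting time of $-k+1/4$ and $\sigma_k$ as the last visit to $-k+3/4$ before it. On $P_{-k}$ the sign is $(-1)^{-k}=(-1)^k$, so a decrease of $1/2$ gives $(-1)^k(B^1_{\tau_k}-B^1_{\sigma_k})=-1/2$, that is, $(-1)^{k-1}(B^1_{\tau_k}-B^1_{\sigma_k})=1/2$. This is the same pattern, now already indexed from $k=1$.

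The only delicate point is the careful choice of the last-exit and first-entrance times, so that the path genuinely stays inside a single plateau and the intervals are disjoint and ordered. The remainder of the argument is bookkeeping.
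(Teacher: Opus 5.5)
Your proof is correct and follows essentially the same route as the paper: you take first hitting times of the upper edge of each plateau and last exit times from the lower edge before them, and you observe that on a plateau the Young equation reduces to $dX=\pm dB^{1}$. Your indexing from $k=0$ is only a shift of the paper's $I_k=[k-3/4,k-1/4]$, and you also check the sign pattern explicitly in the case $X_1<-n$, which the paper only asserts by analogy.
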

\begin{proof}
Suppose that $X_{1}>n$. Define 
\begin{align*}
t_{k} & :=\inf\{t:X_{t}=k-1/4\},\\
s_{k} & :=\sup\{t<t_{k}:X_{t}=k-3/4\}
\end{align*}
for each $1\leqslant k\leqslant n$. By continuity, one must have
\[
0<s_{1}<t_{1}<\cdots<s_{n}<t_{n}<1.
\]
Let $I_{k}:=[k-3/4,k-1/4]$. Since $X_{t}\in I_{k}$ for $t\in[s_{k},t_{k}]$,
one knows from the definition that 
\end{proof}
\[
dX_{t}=(-1)^{k-1}dB_{t}^{1}
\]
on $I_{k}$. Therefore, 
\[
\frac{1}{2}=X_{t_{k}}-X_{s_{k}}=(-1)^{k-1}(B_{t_{k}}^{1}-B_{s_{k}}^{1}).
\]
This proves that the RHS of (\ref{eq:OscilEv}) occurs. The same argument
applies to the other case $X_{1}<-n$.

\subsubsection{Cameron-Martin energy of oscillatory paths}

Since the problem has been reduced to the analysis of a one-dimensional
fBM by Lemma \ref{lem:CMLowFluct}, from now on we will assume that
$B_{t}$ is a 1D fBM with Hurst parameter $H>1/2$ and (intrinsic)
Cameron-Martin space $\bar{{\cal H}}.$ The core of the argument lies
in the observation that a sufficiently oscillatory Cameron-Martin
path cannot have a small Cameron-Martin norm if $H>1/2$. The main
lemma is stated as follows.
\begin{lem}
\label{lem:CMLowFluct}Let $a>0$ be given fixed. Let $n\geqslant1$
and let
\[
0<s_{1}<t_{1}<\cdots<s_{n}<t_{n}<1
\]
be a given sequence. There exists a positive constant $C_{H}$ depending
only on $H$ (but not on $n$ or the partition points $s_{i},t_{j}$),
such that the estimate
\[
\|h\|_{\bar{{\cal H}}}\geqslant C_{H}an^{H+1/2}
\]
holds uniformly for all those paths $h\in\bar{{\cal H}}$ that satisfy
\[
(-1)^{k-1}(h_{t_{k}}-h_{s_{k}})\geqslant a
\]
for every $k=1,\cdots,n.$
\end{lem}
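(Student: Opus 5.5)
The plan is to work entirely on the derivative side, using the norm equivalence of Lemma \ref{lem:EquivNorm}. For $h\in\bar{\mathcal H}$ with $H>1/2$, $\dot h$ exists in $L^2([0,1])$ and
\[
\|h\|_{\bar{\mathcal H}}^2\geqslant c_H\int_0^1\int_0^1\frac{|\dot h_t-\dot h_s|^2}{|t-s|^{2H}}\,ds\,dt .
\]
So it suffices to show that the Gagliardo-type double integral is at least $c\,a^2 n^{2H+1}$. The mechanism is that the prescribed increments force $\dot h$ to have large averages of alternating sign on the intervals $J_k:=[s_k,t_k]$. The jumps between consecutive averages are then penalised by the singular kernel.

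\emph{Step 1 (averages).} Set $\delta_k:=t_k-s_k$ and $m_k:=\delta_k^{-1}\int_{J_k}\dot h_u\,du=(h_{t_k}-h_{s_k})/\delta_k$. By hypothesis $(-1)^{k-1}m_k\geqslant a/\delta_k$. Consecutive averages have opposite signs, so
\[
|m_k-m_{k+1}|\geqslant a\big(\delta_k^{-1}+\delta_{k+1}^{-1}\big).
\]

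\emph{Step 2 (localise to rectangles).} For $1\leqslant k\leqslant n-1$, the rectangles $J_k\times J_{k+1}$ are pairwise disjoint subsets of $[0,1]^2$, because the $J_k$ are disjoint. Let $D_k:=t_{k+1}-s_k$. On $J_k\times J_{k+1}$ we have $|t-s|\leqslant D_k$. Jensen's inequality (convexity of $x\mapsto x^2$ applied to the average over the rectangle) then gives
\[
\int_{J_k}\int_{J_{k+1}}\frac{|\dot h_t-\dot h_s|^2}{|t-s|^{2H}}\,ds\,dt\geqslant D_k^{-2H}\,\delta_k\delta_{k+1}\,|m_k-m_{k+1}|^2 .
\]
Combining with Step 1, the right-hand side is at least
\[
a^2D_k^{-2H}\,\frac{(\delta_k+\delta_{k+1})^2}{\delta_k\delta_{k+1}}\geqslant 4a^2D_k^{-2H}.
\]
This elementary algebra is where the lengths $\delta_k$ drop out, which is exactly why the constant is independent of the partition.

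\emph{Step 3 (summation).} Summing over the disjoint rectangles gives
\[
\|h\|^2_{\bar{\mathcal H}}\geqslant 4c_Ha^2\sum_{k=1}^{n-1}D_k^{-2H}.
\]
Each point of $[0,1]$ lies in at most two of the windows $[s_k,t_{k+1}]$, so $\sum_k D_k\leqslant 2$. Since $x\mapsto x^{-2H}$ is convex and decreasing, Jensen's inequality yields
\[
\sum_{k=1}^{n-1}D_k^{-2H}\geqslant (n-1)\Big(\frac{2}{n-1}\Big)^{-2H}\gtrsim n^{2H+1}
\]
for $n\geqslant2$. Taking square roots gives $\|h\|_{\bar{\mathcal H}}\geqslant C_Ha\,n^{H+1/2}$.

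\emph{Case $n=1$ and the main point to check.} When $n=1$, the lower bound instead follows from the $\|\dot h\|_{L^2}$ term in Lemma \ref{lem:EquivNorm}: by Cauchy--Schwarz, $\|\dot h\|_{L^2}\geqslant |h_{t_1}-h_{s_1}|/\sqrt{\delta_1}\geqslant a$. The only step I regard as a genuine obstacle is Step 2: choosing the right disjoint regions of $[0,1]^2$ and verifying that the $\delta_k$-dependence cancels. Once the pairing of consecutive intervals is in place, the rest is routine convexity.
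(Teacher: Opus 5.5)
Your proof is correct, but it takes a different route from the paper's.

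The paper argues on the dual side. It tests $h$ against $\Lambda(h)=\sum_k\varepsilon_k l_k^{-1}(h_{t_k}-h_{s_k})$ and identifies $\|\Lambda\|^2_{\bar{\mathcal H}^*}$ with the variance of $\sum_k\varepsilon_k l_k^{-1}(B_{t_k}-B_{s_k})$ (Lemma~\ref{lem:CMLowVar}). It bounds that variance by $C_H\sum_k l_k^{2H-2}$ using the covariance kernel $|u-v|^{2H-2}$ and the difference-quotient identity of Lemma~\ref{lem:fFId}; this is Lemma~\ref{lem:VarEst}, which groups the intervals in pairs and so reduces to $n$ even. Finally it uses concavity of $x\mapsto x^{2-2H}$ and Cauchy--Schwarz to convert $(\sum_k l_k^{-1})^2/\sum_k l_k^{2H-2}$ into $n^{2H+1}$.

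You argue on the primal side, using only the lower bound in Lemma~\ref{lem:EquivNorm}. You localise the Gagliardo integral of $\dot h$ to the disjoint rectangles $J_k\times J_{k+1}$. Jensen and AM--GM then cancel the lengths $\delta_k$, leaving only the window lengths $D_k$ with $\sum_k D_k\leqslant 2$. Your route is shorter and needs no parity reduction, since overlapping consecutive pairs are harmless when the rectangles stay disjoint. It is also the same mechanism the paper uses later for the effective-turn estimate of Lemma~\ref{lem:CMEffTurn}, so it handles the 1D and 2D coercivity bounds uniformly.

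What the paper's route buys is independence from the fractional-Sobolev description of $\bar{\mathcal H}$. It needs only the covariance formula (\ref{eq:VarRep}) and the isometry between $\bar{\mathcal H}^*$ and the first chaos. Yours inherits the external inputs behind Lemma~\ref{lem:EquivNorm}: Picard's representation and the Ervin--Roop equivalence. It does not need the Hardy-type estimate (\ref{eq:FracSobEst}), which only enters the upper bound. In both proofs the constant depends only on $H$, which is all Lemma~\ref{lem:FixGdEst} requires.
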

The rest of this subsection is devoted to the proof of Lemma \ref{lem:CMLowFluct}.
We will always assume that $n$ is an even number without loss of
generality (if $n$ is odd, one simply ignores the interval $[s_{n},t_{n}]$
in the lemma).

As the first step, one has the following observation.
\begin{lem}
\label{lem:CMLowVar}The lower estimate 
\begin{equation}
\|h\|_{\bar{{\cal H}}}^{2}\geqslant\frac{a^{2}\big(\sum_{k=1}^{n}l_{k}^{-1}\big)^{2}}{{\rm Var}\big[\sum_{k=1}^{n}\frac{\varepsilon_{k}}{l_{k}}(B_{t_{k}}-B_{s_{k}})\big]}\label{eq:CMLowVar}
\end{equation}
holds uniformly for all $h\in\bar{{\cal H}}$ satisfying
\[
\varepsilon_{k}(h_{t_{k}}-h_{s_{k}})\geqslant a
\]
for every $k=1,\cdots,n,$ where $\varepsilon_{k}:=(-1)^{k-1}$ and
$l_{k}:=t_{k}-s_{k}$. 
\end{lem}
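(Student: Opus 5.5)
We will prove the lower bound (\ref{eq:CMLowVar}) by a duality (Cauchy--Schwarz) argument in the Gaussian space ${\cal C}_1$, using the isometric representation $h_t=\mathbb{E}[B_tZ]$ from the definition of $\bar{{\cal H}}$.

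Fix $h\in\bar{{\cal H}}$ with $\varepsilon_k(h_{t_k}-h_{s_k})\geqslant a$ for all $k$, and let $Z\in{\cal C}_1$ be its representative, so that $\|h\|_{\bar{{\cal H}}}^2=\mathbb{E}[Z^2]$. Introduce the Gaussian random variable
\[
G:=\sum_{k=1}^{n}\frac{\varepsilon_k}{l_k}\big(B_{t_k}-B_{s_k}\big)\in{\cal C}_1 .
\]
By linearity of the representation,
\[
\mathbb{E}[GZ]=\sum_{k=1}^{n}\frac{\varepsilon_k}{l_k}\big(\mathbb{E}[B_{t_k}Z]-\mathbb{E}[B_{s_k}Z]\big)=\sum_{k=1}^{n}\frac{\varepsilon_k(h_{t_k}-h_{s_k})}{l_k}\geqslant a\sum_{k=1}^{n}l_k^{-1},
\]
where the last step uses the hypothesis together with $l_k>0$. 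The right-hand side is strictly positive.

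Next we apply Cauchy--Schwarz in $L^2$:
\[
\Big(a\sum_{k=1}^{n}l_k^{-1}\Big)^2\leqslant\big(\mathbb{E}[GZ]\big)^2\leqslant\mathbb{E}[G^2]\,\mathbb{E}[Z^2]={\rm Var}[G]\cdot\|h\|_{\bar{{\cal H}}}^2 .
\]
Here we used that $G$ is centred, so $\mathbb{E}[G^2]={\rm Var}[G]$. Dividing by ${\rm Var}[G]$ gives (\ref{eq:CMLowVar}).

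The one point needing care is that ${\rm Var}[G]>0$, so that the division is legitimate. This holds because the displayed inequality forces $\mathbb{E}[GZ]>0$, hence $G\neq0$ in $L^2$. No genuine obstacle arises in this lemma. The real difficulty is deferred to the subsequent step: bounding ${\rm Var}[G]$ from above by roughly $\sum_k l_k^{2H-2}$ minus cross terms. For $H>1/2$ the increments over disjoint intervals are positively correlated, and the alternating signs $\varepsilon_k$ produce cancellation; that cancellation is what yields the $n^{H+1/2}$ growth in Lemma \ref{lem:CMLowFluct}.
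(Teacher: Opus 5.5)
Your argument is correct and is essentially the paper's proof. The paper packages $h\mapsto\sum_k \frac{\varepsilon_k}{l_k}(h_{t_k}-h_{s_k})$ as a linear functional $\Lambda$ on $\bar{{\cal H}}$ and identifies $\|\Lambda\|_{\bar{{\cal H}}^*}^2$ with ${\rm Var}[G]$ via $\bar{{\cal H}}^*\cong\bar{{\cal H}}\cong{\cal C}_1$ before applying Cauchy--Schwarz, whereas you apply Cauchy--Schwarz directly to $\mathbb{E}[GZ]$ in ${\cal C}_1$, which uses only the inequality half of that identification; your check that ${\rm Var}[G]>0$ (from $\mathbb{E}[GZ]\geqslant a\sum_k l_k^{-1}>0$, using $a>0$) is a point the paper leaves implicit.
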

\begin{proof}
Consider the linear functional 
\[
\Lambda:\bar{{\cal H}}\rightarrow\mathbb{R},\quad\Lambda(h):=\sum_{k=1}^{n}\frac{\varepsilon_{k}}{l_{k}}\big(h_{t_{k}}-h_{s_{k}}\big).
\]
We claim that 
\begin{equation}
\|\Lambda\|_{\bar{{\cal H}}^{*}}^{2}={\rm Var}\Big[\sum_{k=1}^{n}\frac{\varepsilon_{k}}{l_{k}}(B_{t_{k}}-B_{s_{k}})\Big].\label{eq:LamNorm}
\end{equation}
Indeed, under the canonical isometric isomorphisms $\bar{{\cal H}}^{*}\cong\bar{{\cal H}}\cong\mathcal{C}_{1}^{B}$
(the last denotes the first Wiener chaos of $B$), the chaos element
corresponding to the functional $[h\mapsto h_{t}]$ is $B_{t}$ for
every given $t$. By linearity, the chaos element corresponding to
$\Lambda$ is 
\[
\sum_{k=1}^{n}\frac{\varepsilon_{k}}{l_{k}}(B_{t_{k}}-B_{s_{k}}).
\]
This proves the claim (\ref{eq:LamNorm}). On the other hand, for
any $h\in\bar{{\cal H}}$ satisfying the assumption of the lemma,
one has 
\[
\Lambda(h)\geqslant a\sum_{k=1}^{n}l_{k}^{-1}.
\]
The desired estimate (\ref{eq:CMLowVar}) follows from the Cauchy-Schwarz
inequality and the relation (\ref{eq:LamNorm}). 
\end{proof}
Now the problem is reduced to an upper estimate of the variance appearing
in (\ref{eq:CMLowVar}). To this end, one first notes from \cite[Section 2.1.1]{BHOZ08}
in the $H>1/2$ regime that 
\begin{equation}
{\rm Var}\Big[\int_{0}^{1}f(r)dB_{r}\Big]=H(2H-1)\int_{\mathbb{R}^{2}}f(u)f(v)|u-v|^{2H-2}dudv\label{eq:VarRep}
\end{equation}
for any deterministic and regular $f:[0,1]\rightarrow\mathbb{R}$
(say, piecewise smooth). Here one extends $f$ to the real line by
setting it to be zero outside $[0,1]$. In our context, 
\begin{equation}
f(x)=\sum_{k=1}^{n}\frac{\varepsilon_{k}}{l_{k}}{\bf 1}_{I_{k}}(x),\label{eq:f}
\end{equation}
where $I_{k}:=[s_{k},t_{k}]$.

Next, we derive a basic identity which allows one to analyse the RHS
of (\ref{eq:VarRep}) effectively.
\begin{lem}
\label{lem:fFId} Let $\beta\in(0,1)$. Then one has 
\[
\int_{\mathbb{R}^{2}}f(u)f(v)|u-v|^{-\beta}dudv=\beta(1+\beta)\int_{0}^{\infty}\frac{\|\Delta_{r}F\|_{L^{2}(\mathbb{R})}^{2}}{r^{2+\beta}}dr
\]
for all bounded and compactly supported $f:\mathbb{R}\rightarrow\mathbb{R}$,
where $F(x):=\int_{-\infty}^{x}f(u)du$ and $(\Delta_{r}F)(x):=F(x+r)-F(x)$.
\end{lem}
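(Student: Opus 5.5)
The plan is to prove the identity on the Fourier side, where both sides become weighted integrals of $|\hat f|^2$, and then match the two weights. Since $f$ is bounded and compactly supported, $\hat f$ is smooth and $F$ is bounded. Moreover $F$ is constant, equal to $\int f$, to the right of the support and vanishes to the left. Hence $\Delta_r F$ is compactly supported, since $F(x+r)-F(x)=0$ for $x$ far to the right or far to the left, so $\Delta_rF\in L^2$ for every $r$. Note that $\int f$ need not vanish, so the proof must not assume $F\in L^2$.

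\textbf{Step 1 (right-hand side).} Write $(\Delta_rF)(x)=\int_x^{x+r}f(u)\,du=(f*\mathbf 1_{[-r,0]})(x)$. Then Plancherel gives
\[
\|\Delta_rF\|_{L^2}^2=\frac{1}{2\pi}\int_{\mathbb R}|\hat f(\xi)|^2\,\frac{|e^{ir\xi}-1|^2}{\xi^2}\,d\xi=\frac{1}{2\pi}\int_{\mathbb R}|\hat f(\xi)|^2\,\frac{2-2\cos(r\xi)}{\xi^2}\,d\xi .
\]
This expression is nonnegative, so Tonelli lets me integrate against $r^{-2-\beta}dr$ and swap the order. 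The inner integral scales as
\[
\int_0^\infty\frac{2-2\cos(r\xi)}{r^{2+\beta}}\,dr=c_\beta|\xi|^{1+\beta},\qquad c_\beta:=\int_0^\infty\frac{2-2\cos s}{s^{2+\beta}}\,ds ,
\]
which is finite because $1<1+\beta<2$. The right-hand side therefore equals
\[
\frac{\beta(1+\beta)c_\beta}{2\pi}\int_{\mathbb R}|\hat f(\xi)|^2|\xi|^{\beta-1}\,d\xi .
\]

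\textbf{Step 2 (left-hand side).} The kernel $|x|^{-\beta}$ is locally integrable and its distributional Fourier transform is $d_\beta|\xi|^{\beta-1}$, where
\[
d_\beta=2\Gamma(1-\beta)\sin(\pi\beta/2).
\]
So the left-hand side is
\[
\frac{d_\beta}{2\pi}\int_{\mathbb R}|\hat f|^2|\xi|^{\beta-1}\,d\xi .
\]
To keep this rigorous I will avoid distribution theory with a Gaussian regularisation. I write $|x|^{-\beta}=\Gamma(\beta/2)^{-1}\int_0^\infty t^{\beta/2-1}e^{-tx^2}\,dt$, apply Fubini for each fixed $t$ using the explicit Gaussian Fourier transform, and then integrate in $t$. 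This step is the main technical point.

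\textbf{Step 3 (matching constants).} It remains to check that $\beta(1+\beta)c_\beta=d_\beta$. Integrating by parts twice gives
\[
c_\beta=\frac{2}{1+\beta}\int_0^\infty\frac{\sin s}{s^{1+\beta}}\,ds=\frac{2}{\beta(1+\beta)}\int_0^\infty\frac{\cos s}{s^{\beta}}\,ds .
\]
The boundary terms vanish since $(2-2\cos s)/s^{1+\beta}\to0$ and $\sin s/s^{\beta}\to0$ at both endpoints. Finally, $\int_0^\infty s^{-\beta}\cos s\,ds=\Gamma(1-\beta)\sin(\pi\beta/2)$, so $\beta(1+\beta)c_\beta=2\Gamma(1-\beta)\sin(\pi\beta/2)=d_\beta$, as required. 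The only real difficulty is therefore justifying the interchanges in Steps 1–2, which Tonelli (positivity) and the Gaussian subordination handle.

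As an alternative that avoids Fourier analysis, one can write $\|\Delta_rF\|^2$ as $\int\!\!\int f(u)f(v)\,|[u-r,u]\cap[v-r,v]|\,du\,dv$, which equals $\int\!\!\int f(u)f(v)(r-|u-v|)_+\,du\,dv$. Integrating this against $r^{-2-\beta}dr$ gives
\[
\int_{|u-v|}^\infty\frac{r-|u-v|}{r^{2+\beta}}\,dr=\frac{|u-v|^{-\beta}}{\beta(1+\beta)},
\]
which yields the identity directly. The interchange is justified because $\int\!\!\int|f(u)f(v)||u-v|^{-\beta}\,du\,dv<\infty$. This route is simpler, and it is the one I would actually write up.
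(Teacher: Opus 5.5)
The route you say you would actually write up is exactly the paper's proof. You expand $\|\Delta_rF\|_{L^2(\mathbb{R})}^2=\int_{\mathbb{R}^2}f(u)f(v)(r-|u-v|)_+\,du\,dv$ via the overlap length of $[u-r,u]\cap[v-r,v]$, integrate against $r^{-2-\beta}dr$, and evaluate $\int_0^\infty (r-|u-v|)_+r^{-2-\beta}\,dr=|u-v|^{-\beta}/(\beta(1+\beta))$. Your remark that $\int_{\mathbb{R}^2}|f(u)f(v)||u-v|^{-\beta}\,du\,dv<\infty$ supplies the Fubini--Tonelli justification that the paper leaves implicit.

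Your Fourier argument (Steps 1--3) is also correct. Writing $\Delta_rF=f*\mathbf{1}_{[-r,0]}$ properly avoids needing $F\in L^2$, the value $d_\beta=2\Gamma(1-\beta)\sin(\pi\beta/2)$ is right, and the two integrations by parts do give $\beta(1+\beta)c_\beta=d_\beta$.

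For this lemma, however, it is a detour. It needs the Fourier transform of $|x|^{-\beta}$, a regularisation, and a conditionally convergent oscillatory integral. The Gaussian subordination you propose naturally produces $d_\beta$ in the form $\sqrt{\pi}\,2^{1-\beta}\Gamma(\frac{1-\beta}{2})/\Gamma(\frac{\beta}{2})$, so matching it with Step 3 also costs the duplication and reflection formulas. The direct route, by contrast, is a single elementary $dr$-integral.

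What the Fourier route buys is an additional spectral representation of both sides as $\frac{d_\beta}{2\pi}\int_{\mathbb{R}}|\hat f(\xi)|^2|\xi|^{\beta-1}\,d\xi$. This identifies them with a homogeneous Sobolev seminorm of $F$ of order $(1+\beta)/2$. The paper does not need this, since Lemma \ref{lem:VarEst} only uses the real-space form of the right-hand side.
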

\begin{proof}
By definition, one has 
\begin{align*}
\|\Delta_{r}F\|_{L^{2}(\mathbb{R})}^{2} & =\int_{\mathbb{R}^{2}}\Big(\int_{x}^{x+r}f(u)du\Big)\Big(\int_{x}^{x+r}f(v)dv\Big)dx\\
 & =\int_{\mathbb{R}^{2}}f(u)f(v)\Big[\int_{\mathbb{R}}{\bf 1}_{[u-r,u]\cap[v-r,v]}(x)dx\Big]dudv\\
 & =\int_{\mathbb{R}^{2}}f(u)f(v)\big(r-|u-v|\big)_{+}dudv.
\end{align*}
It follows that 
\[
\int_{0}^{\infty}\frac{\|\Delta_{r}F\|_{L^{2}(\mathbb{R})}^{2}}{r^{2+\beta}}dr=\int_{\mathbb{R}^{2}}f(u)f(v)\Big[\int_{0}^{\infty}\frac{\big(r-|u-v|\big)_{+}}{r^{2+\beta}}dr\Big]dudv.
\]
The $dr$-integral is easily evaluated as 
\[
\int_{0}^{\infty}\frac{\big(r-|u-v|\big)_{+}}{r^{2+\beta}}dr=\frac{|u-v|^{-\beta}}{\beta(1+\beta)}.
\]
The desired relation thus follows. 
\end{proof}
The main upper estimate of the variance appearing in (\ref{eq:CMLowVar})
is now stated as follows. 
\begin{lem}
\label{lem:VarEst}There exists a positive constant $C_{H}$ depending
only on $H$, such that 
\[
{\rm Var}\Big[\sum_{k=1}^{n}\frac{\varepsilon_{k}}{l_{k}}(B_{t_{k}}-B_{s_{k}})\Big]\leqslant C_{H}\sum_{k=1}^{n}l_{k}^{2H-2}.
\]
\end{lem}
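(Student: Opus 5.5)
The plan is to combine the variance representation (\ref{eq:VarRep}) with the identity of Lemma \ref{lem:fFId}, applied with $\beta:=2-2H\in(0,1)$ and $f$ given by (\ref{eq:f}). This gives
\[
{\rm Var}\Big[\sum_{k=1}^{n}\frac{\varepsilon_{k}}{l_{k}}(B_{t_{k}}-B_{s_{k}})\Big]=C_H\int_{0}^{\infty}\frac{\|\Delta_{r}F\|_{L^{2}(\mathbb{R})}^{2}}{r^{4-2H}}dr .
\]
Here $F=\sum_k \varepsilon_k F_k$, and $F_k(x)=\int_{-\infty}^x l_k^{-1}{\bf 1}_{I_k}$ is a ramp rising from $0$ to $1$ across $I_k$. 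Because the signs alternate, $F$ takes values in $[0,1]$. It ramps up on $I_1$, down on $I_2$, up on $I_3$, and so on, and is constant between the intervals. Since $n$ is even, $F$ vanishes outside $[s_1,t_n]$.

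The key step is a pointwise bound on $\|\Delta_r F\|_2^2$ that exploits the alternation. We have $0\leqslant F\leqslant 1$, so $|\Delta_rF|\leqslant 1$. Moreover, $\Delta_rF(x)\neq 0$ only if $[x,x+r]$ meets some ramp $I_k$. The support of $\Delta_rF$ is therefore contained in $\bigcup_k [s_k-r,t_k]$. On a fixed ramp, $F$ is $l_k^{-1}$-Lipschitz, so $|\Delta_rF(x)|$ is also controlled by $r/l_k$ when $[x,x+r]$ only touches $I_k$. When $[x,x+r]$ meets several ramps, the crude bound $1$ suffices. This is where alternation matters: the total increment never exceeds $1$, instead of accumulating across ramps. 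The intended bound is
\[
\|\Delta_rF\|_2^2\leqslant \sum_{k=1}^n \min\big(r^2/l_k^2,\,1\big)\,(r+l_k),
\]
which would be proved by splitting $\mathbb{R}$ into the windows $[s_k-r,t_k]$, each of length $r+l_k$.

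The integral would then be done term by term. For each $k$, split $\int_0^\infty$ at $r=l_k$. On $r<l_k$ the integrand is at most $\frac{r^2}{l_k^2}\cdot 2l_k\cdot r^{2H-4}$, and integrating gives $\lesssim l_k^{-1}\,l_k^{2H-1}=l_k^{2H-2}$, which converges at $0$ since $2H-2>-1$. On $r>l_k$ the integrand is at most $2r\cdot r^{2H-4}$, whose integral over $(l_k,\infty)$ is $\lesssim l_k^{2H-2}$ because $2H-3<-1$. Summing over $k$ yields the claim.

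The main obstacle is justifying the pointwise $L^2$ bound rigorously. If $r$ is large, a single $x$ lies in many windows $[s_k-r,t_k]$, so the windows overlap and one must make sure nothing is undercounted. Since each window contributes a nonnegative amount and every point of the support lies in at least one window, bounding $|\Delta_rF(x)|^2$ by $\min(1,\ldots)$ for any one window containing $x$ suffices, and overcounting is harmless. The delicate part is the case where $[x,x+r]$ touches only $I_k$ partially: there one must check that $|\Delta_rF(x)|\leqslant \min(1,r/l_k)$, using that $F$ is constant off the ramps. I would handle this by taking, for each $x$, the window index $k$ of the first ramp met by $[x,x+r]$. If that is the only ramp met, use the Lipschitz bound; if not, use the bound $1$. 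In the latter case $[x,x+r]$ contains the gap between two ramps, and a further check is needed to confirm that the bound $1$ is already accounted for in the sum.
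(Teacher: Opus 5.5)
Your overall route is sound and differs from the paper's. The paper pairs $I_{2k-1},I_{2k}$ into tent functions $F_k$ with disjoint supports, so that at most two of the $\Delta_rF_k(x)$ are nonzero. It then bounds $\|\Delta_rF_k\|_2^2\leqslant\min\{L_kr^2,2r\}$ with $L_k=l_{2k-1}^{-1}+l_{2k}^{-1}$, and finishes with subadditivity of $x\mapsto x^{2-2H}$. Your per-ramp windows $[s_k-r,t_k]$ also work, and your integration split at $r=l_k$ is correct. However, the pointwise step, as you sketch it, has a genuine gap.

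\textbf{The false step.} Your claim that when $[x,x+r]$ meets several ramps ``the crude bound $1$ suffices'' is false when it partially overlaps exactly two adjacent ramps $I_j,I_{j+1}$. Let $g_j:=s_{j+1}-t_j$ and fix $r$ with $g_j<r<\min(l_j,l_{j+1})$. For every $x\in(s_{j+1}-r,t_j)$, a set of length $r-g_j$, the interval $[x,x+r]$ meets only $I_j$ and $I_{j+1}$, both partially. Such $x$ lies only in the windows of $I_j$ and $I_{j+1}$, whose weights are $r^2/l_j^2$ and $r^2/l_{j+1}^2$, both possibly much smaller than $1$. So the bound $1$ is not accounted for in your sum, and your rule (``first ramp met; if not the only one, use $1$'') breaks down. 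Using $|\Delta_rF|\leqslant1$ on this set adds roughly $\int_{g_j}^{\min(l_j,l_{j+1})}(r-g_j)r^{2H-4}\,dr$. This is of order $g_j^{2H-2}$ when $g_j\ll\min(l_j,l_{j+1})$, is not controlled by the $l_k$, and blows up as the gaps shrink. Uniformity over arbitrary partitions is exactly what Lemma \ref{lem:CMLowFluct} needs, so this is fatal.

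\textbf{The fix.} In the two-ramp case, $F$ is constant on the gap and the two ramps have opposite signs. Hence $\Delta_rF(x)=\varepsilon_j(b_j-b_{j+1})$ with $b_k:=|I_k\cap[x,x+r]|/l_k\leqslant\min(r/l_k,1)$. Therefore $|\Delta_rF(x)|^2\leqslant\max_{k\in\{j,j+1\}}\min(r^2/l_k^2,1)$, and $x$ lies in both windows. The crude bound $|\Delta_rF|\leqslant1$ is legitimately absorbed only when $[x,x+r]$ meets at least three ramps. Then the middle ramps lie entirely inside $[x,x+r]$, so they satisfy $l_k\leqslant r$ and their window term equals $1$.

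With the three-way split (one ramp, exactly two, at least three) you obtain $|\Delta_rF(x)|^2\leqslant\sum_{k:\,x\in[s_k-r,t_k]}\min(r^2/l_k^2,1)$. Integrating this gives your $L^2$ bound, and the rest of your argument then goes through.
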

\begin{proof}
According to (\ref{eq:VarRep}) and Lemma \ref{lem:fFId}, 
\begin{equation}
{\rm Var}\Big[\sum_{k=1}^{n}\frac{\varepsilon_{k}}{l_{k}}(B_{t_{k}}-B_{s_{k}})\Big]=c_{H}\int_{0}^{\infty}\frac{\|\Delta_{r}F\|_{L^{2}(\mathbb{R})}^{2}}{r^{4-2H}}dr,\label{eq:VarRepF}
\end{equation}
where 
\[
c_{H}:=H(2H-1)(2-2H)(3-2H)
\]
and the function $F(x):=\int_{-\infty}^{x}f(u)du$ with $f$ being
defined by (\ref{eq:f}).

To estimate the RHS of (\ref{eq:VarRepF}), one further writes 
\[
F=\sum_{k=1}^{n/2}F_{k};\quad F_{k}:=\int_{-\infty}^{x}\Big(\frac{{\bf 1}_{I_{2k-1}}(u)}{l_{2k-1}}-\frac{{\bf 1}_{I_{2k}}(u)}{l_{2k}}\Big)du.
\]
It is easy to see that 
\begin{equation}
0\leqslant F_{k}\leqslant1,\quad\int_{\mathbb{R}}|F_{k}'(x)|dx\leqslant2.\label{eq:FkProp}
\end{equation}
In addition, the supports of the $F_{k}$'s are disjoint. We claim
that 
\begin{equation}
\|\Delta_{r}F_{k}\|_{L^{2}(\mathbb{R})}^{2}\leqslant\min\{L_{k}r^{2},2r\}\label{eq:DelFkEst}
\end{equation}
for all $r>0,$ where $L_{k}:=l_{2k-1}^{-1}+l_{2k}^{-1}$. Indeed,
one has 
\[
\big|(\Delta_{r}F_{k})(x)\big|^{2}=\Big|\int_{0}^{r}F_{k}'(x+u)du\Big|^{2}\leqslant r\int_{0}^{r}\big|F_{k}'(x+u)\big|^{2}du.
\]
It follows that 
\begin{equation}
\|\Delta_{r}F_{k}\|_{L^{2}(\mathbb{R})}^{2}\leqslant r^{2}\|F_{k}'\|_{L^{2}(\mathbb{R})}^{2}=L_{k}r^{2}.\label{eq:DelFk1}
\end{equation}
On the other hand, according to (\ref{eq:FkProp}), one also has 
\begin{align}
\|\Delta_{r}F_{k}\|_{L^{2}(\mathbb{R})}^{2} & \leqslant\int_{\mathbb{R}}\big|(\Delta_{r}F_{k})(x)\big|dx\nonumber \\
 & \leqslant\int_{\mathbb{R}}\int_{0}^{r}\big|F_{k}'(x+u)\big|dudx\leqslant2r.\label{eq:DelFk2}
\end{align}
The estimate (\ref{eq:DelFkEst}) thus follows from (\ref{eq:DelFk1})
and (\ref{eq:DelFk2}).

Now consider the decomposition 
\[
(\Delta_{r}F)(x)=\sum_{k=1}^{n/2}(\Delta_{r}F_{k})(x).
\]
Since the $F_{k}$'s have disjoint supports, for every $x\in\mathbb{R}$
there are at most two nonzero terms in the above summation. As a result,
one has 
\[
\big|(\Delta_{r}F)(x)\big|^{2}\leqslant2\sum_{k=1}^{n/2}\big|(\Delta_{r}F_{k})(x)\big|^{2}.
\]
It follows from (\ref{eq:DelFkEst}) that 
\begin{align*}
\int_{0}^{\infty}\frac{\|\Delta_{r}F\|_{L^{2}(\mathbb{R})}^{2}}{r^{4-2H}}dr\leqslant & 2\sum_{k=1}^{n/2}\int_{0}^{\infty}\frac{\|\Delta_{r}F_{k}\|_{L^{2}(\mathbb{R})}^{2}}{r^{4-2H}}dr\\
\leqslant & 2\sum_{k=1}^{n/2}\int_{0}^{\infty}\frac{\min\{L_{k}r^{2},2r\}}{r^{4-2H}}dr\\
= & \frac{2^{2H-1}}{(2H-1)(1-H)}\sum_{k=1}^{n/2}L_{k}^{2-2H}\\
\leqslant & \frac{2^{2H-1}}{(2H-1)(1-H)}\sum_{k=1}^{n/2}\big(l_{2k-1}^{2H-2}+l_{2k}^{2H-2}\big).
\end{align*}
In view of the relation (\ref{eq:VarRepF}), the result thus follows. 
\end{proof}
\begin{proof}[Proof of Lemma \ref{lem:CMLowFluct}]

According to Lemma \ref{lem:CMLowVar} and Lemma \ref{lem:VarEst},
one has 
\[
\|h\|_{\bar{{\cal H}}}^{2}\geqslant C_{H}a^{2}\cdot\frac{\big(\sum_{k=1}^{n}l_{k}^{-1}\big)^{2}}{\big(\sum_{k=1}^{n}l_{k}^{-(2-2H)}\big).}
\]
Denote $x_{k}:=l_{k}^{-1}$ and $\beta:=2-2H$. Since $\beta\in(0,1)$,
concavity of $x\mapsto x^{\beta}$ implies that 
\[
\frac{\big(\sum_{k=1}^{n}x_{k}\big)^{2}}{\sum_{k=1}^{n}x_{k}^{\beta}}=\Big(\sum_{k=1}^{n}x_{k}\Big)^{2-\beta}\frac{\big(\sum_{k=1}^{n}x_{k}\big)^{\beta}}{\sum_{k=1}^{n}x_{k}^{\beta}}\geqslant n^{\beta-1}\Big(\sum_{k=1}^{n}x_{k}\Big)^{2-\beta}.
\]
On the other hand, since 
\[
\sum_{k=1}^{n}x_{k}^{-1}=\sum_{k=1}^{n}l_{k}\leqslant1,
\]
the Cauchy-Schwarz inequality shows that 
\[
n^{2}\leqslant\Big(\sum_{k=1}^{n}x_{k}^{-1}\Big)\Big(\sum_{k=1}^{n}x_{k}\Big)\leqslant\sum_{k=1}^{n}x_{k}.
\]
As a consequence, 
\[
\|h\|_{\bar{{\cal H}}}^{2}\geqslant C_{H}a^{2}\cdot n^{\beta-1}\cdot n^{2(2-\beta)}=C_{H}a^{2}n^{2H+1}.
\]
This completes the proof of Lemma \ref{lem:CMLowFluct}.

\end{proof}

\subsubsection{The fixed-grid probabilistic estimate}

Our goal of this subsection is to prove the following estimate. We
continue to use the previous notation.
\begin{lem}
\label{lem:FixGdEst}There exist positive constants $C_{1},C_{2}$
depending only on $H$, such that the estimate 
\begin{equation}
\mathbb{P}\big(\varepsilon_{k}(B_{t_{k}}-B_{s_{k}})\geqslant a\ \text{for every }k=1,\cdots n\big)\leqslant C_{1}e^{-C_{2}a^{2}n^{2H+1}}\label{eq:FixGdEst}
\end{equation}
holds uniformly for all $a>0$, $n\geqslant1$ and $0<s_{1}<t_{1}<\cdots<s_{n}<t_{n}<1.$ 
\end{lem}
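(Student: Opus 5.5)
The plan is to reduce the event in (\ref{eq:FixGdEst}) to a one-dimensional Gaussian tail, using the same linear functional that appeared in the proof of Lemma \ref{lem:CMLowVar}. Set $\varepsilon_k:=(-1)^{k-1}$, $l_k:=t_k-s_k$, and consider the centred Gaussian random variable
\[
\xi:=\sum_{k=1}^{n}\frac{\varepsilon_{k}}{l_{k}}\big(B_{t_{k}}-B_{s_{k}}\big).
\]
On the event $\{\varepsilon_k(B_{t_k}-B_{s_k})\geqslant a\ \text{for every }k\}$, multiplying each inequality by the positive weight $l_k^{-1}$ and summing gives $\xi\geqslant a\sum_{k=1}^n l_k^{-1}=:aS$. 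Hence the probability in (\ref{eq:FixGdEst}) is bounded by $\mathbb{P}(\xi\geqslant aS)$. The standard Gaussian tail bound then yields
\[
\mathbb{P}(\xi\geqslant aS)\leqslant \exp\Big(-\frac{a^2S^2}{2\,{\rm Var}[\xi]}\Big).
\]

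It therefore suffices to show that $S^2/{\rm Var}[\xi]\geqslant c_H n^{2H+1}$ uniformly in $n$ and in the partition. This is exactly the deterministic content already established in the proof of Lemma \ref{lem:CMLowFluct}. Lemma \ref{lem:VarEst} gives ${\rm Var}[\xi]\leqslant C_H\sum_k l_k^{2H-2}$. The concavity argument with $x_k=l_k^{-1}$ and $\beta=2-2H$ gives
\[
\frac{S^2}{\sum_k x_k^\beta}\geqslant n^{\beta-1}S^{2-\beta}.
\]
Finally, Cauchy--Schwarz together with $\sum_k l_k\leqslant 1$ gives $S\geqslant n^2$. Combining these three bounds yields $S^2/{\rm Var}[\xi]\geqslant C_H^{-1}n^{\beta-1+2(2-\beta)}=C_H^{-1}n^{2H+1}$. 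In particular, $\xi$ is nondegenerate, since each $l_k>0$ and the variance is finite and positive.

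As in Lemma \ref{lem:CMLowFluct}, I would first reduce to $n$ even by discarding the last interval when $n$ is odd. This step is needed because Lemma \ref{lem:VarEst} is proved by pairing consecutive intervals. Discarding one interval only changes $n$ to $n-1\geqslant n/2$ for $n\geqslant2$, which affects only the constant $C_2$. The case $n=1$ is handled directly: $\mathbb{P}(B_{t_1}-B_{s_1}\geqslant a)\leqslant e^{-a^2/(2l_1^{2H})}\leqslant e^{-a^2/2}$.

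I do not expect any real obstacle. The only subtle point is that the right-hand side of (\ref{eq:FixGdEst}) must be uniform in the partition, and this uniformity is exactly what the scale-free ratio bound inherited from Lemma \ref{lem:CMLowFluct} provides. An alternative route is the Gaussian isoperimetric / Cameron--Martin estimate $\mathbb{P}(B\in A)\leqslant e^{-d_{\bar{\mathcal H}}(0,A)^2/2}$ for convex $A$ combined with Lemma \ref{lem:CMLowFluct}. The direct one-dimensional reduction above avoids that machinery, and yields $C_1=1$, $C_2=c_H/2$.
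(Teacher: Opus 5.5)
Your argument is correct, and it takes a more direct route than the paper's.

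\textbf{The paper's route.} The paper passes through the Cameron--Martin space. It regards $Z=(\varepsilon_k(B_{t_k}-B_{s_k}))_{k=1}^n$ as a Gaussian vector and identifies the norm of its finite-dimensional Cameron--Martin space ${\cal H}_Z$ with the projected norm (\ref{eq:CMProj}). It then uses Lemma \ref{lem:CMLowFluct} to get $d_{{\cal H}_Z}(o,[a,\infty)^n)\geqslant C_H a n^{H+1/2}$. Finally it applies Proposition \ref{prop:GauIso}, which shows that a closed convex set at Cameron--Martin distance $r$ lies in a half-space at distance $r$, to obtain the bound $1-\Phi(r)$.

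\textbf{Your route.} You write the half-space down explicitly: the event lies in $\{\Lambda(B)\geqslant aS\}$ for the same functional $\Lambda$ used in Lemma \ref{lem:CMLowVar}. You then finish with a one-dimensional Gaussian tail bound, using the same core inputs as the paper (Lemma \ref{lem:VarEst}, concavity, and Cauchy--Schwarz with $\sum_k l_k\leqslant 1$).

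\textbf{Comparison.} Nothing is lost by your shortcut. The lower bound in Lemma \ref{lem:CMLowFluct} was itself obtained by testing against $\Lambda$. So the paper's detour through duality and Proposition \ref{prop:GauIso} ends up with the same exponent $a^2S^2/{\rm Var}[\xi]$ that you get directly.

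Your version is shorter, gives explicit constants, and needs neither the identification of ${\cal H}_Z$ nor any isoperimetric input. The paper's version packages the estimate as a deterministic coercivity statement for Cameron--Martin paths. That template is reused for Theorem \ref{thm:2DCounter}, where the bad set is not cut out by a single linear functional, and there Proposition \ref{prop:KeyCoer} combined with Borell's inequality is genuinely needed.

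\textbf{Minor points.} Your separate treatment of $n=1$ and your reduction of odd $n$ to $n-1$ are fine, and slightly more careful than the paper's ``without loss of generality''. The reduction does cost a factor $2^{-(2H+1)}$ in $C_2$, so $C_2=c_H/2$ holds only for even $n$. Your remark on the nondegeneracy of $\xi$ is not needed: if ${\rm Var}[\xi]$ were zero, the probability in question would simply be zero.
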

The proof of Lemma \ref{lem:FixGdEst} relies on the following standard
result from Gaussian analysis. Recall that the Cameron-Martin subspace
of a centered, nondegenerate Gaussian vector $Z=(Z_{1},\cdots,Z_{n})$
is the image ${\cal H}_{Z}$ of the map 
\begin{equation}
{\rm Span}\{Z_{1},\cdots,Z_{n}\}\ni\eta\mapsto z:=\mathbb{E}[\eta Z]\in\mathbb{R}^{n},\label{eq:FDChao}
\end{equation}
whose inner product structure is defined by 
\[
\langle z,w\rangle_{{\cal H}_{Z}}:=\mathbb{E}[\eta\zeta],
\]
where $\eta$ (respectively, $\zeta$) is the preimage of $z$ (respectively,
$w$) under (\ref{eq:FDChao}). Of course, ${\cal H}_{Z}=\mathbb{R}^{n}$
since $Z$ is nondegenerate but the underlying inner product is induced
from the covariance structure of $Z$. More explicitly, one has 
\[
\langle z,w\rangle_{{\cal H}_{Z}}=z^{T}Q^{-1}w
\]
where $Q$ is the covariance matrix of $Z$, i.e. $Z\sim N(0,Q).$
\begin{prop}
\label{prop:GauIso}Let $Z$ be a centered, nondegenerate Gaussian
vector. Let $K$ be a closed, convex subset of $\mathbb{R}^{n}$.
Suppose that 
\[
r:=d_{{\cal H}_{Z}}(o,K):=\inf_{z\in K}\|z\|_{{\cal H}_{Z}}>0.
\]
Then 
\[
\mathbb{P}(Z\in K)\leqslant1-\Phi(r),
\]
where $\Phi(t):=\frac{1}{\sqrt{2\pi}}\int_{-\infty}^{t}e^{-u^{2}/2}du$
is the standard normal distribution function.
\end{prop}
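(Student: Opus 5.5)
The plan is to reduce to a half-space and then to a one-dimensional Gaussian computation. First I would whiten $Z$. Write $Z=Q^{1/2}G$ with $G\sim N(0,I_n)$ a standard Gaussian vector. For $z=Q^{1/2}g$ one has
\[
\|z\|_{{\cal H}_{Z}}^{2}=z^{T}Q^{-1}z=|g|^{2}.
\]
So $Q^{-1/2}$ is an isometry from $({\cal H}_{Z},\|\cdot\|_{{\cal H}_{Z}})$ onto Euclidean $\mathbb{R}^{n}$. The set $\tilde K:=Q^{-1/2}K$ is again closed and convex, since $Q^{-1/2}$ is linear and invertible. Its Euclidean distance to the origin equals $r$, and $\mathbb{P}(Z\in K)=\mathbb{P}(G\in\tilde K)$. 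It therefore suffices to prove the claim for a standard Gaussian vector and the Euclidean distance.

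Next I would use the separating hyperplane. Since $\tilde K$ is closed, convex, nonempty and $r>0$, there is a unique nearest point $g_{0}\in\tilde K$ with $|g_{0}|=r$. Set $u:=g_{0}/r$. The projection characterisation gives $\langle g-g_{0},g_{0}\rangle\geqslant0$ for all $g\in\tilde K$, hence
\[
\tilde K\subseteq\{g:\langle g,u\rangle\geqslant r\}.
\]
Since $\langle G,u\rangle\sim N(0,1)$, it follows that $\mathbb{P}(G\in\tilde K)\leqslant\mathbb{P}(\langle G,u\rangle\geqslant r)=1-\Phi(r)$. (If $K$ is empty the claim is trivial.)

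The only point needing care is the variational inequality for the projection onto a closed convex set, which is standard. Existence of the minimiser uses closedness together with a minimising sequence that is bounded, and the inequality follows from convexity by differentiating $|g_{0}+t(g-g_{0})|^{2}$ at $t=0^{+}$. I do not expect a genuine obstacle here.

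For the application in Lemma \ref{lem:FixGdEst}, I would take $Z=(\varepsilon_{k}(B_{t_{k}}-B_{s_{k}}))_{k}$ and $K=\{z:z_{k}\geqslant a\ \forall k\}$. Lemma \ref{lem:CMLowFluct} together with the finite-dimensional identification of ${\cal H}_{Z}$ gives $r\geqslant C_{H}an^{H+1/2}$. The Gaussian tail bound $1-\Phi(r)\leqslant e^{-r^{2}/2}$ then yields the required $e^{-C_{2}a^{2}n^{2H+1}}$ estimate.
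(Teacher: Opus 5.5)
Your proposal is correct and follows the same route as the paper. Like the paper, you whiten by $Q^{-1/2}$ to reduce to a standard Gaussian vector and the Euclidean distance, then use the projection inequality at the nearest point to place $Q^{-1/2}K$ inside the half-space $\{\langle u,x\rangle\geqslant r\}$, and finish with the one-dimensional tail $\mathbb{P}(\langle u,G\rangle\geqslant r)=1-\Phi(r)$. Your additional remarks (the isometry identity $z^{T}Q^{-1}z=|Q^{-1/2}z|^{2}$, the trivial empty-set case, and the bound $1-\Phi(r)\leqslant e^{-r^{2}/2}$ used in Lemma \ref{lem:FixGdEst}) are all sound.
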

\begin{proof}
This is essentially the Gaussian isoperimetric inequality (cf. \cite{Led94}).
For completeness, let us provide the short and simple proof in the
current finite dimensional setting. Let $Q:={\rm Cov}[Z]$ and define
$X:=Q^{-1/2}Z\sim N(0,{\rm Id})$. Also set $L:=Q^{-1/2}K$. Note
that $L$ is again a closed, convex subset of $\mathbb{R}^{n}$. In
addition, one has 
\[
d_{\,\mathbb{R}^{n}}(o,L)=\inf_{x\in L}|x|_{\mathbb{R}^{n}}=\inf_{z\in K}\big|Q^{-1/2}z\big|_{\mathbb{R}^{n}}=\inf_{z\in K}\|z\|_{{\cal H}_{Z}}=r.
\]
Let $x_{*}\in L$ be the unique minimiser of $L\ni x\mapsto|x|_{\mathbb{R}^{n}}$.
Convexity implies that 
\[
\langle x_*,x-x_{*}\rangle_{\mathbb{R}^{n}}\geqslant0
\]
for all $x\in L$. As a result, one has 
\[
L\subseteq\big\{ x:\langle u,x\rangle_{\mathbb{R}^{n}}\geqslant r\big\},
\]
where $u:=x_{*}/|x_{*}|_{\mathbb{R}^{n}}$. It follows that 
\[
\mathbb{P}(Z\in K)=\mathbb{P}(X\in L)\leqslant\mathbb{P}\big(\langle u,X\rangle_{\mathbb{R}^{n}}\geqslant r\big)=1-\Phi(r),
\]
since $\langle u,X\rangle\sim N(0,1)$.
\end{proof}
\begin{proof}[Proof of Lemma \ref{lem:FixGdEst}]

Consider the nondegenerate Gaussian vector 
\[
Z:=\Big(\varepsilon_{k}(B_{t_{k}}-B_{s_{k}})\Big)_{k=1}^{n}.
\]
Define the projection 
\[
P_{Z}:\bar{{\cal H}}\rightarrow\mathbb{R}^{n},\quad P_{Z}(h):=\Big(\varepsilon_{k}(h_{t_{k}}-h_{s_{k}})\Big)_{k=1}^{n}.
\]
It is standard that ${\cal H}_{Z}=P_{Z}(\bar{{\cal H}})=\mathbb{R}^{n}$
and 
\begin{equation}
|z|_{{\cal H}_{Z}}=\inf\big\{\|h\|_{\bar{{\cal H}}}:P_{Z}(h)=z\big\}.\label{eq:CMProj}
\end{equation}
Let $K:=[a,\infty)^{n}.$ According to Lemma \ref{lem:CMLowFluct}
and (\ref{eq:CMProj}), one has 
\[
d_{{\cal H}_{Z}}(o,K)\geqslant C_{H}an^{H+1/2},
\]
where $C_{H}$ is a positive constant that does not depend on $a,n$
or the grid points $s_{i},t_{j}$. It follows from Proposition \ref{prop:GauIso}
that 
\[
\mathbb{P}(Z\in K)\leqslant2e^{-C_{H}'a^{2}n^{2H+1}}.
\]
This yields the desired inequality (\ref{eq:FixGdEst}).

\end{proof}

\subsubsection{The general probabilistic estimate}

Finally, we are going to remove the grid points in the estimate (\ref{eq:FixGdEst}).
This is achieved by a simple continuity argument. Given $n\geqslant1$,
let ${\cal P}_{n}$ denote the set of partitions 
\[
({\bf s},{\bf t}):0<s_{1}<t_{1}<\cdots<s_{n}<t_{n}<1.
\]
For each $a>0$ and $({\bf s},{\bf t})\in{\cal P}_{n}$, we define
$E_{{\bf s},{\bf t}}(a)$ to be the event that 
\[
\varepsilon_{k}\big(B_{t_{k}}-B_{s_{k}}\big)\geqslant a
\]
for every $k=1,\cdots,n.$
\begin{lem}
\label{lem:GenProbEst}There exist positive constants $C_{1},C_{2}$
depending only on $H$ and an integer $N$ depending only on $H,a$$,$
such that the estimate 
\[
\mathbb{P}\big(\big\{ E_{{\bf s},{\bf t}}(a)\ \text{occurs for some }\text{partition}({\bf s},{\bf t})\in{\cal P}_{n}\big\}\big)\leqslant C_{1}e^{-C_{2}a^{2}n^{2H+1}}
\]
holds for all $n>N$.
\end{lem}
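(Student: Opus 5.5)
We discretise the partition points onto a fixed grid and pay for the discretisation with a Hölder-continuity bound on the fBM, then take a union bound over grid configurations. Fix a mesh $\delta=1/M$, with $M$ an integer to be chosen depending on $n$ and $a$. Let $D_M:=\{j/M:0\leqslant j\leqslant M\}$.

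\emph{Step 1: reduction to the grid.} Suppose $E_{{\bf s},{\bf t}}(a)$ occurs for some $({\bf s},{\bf t})\in{\cal P}_n$. Round each $s_k,t_k$ down to the nearest grid point, giving $\bar s_k,\bar t_k\in D_M$. Then
\[
\varepsilon_k(B_{\bar t_k}-B_{\bar s_k})\geqslant a-2\,\omega_B(\delta),
\]
where $\omega_B(\delta):=\sup_{|u-v|\leqslant\delta}|B_u-B_v|$. The rounded points satisfy $\bar s_1\leqslant\bar t_1\leqslant\cdots$, but they may coincide. If $\bar s_k=\bar t_k$, the increment vanishes and cannot exceed $a/2$. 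So on $\{\omega_B(\delta)\leqslant a/4\}$ the rounded points are strictly ordered within each pair. Adjacent pairs may still share an endpoint ($\bar t_k=\bar s_{k+1}$). This is harmless: Lemma \ref{lem:CMLowFluct} and hence Lemma \ref{lem:FixGdEst} only use the disjointness of the open intervals, which survives, and the variance argument of Lemma \ref{lem:VarEst} only uses disjoint supports up to endpoints. We therefore apply Lemma \ref{lem:FixGdEst}, extended by a limiting argument to non-strict orderings, with $a/2$ in place of $a$.

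\emph{Step 2: union bound.} The number of admissible grid configurations is at most $\binom{M+1}{2n}\cdot(\text{const})^n\leqslant (CM)^{2n}$. Hence
\[
\mathbb{P}\big(\cup_{{\bf s},{\bf t}}E_{{\bf s},{\bf t}}(a)\big)\leqslant (CM)^{2n}C_1e^{-C_2a^2n^{2H+1}/4}+\mathbb{P}\big(\omega_B(1/M)>a/4\big).
\]

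\emph{Step 3: choice of $M$.} By Fernique's theorem, or Garsia--Rodemich--Rumsey applied to the $\alpha$-Hölder norm with $\alpha<H$ close to $H$, we have $\mathbb{P}(\omega_B(\delta)>a/4)\leqslant C e^{-c a^2\delta^{-2\alpha}}$. We need $\delta^{-2\alpha}\gtrsim n^{2H+1}$, so we take $M=\lceil n^{(2H+1)/(2\alpha)}\rceil$, polynomial in $n$. Then $(CM)^{2n}=e^{O(n\log n)}$. Since $2H+1>1$, this factor is absorbed into $e^{-C_2a^2n^{2H+1}/8}$ once $n>N(H,a)$. This is exactly why $N$ depends on $a$.

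The main obstacle is balancing these three choices: the combinatorial entropy $n\log M$ must stay negligible against $a^2n^{2H+1}$, while the modulus estimate still yields $e^{-ca^2n^{2H+1}}$. The polynomial choice of $M$ above handles both. A secondary technical point is making sure Lemma \ref{lem:FixGdEst} tolerates touching or degenerate intervals, which Step 1 addresses.
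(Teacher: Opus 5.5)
Your proof is correct and takes essentially the same route as the paper. Both arguments discretise onto a polynomially fine grid (the paper's mesh $n^{-q}$ with $q>1+\frac{1}{2H}$ plays the role of your $1/M\approx n^{-(2H+1)/(2\alpha)}$), control the discretisation error by a Gaussian tail bound on the modulus of continuity, apply Lemma \ref{lem:FixGdEst} with $a/2$, and absorb the $e^{O(n\log n)}$ combinatorial factor once $n>N(H,a)$.

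The one difference is the rounding direction. The paper rounds inward: $\hat s_k$ is the nearest grid point to the right of $s_k$ and $\hat t_k$ the nearest to the left of $t_k$, which is possible because $t_k-s_k>2\delta_n$ on the good event. The rounded configuration therefore lies in ${\cal P}_n$ automatically, so you can drop your extension of Lemma \ref{lem:FixGdEst} to touching intervals and to $\bar s_1=0$; that extension is valid but unnecessary.
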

\begin{proof}
Define 
\[
E_{n}(a):=\big\{ E_{{\bf s},{\bf t}}(a)\ \text{occurs for some }\text{partition }({\bf s},{\bf t})\in{\cal P}_{n}\big\}.
\]
Let $\delta_{n}:=n^{-q}$ with $q$ being specified later on. Let
${\cal D}(\delta_{n})$ be the partition of $[0,1]$ into equal subintervals
of length $\delta_{n}$. Consider the event 
\[
G_{n}(a):=\Big\{ m_{B}(2\delta_{n})<\frac{a}{4}\Big\},
\]
where 
\[
m_{B}(\delta):=\sup_{s,t\in[0,1],|t-s|\leqslant\delta}|B_{t}-B_{s}|
\]
is the modulus of continuity of $B$. 

We claim that 
\[
E_{n}(a)\cap G_{n}(a)\subseteq\bigcup_{\substack{(\hat{{\bf s}},\hat{{\bf t}})\in{\cal P}_{n}\\
\hat{s}_{i},\hat{t}_{j}\in{\cal D}(\delta_{n})
}
}E_{\hat{{\bf s}},\hat{{\bf t}}}(a/2).
\]
Indeed, let $\omega\in{\rm LHS}$. Then 
\begin{equation}
\varepsilon_{k}\big(B_{t_{k}}-B_{s_{k}}\big)\geqslant a\label{eq:OsIncfBM}
\end{equation}
holds with some $2n$-tuple $({\bf s},{\bf t})$ for every $k$. It
is obvious that $t_{k}-s_{k}>2\delta_{n}$ for every $k$, for otherwise
the relation (\ref{eq:OsIncfBM}) would contradict the fact that $m_{\omega}(2\delta_{n})<a/4$.
Define the $2n$-tuple $(\hat{{\bf s}},\hat{{\bf t}})$ where $\hat{s}_{k}>s_{k}$
(respectively, $\hat{t}_{k}<t_{k}$) is the partition point in ${\cal D}(\delta_{n})$
that is closest to $s_{k}$ from the right (respectively, closest
to $t_{k}$ from the left). The points $\hat{s}_{k},\hat{t}_{k}$
exist because $t_{k}-s_{k}>2\delta_{n}$. It follows that 
\begin{align*}
\varepsilon_{k}\big(B_{\hat{t}_{k}}-B_{\hat{s}_{k}}\big) & \geqslant\varepsilon_{k}\big(B_{t_{k}}-B_{t_{k}}\big)-\big|B_{s_{k}}-B_{\hat{s}_{k}}\big|-\big|B_{t_{k}}-B_{\hat{t}_{k}}\big|\\
 & \geqslant a-2\omega_{B}(\delta_{n})>a-\frac{a}{2}=\frac{a}{2}
\end{align*}
for every $k$. This proves the claim. 

Note that the cardinality of $2n$-tuples $(\hat{{\bf s}},\hat{{\bf t}})$
chosen from the partition ${\cal D}(\delta_{n})$ is at most 
\[
|{\cal D}(\delta_{n})|^{2n}=e^{2nq\log n}.
\]
According to Lemma \ref{lem:FixGdEst}, one obtains that 
\begin{equation}
\mathbb{P}\big(E_{n}(a)\cap G_{n}(a)\big)\leqslant e^{2nq\log n}\times C_{1}e^{-C_{2}a^{2}n^{2H+1}}.\label{eq:EnaMain}
\end{equation}
The first term is negligible comparing to the second term for all
$n>N(H,a,q).$

Now it remains to estimate $\mathbb{P}(G_{n}(a)^{c}).$ To this end,
we first claim that 
\begin{equation}
\mathbb{P}(m_{B}(r)>\lambda)\leqslant c_H r^{-1}e^{-C_{H}\lambda^{2}/r^{2H}}\label{eq:TailMod}
\end{equation}
for all $r\in(0,1)$ and $\lambda>0$. Indeed, note that 
\[
m_{B}(r)\leqslant2\max_{0\leqslant k\leqslant r^{-1}-2}\sup_{t\in J_{k}}\big|B_{t}-B_{kr}\big|
\]
where $J_{k}:=[kr,(k+2)r].$ By applying a simple union bound, the
fractional Brownian scaling and Borel's inequality (see \cite[Theorem 2.1]{Adl90}),
one finds that 
\[
\mathbb{P}(m_{B}(r)>\lambda)\leqslant  r^{-1}\mathbb{P}\Big(\sup_{t\in[0,2]}|\tilde{B}_{t}|>\frac{u}{2r^{H}}\Big)\leqslant c_H r^{-1}e^{-C_{H}u^{2}/r^{2H}}.
\]
This proves the estimate (\ref{eq:TailMod}). It follows that 
\[
\mathbb{P}\big(G_{n}(a)^{c}\big)\leqslant C_{3}n^{q}e^{-C_{4}a^{2}n^{2Hq}}.
\]
By choosing $q>1+\frac{1}{2H}$, the RHS of the above inequality is
negligible comparing to the main estimate (\ref{eq:EnaMain}). As
a consequence, one concludes that 
\[
\mathbb{P}(E_{n}(a))\leqslant\mathbb{P}\big(E_{n}(a)\cap G_{n}(a)\big)+\mathbb{P}(G_{n}(a)^{c})\leqslant C_{5}e^{-C_{6}a^{2}n^{2H+1}}
\]
for all $n>N(H,a)$.
\end{proof}
According to Lemma \ref{lem:RedOsEv} and Lemma \ref{lem:GenProbEst},
the proof of Theorem \ref{thm:YoungCounterEx} is now complete.

\subsection{A genuine 2D example}

A major unsatisfactory point about the 1D example of Theorem \ref{thm:YoungCounterEx} is that it is not a genuine elliptic system (the dimension of noise is larger than the dimension of solution). The availability of additional noise dimension has made the underlying oscillation mechanism quite transparent. On the other hand, any naive attempt of adding an additional state dimension to the system will turn it back into the Gaussian lower tail regime.

In this section, we present a genuine $2\times 2$ example for which a $(2H+1)$-Weibull upper tail remains valid. The main result is stated as follows.

\begin{thm}
\label{thm:2DCounter} Let $\eta:[0,\infty)\rightarrow[0,\infty)$ be a smooth function that is constant near the origin and such that  $\eta(r)=r$ for $r\geqslant2.$ Let $B_{t}=(B_{t}^{1},B_{t}^{2})$ be a fractional
Brownian motion with Hurst parameter $H>1/2$. Consider the SDE 
\begin{equation}
dX_{t}=V_{1}(X_{t})dB_{t}^{1}+V_{2}(X_{t})dB_{t}^{2},\quad X_{0}=0\in\mathbb{R}^{2},\label{eq:2DCounter}
\end{equation}
where 
\begin{equation}
\big(V_{1}(x),V_{2}(x)\big)=\left(\begin{array}{cc}
\cos\eta(|x|) & -\sin\eta(|x|)\\
\sin\eta(|x|) & \cos\eta(|x|).
\end{array}\right)\label{eq:2DVFld}
\end{equation}
Then there exist positive constants $C_{1},C_{2}$, such that 
\[
\mathbb{P}\Big(\sup_{0\leqslant t\leqslant1}|X_{t}|>R\Big)\leqslant C_{1}e^{-C_{2}R^{2H+1}}
\]
for all large $R$. 
\end{thm}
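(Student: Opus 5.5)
The plan is to reduce Theorem \ref{thm:2DCounter} to the oscillation mechanism of Theorem \ref{thm:YoungCounterEx}, working in polar coordinates. Write $X_t=r_t(\cos\varphi_t,\sin\varphi_t)$ and $e(\theta):=(\cos\theta,\sin\theta)$. For $r\geqslant2$ the matrix in (\ref{eq:2DVFld}) is the rotation by the angle $r$. The Young chain rule therefore gives, away from the origin,
\[
dr_t=\langle e(\psi_t),dB_t\rangle,\qquad r_t\,d\varphi_t=\langle e(\psi_t+\pi/2),dB_t\rangle,\qquad \psi_t:=\varphi_t-r_t .
\]
The key point is that the noise direction which pushes the solution outward is $e(\psi_t)$. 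While $\varphi$ is essentially frozen, this direction rotates by a full turn every time $r$ increases by $2\pi$. Hence reaching radius $R$ forces $\asymp R$ crossings of thin annuli in which the radial push comes from \emph{alternating} directions $u,-u,u,-u,\dots$. This is the analogue of Lemma \ref{lem:RedOsEv}.

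\textbf{Step 1: reduction to an oscillation event.} Suppose $\sup_t|X_t|>R$. I would only use the outer region $r\in[R/2,R]$. For each $k$, choose an annulus $A_k=\{r\in[\rho_k,\rho_k+c]\}$ with $c>0$ small, such that consecutive annuli are separated by $\pi$ in $r$. Let $[s_k,t_k]$ be the last crossing of $A_k$ from inside to outside, exactly as in Lemma \ref{lem:RedOsEv}; this produces $n\asymp R$ ordered intervals.

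On $[s_k,t_k]$ one has $|d\varphi|\lesssim|dB|/R$. Provided the angular drift over each such interval is at most a small constant $\epsilon$, the direction $e(\psi_t)$ stays within $O(c+\epsilon)$ of $u_k:=e(\varphi_{s_k}-\rho_k)$. It follows that
\[
\langle u_k,B_{t_k}-B_{s_k}\rangle\geqslant c/2 ,
\]
after absorbing the error term $\int\langle e(\psi_t)-u_k,dB_t\rangle$ using a Young estimate and the smallness of $\psi-u_k$.

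To make the directions deterministic, I would discretise the angle $\varphi_{s_k}$ to a grid of mesh $\epsilon$. Since $\varphi$ drifts by at most $o(1)$ between consecutive annuli, one can choose the annuli so that $u_{k+1}\approx-u_k$ up to $O(\epsilon)$. The union over the discretised angle sequences costs at most $e^{Cn}$ terms, which is negligible against $e^{-cn^{2H+1}}$.

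\textbf{Step 2: Cameron-Martin lower bound.} I would generalise Lemma \ref{lem:CMLowFluct} to $\mathbb{R}^2$-valued paths with constraints $\langle u_k,h_{t_k}-h_{s_k}\rangle\geqslant a$, where $u_{2k}=-u_{2k-1}$ exactly. Testing against the functional $\Lambda(h)=\sum_k l_k^{-1}\langle u_k,h_{t_k}-h_{s_k}\rangle$ as in Lemma \ref{lem:CMLowVar}, the variance splits into the two independent coordinates. Each coordinate has the form treated in Lemma \ref{lem:VarEst}, with $f=\sum_k l_k^{-1}u_k^{(i)}\mathbf{1}_{I_k}$. The pairwise cancellation $u_{2k}=-u_{2k-1}$ ensures that each $F_k$ still satisfies $|F_k|\leqslant1$ and has disjoint support, so the proof of Lemma \ref{lem:VarEst} goes through verbatim. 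The residual $O(\epsilon)$ mismatch in the directions would be handled by slightly lowering $a$ and keeping the exact alternating pairs in the test functional.

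Proposition \ref{prop:GauIso}, together with the grid-removal argument of Lemma \ref{lem:GenProbEst}, then gives the bound $e^{-Ca^2n^{2H+1}}$ with $n\asymp R$.

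\textbf{Main obstacle.} The hard part is controlling the angular drift of $\varphi$ during the crossings without paying a Gaussian price; a global cutoff on $\|B\|$ only costs $e^{-R^2}$, which is too heavy. The first thing I would try is a purely local control. On the event $G_n$ of Lemma \ref{lem:GenProbEst}, where the modulus $m_B(n^{-q})$ is small, the drift of $\varphi$ over any interval is bounded by $R^{-1}$ times the Young-integral size of $B$ on that interval. I would then bound the number of mesh-sized blocks contained in $[s_k,t_k]$ using the fact that $r$ must actually traverse the annulus.

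If this fails, the fallback is to discard the crossings with large angular drift. The total drift $\int|d\varphi|\lesssim\|B\|_{1\text{-var-like}}/R$ can only be large on a minority of crossings unless $\|B\|$ is of order $R^{2}$, and that event has probability well below $e^{-R^{2H+1}}$. This still leaves $\gtrsim R$ good alternating crossings, which is all that Step 2 requires.
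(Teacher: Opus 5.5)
\textbf{There is a genuine gap, at the point you yourself flagged as the main obstacle.}

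Step 1 is never established. It reduces $\{\sup_t|X_t|>R\}$ to an event involving only increments of $B$ along nearly alternating deterministic directions. Neither of your two remedies closes it. Step 2 and the grid removal are plausible extensions of Lemmas \ref{lem:CMLowVar}--\ref{lem:GenProbEst}, but they are only as good as this reduction.

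To get $\langle u_k,B_{t_k}-B_{s_k}\rangle\geqslant c/2$ you must bound the Young error $\int_{s_k}^{t_k}\langle e(\psi_t)-u_k,dB_t\rangle$. Smallness of $\sup_{I_k}|e(\psi)-u_k|$ does not do this. By the Young--Lo\`eve estimate the error is of size
$$|e(\psi_{s_k})-u_k|\,|B_{t_k}-B_{s_k}|+C\|e(\psi)\|_{q\text{-var};I_k}\|B\|_{p\text{-var};I_k},$$
and none of these factors is controlled:
\begin{itemize}
\item $|B_{t_k}-B_{s_k}|$ can be large transversally to $u_k$. A tangential move of size $\epsilon R$ only shifts $\varphi$ by $\epsilon$.
\item The $q$-variation of $r$ is not $O(c)$ just because $r$ stays in a band of width $c$, since $r$ may oscillate inside the band.
\item The random crossing interval can be long. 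Traversing the annulus gives no upper bound on the time spent in it, so the modulus-of-continuity event $G_n$ controls neither this nor any Young integral.
\end{itemize}

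You could try to make these local quantities small on all but a small fraction of the $n\asymp R$ crossings using $B$ alone, e.g.\ via $\sum_k\|B\|^p_{p\text{-var};I_k}\leqslant\|B\|^p_{p\text{-var}}\lesssim R$. That condition fails with probability about $e^{-cR^{2/p}}$. Since $2/p<2H<2H+1$, this event cannot be discarded. There is also no ``1-variation-like'' norm of $B$ to run your fallback with, because fBM has infinite $1$-variation.

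The claimed alternation $u_{k+1}\approx-u_k$ has the same problem. Between $t_k$ and $s_{k+1}$ the path is unconstrained, so nothing controls how much $\varphi$ changes there.

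\textbf{The missing idea is to avoid any reduction to an event about $B$ itself.} The paper uses Borell's inequality $\mu(G_M+r\mathcal{K})\geqslant\Phi(\kappa+r)$. Here $G_M=\{\|w\|_{p\text{-var}}\leqslant M\}$ has fixed positive probability, $\mathcal{K}$ is the unit ball of $\bar{\mathcal{H}}$, and $\kappa=\Phi^{-1}(\mu(G_M))$. Outside an event of probability at most $1-\Phi(\kappa+r)$, the driver is $w+h$ with $\|w\|_{p\text{-var}}\leqslant M$ and $\|h\|_{\bar{\mathcal{H}}}\leqslant r$. It then suffices to prove the deterministic perturbative coercivity of Proposition \ref{prop:KeyCoer}.

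With $w$ bounded, the rough part is harmless. At most $M^p\delta^{-p}=O(1)$ crossings have $\|w\|_{p\text{-var};I_j}>\delta$, and the Young error terms $\Xi_{I_j}$ are small on all but $o(R)$ crossings.

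The Cameron--Martin part $h$ carries all the cost. It is treated by length versus turning on each crossing of a $3\pi$-wide annulus (Lemmas \ref{lem:ILength} and \ref{lem:ILengthPert}), and one of two things happens:
\begin{itemize}
\item $h$ has arclength $\gtrsim R$ on that crossing. This can happen on at most $\|h\|_{1\text{-var}}/R\lesssim R^{H-1/2}$ crossings, because $\|h\|_{1\text{-var}}\lesssim\|h\|_{\bar{\mathcal{H}}}$. This is the rigorous version of your fallback: rotating $\varphi$ costs length, and length is finite and Cameron--Martin-controlled for $h$ but not for $B$.
\item $h$ makes an effective turn on that crossing.
\end{itemize}
The bound $\|h\|_{\bar{\mathcal{H}}}\gtrsim\mathcal{N}_{\mathrm{eff}}(h)^{H+1/2}$ of Lemma \ref{lem:CMEffTurn} then comes from the fractional Sobolev seminorm of $\dot h$, which is superadditive over disjoint intervals. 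Because this bound is local to each crossing, it never needs an antipodal pairing between different crossings. Hence it never needs control of $\varphi$ between them.
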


Our main idea of proving Theorem \ref{thm:2DCounter} can be vaguely
summarised as follows. Consider the solution map $w\mapsto(X_{t}^{w})_{0\leqslant t\leqslant1}$.
The key insight is that if $h$ is a Cameron-Martin path that drives
the dynamics to a distance of order $R$, i.e. 
\[
\sup_{0\leqslant t\leqslant1}|X_{t}^{h}|\geqslant R,
\]
the path $h$ has to be either longer than $O(R^{H+1/2})$ or it has
to make at least $O(R)$ number of ``effective turns''. The latter
case forces its Cameron-Martin norm to be at least $O(R^{H+1/2})$.
A technical part of the argument is to show that such a coercivity
property is robust under perturbation of $h$ by $w\in C^{p\text{-var}}([0,1];\mathbb{R}^{2})$
with bounded $p$-variation norm. This perturbative coercivity estimate
is needed to transfer the deterministic estimate to a probabilistic
one by using the Gaussian isoperimetric inequality. The 
mechanism here can be viewed as a two-dimensional analogue of Theorem
\ref{thm:YoungCounterEx}.

\subsubsection{A perturbative coercivity estimate}

The essential ingredient for the proof of Theorem \ref{thm:2DCounter}
is the following coercivity estimate. This is a more delicate two-dimensional
version of Lemma \ref{lem:CMLowFluct}, which is also robust under
perturbation. Given $w\in C^{p\text{-var}}([0,1];\mathbb{R}^{2})$
($1/H<p<2$), we use $X_{t}^{w}$ to denote the solution to the differential
equation (\ref{eq:2DCounter}) driven by the path $w$. 
\begin{prop}
\label{prop:KeyCoer}Let $p\in(H^{-1},2).$ For any $M>0,$ there
exist positive constants $c_{*}$ and $R_{*}$ depending only on $p,H,V$
and $M$, such that the following implication 
\begin{equation}
\sup_{0\leqslant t\leqslant1}|X_{t}^{w+h}|\geqslant R,\ R>R_{*}\implies\|h\|_{\bar{{\cal H}}}\geqslant c_{*}R^{H+1/2}\label{eq:KeyCoer}
\end{equation}
holds uniformly over all $h\in\bar{{\cal H}}$ and all $w\in C^{p\text{-var}}([0,1];\mathbb{R}^{2})$
satisfying $\|w\|_{p\text{-var}}\leqslant M$.
\end{prop}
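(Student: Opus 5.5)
The plan is to run the one-dimensional mechanism of Lemma \ref{lem:CMLowFluct} along the crossings of radial shells, with the direction vectors allowed to drift slowly. For $|x|\geqslant2$ one has $V(x)={\rm Rot}(|x|)$. Writing $X=\rho\,u(\theta)$ with $u(\phi):=(\cos\phi,\sin\phi)$ and $g:=w+h$, the equation (\ref{eq:2DCounter}) gives, in the Young sense,
\[
d\rho_t=\langle u(\theta_t-\rho_t),dg_t\rangle,\qquad \rho_t\,d\theta_t=\langle u(\theta_t-\rho_t+\pi/2),dg_t\rangle .
\]
Thus the direction in which $g$ pushes radially is $u(\theta-\rho)$. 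This direction rotates by $\pi$ each time $\rho$ increases by $\pi$, while $\theta$ changes only at rate $O(|dg|/\rho)$. Suppose $\sup_t|X^{g}_t|\geqslant R$. Fix a small $\delta$ and the radii $a_k:=R/2+k\pi$ with $1\leqslant k\leqslant n\asymp R$. Define crossing intervals $[s_k,t_k]$ as in Lemma \ref{lem:RedOsEv}: $t_k$ is the first time $\rho=a_k+\delta$, and $s_k$ is the last time before $t_k$ at which $\rho=a_k$. On $[s_k,t_k]$ the radial equation gives
\[
\delta=\langle e_k,g_{t_k}-g_{s_k}\rangle+{\rm err}_k,\qquad e_k:=u(\theta_{s_k}-a_k),
\]
where ${\rm err}_k$ is controlled by $\|g\|_{p\text{-var};[s_k,t_k]}$ times the oscillation of $\theta-\rho$ on that interval, via the Young estimate. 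Since $a_{k+1}-a_k=\pi$, we get $e_{k+1}=-u(\theta_{s_{k+1}}-a_k)$, so consecutive directions are nearly opposite, up to $D_k:=|\theta_{s_{k+1}}-\theta_{s_k}|$.

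Step 1: reductions. Since $H>1/2$, Lemma \ref{lem:EquivNorm} gives $\|\dot h\|_{L^1}\lesssim\|h\|_{\bar{\cal H}}$. Hence if the length $L$ of $h$ exceeds $R^{H+1/2}$ we are done. So assume $L\leqslant R^{H+1/2}$. Then the total angular drift satisfies $D:=\sum_kD_k\lesssim (L+C_M)/R\lesssim R^{H-1/2}$. For the $w$-part of the drift I use the Young integral bound against $\rho^{-1}$ on the region $\rho\geqslant R/2$. Next, at most $(4M/\delta)^p$ of the intervals have $|w_{t_k}-w_{s_k}|>\delta/4$, by superadditivity of $\|w\|^p_{p\text{-var}}$, and these are discarded. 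On the remaining intervals the error terms are small once $\delta$ is small, $R$ is large and $\|h\|_{p\text{-var}}$ is controlled (otherwise we are again done, since $\|h\|_{p\text{-var}}\leqslant L$). This leaves $n\asymp R$ intervals with
\[
\langle e_k,h_{t_k}-h_{s_k}\rangle\geqslant\delta/4.
\]

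Step 2: a vector-valued version of Lemmas \ref{lem:CMLowVar} and \ref{lem:VarEst}. Take the functional $\Lambda(h):=\sum_k l_k^{-1}\langle e_k,h_{t_k}-h_{s_k}\rangle$, so that $\Lambda(h)\geqslant(\delta/4)\sum_k l_k^{-1}$. Its dual norm is ${\rm Var}[\sum_kl_k^{-1}\langle e_k,B_{t_k}-B_{s_k}\rangle]$, which by (\ref{eq:VarRep}) and Lemma \ref{lem:fFId} (componentwise) equals $c_H\int_0^\infty\|\Delta_rF\|^2_{L^2}r^{2H-4}dr$ for the $\mathbb{R}^2$-valued $F$ with $F'=\sum_kl_k^{-1}e_k{\bf 1}_{I_k}$, extended by zero outside $[0,1]$. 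I split $F=\sum_jF_j+G$. Here $F_j$ is the pair primitive built from the vectors $e_{2j-1}$ and $-e_{2j-1}$, exactly as in Lemma \ref{lem:VarEst}. The remainder $G$ is a step function with jumps of size at most $|e_{2j}+e_{2j-1}|\lesssim D_{2j-1}$, supported in $[0,1]$ (truncated there), so $\|G\|_\infty\leqslant D$ and its total variation is $\lesssim D$. The $F_j$ part is bounded by $C_H\sum_kl_k^{2H-2}$ exactly as in Lemma \ref{lem:VarEst}. The $G$ part is bounded by $C_H D^2$, using $\|\Delta_rG\|_{L^2}^2\leqslant\min\{Dr,1\}D$ and the fact that $G$ is supported in $[0,1]$.

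Step 3: conclusion. As in the proof of Lemma \ref{lem:CMLowFluct},
\[
\|h\|^2_{\bar{\cal H}}\gtrsim\frac{(\sum_k l_k^{-1})^2}{\sum_kl_k^{2H-2}+D^2}.
\]
When $D^2\lesssim\sum_kl_k^{2H-2}$, the concavity and Cauchy--Schwarz argument of that proof gives $\|h\|^2_{\bar{\cal H}}\gtrsim n^{2H+1}$. This condition holds because $\sum_k l_k^{2H-2}\geqslant n^{3-2H}\gtrsim R^{3-2H}$, while $D^2\lesssim R^{2H-1}\leqslant R^{3-2H}$ since $H\leqslant1$. If a constant is off, the case $L\geqslant R^{3/2-H}\cdot R$ can be absorbed into the first case of Step 1. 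This yields (\ref{eq:KeyCoer}).

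The main obstacle is Step 1: making the errors ${\rm err}_k$ genuinely small and uniform. This requires controlling the oscillation of $\theta-\rho$ on each crossing interval through Young estimates for $g=w+h$, where $h$ is only bounded in length and not in $p$-variation on small intervals. I would try first to take the crossing width $\delta$ small, so that $\rho-a_k$ moves by at most $\delta$ on $[s_k,t_k]$ by construction, and to bound the $\theta$-oscillation crudely by $\|g\|_{p\text{-var};[s_k,t_k]}/R$.
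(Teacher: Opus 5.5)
\textbf{There is a genuine gap, and it sits exactly at the step you flagged as the main obstacle.} The claim at the end of Step 1, that $\asymp R$ crossings satisfy $\langle e_k,h_{t_k}-h_{s_k}\rangle\geqslant\delta/4$, is false in general. Taking $\delta$ small cannot repair it.

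On a crossing, the radial direction $u(\theta-\rho)$ rotates by (essentially) the radial displacement. So the $h$-part of ${\rm err}_k$ is of size $\approx\delta\,\ell_k$, where $\ell_k:=\|h\|_{1\text{-var};[s_k,t_k]}$. Both signal and error are linear in $\delta$, so the error is of the same order as the signal as soon as $\ell_k\gtrsim 1$, whatever $\delta$ is.

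Concretely, take $w=0$ and set $\psi:=\theta-\rho$. At time $s_k$, let $h$ move with unit speed in the fixed direction $v=u(\psi_{s_k}-\pi/2+\epsilon)$. With $\chi:=\arg v-\psi$ one has
\[
\dot\rho=\cos\chi,\qquad \dot\chi=\cos\chi-\rho^{-1}\sin\chi=\cos\chi+O(1/R).
\]
Hence $\chi$ goes from $-\pi/2+\epsilon$ to about $-\pi/2+\epsilon+\delta$, and $\ell_k\approx\log(1+\delta/\epsilon)$. Meanwhile
\[
\langle e_k,h_{t_k}-h_{s_k}\rangle=\ell_k\sin\epsilon\approx\epsilon\log(1+\delta/\epsilon),
\]
which is an arbitrarily small multiple of $\delta$ once $\epsilon/\delta$ is small, at the cost of only $\ell_k=O(\log(\delta/\epsilon))$.

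Between $t_k$ and $s_{k+1}$, a couple of straight pieces with $\chi\in(-\pi/2,\pi/2)$ bring $\rho$ monotonically to $a_{k+1}$. Their total length is $O(\log(1/\delta))$, and there one turns sharply again. This $h$ has three properties:
\begin{itemize}
\item its length is $O(R\log(1/\delta))\ll R^{H+1/2}$;
\item nothing is discarded on the $w$-side;
\item it violates your inequality on every crossing.
\end{itemize}
Nor can the crossings with $\ell_k\gtrsim 1$ be discarded: a length budget $L\asymp R$ already permits all $n$ of them. In the critical regime $R\lesssim L\leqslant R^{H+1/2}$, your first case only yields $\|h\|_{\bar{\mathcal H}}\gtrsim R$. (Such an $h$ does make $\asymp R$ genuine turns, so it is your linearisation that fails, not the proposition.) The one-dimensional argument of Theorem \ref{thm:YoungCounterEx} works because the plateaus of $\eta$ make each crossing exactly linear in $B^1$. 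Here there is no plateau, since $\eta(r)=r$.

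\textbf{What is missing is a lower bound insensitive to how $h$ wiggles inside a crossing.} The paper replaces your single test direction per crossing by the nonlinear count $\mathcal{N}_{\rm eff}(h)$ of effective turns. It bounds the Gagliardo seminorm of $\dot h$ from Lemma \ref{lem:EquivNorm} below by a multiple of $\mathcal{N}_{\rm eff}(h)^{2H+1}$ (Lemma \ref{lem:CMEffTurn}). The geometric input is Lemmas \ref{lem:ILength} and \ref{lem:ILengthPert}: crossing a $3\pi$-wide annulus at radius $a$ without an effective turn costs $h$-length $\gtrsim a\asymp R$. So the discarding threshold is length $\gtrsim R$ rather than $\gtrsim 1$, and only $O(\|h\|_{1\text{-var}}/R)=O(R^{H-1/2})$ crossings are lost.

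Two smaller problems are worth noting.
\begin{itemize}
\item In Step 2, the bound $\|\Delta_rG\|_{L^2}^2\leqslant\min\{Dr,1\}D$ leads to $\int_0 D^2r^{2H-3}\,dr=\infty$ (since $2H-3<-1$), so the $G$-part is not $O(D^2)$ as written. Small scales need $\|G'\|_{L^2}$, which involves $l_{2j}^{-1}$.
\item The drift bound $D\lesssim(L+C_M)/R$ presumes $\rho\gtrsim R$ between $t_k$ and $s_{k+1}$, which your crossing construction does not guarantee.
\end{itemize}
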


The rest of this subsection is devoted to the proof of Proposition \ref{prop:KeyCoer}.

\subsubsection*{A lower estimate of Cameron-Martin norm }

In this part, we establish an estimate which controls the Cameron-Martin
norm from below by the number of ``effective'' turns. We first introduce
the definition of an effective turn. Let $a_{0}>0$ and $\theta_{0}\in(0,\pi)$
be given fixed numbers. Later on, we will specify their values (see
(\ref{eq:ParChoice})). For the current discussion, this is not important.
Given two unit vectors $v,w\in\mathbb{R}^{2},$ we use $\angle(v,w):=\arccos\langle v,w\rangle\in[0,\pi]$
to denote their angle. 
\begin{defn}
Let $h:[0,1]\rightarrow\mathbb{R}^{2}$ be an absolutely continuous
path. Let $\mu_{h}(dt):=|\dot{h}_{t}|dt$ denote its arclength measure.
An interval $I=[u,v]\subseteq[0,1]$ is said to \textit{contain an
$(a_{0},\theta_{0})$-effective turn} of $h$ if there are measurable
subsets $A,B\subseteq I$ such that 
\[
\mu_{h}(A)\geqslant a_{0},\quad\mu_{h}(B)\geqslant a_{0}
\]
and 
\[
\text{\ensuremath{\angle}\ensuremath{\Big(}\ensuremath{\ensuremath{\frac{\dot{h}_{s}}{|\dot{h}_{s}|}}},\ensuremath{\frac{\dot{h}_{t}}{|\dot{h}_{t}|}}\Big)\ensuremath{\geqslant}\ensuremath{\theta_{0}}}
\]
for $\mu_{h}\otimes\mu_{h}$-a.e. $(s,t)\in A\times B$ at which $\dot{h}_{s}\neq0$
and $\dot{h}_{t}\neq0$. We define ${\cal N}_{{\rm eff}}(h)$ to be
the maximal number of disjoint intervals in $[0,1]$ that contain
an $(a_{0},\theta_{0})$-effective turn of $h$. 
\end{defn}
\begin{rem}
Heuristically, an effective turn consists of two pieces of $h$ on
$I$ which have arclength at least $a_{0}$ and have angle separately
uniformly by at least $\theta_{0}.$ 
\end{rem}
The main Cameron-Martin estimate in this part is stated as follows.
\begin{lem}
\label{lem:CMEffTurn}There exists a positive constant $C_{H}$ depending
only on $H$, such that 
\[
\|h\|_{\bar{{\cal H}}}\geqslant C_{H}a_{0}\sin\Big(\frac{\theta_{0}}{2}\Big)\cdot{\cal N}_{{\rm eff}}(h)^{H+1/2}
\]
for all $h\in\bar{{\cal H}}$.
\end{lem}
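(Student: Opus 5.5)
The plan is to reduce Lemma \ref{lem:CMEffTurn} to the one-dimensional oscillation estimate, Lemma \ref{lem:CMLowFluct}, by projecting $h$ onto a suitable direction on each interval that contains an effective turn. Let $n:={\cal N}_{{\rm eff}}(h)$ and fix disjoint intervals $J_{1},\cdots,J_{n}$ (ordered in time), each containing an $(a_{0},\theta_{0})$-effective turn, with witnessing sets $A_{k},B_{k}\subseteq J_{k}$.

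\emph{Step 1: local geometry of one turn.} On $J_{k}$, the directions of $\dot{h}$ on $A_{k}$ and on $B_{k}$ are separated by angle at least $\theta_{0}$ pointwise. I will pick a unit vector $e_{k}$, for instance the bisector direction perpendicular to the ``average'' of the two direction sets, so that $\langle\dot{h}_{s},e_{k}\rangle\geqslant\sin(\theta_{0}/2)|\dot{h}_{s}|$ on a subset of $A_{k}$ and $\langle\dot{h}_{t},e_{k}\rangle\leqslant-\sin(\theta_{0}/2)|\dot{h}_{t}|$ on a subset of $B_{k}$, each carrying arclength comparable to $a_{0}$. Making this rigorous with only pairwise angle information is a small planar-geometry issue: one can cover the circle by finitely many arcs of width $\theta_{0}/2$ and pigeonhole to extract subsets of mass $\gtrsim a_{0}$ whose directions lie in two arcs at angular distance $\geqslant\theta_{0}/2$. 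This step costs constants depending only on $\theta_{0}$, which can be absorbed into the factor $\sin(\theta_{0}/2)$ up to a universal adjustment.

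\emph{Step 2: linear functional and duality.} As in Lemma \ref{lem:CMLowVar}, I will construct a bounded linear functional on $\bar{{\cal H}}$ of the form
\[
\Lambda(g):=\sum_{k=1}^{n}\int_{0}^{1}\langle f_{k}(s),\dot{g}_{s}\rangle ds,\qquad f_{k}:=e_{k}\big(\tfrac{{\bf 1}_{A_{k}'}}{|A_{k}'|}-\tfrac{{\bf 1}_{B_{k}'}}{|B_{k}'|}\big),
\]
where $A_{k}',B_{k}'$ are the refined sets from Step 1 and $|\cdot|$ is Lebesgue measure. Then $\Lambda(h)\gtrsim a_{0}\sin(\theta_{0}/2)\sum_{k}(|A_{k}'|^{-1}+|B_{k}'|^{-1})$, after normalising via arclength; a cleaner alternative is to use weights adapted to $|\dot{h}|$. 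By Cauchy--Schwarz, $\|h\|_{\bar{{\cal H}}}\geqslant\Lambda(h)/\|\Lambda\|_{\bar{{\cal H}}^{*}}$, and $\|\Lambda\|^{2}_{\bar{{\cal H}}^{*}}={\rm Var}[\sum_{k}\int\langle f_{k},dB\rangle]$. This variance is given by (\ref{eq:VarRep}) componentwise, since the components of $B$ are independent.

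\emph{Step 3: variance bound and concavity.} Setting $F_{k}:=\int_{-\infty}^{x}f_{k}$, each $F_{k}$ has zero total mass and is supported in $J_{k}$, so the supports are disjoint. Applying Lemma \ref{lem:fFId} and repeating the proof of Lemma \ref{lem:VarEst} verbatim (the vector-valued case only introduces the factor $|e_{k}|=1$) gives a variance bound of order $\sum_{k}(|A_{k}'|^{2H-2}+|B_{k}'|^{2H-2})$. The concavity and Cauchy--Schwarz argument from the proof of Lemma \ref{lem:CMLowFluct}, using $\sum_{k}(|A_{k}'|+|B_{k}'|)\leqslant1$, then yields $\|h\|_{\bar{{\cal H}}}\gtrsim a_{0}\sin(\theta_{0}/2)n^{H+1/2}$.

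\emph{Main obstacle.} The delicate point is the normalisation in Step 2. The effective-turn condition is phrased in arclength $\mu_{h}$, not Lebesgue measure, so the lower bound for $\Lambda(h)$ must be matched correctly to the $l_{k}^{-1}$-type weights. I expect to handle this by choosing subsets on which $|\dot{h}|$ is comparable to $a_{0}/|A_{k}'|$ on average. Alternatively, I would first reduce to the one-dimensional Lemma \ref{lem:CMLowFluct}: the projection $\langle h,e_{k}\rangle$ then increases by $\gtrsim a_{0}\sin(\theta_{0}/2)$ across one time subinterval of $J_{k}$ and decreases by the same amount across another. This produces $2n$ alternating increments, to which Lemma \ref{lem:CMLowFluct}'s variance machinery applies with direction $e_{k}$ varying in $k$; the disjoint-support argument in Lemma \ref{lem:VarEst} is insensitive to that variation.
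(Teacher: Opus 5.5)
Your main line (Steps 1--3) is sound, and it is a genuinely different proof from the paper's.

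The paper never dualises. It starts from the lower bound $\|h\|_{\bar{\cal H}}^{2}\geqslant C_H\int_0^1\int_0^1|\dot h_t-\dot h_s|^2|t-s|^{-2H}dsdt$ of Lemma \ref{lem:EquivNorm} and restricts the integral to $\bigcup_k (A_k\times B_k)$. There it uses $|\dot h_s-\dot h_t|^2\geqslant 4\sin^2(\theta_0/2)|\dot h_s||\dot h_t|$, which follows from the angle condition, together with $|t-s|\leqslant|I_k|$. This gives at least $4\sin^2(\theta_0/2)a_0^2|I_k|^{-2H}$ per interval, and convexity ($\sum_k|I_k|^{-2H}\geqslant N^{2H+1}$) finishes. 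Since that seminorm sees $|\dot h_s-\dot h_t|$ directly, no direction has to be selected, and the factor $a_0\sin(\theta_0/2)$ comes out exactly.

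Your route instead recycles the duality and variance machinery of Lemmas \ref{lem:CMLowVar}, \ref{lem:fFId} and \ref{lem:VarEst}. Using indicators of measurable sets in place of intervals is legitimate for $H>1/2$, by approximating $f$ with step functions. So your argument does not need Lemma \ref{lem:EquivNorm}, and it treats the 1D and 2D counterexamples with one tool; the price is the direction-selection step.

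The normalisation you single out as the main obstacle is not one. The bound $|A_k'|^{-1}\int_{A_k'}\langle e_k,\dot h_s\rangle ds\geqslant c\,\mu_h(A_k')/|A_k'|$ holds trivially. Together with $\sum_k(|A_k'|+|B_k'|)\leqslant1$, the concavity argument of Lemma \ref{lem:CMLowFluct} then closes exactly as in your Step 3.

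\textbf{Two corrections.}

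First, the constant. Pigeonholing over roughly $4\pi/\theta_0$ arcs only guarantees $\mu_h(A_k'),\mu_h(B_k')\gtrsim\theta_0 a_0$. Even with a projection constant of order $\sin(\theta_0/2)$, what you actually prove is $\|h\|_{\bar{\cal H}}\geqslant C_H\,c(\theta_0)\,a_0\,{\cal N}_{{\rm eff}}(h)^{H+1/2}$ with $c(\theta_0)\asymp\theta_0^{2}$ as $\theta_0\to0$. The extra factor $\theta_0$ is not a universal adjustment of $\sin(\theta_0/2)$, so your claim in Step 1 that it can be absorbed is false. This is harmless for Proposition \ref{prop:KeyCoer}, where $\theta_0=\pi/8$ is fixed. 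It does, however, fall short of the statement with $C_H$ depending only on $H$, which the paper's energy argument delivers.

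Second, the fallback you propose at the end fails. That fallback reduces to Lemma \ref{lem:CMLowFluct} through increments of $\langle h,e_k\rangle$ across time subintervals of $J_k$. The sets $A_k,B_k$ are merely measurable and can be finely interleaved; for example, $\dot h$ may alternate rapidly between $e$ and $-e$ on $J_k$. In that case $h$ itself has arbitrarily small oscillation on $J_k$, even though $J_k$ contains an effective turn. So no interval increment of size $\gtrsim a_0\sin(\theta_0/2)$ need exist, and the measurable-set functional of Step 2 is essential, not optional.
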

\begin{proof}
In particular, 
\begin{equation}
\|h\|_{\bar{{\cal H}}}^{2}\geqslant C_{H}\int_{0}^{1}\int_{0}^{1}\frac{\big|\dot{h}_{t}-\dot{h}_{s}\big|^{2}}{|t-s|^{2H}}dsdt.\label{eq:CMLowFrac}
\end{equation}
Next, let $I\subseteq[0,1]$ be an interval which contains an $(a_{0},\theta_{0})$-effective
turn $(A,B)$ of $h$. By definition, it is not hard to see that 
\[
\big|\dot{h}_{s}-\dot{h}_{t}\big|^{2}\geqslant4\sin^{2}\Big(\frac{\theta_{0}}{2}\Big)\cdot|\dot{h}_{s}|\cdot|\dot{h}_{t}|
\]
for $(\mu_{h}\otimes\mu_{h})$-a.e. $(s,t)\in A,B$. As a consequence,
\begin{align}
\int_{I\times I}\frac{\big|\dot{h}_{t}-\dot{h}_{s}\big|^{2}}{|t-s|^{2H}}dsdt & \geqslant4\sin^{2}\Big(\frac{\theta_{0}}{2}\Big)\int_{A\times B}\frac{|\dot{h}_{s}|\cdot|\dot{h}_{t}|}{|t-s|^{2H}}dsdt\nonumber \\
 & \geqslant4\sin^{2}\Big(\frac{\theta_{0}}{2}\Big)\cdot|I|^{-2H}\cdot a_{0}^{2}.\label{eq:CMLowEachI}
\end{align}
Now suppose that $I_{1},\cdots,I_{N}$ are $N$ given disjoint intervals,
each containing an $(a_{0},\theta_{0})$-effective turn. It follows
from (\ref{eq:CMLowEachI}) that
\begin{align}
\int_{0}^{1}\int_{0}^{1}\frac{\big|\dot{h}_{t}-\dot{h}_{s}\big|^{2}}{|t-s|^{2H}}dsdt & \geqslant\sum_{k=1}^{N}\int_{I_{k}\times I_{k}}\frac{\big|\dot{h}_{t}-\dot{h}_{s}\big|^{2}}{|t-s|^{2H}}dsdt\nonumber \\
 & \geqslant4a_{0}^{2}\sin^{2}\Big(\frac{\theta_{0}}{2}\Big)\cdot\sum_{k=1}^{N}|I_{k}|^{-2H}.\label{eq:FracLow}
\end{align}
On the other hand, by using the convexity of $x\mapsto x^{-2H}$ and
the fact that $\sum_{k}|I_{k}|\leqslant1,$ one has
\begin{equation}
\frac{1}{N}\sum_{k=1}^{N}|I_{k}|^{-2H}\geqslant\Big(\frac{1}{N}\sum_{k=1}^{N}|I_{k}|\Big)^{-2H}\geqslant N^{2H}.\label{eq:Conv}
\end{equation}
After combining (\ref{eq:CMLowFrac}), (\ref{eq:FracLow}) and (\ref{eq:Conv}),
one arrives at 
\[
\|h\|_{\bar{{\cal H}}}^{2}\geqslant4C_{H}a_{0}^{2}\sin^{2}\Big(\frac{\theta_{0}}{2}\Big)\cdot N^{2H+1}.
\]
The result follows from the definition of ${\cal N}_{{\rm eff}}(h)$.
\end{proof}

\subsubsection*{\label{subsec:NonPert}A non-perturbative coercivity estimate}

In order to prove Proposition \ref{prop:KeyCoer}, one needs to control
the quantity ${\cal N}_{{\rm eff}}(h)$ from below. It is helpful
to first establish a non-perturbative lower estimate of ${\cal N}_{{\rm eff}}(h)$.
This part contains the essential idea and the perturbative version
follows easily by controlling an additional Young integration error.
The main lemma is stated as follows.
\begin{lem}
\label{lem:NonPertCoer}There exist universal positive constants $C_{1},C_{2}$,
such that the estimate
\begin{equation}
{\cal N}_{{\rm eff}}(h)\geqslant C_{1}R-\frac{C_{2}\|h\|_{1\text{-var}}}{R}\label{eq:NonPertCoer}
\end{equation}
holds uniformly for all $R>4$ and all $h\in\bar{{\cal H}}$ satisfying
$\sup_{0\leqslant t\leqslant1}|X_{t}^{h}|\geqslant R$. 
\end{lem}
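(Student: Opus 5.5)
Here $h$ is Cameron--Martin, so $X^h$ solves the ODE $\dot X_t=V(X_t)\dot h_t$, where $V(x)$ is the rotation by angle $\eta(|x|)$. Consequently $\dot X_t=\mathrm{Rot}_{\eta(|X_t|)}\dot h_t$ and $|\dot X_t|=|\dot h_t|$. The plan is to track the radial velocity. For $|x|\ge 2$ write $x=\rho e^{i\phi}$. Then
\[
\dot\rho_t=\Big\langle \frac{X_t}{|X_t|},\mathrm{Rot}_{\rho_t}\dot h_t\Big\rangle=|\dot h_t|\cos\big(\psi_t+\rho_t-\phi_t\big),
\]
where $\psi_t$ is the direction angle of $\dot h_t$. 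The key observation is that radial progress requires the effective direction $\psi+\rho-\phi$ to stay near $0$ mod $2\pi$. Meanwhile $\rho$ itself increases by order $R$, so the rotation $\mathrm{Rot}_\rho$ winds many times. To keep pushing outward, $h$ must keep turning its direction $\psi$ to compensate, unless $\phi$ compensates instead. The angular variable changes slowly, since $|\dot\phi|\le|\dot h|/\rho$, and this is exactly where the error term $\|h\|_{1\text{-var}}/R$ will come from.

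Concretely, let $T$ be the first time with $|X_T|=R$ and let $\tau$ be the last time before $T$ with $|X_\tau|=R/2$ (we need $R/2\ge2$, which is why $R>4$). On $[\tau,T]$ we have $\rho\in[R/2,R]$. We split the radial range $[R/2,R]$ into $m\asymp R$ consecutive shells $S_j=[r_j,r_j+2\pi]$, $j=1,\dots,m$. Let $[u_j,v_j]$ denote a crossing time interval of $S_j$, namely the last entry at radius $r_j$ before the first exit at radius $r_j+2\pi$. These crossing intervals are disjoint. On each of them, $\rho$ sweeps a full $2\pi$ range while the net radial displacement $2\pi$ is forced.

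The main claim is that on each crossing interval, either an $(a_0,\theta_0)$-effective turn occurs or $\phi$ moves by at least a fixed amount $c_0$. To see this, suppose $\phi$ varies by less than $c_0$ on $[u_j,v_j]$ and there is no effective turn. The absence of an effective turn means that all but arclength $a_0$ of $h$ on the interval has direction within $\theta_0$ of a single direction $\psi^*$. This needs an elementary covering lemma on the circle: if no two sets of mass $\ge a_0$ are $\theta_0$-separated, then mass outside some $\theta_0$-arc is at most about $a_0$, after splitting the circle into finitely many arcs.

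We then partition $S_j$ into sub-shells according to the value of $\rho+\psi^*-\phi$. On the sub-shell where $\cos(\psi^*+\rho-\phi)\le-1/2$, which is a radial band of width about $2\pi/3$ that must be traversed, $\dot\rho\le -|\dot h|/2+O(\theta_0+c_0)|\dot h|$ except on arclength $a_0$. Traversing this band outward therefore requires arclength at least about $2\pi/3$ of "bad" motion, and this contradicts the bound once $a_0$ is chosen small, say $a_0=1$ and $\theta_0=\pi/8$. These are the parameter choices of (\ref{eq:ParChoice}).

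Counting then gives the estimate. If $\phi$ moves by at least $c_0$ on the interval, then $\int|\dot\phi|\ge c_0$, so the arclength there is at least $c_0R/2$. Hence the number of shells without an effective turn is at most $2\|h\|_{1\text{-var}}/(c_0R)$. This yields
\[
\mathcal N_{\mathrm{eff}}(h)\ge m-\frac{2\|h\|_{1\text{-var}}}{c_0R}\ge C_1R-\frac{C_2\|h\|_{1\text{-var}}}{R},
\]
which is (\ref{eq:NonPertCoer}).

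The main obstacle is making the per-shell dichotomy rigorous. One has to carefully handle the set of directions of $h$ and the measure-theoretic definition of effective turn. One also has to check that a band with $\cos\le-1/2$ must actually be crossed in the outward direction even though $\rho$ may go back and forth. For this I would use the first-passage times across the band together with the bound $\dot\rho\le|\dot h|$.
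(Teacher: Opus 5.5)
Your proposal is essentially the paper's proof. It uses the same annular crossing intervals, the same heavy/light-arc concentration step (Lemma \ref{lem:HeavyConcen}), and the same drift bound $|\dot{\phi}_t|\leqslant|\dot{h}_t|/r_t$ producing the dichotomy of Lemma \ref{lem:ILength}; your $c_0$ is the paper's $\gamma_0$. The first-passage argument across a radial band on which $\dot{r}_t\leqslant 0$ off the light set, and the final counting, are also the same.

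Only the constants need repair. The concentration step gives light mass $m_0a_0$ and aperture $\theta_0+4\pi/m_0$, not $a_0$ and $\theta_0$. So $a_0$ must be genuinely small compared with the band width: (\ref{eq:ParChoice}) takes $a_0=\pi/256$ and $m_0=32$, not $a_0=1$. Also, with shells of width exactly $2\pi$ the band may straddle a shell boundary. The paper avoids this by taking width $3\pi$; alternatively, you can cross both partial pieces in their disjoint first-passage windows.
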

The rest of this part is devoted to the proof of Lemma \ref{lem:NonPertCoer}. 

From now on, we fix three parameters $a_{0}>0$, $\theta_{0}\in(0,\pi)$
and $m_{0}\in\mathbb{N}$ (their exact values will be chosen by (\ref{eq:ParChoice})
later on). We divide the unit circle $S^{1}$ into $m_{0}$ equal
arcs $P_{1},P_{2},\cdots,P_{m_{0}}$. Let $h:[0,1]\rightarrow\mathbb{R}^{2}$
be an absolutely continuous path and let $I=[u,v]\subseteq[0,1]$
be a given interval. An arc $P_{k}$ is said to be \textit{heavy}
with respect to $h|_{I}$ if 
\[
\mu_{h}\Big(\Big\{ t\in I:\dot{h}_{t}\neq0,\ \frac{\dot{h}_{t}}{|\dot{h}_{t}|}\in P_{k}\Big\}\Big)\geqslant a_{0},
\]
where $\mu_{h}(dt):=|\dot{h}_{t}|dt$ is the arclength measure associated
with $h$. An arc that is not heavy is said to be \textit{light}. 

The following observation is important for the proof of Lemma \ref{lem:NonPertCoer}.
\begin{lem}
\label{lem:HeavyConcen}Suppose that $I$ does not contain an $(a_{0},\theta_{0})$-effective
turn of $h$ and there is at least one heavy arc with respect to $h|_{I}$.
Then there exists a measurable set $E\subseteq I$ and a unit vector
$v_{*}$, such that 
\begin{equation}
\mu_{h}(E)\leqslant m_{0}a_{0}\label{eq:LightLength}
\end{equation}
and 
\begin{equation}
\angle\Big(\frac{\dot{h}_{t}}{|\dot{h}_{t}|},v_{*}\Big)\leqslant\theta_{0}+\frac{4\pi}{m_{0}}\label{eq:HeavyConcen}
\end{equation}
for $\mu_{h}$-a.e. $t\in I\backslash E$ with $\dot{h}_{t}\neq0$. 
\end{lem}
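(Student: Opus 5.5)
Let $E$ be the set of times whose direction falls into a light arc, and let $v_*$ be any direction inside a fixed heavy arc. The claim is that $I\setminus E$, the part of $I$ carried by heavy arcs, points roughly along $v_*$.

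Concretely, define
\[
E:=\Big\{t\in I:\dot h_t\neq0,\ \tfrac{\dot h_t}{|\dot h_t|}\in P_k\text{ for some light arc }P_k\Big\}.
\]
If the arcs $P_k$ overlap at endpoints, assign each direction to a single arc, for instance by using half-open arcs. Since there are at most $m_0$ light arcs and each carries $\mu_h$-mass less than $a_0$ on $I$, summing gives $\mu_h(E)\leqslant m_0a_0$. This proves (\ref{eq:LightLength}).

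Next fix a heavy arc $P_{k_0}$, which exists by hypothesis, and let $v_*$ be its midpoint. Take any other heavy arc $P_k$ and any $u\in P_k$, $u'\in P_{k_0}$. I claim that $\angle(u,u')<\theta_0+2\pi/m_0$, and I prove it by contradiction.

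Suppose the two arcs contained points at angle at least $\theta_0+2\pi/m_0$. Each arc has angular length $2\pi/m_0$, so for all $w\in P_k$ and $w'\in P_{k_0}$ the triangle inequality for angles gives
\[
\angle(w,w')\geqslant \theta_0+\tfrac{2\pi}{m_0}-2\cdot\tfrac{2\pi}{m_0}.
\]
This is slightly lossy, so I run the argument with minimal separations instead. Set $\delta_k:=\inf\{\angle(w,w'):w\in P_k,\ w'\in P_{k_0}\}$.

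If $\delta_k\geqslant\theta_0$, then the sets
\[
A:=\{t\in I:\dot h_t/|\dot h_t|\in P_k\},\qquad B:=\{t\in I:\dot h_t/|\dot h_t|\in P_{k_0}\}
\]
each have $\mu_h$-mass at least $a_0$, and every pair of directions from $A\times B$ has angle at least $\theta_0$. That is precisely an $(a_0,\theta_0)$-effective turn in $I$, which contradicts the hypothesis. Therefore $\delta_k<\theta_0$ for every heavy arc $P_k$.

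It follows that for any $u\in P_k$ we have $\angle(u,v_*)\leqslant\delta_k+2\pi/m_0+\pi/m_0$. The three terms are the diameter of $P_k$, the minimal gap $\delta_k$, and the distance from the closest point of $P_{k_0}$ to its midpoint. The total is below $\theta_0+4\pi/m_0$. Every $t\in I\setminus E$ with $\dot h_t\neq0$ has its direction in some heavy arc, so (\ref{eq:HeavyConcen}) holds.

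The only delicate points are bookkeeping. One must check that the angle triangle inequality on $S^1$ is valid; it holds because $\angle$ is the geodesic metric on the circle. One must also make sure that the infimum defining $\delta_k$ is handled correctly, i.e.\ that $\delta_k\geqslant\theta_0$ is really what the definition of an effective turn requires. Both are routine.
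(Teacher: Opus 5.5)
Your proof is correct and follows the paper's argument. The paper shows that the centres of any two heavy arcs are within $\theta_0+2\pi/m_0$ of each other, since otherwise the two arcs would yield an $(a_0,\theta_0)$-effective turn; it then takes $v_*$ to be the centre of a heavy arc and $E$ to be the set of light-arc times. Your bound $\delta_k<\theta_0$ on the minimal separation is essentially the same statement, and it even gives the radius $\theta_0+3\pi/m_0$. The one thing to fix is the write-up: delete the abandoned opening claim that $\angle(u,u')<\theta_0+2\pi/m_0$ for all $u\in P_k$, $u'\in P_{k_0}$. You never prove it, it is false for general parameters (only $\angle(u,u')<\theta_0+4\pi/m_0$ follows from $\delta_k<\theta_0$), and nothing downstream uses it.
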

\begin{proof}
This is a simple consequence of the definition of effective turn.
Let $w_{k}$ be the center of $P_{k}$ for every $k$. We claim that
\begin{equation}
\angle(w_{k},w_{l})\leqslant\theta_{0}+\frac{2\pi}{m_{0}}\label{eq:HeavClose}
\end{equation}
for any pair $P_{k},P_{l}$ of heavy arcs. Indeed, suppose that (\ref{eq:HeavClose})
is not true for some heavy pair $P_{k},P_{l}$. Let 
\[
A:=\Big\{ t\in I:\dot{h}_{t}\neq0,\ \frac{\dot{h}_{t}}{|\dot{h}_{t}|}\in P_{k}\Big\},\quad B:=\Big\{ t\in I:\dot{h}_{t}\neq0,\ \frac{\dot{h}_{t}}{|\dot{h}_{t}|}\in P_{l}\Big\}.
\]
Then one has
\begin{align*}
\ensuremath{\angle}\Big(\ensuremath{\frac{\dot{h}_{s}}{|\dot{h}_{s}|}},\ensuremath{\frac{\dot{h}_{t}}{|\dot{h}_{t}|}}\Big) & \geqslant\angle(w_{k},w_{l})-\ensuremath{\angle}\Big(\ensuremath{\frac{\dot{h}_{s}}{|\dot{h}_{s}|}},\ensuremath{\frac{\dot{h}_{t}}{|\dot{h}_{t}|}}\Big)\\
 & >\theta_{0}+\frac{2\pi}{m_{0}}-\frac{2\pi}{m_{0}}=\theta_{0}
\end{align*}
for $\mu_{h}\otimes\mu_{h}$-a.e. $(s,t)\in A\times B$. Since $P_{k},P_{l}$
are heavy, it follows that $(A,B)$ provides an $(a_{0},\theta_{0})$-effective
turn of $h$ in $I$. This is a contradiction. Therefore, the estimate
(\ref{eq:HeavClose}) holds.

Now let $v_{*}$ be the center of a given heavy arc (which exists
by assumption). Define 
\[
E:=\Big\{ t\in I:\dot{h}_{t}\neq0,\ \frac{\dot{h}_{t}}{|\dot{h}_{t}|}\in P_{{\rm light}}\Big\},
\]
where $P_{{\rm light}}$ denotes the union of all light arcs. Since
there are at most $m_{0}$ light arcs, one has by definition that
$\mu_{h}(E)\leqslant m_{0}a_{0}.$ This proves the first claim (\ref{eq:LightLength}).
In addition, according to the estimate (\ref{eq:HeavClose}), all
heavy arcs lie inside the cone centered at $v_{*}$ with $S^{1}$-radius
\[
\theta_{0}+\frac{2\pi}{m_{0}}+2\times\frac{\pi}{m_{0}}=\theta_{0}+\frac{4\pi}{m_{0}}.
\]
This proves the second claim (\ref{eq:HeavyConcen}).
\end{proof}
Now we fix the following four parameters:
\begin{equation}
a_{0}=\frac{1}{256}\pi,\quad\theta_{0}=\frac{1}{8}\pi,\quad m_{0}=32,\quad\gamma_{0}=\frac{1}{8}\pi.\label{eq:ParChoice}
\end{equation}
Given $h\in\bar{{\cal H}},$ recall that $X_{t}^{h}$ is the solution
to the control differential equation 
\[
X_{t}^{h}=V(X_{t}^{h})\cdot\dot{h}_{t}dt,\quad X_{0}^{h}=0,
\]
where $V$ is the vector field given by (\ref{eq:2DVFld}). Note that
$V$ is orthonormal at every point. Let $r_{t}:=|X_{t}^{h}|$.
\begin{lem}
\label{lem:ILength}Let $I=[u,v]\subseteq[0,1]$ and $a>2$. Suppose
that $I$ does not contain an $(a_{0},\theta_{0})$-effective turn
of $h$. Suppose further that 
\[
r_{u}=a,\quad r_{v}=a+3\pi
\]
and $r_{t}\in[a,a+3\pi]$ for all $t\in I$. Then $\mu_{h}(I)\geqslant\gamma_{0}a.$ 
\end{lem}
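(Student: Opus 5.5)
The plan is to work in polar coordinates and argue by contradiction: if $\mu_h(I)<\gamma_0 a$, we will locate a sub-band of radii $J$ that $r_t$ must cross upward but where the radial velocity is essentially forced to be nonpositive. Since $a>2$ and $r_t\geqslant a$ on $I$, we have $\eta(r_t)=r_t$ there. Hence $V(X_t)$ is rotation by the angle $r_t$.

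\emph{Polar equations.} Write $X_t=r_te(\varphi_t)$ with $e(\alpha):=(\cos\alpha,\sin\alpha)$, and set $\psi_t:=\varphi_t-r_t$. From $\dot X_t=\mathrm{Rot}(r_t)\dot h_t$ I expect the standard computation to give
\[
\dot r_t=\langle e(\psi_t),\dot h_t\rangle,\qquad r_t\dot\varphi_t=\langle e(\psi_t+\pi/2),\dot h_t\rangle .
\]
Suppose, for contradiction, that $\mu_h(I)<\gamma_0a=\pi a/8$. The second equation together with $r_t\geqslant a$ gives $|\varphi_t-\varphi_u|\leqslant\mu_h(I)/a<\pi/8$ for all $t\in I$. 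Hence $\psi_t$ differs from $\psi^0(r_t):=\varphi_u-r_t$ by less than $\pi/8$.

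\emph{Concentration of directions.} If no arc is heavy, then $\mu_h(I)\leqslant m_0a_0=\pi/8$. On the other hand $3\pi=r_v-r_u\leqslant\mu_h(I)$, which is a contradiction, so some arc is heavy. Lemma \ref{lem:HeavyConcen} then provides a unit vector $v_*=e(\beta_*)$ and a set $E\subseteq I$ with $\mu_h(E)\leqslant m_0a_0=\pi/8$. Outside $E$, the direction of $\dot h_t$ lies within $\theta_0+4\pi/m_0=\pi/4$ of $v_*$.

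\emph{A band with nonpositive radial drift.} Let $J\subseteq[a,a+3\pi]$ be the set of radii $\rho$ for which $\psi^0(\rho)=\varphi_u-\rho$ lies within $\pi/12$ of $\beta_*+\pi$ (mod $2\pi$). Such bands recur in $\rho$ with period $2\pi$ and have length $\pi/6$. Since the interval has length $3\pi>2\pi+\pi/6$, at least one such band $J=[\rho_1,\rho_2]$ lies strictly inside $(a,a+3\pi)$. Suppose $r_t\in J$ and $t\notin E$. Then $\psi_t$ is within $\pi/8+\pi/12=5\pi/24$ of $-v_*$, while $\dot h_t/|\dot h_t|$ is within $\pi/4$ of $v_*$. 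Hence the angle between $e(\psi_t)$ and $\dot h_t$ is at least $\pi-\pi/4-5\pi/24>\pi/2$, so $\dot r_t\leqslant0$.

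\emph{The crossing argument.} Since $r_u=a<\rho_1$ and $r_v=a+3\pi>\rho_2$, continuity gives times $\sigma<\tau$ with $r_\sigma=\rho_1$, $r_\tau=\rho_2$ and $r_t\in J$ for $t\in[\sigma,\tau]$; take $\sigma$ to be the last visit to $\rho_1$ before the first hitting of $\rho_2$. On $[\sigma,\tau]$, only times in $E$ can contribute positive radial increment. Therefore
\[
\pi/6=\rho_2-\rho_1\leqslant\int_{[\sigma,\tau]\cap E}|\dot h_t|\,dt\leqslant\mu_h(E)\leqslant\pi/8,
\]
a contradiction. Hence $\mu_h(I)\geqslant\gamma_0a$.

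The step I expect to be the main obstacle is the bookkeeping of angular margins. The three errors are the drift of $\varphi$ (less than $\pi/8$, driven by $\gamma_0$), the directional spread of $\dot h$ off $E$ (at most $\theta_0+4\pi/m_0$), and the band half-width. Their sum must stay below $\pi/2$ so that the band forces $\dot r_t\leqslant0$. At the same time, the band length must exceed $m_0a_0$ so that $E$ alone cannot carry the crossing. The parameter choice (\ref{eq:ParChoice}) appears designed to leave exactly this room, and I would double-check the mod $2\pi$ placement of $J$ inside $(a,a+3\pi)$.
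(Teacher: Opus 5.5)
Your proposal is correct and follows essentially the same route as the paper. Both arguments use polar coordinates, obtain a heavy arc and then Lemma \ref{lem:HeavyConcen}, bound the angular drift by $|\varphi_t-\varphi_u|<\gamma_0$, and cross a radial band on which $\dot r_t\leqslant 0$ off $E$, contradicting $\mu_h(E)\leqslant m_0a_0$. The only difference is your band half-width of $\pi/12$ (angular total $11\pi/24<\pi/2$) versus the paper's $m_0a_0=\pi/8$ (total exactly $\pi/2$); in both cases the band length exceeds $m_0a_0$.
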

\begin{proof}
We first claim that there is at least one heavy arc with respect to
$h|_{I}.$ Indeed, since $V$ is everywhere orthonormal, one has 
\[
|\dot{X}_{t}^{h}|=|\dot{h}_{t}|\quad\text{for every }t\implies\mu_{h}(I)=\|h\|_{1\text{-var};I}=\|X^{h}\|_{1\text{-var};I}\geqslant3\pi
\]
by assumption. On the other hand, if all arcs were light, according
to (\ref{eq:ParChoice}) one would obtain that 
\[
\mu_{h}(I)\leqslant m_{0}a_{0}=\frac{\pi}{8},
\]
which is a contradiction. Therefore, there is at least one heavy arc.
It follows from Lemma \ref{lem:HeavyConcen} that there exists a measurable
subset $E\subseteq I$ and a unit vector $v_{*}$, such that (\ref{eq:LightLength})
and (\ref{eq:HeavyConcen}) hold.

To prove the lemma, let us write 
\begin{equation}
X_{t}^{h}=r_{t}e^{i\phi_{t}},\dot{h}_{t}=|\dot{h}_{t}|e^{i\alpha_{t}},\quad t\in I.\label{eq:XDotHEqn}
\end{equation}
Here $\phi_{t}$ is any continuous determination of the angular component
of $X_{t}^{h}$; one can fix any $\phi_{u}\in\arg X_{t}^{h}$ and
then determine a unique lifted path $(\phi_{t})_{t\in I}.$ The path
$\alpha_{t}$ is any measurable choice of ${\rm arg}(\dot{h}_{t}/|\dot{h}_{t}|)$,
which is $\mu_{h}$-a.e. defined on $\{t:\dot{h}_{t}\neq0\}$. Since
\[
V(x)=\left(\begin{array}{cc}
\cos|x| & -\sin|x|\\
\sin|x| & \cos|x|
\end{array}\right)
\]
when $|x|\geqslant2$, elementary computation shows that 
\begin{equation}
\dot{r}_{t}=|\dot{h}_{t}|\cos(r_{t}+\alpha_{t}-\phi_{t}),\quad\dot{\phi}_{t}=\frac{1}{r_{t}}|\dot{h}_{t}|\sin(r_{t}+\alpha_{t}-\phi_{t}).\label{eq:RPhiEqn}
\end{equation}
In particular, one has 
\begin{equation}
|\dot{r}_{t}|\leqslant|\dot{h}_{t}|,\quad|\dot{\phi}_{t}|\leqslant\frac{1}{a}|\dot{h}_{t}|\label{eq:DynEstNonPert}
\end{equation}
for all $t\in I$. 

Now suppose on the contrary that $\mu_{h}(I)<\gamma_{0}a.$ The second
relation in (\ref{eq:DynEstNonPert}) implies that 
\[
\sup_{s,t\in I}|\phi_{t}-\phi_{s}|\leqslant\frac{1}{a}\mu_{h}(I)\leqslant\gamma_{0}.
\]
In addition, writing $v_{*}=e^{i\alpha_{*}}$ one knows from (\ref{eq:HeavyConcen})
that 
\[
\angle\big(e^{i\alpha_{t}},e^{i\alpha_{*}}\big)\leqslant\theta_{0}+\frac{4\pi}{m_{0}}
\]
for $\mu_{h}$-a.e. $t\in I\backslash E$. As a result, one has 
\begin{equation}
{\rm dist}_{2\pi}(\alpha_{t}-\phi_{t},\alpha_{*}-\phi_{u})\leqslant\theta_{0}+\frac{4\pi}{m_{0}}+\gamma_{0}\label{eq:AlpPhiDist}
\end{equation}
for $\mu_{h}$-a.e. $t\in I\backslash E$. Here ${\rm dist}_{2\pi}$
denotes the distance modulo $2\pi$ (equivalently, the $S^{1}$-distance).
Let 
\[
\beta_{*}\in[a+m_{0}a_{0},a+3\pi-m_{0}a_{0}]
\]
be a point satisfying 
\begin{equation}
\beta_{*}=\pi-(\alpha_{*}-\phi_{u})\quad\mod2\pi.\label{eq:Beta*}
\end{equation}
Such a $\beta_{*}$ clearly exists since $3\pi-2m_{0}a_{0}>2\pi$
(see (\ref{eq:ParChoice})). It follows that 
\begin{align*}
{\rm dist}_{2\pi}(r_{t}+\alpha_{t}-\phi_{t},\pi) & ={\rm dist}_{2\pi}(r_{t}+\alpha_{t}-\phi_{t},\beta_{*}+(\alpha_{*}-\phi_{u}))\\
 & \leqslant m_{0}a_{0}+\theta_{0}+\frac{4\pi}{m_{0}}+\gamma_{0}\\
 & =\frac{\pi}{8}+\frac{\pi}{8}+\frac{\pi}{8}+\frac{\pi}{8}=\frac{\pi}{2}
\end{align*}
whenever $t\in I\backslash E$ and $r_{t}\in[\beta_{*}-m_{0}a_{0},\beta_{*}+m_{0}a_{0}]$
for $\mu_{h}$-a.e. In particular, 
\begin{equation}
\dot{r}_{t}=|\dot{h}_t|\cos(r_{t}+\alpha_{t}-\phi_{t})\leqslant0\label{eq:NegDotR}
\end{equation}
for all such $t.$ 

To reach a contradiction, let 
\begin{align}
t_{+} & :=\inf\{t\in I:r_{t}=\beta_{*}+m_{0}a_{0}\},\nonumber \\
t_{-} & :=\sup\{t\in I:t<t_{+},r_{t}=\beta_{*}-m_{0}a_{0}\}.\label{eq:tpm}
\end{align}
Both $t_{\pm}$ are well-defined since 
\[
[\beta_{*}-m_{0}a_{0},\beta_{*}+m_{0}a_{0}]\subseteq[a,a+3\pi]
\]
by the construction of $\beta_{*}$. According to the first relation
of (\ref{eq:DynEstNonPert}) along with the estimates (\ref{eq:NegDotR})
and (\ref{eq:LightLength}), one obtains that 
\begin{align*}
2m_{0}a_{0} & =r_{t_{+}}-r_{t_{-}}=\int_{t_{-}}^{t_{+}}\dot{r}_{s}ds\\
 & =\int_{E\cap[t_{-},t_{+}]}\dot{r}_{s}ds+\int_{(I\backslash E)\cap[t_{-},t_{+}]}\dot{r}_{s}ds\\
 & \leqslant\int_{E\cap[t_{-},t_{+}]}\dot{r}_{s}ds\leqslant\mu_{h}(E)\leqslant m_{0}a_{0}.
\end{align*}
This is absurd. Therefore, $\mu_{h}(I)\geqslant\gamma_{0}a.$
\end{proof}
We are now in a position to prove the non-perturbative coercivity
estimate (\ref{eq:NonPertCoer}).

\begin{proof}[Proof of Lemma \ref{lem:NonPertCoer}]

Let $R>4$. Let $h\in\bar{{\cal H}}$ be such that $r_{t}:=|X_{t}^{h}|\geqslant R$
for some $t\in[0,1]$. We decompose the annulus $\{x:R/2\leqslant|x|\leqslant R\}$
into disjoint concentric annuli
\[
A_{j}:=\{x:a_{j}\leqslant|x|\leqslant b_{j}\},\quad a_{j}:=\frac{R}{2}+3\pi(j-1),b_{j}:=a_{j}+3\pi.
\]
There are $N_{R}=\frac{R}{6\pi}$ of them. Using first and last hitting,
one can find a sequence of times 
\[
0<s_{1}<t_{1}<\cdots<s_{N_{R}}<t_{N_{R}}<1,
\]
such that $r_{s_{j}}=a_{j},r_{t_{j}}=b_{j}$ and $r_{t}\in[a_{j},b_{j}]$
for every $j$. Let $I_{j}:=[s_{j},t_{j}]$. Denote 
\[
{\cal B}:=\{j:I_{j}\text{ does not contain an }(a_{0},\theta_{0})\text{-effective turn of }h\}.
\]
According to Lemma \ref{lem:ILength}, one has 
\[
\mu_{h}(I_{j})\geqslant\gamma_{0}a_{j}\geqslant\frac{\gamma_{0}R}{2}
\]
for every $j\in{\cal B}$. It follows that 
\[
\|h\|_{1\text{-var}}\geqslant\sum_{j\in{\cal B}}\mu_{h}(I_{j})\geqslant\frac{\gamma_{0}R}{2}\cdot|{\cal B}|.
\]
Therefore, 
\[
|{\cal B}|\leqslant\frac{2\|h\|_{1\text{-var}}}{\gamma_{0}R}.
\]
The number of $j$'s such that $I_{j}$ contains an $(a_{0},\theta_{0})$-effective
turn is thus bounded below by 
\[
N_{R}-|{\cal B}|\geqslant\frac{R}{6\pi}-\frac{2\|h\|_{1\text{-var}}}{\gamma_{0}R}.
\]
The desired estimate (\ref{eq:NonPertCoer}) now follows from the
definition of ${\cal N}_{{\rm eff}}(h).$

\end{proof}

\subsubsection*{The perturbative estimate}

In this part, we obtain a perturbative version of Lemma \ref{lem:NonPertCoer}
which will easily lead to a proof of Proposition \ref{prop:KeyCoer}.

In what follows, we fix 
\begin{equation}
p=(1/H)^{+},\quad p':=\frac{p}{1-p},\quad q=(p')^{-},\quad\theta:=\frac{p}{q}.\label{eq:PThe}
\end{equation}
It is important to note that $p-1<\theta<1$ and $\theta$ can be
chosen such that 
\[
\theta(H+1/2)<1.
\]
This is possible precisely when $H>1/2$. Our main perturbative estimate
for the number ${\cal N}_{{\rm eff}}(h)$ of effective turns is stated
as follows.
\begin{lem}
\label{lem:TurnEstPert}There exist positive constants $C_{1},C_{2},R_{0}$
depending only on $p,\theta,V$, such that the estimate
\begin{equation}
{\cal N}_{{\rm eff}}(h)\geqslant C_{1}R-C_{2}\Big[M^{p}+M^{1+\theta}+MR^{1-1/p}+\frac{\|h\|_{1\text{-var}}}{R}+M\|h\|_{1\text{-var}}^{\theta}\Big]\label{eq:TurnEstPert}
\end{equation}
holds uniformly over all $R>R_{0}$, all $h\in\bar{{\cal H}}$ and
all $w\in C^{p\text{-var}}([0,1];\mathbb{R}^{2})$ satisfying
\[
\sup_{0\leqslant t\leqslant1}|X_{t}^{w+h}|\geqslant R
\]
and $\|w\|_{p\text{-var}}\leqslant M$.
\end{lem}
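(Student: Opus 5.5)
The plan is to rerun the annulus argument from the proof of Lemma \ref{lem:NonPertCoer}, with $X^{h}$ replaced by $X^{w+h}$, and to treat the contribution of $w$ as a Young-integral error that is small on all but a controlled number of annuli. As before, I decompose $\{R/2\leqslant|x|\leqslant R\}$ into the $N_{R}\asymp R$ concentric annuli $A_{j}=\{a_{j}\leqslant|x|\leqslant a_{j}+3\pi\}$. I then choose crossing intervals $I_{j}=[s_{j},t_{j}]$ by first and last hitting, so that $r_{t}=|X_{t}^{w+h}|\in[a_{j},a_{j}+3\pi]$ on $I_{j}$. Let ${\cal B}$ be the set of $j$ such that $I_{j}$ contains no $(a_{0},\theta_{0})$-effective turn of $h$. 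Once $|{\cal B}|$ is bounded by the bracket in (\ref{eq:TurnEstPert}), the lower bound on ${\cal N}_{{\rm eff}}(h)$ follows exactly as in the non-perturbative case.

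The core step is a perturbative version of Lemma \ref{lem:ILength}. For $j\in{\cal B}$, I claim that either $\mu_{h}(I_{j})\geqslant\gamma_{0}a_{j}/2$, or the local Young error
\[
E_{j}:=\|w\|_{p\text{-var};I_{j}}\big(1+\mu_{h}(I_{j})^{\theta}+\|w\|_{p\text{-var};I_{j}}^{\theta}\big)
\]
exceeds a fixed constant $c_{0}$ depending only on $p,\theta,V$. To prove the claim I write the polar equations (\ref{eq:RPhiEqn}) for $X^{w+h}$. These acquire extra terms of the form $\int g(X_{s})\,dw_{s}$ with $g$ smooth and bounded; for $\phi$, $g$ carries an additional factor $1/r\leqslant1/a$. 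I bound these terms by the Young--Lo\`eve estimate,
\[
\Big|\int_{J}g(X)\,dw\Big|\lesssim\|w\|_{p\text{-var};J}\big(1+\|X\|_{q\text{-var};J}\big).
\]
For the $q$-variation of $X$ over $J$ I would use an interpolation of the type $\|X\|_{q\text{-var}}\lesssim\mu_{h}(J)^{\theta}+\|w\|_{p\text{-var};J}^{\theta}$, obtained from the decomposition of $X$ into its $h$-driven part and its $w$-driven part; this is where the exponent $\theta$ in (\ref{eq:PThe}) enters.

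With this bound in hand I repeat the contradiction argument of Lemma \ref{lem:ILength}. Suppose $\mu_{h}(I_{j})<\gamma_{0}a_{j}/2$ and $E_{j}<c_{0}$. Then $\phi$ still varies by less than $\gamma_{0}$ over $I_{j}$, so the cone condition (\ref{eq:AlpPhiDist}) survives. On the window $[t_{-},t_{+}]$ of (\ref{eq:tpm}), the drift part of $\dot r$ is nonpositive off $E$, as in (\ref{eq:NegDotR}). Hence
\[
2m_{0}a_{0}\leqslant\mu_{h}(E)+Cc_{0}\leqslant m_{0}a_{0}+Cc_{0},
\]
which is a contradiction once $c_{0}$ is small. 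Intervals on which $\|w\|_{p\text{-var};I_{j}}\geqslant1$ are set aside, since the Young bound is used only when that norm is at most $1$. By superadditivity of $\|w\|_{p\text{-var}}^{p}$ there are at most $M^{p}$ of them, which produces the $M^{p}$ term.

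Finally I count. By the first alternative, $|\{j\in{\cal B}:\mu_{h}(I_{j})\geqslant\gamma_{0}R/4\}|\lesssim\|h\|_{1\text{-var}}/R$. The number of $j$ with $E_{j}\geqslant c_{0}$ is at most $c_{0}^{-1}\sum_{j}E_{j}$, and I bound each piece by H\"older's inequality together with superadditivity:
\begin{itemize}
\item $\sum_{j}\|w\|_{p\text{-var};I_{j}}\leqslant N_{R}^{1-1/p}M\lesssim MR^{1-1/p}$;
\item $\sum_{j}\|w\|_{p\text{-var};I_{j}}\,\mu_{h}(I_{j})^{\theta}\leqslant M\big(\sum_{j}\mu_{h}(I_{j})^{\theta p'}\big)^{1/p'}\leqslant M\|h\|_{1\text{-var}}^{\theta}$, which uses $\theta p'\geqslant1$ (true since $q<p'$);
\item $\sum_{j}\|w\|_{p\text{-var};I_{j}}^{1+\theta}\leqslant M^{1+\theta}$, which uses $1+\theta\geqslant p$.
\end{itemize}
Together these give all the terms in the bracket of (\ref{eq:TurnEstPert}). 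I expect the main obstacle to be the Young estimate on the polar equations, in particular the $q$-variation bound on $X$ with the right power $\theta$ and constants uniform in $a_{j}\geqslant R/2$. The rest transcribes the non-perturbative argument.
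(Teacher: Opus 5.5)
Your architecture is the paper's: the same annuli and crossing intervals, and a perturbative version of Lemma \ref{lem:ILength} (the paper's Lemma \ref{lem:ILengthPert}, with your $E_j$ playing the role of $\Xi_{I_j}$). The three-way count also matches, with the same H\"older and superadditivity bounds.

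However, the step you flag as the main obstacle fails as written. Write $\omega_J:=\|w\|_{p\text{-var};J}$. The interpolation $\|X\|_{q\text{-var};J}\lesssim\mu_h(J)^{\theta}+\omega_J^{\theta}$ is false. Take $w=0$ and let $h$ move $X$ at constant speed along an arc of length $L$ of the circle $|x|=a$, with $1\ll L\ll a$. This is possible because $V$ is orthonormal, and it gives $\mu_h(J)=L$. Then $\|X\|_{q\text{-var};J}\geqslant|X_v-X_u|\geqslant cL$, while $\mu_h(J)^{\theta}=L^{\theta}\ll L$. Splitting $X$ into its $h$-driven and $w$-driven parts cannot produce a power $\theta<1$ of $\mu_h(J)$, because the $h$-driven part genuinely has variation of order $\mu_h(J)$.

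You could fall back on the valid bound $\|X\|_{q\text{-var};J}\leqslant\|X\|_{p\text{-var};J}\lesssim\omega_J+\mu_h(J)$. But then your cross term becomes $\sum_j\omega_{I_j}\mu_h(I_j)\leqslant M\|h\|_{1\text{-var}}$. That is weaker than the claimed $M\|h\|_{1\text{-var}}^{\theta}$. It is also useless in Proposition \ref{prop:KeyCoer}, where $\|h\|_{1\text{-var}}$ may be as large as $R^{H+1/2}\gg R$.

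The missing idea is to interpolate the bounded integrand $g(X)$ rather than $X$ itself.
\begin{itemize}
\item Use Young--Lo\`eve in the form $|\int_J g(X)\,dw|\leqslant C(\|g\|_\infty+\|g(X)\|_{q\text{-var};J})\,\omega_J$. Your version, with $\|X\|_{q\text{-var};J}$ in place of $\|g(X)\|_{q\text{-var};J}$, discards the boundedness of $g$.
\item From $|g(X_t)-g(X_s)|^{q}\leqslant(2\|g\|_\infty)^{q-p}L_g^{p}|X_t-X_s|^{p}$ you get $\|g(X)\|_{q\text{-var};J}\leqslant(2\|g\|_\infty)^{1-\theta}L_g^{\theta}\|X\|_{p\text{-var};J}^{\theta}$.
\item Combine this with the a priori bound $\|X\|_{p\text{-var};J}\leqslant C(\omega_J+\mu_h(J))$. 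It comes from bounding the $dh$-integral by $\|V\|_\infty\mu_h(J)$ and absorbing the $dw$-integral when $\omega_J\leqslant\delta$ (Lemma \ref{lem:WHCty}). With your cutoff at $1$, first subdivide $J$ into $O(\delta^{-p})$ pieces on which $\omega\leqslant\delta$.
\end{itemize}
Since $(\omega_J+\mu_h(J))^{\theta}\leqslant\omega_J^{\theta}+\mu_h(J)^{\theta}$, this yields exactly your $E_j$ up to constants, uniformly in $a_j$. For $g=B/|x|$ one has $\|g\|_\infty,L_g\lesssim1/a$ on the annulus, which supplies the extra factor $1/a$ in the $\phi$-equation. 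With this correction, the rest of your argument goes through as in the paper.
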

\begin{proof}[Proof of Proposition \ref{prop:KeyCoer} assuming Lemma \ref{lem:TurnEstPert}]

Let $R>R_{0}$ and let $(w,h)\in C^{p\text{-var}}([0,1];\mathbb{R}^{2})\times\bar{{\cal H}}$
be given such that 
\[
\sup_{0\leqslant t\leqslant1}|X_{t}^{w+h}|\geqslant R,\quad\|w\|_{p\text{-var}}\leqslant M.
\]
If $\|h\|_{1\text{-var}}>R^{H+1/2},$ one can use the simple inequality
(cf. (\ref{eq:EquivFracNorm}))
\[
\|h\|_{1\text{-var}}\leqslant C_{H}\|h\|_{\bar{{\cal H}}}
\]
to deduce that 
\[
\|h\|_{\bar{{\cal H}}}\geqslant C_{H}^{-1}R^{H+1/2}.
\]
Now consider the case when $\|h\|_{1\text{-var}}\leqslant R^{H+1/2}$.
According to Lemma \ref{lem:CMEffTurn} and Lemma \ref{lem:TurnEstPert},
\begin{align*}
\|h\|_{\bar{{\cal H}}}\geqslant & C_{H}\Big[C_{1}R-C_{2}\Big(M^{p}+M^{1+\theta}+MR^{1-1/p}\\
 & \ \ \ +R^{H-1/2}+MR^{\theta(H+1/2)}\Big)\Big]^{H+1/2}.
\end{align*}
Since $\theta(H+1/2)<1$, one has 
\[
C_{2}\Big(M^{p}+M^{1+\theta}+MR^{1-1/p}+R^{H-1/2}+MR^{\theta(H+1/2)}\Big)\leqslant\frac{C_{1}R}{2}
\]
whenever $R>R_{*}$ for some constant $R_{*}$ depending additionally
on $M$. It follows that 
\[
\|h\|_{\bar{{\cal H}}}\geqslant c_{*}R^{H+1/2}
\]
with a suitable constant $c_{*}>0$ for all $R>R_{M}.$

\end{proof}
\begin{rem}
It is clear from the above proof that the constant $c_{*}$ does not
depend on $M$. 
\end{rem}
The rest of this subsection is devoted to the proof of Lemma \ref{lem:TurnEstPert}.

Given $I\subseteq[0,1]$, we set
\begin{equation}
\omega_{I}:=\|w\|_{p\text{-var};I},\quad l_{I}:=\|h\|_{1\text{-var};I}.\label{eq:OmeL}
\end{equation}
From time to time, we will just write $X_{t}:=X_{t}^{w+h}$ to ease
notation. The first tool we need is the following simple continuity
estimate.
\begin{lem}
\label{lem:WHCty}There exist positive constants $C,\delta$ depending
only on $p$ and $V$, such that the estimate
\begin{equation}
\|X^{w+h}\|_{p\text{-var};I}\leqslant C(\omega_{I}+l_{I})\label{eq:WHCty}
\end{equation}
holds uniformly over all subintervals $I\subseteq[0,1]$, all $h\in C^{1\text{-var}}([0,1];\mathbb{R}^{2})$
and all $w\in C^{p\text{-var }}([0,1];\mathbb{R}^{2})$ satisfying
$\omega_{I}\leqslant\delta$.
\end{lem}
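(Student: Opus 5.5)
The plan is to view $w+h$ as a single driving path and apply the standard a priori estimate for Young differential equations on the interval $I$, then separate the contributions of $w$ and $h$ by superadditivity of controls. Since $1/p+1=1/p+1/1>1$, the joint path $z:=w+h$ restricted to $I$ has finite $p$-variation, and the mixed control
\[
\varpi(s,t):=\|w\|_{p\text{-var};[s,t]}^{p}+\|h\|_{1\text{-var};[s,t]}^{p}
\]
is superadditive. It dominates $\|z\|_{p\text{-var};[s,t]}^{p}$ up to a factor $2^{p-1}$, because $\|h\|_{p\text{-var}}\leqslant\|h\|_{1\text{-var}}$. One could also treat $h$ through a $1$-variation Riemann--Stieltjes integral and $w$ through Young's integral separately, but the single-control route is simpler. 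Note that $V\in C_b^\infty$ with $|V|\leqslant1$ and bounded derivatives; in particular $V$ is globally Lipschitz.

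The first step is a local estimate on short intervals. For $[s,t]\subseteq I$, Young's inequality gives
\[
\Big|X_{t}-X_{s}-V(X_{s})(z_{t}-z_{s})\Big|\leqslant C_{p}\|V\|_{C^{1}}\|X\|_{p\text{-var};[s,t]}\|z\|_{p\text{-var};[s,t]}.
\]
Hence $\|X\|_{p\text{-var};[s,t]}\leqslant C\|z\|_{p\text{-var};[s,t]}(1+\|X\|_{p\text{-var};[s,t]})$. Whenever $\|z\|_{p\text{-var};[s,t]}\leqslant 1/(2C)$, the $X$-term can be absorbed (by continuity in $t$), giving $\|X\|_{p\text{-var};[s,t]}\leqslant 2C\|z\|_{p\text{-var};[s,t]}\leqslant C'\varpi(s,t)^{1/p}$. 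This is the standard Davie/Lyons-type bound and I will essentially cite it, e.g.\ from \cite{FV10}.

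The second step globalises over $I$ by a greedy partition $u=\tau_0<\tau_1<\cdots<\tau_m=v$, chosen so that $\varpi(\tau_{i-1},\tau_i)=\varepsilon^p$ on each piece except the last, where $\varepsilon$ is small. Superadditivity yields $m\leqslant1+\varpi(u,v)/\varepsilon^p$. The $p$-variation over $I$ is subadditive up to a factor: $\|X\|_{p\text{-var};I}\leqslant m^{1-1/p}\sum_i\|X\|_{p\text{-var};[\tau_{i-1},\tau_i]}$. This naive bound gives $\|X\|_{p\text{-var};I}\lesssim \varpi(I)^{1/p}+\varpi(I)/\varepsilon^{p-1}$, which is linear in $\omega_I^p+l_I^p$ rather than linear in $\omega_I+l_I$ when $l_I$ is large. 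The main obstacle is precisely this: $l_I$ is unrestricted, so the estimate must stay linear in $l_I$ for large $l_I$.

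To fix this, I will use the hypothesis $\omega_I\leqslant\delta$ and a partition driven only by $l$. Since $w$ is already small on all of $I$, the $w$-part never forces subdivision. Choose the pieces so that $\|h\|_{1\text{-var};[\tau_{i-1},\tau_i]}\leqslant\varepsilon$; then $m\leqslant 1+l_I/\varepsilon$, and on each piece $\|z\|_{p\text{-var}}\leqslant\delta+\varepsilon$ is small. On each piece, the first step gives $\|X\|_{p\text{-var};[\tau_{i-1},\tau_i]}\leqslant C(\omega_{[\tau_{i-1},\tau_i]}+l_{[\tau_{i-1},\tau_i]})$.

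The pieces are then summed using the $p$-variation triangle inequality for concatenations. The term $\sum_i l_{[\tau_{i-1},\tau_i]}$ equals $l_I$, since $1$-variation is additive. For the $w$-part, I use $\|X\|_{p\text{-var}}^p$ summed over a partition together with the superadditivity of $\omega^p$. Via the bound
\[
\|X\|_{p\text{-var};I}\leqslant \Big(\sum_i\|X\|^p_{p\text{-var};[\tau_{i-1},\tau_i]}\Big)^{1/p}+\sum_i|X_{\tau_i}-X_{\tau_{i-1}}|,
\]
where the second term is at most $\sum_i C(\omega_i+l_i)$, the $\omega$ contributions would accumulate as $m\,\omega_I$. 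To avoid this I will bound the increment more carefully:
\[
|X_{\tau_i}-X_{\tau_{i-1}}|\leqslant |h_{\tau_i}-h_{\tau_{i-1}}|+\Big|\int V(X)\,dw\Big|.
\]
The $dw$-integrals telescope into a single Young integral $\int_I V(X)\,dw$. This integral is bounded by $C\omega_I(1+\|X\|_{p\text{-var};I})$, and the resulting $C\delta\|X\|_{p\text{-var};I}$ term is absorbed into the left-hand side for $\delta$ small. This absorption is the key point and is exactly where $\omega_I\leqslant\delta$ is used; together with the previous bounds it gives \eqref{eq:WHCty}.
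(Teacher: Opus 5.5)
Your last step fails as written. What you need to bound is $\sum_i|X_{\tau_i}-X_{\tau_{i-1}}|$, so the $dw$-contributions enter as $\sum_i\big|\int_{\tau_{i-1}}^{\tau_i}V(X)\,dw\big|$. Absolute values of the pieces do not telescope. This sum is only bounded \emph{below} by $\big|\int_I V(X)\,dw\big|$, so the Young bound $C\omega_I(1+\|X\|_{p\text{-var};I})$ for the single integral over $I$, and the absorption of $C\delta\|X\|_{p\text{-var};I}$, give no control of it. Already $\sum_i|w_{\tau_i}-w_{\tau_{i-1}}|$ can be of size $m^{1-1/p}\omega_I$ for $w$ of infinite $1$-variation, and this grows with $l_I$. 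A smaller slip: the $dh$-part of the increment is $\big|\int_{\tau_{i-1}}^{\tau_i}V(X)\,dh\big|\leqslant\|V\|_\infty l_{[\tau_{i-1},\tau_i]}$, not $|h_{\tau_i}-h_{\tau_{i-1}}|$.

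The repair is easy, because the obstacle you tried to avoid is not real. With pieces satisfying $l_{[\tau_{i-1},\tau_i]}\leqslant\varepsilon$ you have $m\leqslant 1+l_I/\varepsilon$, and $\omega_I\leqslant\delta$. So the naive bound you discarded already gives
\[
\sum_i|X_{\tau_i}-X_{\tau_{i-1}}|\leqslant C\sum_i\big(\omega_{[\tau_{i-1},\tau_i]}+l_{[\tau_{i-1},\tau_i]}\big)\leqslant C\big(m\,\omega_I+l_I\big)\leqslant C\big(\omega_I+(1+\delta/\varepsilon)\,l_I\big),
\]
which is linear. Together with your correct bound on $\big(\sum_i\|X\|^p_{p\text{-var};[\tau_{i-1},\tau_i]}\big)^{1/p}$, this completes your partition argument.

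The paper needs no partition at all. It never merges $w+h$ into one driver. It bounds the $p$-variation of $\int_u^\cdot V(X)\,dh$ on $I$ directly by $\|V\|_\infty l_I$; this is a Riemann--Stieltjes integral, so no smallness of $l_I$ is needed. It bounds $\int_u^\cdot V(X)\,dw$ on $I$ by Young's estimate as $C_p(\|V\|_\infty+\|DV\|_\infty\|X\|_{p\text{-var};I})\,\omega_I$, and absorbs the last term using $\omega_I\leqslant\delta$. That is the ``separate treatment'' you set aside as less simple. It is in fact the one-step proof, and your final absorption is a misapplied version of it.
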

\begin{proof}
Let $I=[u,v]$. One has 
\[
X_{s,t}=\int_{s}^{t}V(X_{r})dw_{r}+\int_{s}^{t}V(X_{r})dh_{r}
\]
for all $s\leqslant t\in I.$ The $dh$-integral is easily estimated
as 
\[
\Big\|\int_{u}^{\cdot}V(X_{t})dh_{t}\Big\|_{p\text{-var};I}\leqslant\|V\|_{\infty}l_{I}.
\]
For the $dw$-integral, one has from \cite[Theorem 6.8]{FV10} (Young's
integral estimate) that 
\[
\Big\|\int_{u}^{\cdot}V(X_{r})dw_{r}\Big\|_{p\text{-var};I}\leqslant C_{p}\big(\|V\|_{\infty}+\|DV\|_{\infty}\omega_{I}\big)\omega_{I}.
\]
Therefore, 
\begin{align}
\|X\|_{p\text{-var};I} & \leqslant\|V\|_{\infty}l_{I}+C_{p}\big(\|V\|_{\infty}+\|DV\|_{\infty}\|X\|_{p\text{-var};I}\big)\omega_{I}\nonumber \\
 & \leqslant C_{1;p,V}(l_{I}+\omega_{I})+C_{2;p,V}\omega_{I}\|X\|_{p\text{-var};I}.\label{eq:pVarEstPf}
\end{align}
Choose $\delta$ to be such that 
\[
C_{2;p,V}\delta\leqslant\frac{1}{2}.
\]
It follows from (\ref{eq:pVarEstPf}) that 
\[
\|X\|_{p\text{-var};I}\leqslant C_{1;p,V}(l_{I}+\omega_{I})+\frac{1}{2}\|X\|_{p\text{-var};I}\iff\|X\|_{p\text{-var};I}\leqslant2C_{1;p,V}(l_{I}+\omega_{I}),
\]
provided that $\omega_{I}\leqslant\delta$. This yields the desired
estimate (\ref{eq:WHCty}).
\end{proof}
The next result provides the main Young perturbative estimates needed
for the proof of Lemma \ref{lem:TurnEstPert}.
\begin{lem}
\label{lem:YoungPert}Let $p,q,\theta$ be given exponents satisfying
(\ref{eq:PThe}). Let $F$ be a uniformly bounded, Lipschitz function
with Lipschitz constant $L_{F}.$ Then there exists a positive constant
$C$ depending only on $p,\theta,V$, such that the following estimate
\begin{equation}
\sup_{J\subseteq I}\Big|\int_{J}F(X_{t}^{w+h})dw_{t}\Big|\leqslant C\big(\|F\|_{\infty}+\|F\|_{\infty}^{1-\theta}L_{F}^{\theta}(\omega_{I}+l_{I})^{\theta}\big)\omega_{I},\label{eq:YoungPert}
\end{equation}
holds uniformly over all subintervals $I\subseteq[0,1],$ all $h\in C^{1\text{-var}}([0,1];\mathbb{R}^{2})$
and all $w\in C^{p\text{-var}}([0,1];\mathbb{R}^{2})$ satisfying
$\omega_{I}\leqslant\delta$, where $\delta$ is the constant appearing
in Lemma \ref{lem:WHCty}. 
\end{lem}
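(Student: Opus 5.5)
The plan is to combine the Young--Lo\`eve estimate with an interpolation of the integrand's $q$-variation between its sup norm and its Lipschitz-controlled $p$-variation. Fix a subinterval $J=[s,t]\subseteq I$ and write $Y_r:=F(X_r^{w+h})$. Young's integral estimate \cite[Theorem 6.8]{FV10} with the pair of exponents $(q,p)$ gives
\[
\Big|\int_J Y_r\,dw_r\Big|\leqslant |Y_s|\,|w_{s,t}|+C_{p,q}\|Y\|_{q\text{-var};J}\,\|w\|_{p\text{-var};J}.
\]
This requires $1/p+1/q>1$, which holds because $q<p'$ is the conjugate-type exponent fixed in (\ref{eq:PThe}). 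The first term is bounded by $\|F\|_\infty\omega_I$ directly. So the whole task reduces to showing
\[
\|Y\|_{q\text{-var};J}\lesssim \|F\|_\infty^{1-\theta}L_F^\theta(\omega_I+l_I)^\theta .
\]

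The key step is this interpolation. Since $q>p$ and $\theta=p/q$, for any partition $\{r_i\}$ of $J$ I would write
\[
|Y_{r_i,r_{i+1}}|^q=|Y_{r_i,r_{i+1}}|^{q-p}\,|Y_{r_i,r_{i+1}}|^{p}\leqslant (2\|F\|_\infty)^{q-p}L_F^p\,|X_{r_i,r_{i+1}}|^p .
\]
Summing over the partition and taking the $q$-th root gives
\[
\|Y\|_{q\text{-var};J}\leqslant (2\|F\|_\infty)^{1-\theta}L_F^{\theta}\|X\|_{p\text{-var};J}^{\theta}.
\]
Lemma \ref{lem:WHCty} then bounds $\|X\|_{p\text{-var};J}\leqslant\|X\|_{p\text{-var};I}\leqslant C(\omega_I+l_I)$; this is where the assumption $\omega_I\leqslant\delta$ enters. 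Combining this with $\|w\|_{p\text{-var};J}\leqslant\omega_I$ yields (\ref{eq:YoungPert}), and taking the supremum over $J\subseteq I$ finishes the argument.

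The main thing to double-check is the bookkeeping of exponents in (\ref{eq:PThe}). The formula ``$p'=p/(1-p)$'' as written is presumably meant to be the conjugate exponent $p/(p-1)$. What actually matters is that $q$ is slightly smaller than $p/(p-1)$, so that $1/p+1/q>1$ and Young's estimate applies, and that $q>p$, so that $\theta<1$ and the interpolation is legitimate; both hold when $p$ is close to $1/H<2$. Everything else is a direct application of earlier estimates, with constants depending only on $p,\theta,V$.
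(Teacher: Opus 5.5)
Your proposal is correct and follows the paper's proof essentially line for line. Both arguments apply Young's estimate \cite[Theorem 6.8]{FV10} with exponents $(q,p)$, use the interpolation $|F(X_t)-F(X_s)|^q\leqslant(2\|F\|_\infty)^{q-p}(L_F|X_t-X_s|)^p$ to get $\|F(X)\|_{q\text{-var}}\leqslant(2\|F\|_\infty)^{1-\theta}L_F^\theta\|X\|_{p\text{-var}}^\theta$, and then invoke Lemma \ref{lem:WHCty} under $\omega_I\leqslant\delta$. You are also right that $p'$ in (\ref{eq:PThe}) should read $p/(p-1)$, and that the only facts needed are $p<q<p'$, which hold because $p<2$.
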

\begin{proof}
According to \cite[Theorem 6.8]{FV10}, one has 
\begin{equation}
\Big|\int_{J}F(X_{t})dw_{t}\Big|\leqslant C_{p,q}\big(\|F\|_{\infty}+\|F(X)\|_{q\text{-var};I}\big)\omega_{I}\label{eq:PertEsPf2}
\end{equation}
for all $J\subseteq I$, where $q\in(p,p')$ is given fixed (see (\ref{eq:PThe})).
On the other hand, by using the simple estimate 
\begin{align*}
|F(X_{t})-F(X_{s})|^{q} & =|F(X_{t})-F(X_{s})|^{q-p}|F(X_{t})-F(X_{s})|^{p}\\
 & \leqslant(2\|F\|_{\infty})^{q-p}(L_{F}|X_{t}-X_{s}|)^{p}
\end{align*}
where $L_{F}$ denotes the Lipschitz constant for $F$, one finds
that 
\[
\|F(X)\|_{q\text{-var};I}\leqslant(2\|F\|_{\infty})^{1-\theta}L_{F}^{\theta}\|X\|_{p\text{-var};I}^{\theta}.
\]
It follows from Lemma \ref{lem:WHCty} that 
\begin{equation}
\|F(X)\|_{q\text{-var};I}\leqslant C_{p,V}(2\|F\|_{\infty})^{1-\theta}L_{F}^{\theta}(\omega_{I}+l_{I})^{\theta},\label{eq:PertEsPf1}
\end{equation}
provided that $\omega_{I}\leqslant\delta$. By substituting (\ref{eq:PertEsPf1})
into (\ref{eq:PertEsPf2}), one obtains the desired estimate (\ref{eq:YoungPert}).
\end{proof}
To proceed further, one needs to examine the radial and angular dynamics
of $X_{t}^{w+h}$. For each $x=re^{i\phi}\in\mathbb{R}^{2},$ we define
\[
e_{1}(x):=\left(\begin{array}{c}
\cos\phi\\
\sin\phi
\end{array}\right),\quad e_{2}(x):=\left(\begin{array}{c}
-\sin\phi\\
\cos\phi
\end{array}\right)
\]
and set 
\[
A(x):=e_{1}(x)^{T}\cdot V(x),\quad B(x):=e_{2}(x)^{T}\cdot V(x).
\]
As a convention, we take $\phi=0$ if $r=0$. In other words, $A(x)$
(respectively, $B(x)$) is the coordinates of $e_{1}(x)$ (respectively,
$e_{2}(x)$) with respect to the orthonormal frame $V(x)$.

Given a control pair $(w,h)$ with $w\in C^{p\text{-var}}([0,1];\mathbb{R}^{2})$
and $h\in\bar{{\cal H}}$, let $r_{t}:=X_{t}^{w+h}$. Suppose that
$r_{t}\geqslant2$ for all $t\in I=[u,v]$. Similar to the equations
(\ref{eq:XDotHEqn}) and (\ref{eq:RPhiEqn}), one can write 
\[
X_{t}^{w+h}=r_{t}e^{i\phi_{t}},\quad\dot{h}_{t}=|\dot{h}_{t}|e^{i\alpha_{t}},
\]
where $(r_{t},\phi_{t})$ satisfies the controlled differential equations
\begin{equation}
dr_{t}=A(X_{t})dw_{t}+\cos(r_{t}+\alpha_{t}-\phi_{t})|\dot{h}_{t}|dt,\label{eq:RPertEqn}
\end{equation}
\begin{equation}
d\phi_{t}=\frac{1}{r_{t}}\big(B(X_{t})dw_{t}+\sin(r_{t}+\alpha_{t}-\phi_{t})|\dot{h}_{t}|dt\big).\label{eq:PhiPertEqn}
\end{equation}

\begin{cor}
\label{cor:YoungPertEst}Define 
\[
\Xi_{I}:=\omega_{I}\big(1+(\omega_{I}+l_{I})^{\theta}\big)
\]
for each $I=[u,v]\subseteq[0,1]$. There exists a positive constant
$K$ depending only on $p,\theta,V$, such that the following two estimates
\[
\sup_{J\subseteq I}\Big|\int_{J}A(X_{t}^{w+h})dw_{t}\Big|\leqslant K\Xi_{I},
\]
\[
\sup_{J\subseteq I}\Big|\int_{J}\frac{1}{r_t}B(X_{t}^{w+h})dw_{t}\Big|\leqslant \frac{K}{a}\Xi_{I}
\]
hold uniformly over all $I\subseteq[0,1],$ $a>2$, $h\in\bar{{\cal H}}$
and $w\in C^{p\text{-var}}([0,1];\mathbb{R}^{2})$ satisfying
$\omega_{I}\leqslant\delta$ and $r_{t}\geqslant a$ for all $t\in I$. 
\end{cor}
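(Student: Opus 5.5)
The plan is to apply Lemma \ref{lem:YoungPert} directly, once with $F=A$ and once with $F=B/r$. The work lies in checking that these functions are bounded and Lipschitz with constants depending only on $V$ (and, for $B/r$, on $a$). Two technical points need care. First, $A$ and $B$ are defined through the polar frame $e_1(x),e_2(x)$, which is not globally Lipschitz near the origin. Second, $1/r$ is unbounded near $0$. Both issues disappear on the region $\{|x|\geqslant a\}$, where the trajectory lives by assumption.

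\emph{The function $A$.} Since $V(x)$ is orthonormal and $e_1(x)$ is a unit vector, $|A(x)|\leqslant 1$. On $\{|x|\geqslant 2\}$, the map $x\mapsto e_1(x)=x/|x|$ has Lipschitz constant at most $1$ (more precisely $\lesssim 1/|x|$). The map $V$ is Lipschitz with constant $\|\eta'\|_\infty$. Hence $A$ is Lipschitz on $\{|x|\geqslant 2\}$ with a constant $L_A$ depending only on $V$.

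To use Lemma \ref{lem:YoungPert}, which is stated for a globally defined $F$, I will replace $A$ by a bounded Lipschitz extension $\tilde A$. For instance, compose $A$ with the radial retraction $x\mapsto x\max(1,2/|x|)$ onto $\{|x|\geqslant 2\}$, which is Lipschitz. Then $\tilde A$ agrees with $A$ along the trajectory, since $r_t\geqslant a>2$ on $I$. Since $\|\tilde A\|_\infty\leqslant 1$ and $L_{\tilde A}\lesssim_V 1$, Lemma \ref{lem:YoungPert} gives a bound of the form
\[
C\big(1+(\omega_I+l_I)^\theta\big)\omega_I=C\,\Xi_I .
\]

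\emph{The function $B/r$.} Set $G(x):=B(x)/|x|$ and extend it in the same way, using the retraction onto $\{|x|\geqslant a\}$. On that region:
\begin{itemize}
\item $\|G\|_\infty\leqslant 1/a$;
\item $L_G\leqslant L_B/a+\|B\|_\infty/a^2\lesssim_V 1/a$, because $a>2$ gives $1/a^2\leqslant 1/(2a)$.
\end{itemize}
Substituting into Lemma \ref{lem:YoungPert} gives
\[
\|G\|_\infty+\|G\|_\infty^{1-\theta}L_G^\theta(\omega_I+l_I)^\theta\lesssim \frac1a\big(1+(\omega_I+l_I)^\theta\big),
\]
so the second estimate holds with the factor $K/a$. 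The constant $C$ of Lemma \ref{lem:YoungPert} is uniform across choices of $F$ once $\|F\|_\infty$ and $L_F$ are factored out, so $K$ depends only on $p,\theta,V$.

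The main obstacle is justifying that Lemma \ref{lem:YoungPert} still applies after replacing $F$ by its extension. Its proof uses $F$ only through the values $F(X_t)$ for $t\in I$, via the $q$-variation bound on $F(X)$ and Lemma \ref{lem:WHCty}. Because $\tilde A=A$ and $\tilde G=B/r$ hold along $X|_I$, this replacement is harmless.

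A secondary check concerns the retraction. Although retracting onto $\{|x|\geqslant a\}$ is Lipschitz, the Lipschitz constant of $B/r$ must be controlled on the whole region $\{|x|\geqslant a\}$, not merely locally. This follows from the pointwise gradient bound above together with the fact that this region is path-connected via arcs of length comparable to the chord, for example the straight segment or a circular arc (at most $\pi/2$ times the chord).
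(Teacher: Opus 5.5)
Your approach is the paper's: apply Lemma \ref{lem:YoungPert} with $F=A$ and with $F=B/|x|$, using only that the trajectory stays in $\{|x|\geqslant a\}$ on $I$. You are more careful than the paper's one-line proof on the point the second estimate needs. The factor $1/a$ comes from $\|G\|_\infty\leqslant 1/a$ and $L_G\lesssim_V 1/a$ on $\{|x|\geqslant a\}$, together with $\|G\|_\infty^{1-\theta}L_G^{\theta}\lesssim 1/a$. A cut-off at the fixed radius $2$ would only give $K\Xi_I$.

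One auxiliary claim is false. The radial map $x\mapsto x\max(1,2/|x|)$ is not a Lipschitz retraction onto $\{|x|\geqslant 2\}$: it is undefined at the origin and sends $\pm\varepsilon e$ to $\pm 2e$. In fact no continuous retraction of $\mathbb{R}^2$ onto $\{|x|\geqslant 2\}$ or $\{|x|\geqslant a\}$ exists, because a retract of a contractible space is contractible and these sets are not.

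This does not damage the argument, and you already have the repair. As you observe, the proof of Lemma \ref{lem:YoungPert} uses $F$ only through the path $F(X_t)$, $t\in I$, via the Young--Lo\`eve bound and the $q$-variation interpolation. So it suffices that $F$ be bounded and Lipschitz on the set $\{|x|\geqslant a\}$ containing $X|_I$, and no extension is needed. If you want a globally defined $F$, use a cut-off instead, as the paper does. For $G$, take $\chi(|x|)B(x)/|x|$ with $\chi=0$ on $[0,a/2]$, $\chi=1$ on $[a,\infty)$ and $|\chi'|\leqslant 4/a$; this keeps both the sup norm and the Lipschitz constant of order $1/a$.

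The Lipschitz bound on the non-convex region also follows directly from $\big|x/|x|-y/|y|\big|\leqslant |x-y|/\sqrt{|x||y|}$, so your path-length argument is correct but not needed.
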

\begin{proof}
Take $F(x)=A(x)$ or $B(x)/|x|$ in Lemma \ref{lem:YoungPert} (in the latter case, one shall also apply a smooth cut-off near the origin which is harmless since the estimate only concerns with the region $|x|\geqslant 2$). Then $F$ is uniformly bounded and Lipschitz on $\{x:|x|\geqslant2\}$.
\end{proof}
Now we establish the perturbative version of Lemma \ref{lem:ILength}.
We will continue to use the same definitions, notation and exact values
of parameters (e.g. $a_{0},\theta_{0},m_{0},\gamma_{0}$ satisfying
(\ref{eq:ParChoice})) as in Section \ref{subsec:NonPert}. Recall
that $\delta=\delta_{p,V}$ is the constant appearing in Lemma \ref{lem:WHCty}.
\begin{lem}
\label{lem:ILengthPert}There exist positive constants $A_{0},\eta_{0}$
depending only on $p,\theta,V$, such that the following statement
holds. Given any control pair $(w,h)$, any $I=[u,v]\subseteq[0,1],$
any $a>2$, whenever $(w,h,X^{w+h})$ satisfies 
\begin{equation}
r_{u}=a,r_{v}=a+3\pi,\quad r_{t}\in[a,a+3\pi]\ \text{for all }t\in I,\label{eq:WHAs1}
\end{equation}
\begin{equation}
a\geqslant A_{0},\omega_{I}\leqslant\delta,\Xi_{I}\leqslant\eta_{0}\label{eq:WHAs2}
\end{equation}
and 
\begin{equation}
\mu_{h}(I)<\frac{\gamma_{0}a}{2},\label{eq:WHAs3}
\end{equation}
the interval $I$ must contain an $(a_{0},\theta_{0})$-effective
turn of $h$.
\end{lem}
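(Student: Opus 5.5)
We argue by contradiction, mirroring the proof of Lemma \ref{lem:ILength} and treating the $dw$-terms in (\ref{eq:RPertEqn})--(\ref{eq:PhiPertEqn}) as small errors controlled by Corollary \ref{cor:YoungPertEst}. So suppose $I$ contains no $(a_0,\theta_0)$-effective turn of $h$, and assume (\ref{eq:WHAs1})--(\ref{eq:WHAs3}).

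\emph{Existence of a heavy arc.} The radial equation (\ref{eq:RPertEqn}) together with Corollary \ref{cor:YoungPertEst} gives
\[
3\pi=r_v-r_u\leqslant K\Xi_I+\mu_h(I)\leqslant K\eta_0+\mu_h(I).
\]
Choosing $\eta_0$ with $K\eta_0\leqslant\pi$ yields $\mu_h(I)\geqslant 2\pi>m_0a_0$. Hence some arc is heavy, and Lemma \ref{lem:HeavyConcen} supplies a set $E$ and a direction $v_*=e^{i\alpha_*}$ satisfying (\ref{eq:LightLength}) and (\ref{eq:HeavyConcen}).

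\emph{Control of the angle.} From (\ref{eq:PhiPertEqn}), using $r_t\geqslant a$ and Corollary \ref{cor:YoungPertEst},
\[
\sup_{s,t\in I}|\phi_t-\phi_s|\leqslant \frac{K\eta_0}{a}+\frac{\mu_h(I)}{a}\leqslant \frac{K\eta_0}{A_0}+\frac{\gamma_0}{2}.
\]
Taking $A_0$ large, e.g.\ $K\eta_0/A_0\leqslant\gamma_0/2$, this is at most $\gamma_0$. Then (\ref{eq:AlpPhiDist}) holds exactly as before. The one new issue is a margin: $m_0a_0+\theta_0+4\pi/m_0+\gamma_0=\pi/2$ exactly, while the perturbation will consume a little room. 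So on the window around $\beta_*$ we should ask for $\cos(r_t+\alpha_t-\phi_t)\leqslant0$ only, which the equality case still gives. It is simpler, though, to shrink the window: work on $r_t\in[\beta_*-\rho,\beta_*+\rho]$ with $\rho=m_0a_0$ as before, still getting $\dot r$'s $h$-part $\leqslant0$ off $E$. The $dw$-part will be absorbed separately below rather than through the angle.

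\emph{The crossing contradiction.} Define $t_\pm$ as in (\ref{eq:tpm}); $\beta_*$ exists since $3\pi-2m_0a_0>2\pi$. Integrating (\ref{eq:RPertEqn}) over $[t_-,t_+]$ and splitting the $h$-part over $E$ and $I\setminus E$ gives
\[
2m_0a_0=r_{t_+}-r_{t_-}\leqslant\Big|\int_{t_-}^{t_+}A(X_t)\,dw_t\Big|+\mu_h(E)\leqslant K\eta_0+m_0a_0.
\]
If $\eta_0<m_0a_0/K$, this is a contradiction. So the final choice is $\eta_0<\min\{\pi,m_0a_0\}/K$, followed by $A_0$ large. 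We also need $a\geqslant A_0>2$ so that $V$ has the rotation form and the polar equations are valid.

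\emph{Main obstacle.} The delicate step is checking that the sign argument survives the perturbation: the $dw$-contributions to $\dot\phi$ could shift $r_t+\alpha_t-\phi_t$. Since the angular shift is bounded by $K\eta_0/a$ uniformly, we will take $A_0$ large enough that it is absorbed into $\gamma_0$, using the slack from the factor $1/2$ in (\ref{eq:WHAs3}). That factor is precisely why the hypothesis is $\gamma_0a/2$ rather than $\gamma_0a$.
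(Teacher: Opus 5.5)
Your proposal is correct and follows the paper's proof essentially step for step. You use the same heavy-arc bound $3\pi\leqslant K\eta_0+l_I$, the same angular estimate with $K\eta_0/A_0\leqslant\gamma_0/2$ absorbed through the factor $1/2$ in (\ref{eq:WHAs3}), and the same crossing contradiction $2m_0a_0\leqslant K\eta_0+m_0a_0$, which forces $K\eta_0<m_0a_0$.

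The digression about a ``margin'' and shrinking the window is unnecessary, and in the end you keep $\rho=m_0a_0$ anyway. Once $\sup_{s,t\in I}|\phi_t-\phi_s|\leqslant\gamma_0$, the budget $m_0a_0+\theta_0+4\pi/m_0+\gamma_0=\pi/2$ is met exactly as in Lemma \ref{lem:ILength}. Shrinking the window would only weaken the final inequality $2\rho\leqslant K\eta_0+m_0a_0$.
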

\begin{proof}
By choosing $\eta_{0}$ small, one can always ensure that there is
at least a heavy arc with respect to $h|_{I}$. Indeed, since $\|A\|_{\infty}=1$,
one finds from the equation (\ref{eq:RPertEqn}) of $r_{t}$ that
\[
3\pi=r_{v}-r_{u}\leqslant K\Xi_{I}+l_{I}\leqslant K\eta_{0}+l_{I},
\]
provided that $\Xi_{I}\leqslant\eta_{0}.$ Suppose that 
\begin{equation}
\eta_{0}<\frac{1}{K}(3\pi-m_{0}a_{0})=\frac{23\pi}{8K}.\label{eq:Eta01}
\end{equation}
Then one has 
\[
l_{I}\geqslant3\pi-K\eta_{0}>m_{0}a_{0}.
\]
This implies by definition that there must be at least one heavy arc
(otherwise, $l_{I}\leqslant m_{0}a_{0}$).

Now suppose that the conditions (\ref{eq:WHAs1}, \ref{eq:WHAs2},
\ref{eq:WHAs3}) are all satisfied while $I$ does not contain an
$(a_{0},\theta_{0})$-effective turn of $h$. We will show that this
leads to a contradiction if $A_{0}$ and $\eta_{0}$ are chosen properly.
First of all, by using the given conditions, the $\phi$-equation
(\ref{eq:PhiPertEqn}) and Corollary \ref{cor:YoungPertEst}, one
finds that 
\[
\sup_{s,t\in I}|\phi_{t}-\phi_{s}|\leqslant\frac{K}{a}\Xi_{I}+\frac{1}{a}\mu_{h}(I)\leqslant\frac{K\eta_{0}}{A_{0}}+\frac{\gamma_{0}}{2}.
\]
By requiring that 
\begin{equation}
\frac{K\eta_{0}}{A_{0}}<\frac{\gamma_{0}}{2},\label{eq:A0}
\end{equation}
one has 
\[
\sup_{s,t\in I}|\phi_{t}-\phi_{s}|<\gamma_{0}.
\]
In addition, according to Lemma \ref{lem:HeavyConcen}, there exist
a measurable subset $E\subseteq I$ and a unit vector $v_{*}=e^{i\alpha_{*}}$
such that (\ref{eq:LightLength}) and (\ref{eq:HeavyConcen}) holds.
One can then carry the argument of Lemma \ref{lem:ILength} word by
word to show that 
\begin{equation}
{\rm dist}_{2\pi}(r_{t}+\alpha_{t}-\phi_{t},\pi)\leqslant\frac{\pi}{2}\label{eq:NegCosPertPf}
\end{equation}
for $\mu_{h}$-a.e. $t\in I\backslash E$ such that $\dot{h}_{t}\neq0$
and $r_{t}\in[\beta_{*}-m_{0}a_{0},\beta_{*}+m_{0}a_{0}]$, where
$\beta_{*}$ is defined according to the relation (\ref{eq:Beta*}).
Define $t_{\pm}$ as in (\ref{eq:tpm}). It follows from the $r$-equation
(\ref{eq:RPertEqn}), Corollary \ref{cor:YoungPertEst}, the estimates
(\ref{eq:LightLength}) and (\ref{eq:NegCosPertPf}) that 

\begin{align*}
2m_{0}a_{0} & =r_{t_{+}}-r_{t_{-}}=\int_{t_{-}}^{t_{+}}dr_{s}\\
 & =\int_{t_{-}}^{t_{+}}A(X_{s})dw_{s}+\int_{t_{-}}^{t_{+}}\cos(r_{s}+\alpha_{s}-\phi_{s})|\dot{h}_{s}|ds\\
 & \leqslant K\Xi_{I}+\mu_{h}(E)\leqslant K\eta_{0}+m_{0}a_{0}.
\end{align*}
By requiring $\eta_0$ to satisfy
\begin{equation}
K\eta_{0}<m_{0}a_{0},\label{eq:Eta02}
\end{equation}
one reaches a contradiction. 

To summarise, one first chooses $\eta_{0}$ that satisfies (\ref{eq:Eta01},
\ref{eq:Eta02}) and then chooses $A_{0}$ to satisfy (\ref{eq:A0}).
With such choices, for any $(I,a;w,h)$ satisfying the conditions
(\ref{eq:WHAs1}, \ref{eq:WHAs2}, \ref{eq:WHAs3}), the interval
$I$ must contain an $(a_{0},\theta_{0})$-effective turn of $h$.
This completes the proof of the lemma. 
\end{proof}
Finally, we are in a position to prove Lemma \ref{lem:TurnEstPert}.

\begin{proof}[Proof of Lemma \ref{lem:TurnEstPert}]

We continue to use the same notation as in the proof of Lemma \ref{lem:NonPertCoer}
(the annuli decomposition $[a_{j},b_{j}]$ and radial crossing $I_{j}=[s_{j},t_{j}]$).
Let $R>2A_{0}$ ($A_{0}$ is the constant appearing in Lemma \ref{lem:ILengthPert}
so that $a_{j}>A_{0}$ for every $j$). Our goal is to estimate the
number of $I_{j}$'s which do not contain an $(a_{0},\theta_{0})$-effective
turn of $h$. According to Lemma \ref{lem:ILengthPert}, at least
one of the constraints in (\ref{eq:WHAs2}, \ref{eq:WHAs3}) fails
for such $I_{j}$'s. We discuss the possibilities as follows.

\vspace{2mm}\noindent (i) \uline{\mbox{$\omega_{I_{j}}>\delta$}.} 

\vspace{2mm} Since $\|w\|_{p\text{-var}}\leqslant M$, one has 
\[
\delta^{p}\cdot\big|\{j:\omega_{I_{j}}>\delta\}\big|\leqslant\sum_{j=1}^{N_{R}}\omega_{I_{j}}^{p}\leqslant M^{p}.
\]
As a result, 
\begin{equation}
\big|\{j:\omega_{I_{j}}>\delta\}\big|\leqslant\delta^{-p}M^{p}.\label{eq:C1}
\end{equation}
(ii) \uline{\mbox{$l_{I_{j}}>\gamma_{0}a_{j}/2$}.}

\vspace{2mm} Since $a_{j}\geqslant R/2$, one has 
\[
\frac{\gamma_{0}R}{4}\cdot\big|\{j:l_{I_{j}}>\gamma_{0}a_{j}/2\}\big|\leqslant\sum_{j=1}^{N_{R}}l_{I_{j}}\leqslant\|h\|_{1\text{-var}}.
\]
Therefore, 
\begin{equation}
\big|\{j:l_{I_{j}}>\gamma_{0}a_{j}/2\}\big|\leqslant\frac{4\|h\|_{1\text{-var}}}{\gamma_{0}R}.\label{eq:C2}
\end{equation}
(iii) \uline{\mbox{$\Xi_{I_{j}}>\eta_{0}$}.} 

\vspace{2mm} By definition, one has 
\begin{equation}
\sum_{j=1}^{N_{R}}\Xi_{I_{j}}\leqslant\sum_{j=1}^{N_{R}}\omega_{I_{j}}+\sum_{j=1}^{N_{R}}\omega_{I_{j}}l_{I_{j}}^{\theta}+\sum_{j=1}^{N_{R}}\omega_{I_{j}}^{1+\theta}.\label{eq:SumXi}
\end{equation}
The first sum in (\ref{eq:SumXi}) is estimated as 
\[
\sum_{j=1}^{N_{R}}\omega_{I_{j}}\leqslant N_{R}^{1-1/p}\Big(\sum_{j=1}^{N_{R}}\omega_{I_{j}}^{p}\Big)^{1/p}\leqslant MN_{R}^{1-1/p}.
\]
For the second sum in (\ref{eq:SumXi}), one can first use H\"older's
inequality to obtain that 
\[
\sum_{j=1}^{N_{R}}\omega_{I_{j}}l_{I_{j}}^{\theta}\leqslant M\Big(\sum_{j=1}^{N_{R}}l_{I_{j}}^{\theta p'}\Big)^{1/p'}.
\]
Next, observe that $\theta p'>1$. Since the $l^{p}$-norm is nonincreasing
in $p$, one has 
\[
\Big(\sum_{j=1}^{N_{R}}l_{I_{j}}^{\theta p'}\Big)^{1/p'}\leqslant\Big(\sum_{j=1}^{N_{R}}l_{I_{j}}\Big)^{\theta}\leqslant\|h\|_{1\text{-var}}^{\theta}.
\]
As a consequence, 
\[
\sum_{j=1}^{N_{R}}\omega_{I_{j}}l_{I_{j}}^{\theta}\leqslant M\|h\|_{1\text{-var}}^{\theta}.
\]
The last sum in (\ref{eq:SumXi}) again easily estimated by monotonicity
of $l^{p}$-norm; since $1+\theta>p$, one has 
\[
\sum_{j=1}^{N_{R}}\omega_{I_{j}}^{1+\theta}\leqslant\Big(\sum_{j=1}^{N_{R}}\omega_{I_{j}}^{p}\Big)^{\frac{1+\theta}{p}}\leqslant M^{1+\theta}.
\]
To summarise, one concludes that 
\[
\sum_{j=1}^{N_{R}}\Xi_{I_{j}}\leqslant M\Big(\frac{R}{6\pi}\Big)^{1-1/p}+M\|h\|_{1\text{-var}}^{\theta}+M^{1+\theta}.
\]
It follows that 
\begin{equation}
\big|\{j:\Xi_{I_{j}}>\eta_{0}\}\big|\leqslant\eta_{0}^{-1}\Big[M\Big(\frac{R}{6\pi}\Big)^{1-1/p}+M\|h\|_{1\text{-var}}^{\theta}+M^{1+\theta}\Big].\label{eq:C3}
\end{equation}
The number of $j$'s such that $I_{j}$ does not contain an $(a_{0},\theta_{0})$-effective
turn is therefore bounded above by the sum of the RHS of (\ref{eq:C1},
\ref{eq:C2}, \ref{eq:C3}). Since $N_{R}=\frac{R}{6\pi}$, this immediately
yields a lower bound of ${\cal N}_{{\rm eff}}(h)$ in the form of
(\ref{eq:TurnEstPert}). The proof of Lemma \ref{lem:TurnEstPert}
is now complete. 

\end{proof}

\subsubsection{The  $(2H+1)$-Weibull upper estimate}

We now proceed to prove Theorem \ref{thm:2DCounter} by using the
coercivity estimate given by Proposition \ref{prop:KeyCoer}. The
argument is a simple application of Borell's isoperimetric inequality. 

\begin{proof}[Proof of Theorem \ref{thm:2DCounter}]

Let $({\cal W},\bar{{\cal H}},\mu)$ be the abstract Wiener space
associated with the 2D fBM (in particular, $\mu$ is the law of fBM).
Fix $p\in(1/H)^{+}$ as before and fix $M>0$ (its precise value is
of no importance). Define
\[
G_{M}:=\big\{ w\in{\cal W}:\|w\|_{p\text{-var}}\leqslant M\big\}.
\]
For each $R>0,$ we also set 
\[
A_{R}:=\Big\{ w\in{\cal W}:\sup_{0\leqslant t\leqslant1}|X_{t}^{w}|>R\Big\}
\]
According to Proposition \ref{prop:KeyCoer}, there exist positive
constants $c_{0},R_{0},$ such that 
\[
G_{M}+c_{0}R^{H+1/2}{\cal K}\subseteq A_{R}^{c}
\]
for all $R>R_{0},$ where ${\cal K}$ denotes the closed unit ball
of $\bar{{\cal H}}.$ Let 
\[
\kappa:=\Phi^{-1}(\mu(G_{M}))\in\mathbb{R},
\]
where $\Phi(u):=\frac{1}{\sqrt{2\pi}}\int_{-\infty}^{x}e^{-u^{2}/2}du$.
It follows from Borell's isoperimetric inequality (see \cite[Theorem 4.3]{Led94})
that 
\begin{align*}
\mu(A_{R}) & =1-\mu(A_{R}^{c})\leqslant1-\mu\big(G_{M}+c_{0}R^{H+1/2}{\cal K}\big)\\
 & \leqslant1-\Phi(\kappa+c_{0}R^{H+1/2})\leqslant C_{1}e^{-C_{2}R^{2H+1}}
\end{align*}
for all $R>R_{0}$ with suitable constants $C_{1},C_{2}>0.$ This
completes the proof of Theorem \ref{thm:2DCounter}.

\end{proof}

\newpage
\section{General lower tail estimates in Young's regime}\label{sec:LowGauCondYoung}

In this section, we consider the SDE (\ref{eq:SDEIntro}) with $H\in(1/2,1)$.
Our aim is to provide a simple sufficient condition on the vector fields, under which the solution admits a Gaussian lower tail. The main result is Theorem \ref{thm:GLowH}. As a quite straightforward application of the technique developed in the proof, we will also establish a general $(2H+1)$-Weibull lower tail for uniformly elliptic systems. This is the content of Theorem \ref{thm:GenLowYoung}.

\subsection{A sufficient condition for  Gaussian lower tail in Young's regime}

Let $X_t$ be the solution to the SDE (\ref{eq:SDEIntro}) with $H\in(1/2,1)$. Our main sufficient condition on the vector fields to ensure Gaussian lower tail is formulated as follows. 

\vspace{2mm}\noindent (\textbf{H}) There exists a function $F\in C^{2}(\mathbb{R}^{N})$
with uniformly bounded first two derivatives and a unit vector $\eta\in S^{d-1},$
such that 
\begin{equation}\label{eq:BalCond}
m:=\inf_{x\in\mathbb{R}^{N}}(W_{\eta}F)(x)>0
\end{equation}
where $W_\eta(x):= V(x)\eta := \sum_{i=1}^d V_i(x)\eta^i.$ 

\begin{rem}

Geometrically, Condition (H) asserts that level sets of the function $F$ are uniformly transversal to integral curves of the vector field $W_\eta$. 

\end{rem}

\vspace{2mm}\noindent The main result of this subsection is now stated as
follows. 
\begin{thm}
\label{thm:GLowH}Let $H\in(1/2,1)$ and $V_0,V_1,\cdots,V_d\in C_b^2$. Suppose that Condition (H) holds. Then there exist
positive constants $C_{1},C_{2}$ depending only on $H$ and the vector fields, such that
the estimate
\[
\mathbb{P}\big(\big|X_{1}-x_{0}\big|>R\big)\geqslant C_{1}e^{-C_{2}R^{2}}
\]
holds for all large $R$.
\end{thm}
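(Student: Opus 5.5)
The plan is to follow the Girsanov scheme of Theorem \ref{thm:RoughLowG}, now with the single straight-line control $h^{R}_t:=Rt\eta$. The key feature of this control is that $\dot h^R\equiv R\eta$ is constant. By Lemma \ref{lem:EquivNorm}, the fractional seminorm term $\int\int|\dot h_t-\dot h_s|^2|t-s|^{-2H}\,ds\,dt$ therefore vanishes, and
\[
\|h^R\|_{\bar{\mathcal H}}\asymp_H \|\dot h^R\|_{L^2}= R .
\]
Write $\tilde B:=B+h^R$. Under the Girsanov measure $\mathbb Q$ with density $Z_1=\exp(-\int_0^1\langle\theta,dW\rangle-\tfrac12\|\theta\|^2_{L^2})$, where $\theta=K_H^{-1}h^R$ is deterministic, $\tilde B$ is an fBM. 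Hence
\[
\mathbb P(|X_1-x_0|>cR)=\mathbb E^{\mathbb P}\big[Z_1\mathbf 1_{\{|X^{B+h^R}_1-x_0|>cR\}}\big].
\]
Because $\theta$ is deterministic, $\int\langle\theta,dW\rangle$ is Gaussian with variance $\|\theta\|_{L^2}^2\lesssim R^2$. Restricting to the event $G=\{\int\langle\theta,dW\rangle\le\|\theta\|^2\}$ gives $Z_1\ge e^{-CR^2}$ there, and $\mathbb P(G)\ge1/2$. It therefore suffices to find an event $A$ with $\mathbb P(A)\ge 3/4$, uniformly in large $R$, on which the perturbed solution moves a distance $\gtrsim R$.

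To produce this event we use the ballistic drift coming from Condition (H). Set $Y_t:=X^{B+h^R}_t$ and apply the Young chain rule to $F$:
\[
F(Y_1)-F(x_0)=R\int_0^1 (W_\eta F)(Y_t)\,dt+\int_0^1 (V_0F)(Y_t)\,dt+\int_0^1 \langle (V F)(Y_t),dB_t\rangle .
\]
The first term is at least $mR$, and the second is bounded by a constant. Because $DF$ is bounded and $F$ is Lipschitz, a lower bound $F(Y_1)-F(x_0)\ge mR/2$ yields $|Y_1-x_0|\ge mR/(2\|DF\|_\infty)$. So the whole task reduces to showing that the Young integral $\int_0^1 g(Y_t)\,dB_t$, with $g=VF$ having bounded $C^1$-norm (this is where the bounds on $D^2F$ and $DV$ enter), is $o(R)$ on an event of probability at least $3/4$.

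The main obstacle is this Young integral: $Y$ travels distance of order $R$ with velocity of order $R$, so the naive Young bound $\|g(Y)\|_{q\text{-var}}\|B\|_{p\text{-var}}$ can be as large as $R\|B\|_{p\text{-var}}$, which is not small. I would handle it by localising on a grid of mesh $1/n$ with $n=R^{\kappa}$, exactly as in Lemma \ref{lem:LinDisplace}. On each $I_k$ the local Young estimate gives
\[
\Big|\int_{I_k} g(Y)\,dB-g(Y_{t_{k-1}})B_{t_{k-1},t_k}\Big|\lesssim \|B\|_{p\text{-var};I_k}\,\|Y\|_{p\text{-var};I_k}\lesssim \|B\|_{p\text{-var};I_k}\big(R/n+\|B\|_{p\text{-var};I_k}\big),
\]
the second inequality coming from Lemma~\ref{lem:WHCty}-type continuity with $l_{I_k}=R/n$. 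On the event $\max_k\|B\|_{p\text{-var};I_k}\le n^{-H'}$ with $1/p<H'<H$, which has probability tending to $1$ by the scaling argument of Lemma \ref{lem:Fernique}, the accumulated errors are at most $n\cdot n^{-H'}(R/n+n^{-H'})=Rn^{-H'}+n^{1-2H'}=o(R)$. The leading Riemann sum $\sum_k g(Y_{t_{k-1}})B_{t_{k-1},t_k}$ is not a martingale, because $Y$ depends on future increments of the fBM. I would instead bound it by freezing $Y$ at the deterministic skeleton $z_t=\Psi$-flow of $h^R$: the sum $\sum_k g(z_{t_{k-1}})B_{t_{k-1},t_k}$ is Gaussian with variance $O(1)$ and is therefore $O(1)$ with high probability, while the correction $\sum_k\big(g(Y_{t_{k-1}})-g(z_{t_{k-1}})\big)B_{t_{k-1},t_k}$ is controlled using $\sup_t|Y_t-z_t|$. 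If the freezing step fails to close, the fallback is to pick $\kappa$ so that the direct bound $\sum_k|B_{t_{k-1},t_k}|\lesssim n^{1-H'}$ is already $o(R)$, which requires $\kappa<1/(1-H')$; this is compatible with the other constraints, since they only need $n\to\infty$. Either route gives $F(Y_1)-F(x_0)\ge mR/2$ on $A$, and combining with the Girsanov bound yields $\mathbb P(|X_1-x_0|>cR)\ge \tfrac14 e^{-CR^2}$.
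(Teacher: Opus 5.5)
Your argument is correct, but it is the fallback that proves the theorem, and the freezing step should simply be dropped. The skeleton $z$ follows the flow of $W_\eta$ for an effective time of order $R$. Nothing makes $\sup_t|Y_t-z_t|$ small: perturbations can be amplified by a factor up to $e^{CR}$. Moreover, since $|g(Y)-g(z)|\le 2\|g\|_\infty$, the freezing decomposition can never beat the crude bound $2\|g\|_\infty\sum_k|B_{t_{k-1},t_k}|$. The fallback closes without difficulty. On $\{\max_k\|B\|_{p\text{-var};I_k}\le n^{-H'}\}$ you get $|\int_0^1 g(Y)\,dB|\lesssim n^{1-H'}+Rn^{-H'}+n^{1-2H'}=o(R)$ for any $\kappa\in(0,1/(1-H'))$. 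With $\kappa=1$ this is exactly the paper's device: cut $[0,1]$ into intervals of length $1/R$, on which the solution has $O(1)$ $p$-variation, so each local Young integral is $O(\|B\|_{p\text{-var};I_k})$.

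What remains different from the paper is only the probabilistic packaging. The paper proves a deterministic implication, $\|w\|_{p\text{-var}}\le M\Rightarrow|X^{w+h^R}_1-x_0|\ge C_0R$ for a fixed small $M$. It sums the local errors by H\"older, $\sum_j\|w\|_{p\text{-var};I_j}\le MN^{1-1/p}$. It then uses the symmetry of $G_M=\{\|w\|_{p\text{-var}}\le M\}$ and $\cosh\ge 1$ to get $\mu(h^R+G_M)\ge\mu(G_M)e^{-\|h^R\|^2_{\bar{\mathcal H}}/2}$. You instead use an $R$-dependent event of probability tending to one, together with the cut-off $\{\int_0^1\langle\theta,dW\rangle\le\|\theta\|^2_{L^2}\}$. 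Your version needs only fBM scaling and Fernique's estimate, whereas the paper's tacitly requires $\mu(G_M)>0$ for small $M$ (a small-ball property of fBM in $p$-variation). The paper's version, on the other hand, isolates a perturbation-robust tunnel estimate, and that is the form it reuses in the proof of Theorem \ref{thm:2DPeriodic}. The worse constant $\tfrac32\|\theta\|^2_{L^2}$ in your exponent is immaterial.
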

\begin{proof}
As usual, we use $X_{t}^{w}$ to denote the solution to the same equation
(\ref{eq:SDEIntro}) with $B_{t}$ replaced by $w_{t}$. Given $R>0,$
we set $h_{t}^{R}:=Rt\eta$ ($0\leqslant t\leqslant1$) where $\eta$
is the unit vector given by Condition (H). This particular Cameron-Martin
path will play a critical role in the argument. Let $F$ be the function
given by Condition (H).

\vspace{2mm}\noindent (i) The key claim is that there exist positive
constants $M,C_{0},R_{0}$ depending only on $p,H,V,F$, such that
\begin{equation}
\|w\|_{p\text{-var}}\leqslant M\implies\big|X_{1}^{w+h^{R}}-x\big|\geqslant C_{0}R\label{eq:DispEstYoung}
\end{equation}
for all $R>R_{0}.$ 

To prove this claim, let us write $X_{t}:=X_{t}^{w+h^{R}}$ and $V=(V_{1},\cdots,V_{d})$
for simplicity. Note that $X_{t}$ satisfies the following equation:
\[
dX_{t}=RW_{\eta}(X_{t})dt+V(X_{t})dw_{t}+V_{0}(X_{t})dt.
\]
Let $F$ be the function given by Condition (H). By applying the chain
rule to $F(X_{t})$, one obtains that 
\begin{align*}
F(X_{1})-F(x_{0})= & R\int_{0}^{1}(W_{\eta}F)(X_{t})dt+\int_{0}^{1}\big(\nabla F(X_{t})\big)^{T}V(X_{t})dw_{t}\\
 & \ \ \ +\int_{0}^{1}\big(\nabla F(X_{t})\big)^{T}V_{0}(X_{t})dt=:A+B+C.
\end{align*}
According to Condition (H), one has $A\geqslant mR$. It is also obvious
that $|C|\leqslant\|DF\|_{\infty}\|V_{0}\|_{\infty}.$ 

To estimate $B$, let us set 
\[
x_{I}:=\|X\|_{p\text{-var};I},\quad\omega_{I}:=\|\omega\|_{p\text{-var};I}
\]
for any given interval $I\subseteq[0,1]$. By using \cite[Theorem 6.8]{FV10},
one easily finds that 
\[
x_{I}\leqslant\big(R\|V\|_{\infty}+\|V_{0}\|_{\infty}\big)|I|+C_{p}\big(\|V\|_{\infty}+\|DV\|_{\infty}x_{I}\big)\omega_{I}.
\]
Let $M$ be chosen such that 
\[
C_{p}\|DV\|_{\infty}M<\frac{1}{2}.
\]
Whenever $\|w\|_{p\text{-var}}\leqslant M$, one has 
\begin{equation}
x_{I}\leqslant2\big(R\|V\|_{\infty}+\|V_{0}\|_{\infty}\big)|I|+2C_{p}\|V\|_{\infty}M.\label{eq:xIEst}
\end{equation}
Now we divide the interval $[0,1]$ into equal subintervals $I_{1},\cdots,I_{N}$
($N\asymp R$) each having length $1/R$. According to (\ref{eq:xIEst}),
on each subinterval $I_{j}$ one has 
\begin{equation}
x_{I_{j}}\leqslant C_{p,V}\label{eq:xIjEst}
\end{equation}
with some constant $C_{p,V}$ depending on $p,V.$ It follows
again from \cite[Theorem 6.8]{FV10} and (\ref{eq:xIjEst}) that 
\begin{align*}
|B| & \leqslant\sum_{j=1}^{N}\Big|\int_{I_{j}}\big(\nabla F(X_{t})\big)^{T}V(X_{t})dw_{t}\Big|\\
 & \leqslant C_{p}\big(\|(\nabla F)^{T}\cdot V\|_{\infty}+C_{p,V}\big\|D\big((\nabla F)^{T}\cdot V\big)\big\|_\infty\big)\sum_{j=1}^{N}\omega_{I_{i}}\\
 & \leqslant C'_{p,V,F}R^{1-1/p}\Big(\sum_{j=1}^{N}\omega_{I_{j}}^{p}\Big)^{1/p}\\
 & \leqslant C'_{p,V,F}MR^{1-1/p}.
\end{align*}
As a consequence, 
\begin{align*}
\|\nabla F\|_{\infty}|X_{1}-x_{0}| & \geqslant F(X_{1})-F(x_{0})\\
 & \geqslant mR-C'_{p,V,F}MR^{1-1/p}-\|DF\|_{\infty}\|V_{0}\|_{\infty}.
\end{align*}
Let $R_{0}$ be chosen such that the RHS is larger than $mR/2$ for
all $R>R_{0}.$ The claim (\ref{eq:DispEstYoung}) thus follows with
$C_{0}:=\frac{m}{2\|\nabla F\|_{\infty}}.$

\vspace{2mm}\noindent (ii) The claim (\ref{eq:DispEstYoung}) shows
that 
\[
G_{M}:=\big\{ w:\|w\|_{p\text{-var}}\leqslant M\big\}\subseteq\Big\{\big|X_{1}^{w+h^{R}}-x_{0}\big|\geqslant C_{0}R\Big\}
\]
for all $R>R_{0}.$ As a consequence, 
\[
\mathbb{P}\big(\big|X_{1}^{B}-x_{0}\big|\geqslant C_{0}R\big)\geqslant\mu(h^{R}+G_{M}),
\]
where $\mu$ is the law of fBM on path space. By exactly the same
Cameron-Martin transformation argument as in the proof of Theorem
\ref{thm:RoughLowG}, one finds that 
\[
\mu(h^{R}+G_{M})=e^{-\|h^{R}\|_{\bar{{\cal H}}}^{2}/2}\mathbb{E}\big[e^{-\int_{0}^{1}\langle\theta_{t}^{R},dW_{t}\rangle}{\bf 1}_{G_{M}}(B)\big].
\]
Here $\theta^{R}:=K_{H}^{-1}h^{R}\in L^{2}([0,1];\mathbb{R}^{d})$
($K_{H}$ is the isomorphism defined by (\ref{eq:KHIso})) and $W$
is the Wiener process in the fBM representation (\ref{eq:fBMRep}).
By the definition of $h^{R},$ it is obvious that 
\begin{equation}\label{eq:hRNorm}
\|h^{R}\|_{\bar{{\cal H}}}=R\|[t\mapsto t\eta]\|_{\bar{{\cal H}}}=:\kappa R.
\end{equation}
In addition, due to the reflection symmetry of the set $G_{M}$, one
has 
\[
\mathbb{E}\big[e^{-\int_{0}^{1}\langle\theta_{t}^{R},dW_{t}\rangle}{\bf 1}_{G_{M}}(B)\big]=\mathbb{E}\big[e^{\int_{0}^{1}\langle\theta_{t}^{R},dW_{t}\rangle}{\bf 1}_{G_{M}}(B)\big].
\]
In particular, 
\begin{align*}
\mathbb{E}\big[e^{-\int_{0}^{1}\langle\theta_{t}^{R},dW_{t}\rangle}{\bf 1}_{G_{M}}(B)\big] & =\mathbb{E}\Big[\cosh\Big(\int_{0}^{1}\langle\theta_{t}^{R},dW_{t}\rangle\Big){\bf 1}_{G_{M}}(B)\Big]\\
 & \geqslant\mu(G_{M}):=\delta.
\end{align*}
As a result, one concludes that 
\[
\mathbb{P}\big(\big|X_{1}^{B}-x_{0}\big|\geqslant C_{0}R\big)\geqslant\delta e^{-\kappa^{2}R^{2}/2}
\]
for all $R>R_{0}$. 

Now the proof of Theorem \ref{thm:GLowH} is complete. 
\end{proof}

\subsection{General $(2H+1)$-Weibull lower tail for uniformly elliptic systems}

We now prove a general $(2H+1)$-Weibull lower tail estimate under
the uniform ellipticity assumption. Parallel to the rough regime in
Section \ref{sec:GenLowRough}, this part is also a simple adaptation of the proof of Theorem \ref{thm:GLowH}.
\begin{thm}\label{thm:GenLowYoung}
Consider the SDE (\ref{eq:SDEIntro}) with $H\in(1/2,1).$ Suppose that
the vector fields are $C_{b}^{\infty}$ and uniformly elliptic. Then
there exist positive constants $C_{1},C_{2}$ depending only on $H$
and the vector fields, such that the estimate 
\begin{equation}
\mathbb{P}\big(|X_{1}-x_{0}|>R\big)\geqslant C_{1}e^{-C_{2}R^{2H+1}}\label{eq:WeibullGenYoung}
\end{equation}
holds for all large $R$.
\end{thm}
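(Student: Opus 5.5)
The plan is to adapt the proof of Theorem \ref{thm:GLowH}, replacing the straight control $h^R_t=Rt\eta$ by a control that follows the integral curve of a fixed direction in state space. Fix a unit vector $e\in\mathbb{R}^N$, set $F(x):=\langle x,e\rangle$, and let $A(x)=V(x)^T(V(x)V(x)^T)^{-1}$ be the right inverse from (\ref{eq:RightInv}). The obvious choice is the ODE $\dot y_t=Re$, $y_0=x_0$, with $\dot h_t=R\,A(y_t)e$. This $h$ is deterministic, but $\|\ddot h\|_\infty\lesssim R^2$, so Lemma \ref{lem:EquivNorm} only gives $\|h\|_{\bar{\mathcal H}}^2\lesssim R^2+R^4$. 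That is too large.

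The fix is to reduce the speed. Split $[0,1]$ into subintervals of length $1/n$, and on each subinterval let $h$ move the state a distance of order one, so that $\dot h$ has size of order $n$ and total displacement is of order $n$. If $\dot h$ is Lipschitz with constant $\lesssim n^2$ and bounded by $Kn$, then Lemma \ref{lem:EquivNorm} gives
\[
\int_0^1\!\!\int_0^1\frac{|\dot h_t-\dot h_s|^2}{|t-s|^{2H}}\,ds\,dt\lesssim \int\!\!\int \min(n^4|t-s|^2,n^2)\,|t-s|^{-2H}\,ds\,dt\lesssim n^{2H+1},
\]
using the split at $|t-s|\sim 1/n$. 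Hence $\|h\|_{\bar{\mathcal H}}\lesssim n^{H+1/2}$. Concretely, define $h$ by $\dot h_t=n\,A(y_t)e$ with $\dot y_t=ne$, that is, $y_t=x_0+nte$. Then $\ddot h_t=n^2 DA(y_t)[e]e$ is bounded by $Cn^2$ because $A\in C_b^1$ under uniform ellipticity, and $\Psi$ moves $x_0$ to $x_0+ne$. Taking $R\asymp n$ gives the target exponent $n^{2H+1}\asymp R^{2H+1}$.

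For robustness under the noise, I would not use a single feedback-free control; this mirrors Section \ref{subsec:LinGrowth}, and the control has to be adapted. On $I_k$ one could define $q_k$ from the random point $X_{t_{k-1}}$ as in Condition (G). However, the Cameron--Martin estimate needs $\dot h$ to be regular across the breakpoints $t_k$, and a jump of size $n$ at each breakpoint would give roughly $n\cdot n^2\cdot n^{2H-1}$, which is too big. The way around this is to make each $q_k$ start and end with zero velocity by a smooth cutoff: set $\dot q_k(t)=n\,\chi(nt-k+1)\,A(\cdot)e$ with $\chi$ a bump supported in $(0,1)$ and $\int\chi=1$. The increase of $F$ on each block is then $1$ up to noise. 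Lemma \ref{lem:LocCty} still applies in Young's regime, so on the event $A_n(\varepsilon)$ one gets $F(X_{t_k})\ge F(X_{t_{k-1}})+1/2$, and hence $|X_1-x_0|\ge n/2$. The probability of $A_n(\varepsilon)^c$ is controlled exactly as in Lemma \ref{lem:Fernique}.

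The main obstacle is the Girsanov step. The drift $\theta=K_H^{-1}h$ has to be adapted, and its energy has to be bounded pathwise by $Cn^{2H+1}$. Adaptedness holds because $h$ on $I_k$ depends only on $X_{t_{k-1}}$. For the energy I would invoke the norm equivalence of Lemma \ref{lem:EquivNorm}, which is pathwise since it is a deterministic inequality applied to each realization of $h$; the bounds $|\dot h|\le Kn$ and $\mathrm{Lip}(\dot h)\lesssim n^2$ hold uniformly. Given this bound, the exponential martingale argument from the proof of Theorem \ref{thm:RoughLowG} yields
\[
\mathbb P(|X_1-x_0|\ge n/2)\ge \tfrac12 e^{-Cn^{2H+1}},
\]
which is (\ref{eq:WeibullGenYoung}) after setting $R=n/2$.
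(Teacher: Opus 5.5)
Your argument is correct, but it is not the route the paper takes.

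\textbf{The paper's route.} The paper uses a single continuous feedback control. It solves $dX^R_t=R\eta\,dt+V(X^R_t)dB_t+V_0(X^R_t)dt$ and sets $\dot h^R_t=R\,A(X^R_t)\eta$. The regularity of $\dot h^R$ is therefore inherited from the solution and contains the noise term $M|t-s|^{\rho}$. This forces the paper to work with H\"older norms, to get $\|h^R\|_{\bar{\mathcal H}}^2\lesssim R^{2H+1}$ only on the event $\{\|B\|_{\rho\text{-H\"ol}}\leqslant M\}$, and to truncate at a stopping time $\tau$ before applying Girsanov.

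\textbf{Your route.} You re-anchor an open-loop skeleton control at each grid point $t_{k-1}$ and time-change it by a bump $\chi$. Then $\dot h$ on $I_k$ depends on $\omega$ only through the frozen point $X_{t_{k-1}}$. Consequently:
\begin{itemize}
\item $|\dot h|\leqslant Kn$ and $\mathrm{Lip}(\dot h)\lesssim n^2$ hold for every $\omega$;
\item Lemma \ref{lem:EquivNorm} gives a deterministic bound $\|h\|_{\bar{\mathcal H}}^2\lesssim n^{2H+1}$;
\item Novikov and the exponential-martingale step are immediate, with no localisation;
\item displacement comes from Lemma \ref{lem:LocCty} on $A_n(\varepsilon)$, whose probability tends to one.
\end{itemize}
In effect your proof is the argument of Theorem \ref{thm:GenLowRough}, with the rough-regime bound $\|h\|_{\bar{\mathcal H}}\leqslant C_H\|\dot h\|_\infty$ replaced by the Gagliardo-seminorm estimate. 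That gives one proof for both regimes. The paper's version is closer in spirit to Theorem \ref{thm:GLowH} and avoids the grid, at the cost of the H\"older event and the stopping time.

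\textbf{Two inaccurate side remarks.} Neither affects your final argument.
\begin{itemize}
\item The ``obvious choice'' $\dot h_t=R\,A(x_0+Rte)e$ is not too large. The same split at $|t-s|\sim 1/R$, using $|\dot h_t-\dot h_s|\leqslant 2\|\dot h\|_\infty$ at long range, already gives $\|h\|_{\bar{\mathcal H}}^2\lesssim R^{2H+1}$. Your $R^4$ came from using only the Lipschitz bound. Indeed, your ``concrete'' $h$ is exactly this control with $R$ renamed $n$. The real reason to re-anchor on blocks is robustness under noise. With an open-loop control over all of $[0,1]$, the linearised dynamics along a skeleton moving at speed $R$ can amplify perturbations by a factor as large as $e^{CR}$.
\item The bump is convenient but not necessary. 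Pairs $(s,t)$ straddling a given breakpoint with $|t-s|\approx r$ have measure $\approx r\,dr$. Hence jumps of size $O(n)$ at $n$ breakpoints contribute $O(n\cdot n^2\cdot n^{2H-2})=O(n^{2H+1})$ to the seminorm, not $n^{2H+2}$.
\end{itemize}
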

\begin{proof}
The argument is a simple adaptation of the proof of Theorem \ref{thm:GLowH}.
It is more convenient to use H\"older norms here. Fix a unit vector
$\eta\in\mathbb{R}^{N-1}$ and $\rho\in(1/2,H)$. Given $R>1$, let
$X_{t}^{R}$ denote the solution to the differential equation 
\[
dX_{t}^{R}=R\eta dt+V(X_{t}^{R})dB_{t}+V_{0}(X_{t}^{R})dt,\quad X_{0}^{R}=x_{0}.
\]
Similar to Part (i) of the proof of Theorem \ref{thm:GLowH} with
$p$-variation replaced by $\rho$-H\"older norm (cf. Lemma \ref{lem:LinDisplace}),
there exist positive constants $M,C_{1},C_{2},R_{0},$ such that on
the event 
\[
A_{M}:=\big\{\|B\|_{\rho\text{-H\"ol};[0,1]}\leqslant M\big\},
\]
the estimates
\begin{equation}
\big|X_{t}^{R}-X_{s}^{R}\big|\leqslant C_{1}\big(R|t-s|+M|t-s|^{\rho}+|t-s|\big)\label{eq:CtyEstGenLowYoung}
\end{equation}
and 
\begin{equation}
|X_{1}^{R,w}-x_{0}|\geqslant C_{2}R\label{eq:GloDispGenYoung}
\end{equation}
hold for all $R>R_{0}$ and $s,t\in[0,1]$. 

Now let us define the (random) Cameron-Martin path
\[
h_{t}^{R}:=R\int_{0}^{t}a(X_{s}^{R})ds,
\]
where $a(x):=A(x)\eta$ and $A(x)$ is the right inverse of $V(x)$
defined by (\ref{eq:RightInv}). Note that $h^{R}$ is ${\cal F}^{W}$-adapted
(cf. (\ref{eq:fBMRep})). According to the norm-equivalence relation
(\ref{eq:EquivFracNorm}), one has 
\begin{align*}
\|h^{R}\|_{\bar{{\cal H}}}^{2} & \leqslant C_{H}\Big(\|\dot{h}^{R}\|_{L^{2}}^{2}+\int_{0}^{1}\int_{0}^{1}\frac{|\dot{h}_{t}^{R}-\dot{h}_{s}^{R}|^{2}}{|t-s|^{2H}}dsdt\Big)\\
 & =C_{H}R^{2}\Big(\|u^{R}\|_{L^{2}}^{2}+\int_{0}^{1}\int_{0}^{1}\frac{|\dot{u}_{t}^{R}-\dot{u}_{s}^{R}|^{2}}{|t-s|^{2H}}dsdt\Big)
\end{align*}
where $u_{t}^{R}:=a(X_{t}^{R})$. The $L^{2}$-norm in the first term
is uniformly bounded by $\|A\|_{\infty}$. For the second term, by
using (\ref{eq:CtyEstGenLowYoung}) one easily finds that 
\[
\big|u_{t}^{R}-u_{s}^{R}\big|\leqslant C_{3}\big(\min\{1,R|t-s|\}+M|t-s|^{\rho}\big)
\]
for all $R>R_{0}$ and $s,t\in[0,1]$. It follows from explicit analysis
that 
\begin{align*}
 & \int_{0}^{1}\int_{0}^{1}\frac{|\dot{u}_{t}^{R}-\dot{u}_{s}^{R}|^{2}}{|t-s|^{2H}}dsdt\\
 & \leqslant C_{3}\int_{0}^{1}\int_{0}^{1}\Big(\frac{\min\{1,R^{2}|t-s|^{2}\}}{|t-s|^{2H}}+M|t-s|^{2\rho-2H}\Big)\\
 & \leqslant C_{4}R^{2H-1}
\end{align*}
on the event $A_{M}.$ As a consequence, 
\begin{equation}
\|h^{R}\|_{\bar{{\cal H}}}^{2}\leqslant C_{H}R^{2}\big(\|A\|_{\infty}^{2}+C_{4}R^{2H-1}\big)\leqslant C_{5}R^{2H+1}\label{eq:CMGenYoung}
\end{equation}
on $A_{M}.$

Let \[\tau:=\inf\big\{t\geqslant 0:\|B\|_{\rho\text{-H\"ol};[0,t]}>M\big\}\wedge 1\] and define 
\[\hat{h}^R_t := R\int_{0}^{t\wedge \tau}a(X_{s}^{R})ds.\]Note that $\hat{h}^R$ is also $\mathcal{F}^W$-adapted and $\hat{h}^R = h^R$ on $A_M$. From the above analysis, it is not hard to see that with probability one, 
\begin{equation}\label{eq:CMGenYoung}
\big\|\dot{\hat{h}}^R\big\|_{\bar{\cal H}}\leqslant C_6 R^{2H+1}
\end{equation}for all $R$. With the global Cameron-Martin estimate (\ref{eq:CMGenYoung}), one can now carry through the Cameron-Martin transformation argument as in the
proof of Theorem \ref{thm:GLowH} (cf. the proof of Theorem \ref{thm:RoughLowG}) to conclude the desired $(2H+1)$-Weibull
lower bound (\ref{eq:WeibullGenYoung}). 
\end{proof}

\newpage
\section{2D periodic elliptic systems always have Gaussian lower tail}\label{sec:2DPeriodic}

In this section, we prove that a $2\times 2$ periodic, uniformly
elliptic system always possesses a Gaussian lower tail, which is a striking
result in contrast to the radially periodic counterexample of Theorem
\ref{thm:2DCounter}.

\begin{thm}
\label{thm:2DPeriodic}Let $V_{1},V_{2}$ be given smooth vector fields
on the torus $\mathbb{T}^{2}=\mathbb{R}^{2}/\mathbb{Z}^{2}.$ Suppose
that they are uniformly elliptic, namely, 
\[
\dim{\rm Span}\{V_{1}(x),V_{2}(x)\}=2
\]
for every $x\in\mathbb{T}^{2}$. Consider the SDE 
\begin{equation}
dX_{t}=\tilde{V}_{1}(X_{t})dB_{t}^{1}+\tilde{V}_{2}(X_{t})dB_{t}^{2},\quad X_{0}=x_{0}\in\mathbb{R}^{2}\label{eq:SDETorus}
\end{equation}
on the universal cover $\mathbb{R}^{2}.$ Here $\tilde{V}_{i}$ is
the canonical periodic lifting of $V_{i}$ on $\mathbb{R}^{2}$ and
$B_{t}=(B_{t}^{1},B_{t}^{2})$ is an fBM with Hurst parameter $H\in(1/2,1)$.
Then there exist positive constants $C_{1},C_{2}$, such that 
\begin{equation}
\mathbb{P}\big(|X_{1}-x_{0}|>R\big)\geqslant C_{1}e^{-C_{2}R^2}\label{eq:GaussLowTorus}
\end{equation}
for all large $R.$ 
\end{thm}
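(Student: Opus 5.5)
The plan is to reduce Theorem \ref{thm:2DPeriodic} to Theorem \ref{thm:GLowH}. It suffices to find a unit vector $u\in S^{1}$ and a function $F\in C^{2}(\mathbb{R}^{2})$ with bounded first two derivatives such that
\[
\inf_{x\in\mathbb{R}^{2}}(W_{u}F)(x)>0,\qquad W_{u}:=\tilde V_{1}u^{1}+\tilde V_{2}u^{2}.
\]
Once Condition (H) holds, the ballistic tunnel along $h^{R}_{t}=Rtu$ and the symmetric Cameron--Martin shift argument of Theorem \ref{thm:GLowH} give (\ref{eq:GaussLowTorus}) directly.

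\textbf{Step 1: reformulation on the torus.} By ellipticity, $W_{u}$ is a nonvanishing vector field on $\mathbb{T}^{2}$ for every $u$. I will look for $F$ of the form $F(x)=\langle\xi,x\rangle+g(x)$ with $\xi\in\mathbb{R}^{2}$ and $g$ $\mathbb{Z}^{2}$-periodic, so that all derivatives of $F$ are automatically bounded. The condition $W_{u}F\geqslant m>0$ is then a cohomological problem on $\mathbb{T}^{2}$: find $\xi$ and $g$ with $\langle\xi,W_{u}\rangle+W_{u}g\geqslant m$. By the Schwartzman--Sullivan duality (a Hahn--Banach argument on the cone of invariant measures), such $(\xi,g)$ exist if and only if every $W_{u}$-invariant probability measure $\nu$ has asymptotic cycle $\rho(\nu)=\int W_{u}\,d\nu$ satisfying $\langle\xi,\rho(\nu)\rangle>0$. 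Equivalently, the rotation set of $W_{u}$ must lie in an open half-plane. I would either quote this duality or prove the needed direction by averaging $\langle\xi,W_u\rangle$ along the flow over a long time $T$ and smoothing.

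\textbf{Step 2: dynamics of a nonvanishing field on $\mathbb{T}^{2}$.} By Poincaré--Hopf there are no null-homotopic closed orbits. Hence every closed orbit represents a nonzero class, and all closed orbits are multiples of a single primitive class $\pm\kappa$. If $W_{u}$ has no closed orbit, Poincaré--Bendixson theory on $\mathbb{T}^{2}$ (Denjoy--Schwartz) shows that all invariant measures have rotation vectors on one open ray in an irrational direction. Nonvanishing of the field should rule out zero rotation, and then Step 1 applies with $\xi$ along that ray. If closed orbits exist and all are oriented in the same class $+\kappa$, a suitable $\xi$ with $\langle\xi,\kappa\rangle>0$ again works once one shows that invariant measures are supported on these orbits. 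That last claim needs a Poincaré-section argument on the annuli between consecutive closed orbits.

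\textbf{Step 3 (main obstacle): Reeb-type configurations.} The bad case for a fixed $u$ is when $W_{u}$ has closed orbits of both orientations $\pm\kappa$. Here I would use the freedom to change $u$. Fix a closed orbit $\gamma$ of $W_{u}$ and let $u^{\perp}$ be the rotation of $u$ by $\pi/2$. Since $V(x)$ is invertible, $W_{u^{\perp}}(x)$ is never parallel to $W_{u}(x)$, so $W_{u^{\perp}}$ is transverse to $\gamma$ and crosses it in a constant direction. The lifts of $\gamma$ form a $\mathbb{Z}$-family of disjoint properly embedded curves in $\mathbb{R}^{2}$, all crossed by $W_{u^{\perp}}$ the same way. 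I would then build $F$ constant on these lifts and increasing by $1$ from one lift to the next.

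The difficulty is to guarantee that no $W_{u^{\perp}}$-orbit stays forever in a strip between two consecutive lifts. Such an orbit would accumulate on a closed orbit of $W_{u^{\perp}}$ that is homologous to $\pm\kappa$ and disjoint from $\gamma$. To handle this, I would try a continuity and degree argument in $u\in S^{1}$: track the set of directions $u$ for which $W_{u}$ has closed orbits of a given class and orientation. As $u$ rotates by $\pi$, $W_{u}$ is replaced by $-W_{u}$, which reverses all orientations, while pairwise transversality $W_{u}\pitchfork W_{u'}$ for $u\neq\pm u'$ prevents closed orbits of different directions from sharing a class with opposite orientations. From this I expect to extract a $u$ whose rotation set lies in an open half-plane, and then conclude by Step 1 and Theorem \ref{thm:GLowH}. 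This direction-selection argument is the step I expect to need the most care, and it is where the proof genuinely uses dimension two.
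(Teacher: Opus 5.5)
Your Steps 1--2 are essentially the paper's suspension case: a closed $1$-form positive on $W_u$, i.e.\ Condition (H), then Theorem~\ref{thm:GLowH}. They are fine modulo the classification theorem. Step 3, however, aims at a statement that is false, so the reduction to Theorem~\ref{thm:GLowH} cannot be completed.

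Take the paper's own example, where $V(x)$ is the rotation matrix of angle $2\pi x^{1}$. For $u=(\cos\alpha,\sin\alpha)$ one gets $W_{u}=(\cos(2\pi x^{1}+\alpha),\sin(2\pi x^{1}+\alpha))$. For \emph{every} $\alpha$ this field has the closed orbits $\{x^{1}=a_{+}\}$ and $\{x^{1}=a_{-}\}$, where $2\pi a_{+}+\alpha=\pi/2$, $2\pi a_{-}+\alpha=3\pi/2$ and $|a_{+}-a_{-}|=1/2$. They are traversed upward and downward at unit speed. So the rotation set of every $W_u$ contains $(0,\pm1)$ and never lies in an open half-plane.

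In fact Condition (H) fails for every $u$, even for general $F$ with bounded derivatives. The bound $W_{u}F\geqslant m$ would give $F(a_{+},s)\geqslant F(a_{+},0)+ms$ and $F(a_{-},s)\leqslant F(a_{-},0)-ms$ for $s>0$. This contradicts $|F(a_{+},s)-F(a_{-},s)|\leqslant\|DF\|_{\infty}/2$.

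The same example defeats your specific devices. The closed orbits of $W_{u^{\perp}}$ are the vertical lines $2\pi x^{1}+\alpha\in\{0,\pi\}$, which lie inside the strips between consecutive lifts of $\gamma$. So $W_{u^{\perp}}$-orbits do stay forever in a strip, which is exactly the failure you feared. Transversality of $W_{u}$ and $W_{u'}$ only constrains orbits that actually meet, whereas here the relevant closed orbits are disjoint parallel lines. Hence no continuity or degree argument in $u$ can produce a good direction.

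The missing idea is that in the Reeb case you must abandon a global function $F$ and instead make a \emph{single} ballistic trajectory robust by hyperbolicity. The paper does this in the following steps.
\begin{enumerate}
\item It perturbs the direction using the parametric Poincar\'e map. One has $\partial_{\alpha}D(0,\alpha_{0})\neq0$ because $\det(W_{\alpha_{0}},\partial_{\alpha}W_{\alpha})=\det V\neq0$ (Lemma~\ref{lem:NonDegD}).
\item The implicit function theorem then yields periodic orbits of $W_{A(u)}$ through $\sigma(u)$. The map $A$ is nonconstant because the interior leaves of the Reeb component are nonperiodic. This forces $\partial_{u}P(u_{*},A(u_{*}))\neq1$ for some $u_*$, i.e.\ a hyperbolic periodic orbit (Lemma~\ref{lem:HyperSelect}).
\item After replacing $W$ by $-W$ so that $\lambda_{\gamma}<1$, the contraction $|z(\phi_{T}(x))|\leqslant\lambda^{*}|z(x)|$ of Lemma~\ref{lem:OrbitStab} applies. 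It keeps the solution with drift $R\tilde{W}$ and noise satisfying $\|w\|_{p\text{-var}}\leqslant M$ inside a fixed tube around the lifted orbit for all $\lfloor R/(2T)\rfloor$ periods. Each period contributes a displacement close to some $k\in\mathbb{Z}^{2}\backslash\{0\}$.
\item A fixed initial control steers $x_{0}$ into the tube. This gives a robust linear displacement under a deterministic control $h^{R}$ with $\|h^{R}\|_{\bar{\mathcal{H}}}=O(R)$.
\item The symmetric Cameron--Martin shift argument from the proof of Theorem~\ref{thm:GLowH} then concludes, without ever verifying Condition (H).
\end{enumerate}
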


\subsection{Idea of proof}\label{sec:TorusStrategy}

From the proof of Theorem \ref{thm:GLowH}, the main mechanism leading
to a Gaussian lower tail is the existence of a Cameron-Martin control
$h^{R}$ for each $R$, such that the following two properties hold
true:

\vspace{2mm}\noindent (i) $\|h^{R}\|_{\bar{{\cal H}}}=O(R)$;

\vspace{2mm}\noindent (ii) Whenever $\|w\|_{p\text{-var}}\leqslant M$,
one has 
\begin{equation}
|X_{1}^{w+h^{R}}-x_{0}|\geqslant C_{M}\cdot R\label{eq:RobPert}
\end{equation}
for all large $R$. In other words, there always exists a control
with Cameron-Martin norm $O(R)$ which creates a ballistic tunnel
for the dynamics to travel a linear distance that is robust under
perturbation. 

Our main strategy for establishing the aforementioned mechanism relies on the theory of dynamical systems, in particular,
a fundamental classification theorem for torus flows (see Theorem
\ref{thm:1DFoliClass} below). For each $u\in S^{1},$ the nonvanishing
vector field $W_{u}:=V\cdot u$ generates a 1D foliation ${\cal F}_{u}$
of $\mathbb{T}^{2}$ (the family of integral curves of $W_{u}$).
The classification theorem asserts that there are precisely two possibilities
for ${\cal F}_{u}$.

\vspace{2mm}\noindent \textit{Case I: ${\cal F}_{u}$ is conjugate
to a suspension} (see Definition \ref{def:SuspensionMain} and Definition
\ref{def:Conj}). In this case, one can easily construct a closed
$1$-form $\eta$ on $\mathbb{T}^{2}$ with nontrivial cohomology,
whose integral along the lifted flow of $W_{u}$ on the universal
cover $\mathbb{R}^{2}$ produces global linear displacements. In this
regime, one is precisely in the setting of Condition (H) in Theorem
\ref{thm:GLowH}.

\vspace{2mm}\noindent \textit{Case II: ${\cal F}_{u}$ contains
a Reeb component} (see Definition \ref{def:Reeb}). In this case,
$W_{u}$ must have a periodic orbit with nontrivial homology. The
lifting of this orbit to the universal cover $\mathbb{R}^{2}$ naturally
provides a ballistic trajectory for the deterministic dynamics controlled
by $h_{t}^{R}:=Rtu.$ However, a major difficulty in this regime is
that such a ballistic trajectory may not necessarily create a ballistic
tunnel for the stochastic dynamics perturbed by fractional Brownian
paths. It is necessary to establish perturbative estimates like (\ref{eq:RobPert})
in order to turn the deterministic Cameron-Martin estimate into a
probabilistic Gaussian lower bound (cf. Section \ref{sec:LowGauCondYoung}).
For this purpose, one needs some dynamical stability of the periodic
orbit. 

To overcome the aforementioned difficulty, a key insight is that one
can choose another unit vector $u^{*}$ that is close to $u$, such
that the flow lines of $W_{u^{*}}$ contain a \textit{hyperbolic}
periodic orbit (see Definition \ref{def:HypOrb}). Hyperbolicity will
enforce an attraction property for such an orbit, which provides the
exact level of stability that enables one to establish a robust perturbative
estimate. 

\vspace{2mm} The rest of this section is devoted to the
development of the above strategy in a precise mathematical way.

\subsection{A classification theorem for 1D foliations on $\mathbb{T}^{2}$ }

In this subsection, we recall a fundamental classification theorem
for nondegenerate torus flows. We
begin by recalling the definition of a foliation (see \cite{Lee13}).
\begin{defn}\label{def:Folia}
Let $M$ be an $n$-dimensional differentiable manifold. Let ${\cal F}$
be a collection of immersed $k$-dimensional submanifolds. A chart
$(U,\varphi)$ is said to be \textit{flat} for ${\cal F}$ if the
following two properties hold:

\vspace{2mm}\noindent (i) $\varphi(U)$ is a cube in $\mathbb{R}^{n}$;

\vspace{2mm}\noindent (ii) for each submanifold $N\in{\cal F}$ with
nonempty intersection with $U$, the set $\varphi(N\cap U)$ is a
countable union of $k$-dimensional slices of the form 
\[
x^{k+1}=c^{k+1},\cdots,x^{n}=c^{n}
\]
in $\varphi(U).$ 

\vspace{2mm}\noindent A \textit{$k$-dimensional (smooth) foliation}
on $M$ is a collection ${\cal F}$ of disjoint, connected, immersed
$k$-dimensional submanifolds such that the union of members of ${\cal F}$
is $M$ and for every $x\in M$, there exists a chart around $x$
which is flat for ${\cal F}.$ Members of a foliation ${\cal F}$
are called \textit{leaves} of ${\cal F}.$ 
\end{defn}
\begin{example}
Let $M$ be a closed surface. Let $W$ be a smooth vector field on
$M$ which is everywhere nonvanishing. According to the global Frobenius
theorem, the integral curves of $W$ form a one-dimensional (1D) foliation
on $M$ (see \cite[Theorem 19.21]{Lee13}). This is the main example
that is relevant to us. 
\end{example}
From now on, we restrict ourselves to the case $M=\mathbb{T}^{2}.$
We need two more concepts in order to state the classification theorem
of 1D foliations on $\mathbb{T}^{2}$. The first one is the notion
of a Reeb component (see \cite[Definition 2.4]{BZ17} and \cite[Chapter 1, Section 3.3]{HH86}). 

\begin{defn}\label{def:Reeb}

Let ${\cal F}$ be a 1D foliation on $\mathbb{T}^{2}.$ We say that
${\cal F}$ \textit{has a Reeb component} if there exists an embedded
annulus $A\cong S^{1}\times[0,1]$ inside $\mathbb{T}^{2}$ such that:

\vspace{2mm}\noindent (i) $\partial A$ is the union of two compact
leaves of ${\cal F}$ that are in opposite homotopy classes;

\vspace{2mm}\noindent (ii) there are no compact leaves of ${\cal F}$
in the interior of $A$.

\end{defn}

The next one is the notion of a suspension (see \cite[Chapter 1, Section 3.1]{HH86}).
Let $f:S^{1}\rightarrow S^{1}$ be an orientation-preserving diffeomorphism.
The integer group $\mathbb{Z}$ acts on $S^{1}\times\mathbb{R}$ by
\begin{equation}
k\cdot(x,t):=\big(f^{k}(x),t-k\big),\quad k\in\mathbb{Z},(x,t)\in S^{1}\times\mathbb{R}.\label{eq:ZActSusp}
\end{equation}
Let $\Sigma_{f}:=(S^{1}\times\mathbb{R})/_{f}\mathbb{Z}$ be the quotient
space with canonical projection map $\pi_{f}:S^{1}\times\mathbb{R}\rightarrow\Sigma_{f}.$
It is clear that $\Sigma_{f}$ is diffeomorphic to the torus $\mathbb{T}^{2}$.
The image of the vertical leaves $\{x\}\times\mathbb{R}$ under $\pi_{f}$,
i.e.
\[
{\cal F}_{f}:=\big\{\pi_{f}\big(\{x\}\times\mathbb{R}\big):x\in S^{1}\big\},
\]
defines a 1D foliation on $\Sigma_f$.
\begin{defn}
\label{def:SuspensionMain}The foliation $(\Sigma_{f},{\cal F}_{f})$
is called the \textit{suspension} of the diffeomorphism $f$. 
\end{defn}
We also need the concept of conjugated foliations (see \cite[Definition 2.3]{BZ17} and \cite[Chapter 1, Section 2.2]{HH86}). 

\begin{defn}\label{def:Conj}

Let ${\cal F},{\cal F}'$ be $k$-dimensional foliations on $n$-dimensional
manifolds $M,M'$ respectively. We say that ${\cal F}$ and ${\cal F}'$
are \textit{conjugate} (or \textit{isomorphic}) if there exists a
diffeomorphism $\Psi:M\rightarrow M'$ such that $\Psi({\cal F})={\cal F}'$.

\end{defn}

We now state the main classification theorem for 1D foliations that will be used in our study. 
\begin{thm}
\label{thm:1DFoliClass}Let ${\cal F}$ be a 1D foliation on $\mathbb{T}^{2}$.
Then either ${\cal F}$ has a Reeb component or it is conjugate to
the suspension of an orientation-preserving diffeomorphism $f:S^{1}\rightarrow S^{1}$.
\end{thm}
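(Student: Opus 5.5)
This is the classical Kneser--Reeb classification of foliations of the torus, and in the paper it could simply be cited from \cite{HH86}. If we prove it, we need only the case used later: ${\cal F}$ is the orbit foliation of a nonvanishing vector field $W$ on $\mathbb{T}^2$, so ${\cal F}$ is oriented. (A general transversely non-orientable foliation could be handled by passing to the orientation double cover, but we do not pursue this.) The plan is to reduce everything to first-return maps on a closed transversal, using Poincar\'e--Hopf to exclude topologically trivial pictures.

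\textbf{Step 1: no trivial closed curves.} A closed leaf or a closed transversal cannot be null-homotopic. If it were, it would bound an embedded disc $D$ (Jordan--Schoenflies on the universal cover). On $D$, $W$, or a vector field transverse to $W$, would be nonvanishing and either tangent to $\partial D$ or pointing everywhere inward or outward. Poincar\'e--Hopf would then give $\chi(D)=1=0$, a contradiction.

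\textbf{Step 2: a closed transversal exists.} Take a leaf $L$ and a short transverse arc $\tau$ through a point of $L$. By compactness, $L$ (or a limit leaf in its $\omega$-limit set) returns to $\tau$. Joining the leaf segment to the piece of $\tau$ between the two hits and smoothing gives a closed curve $C$ transverse to ${\cal F}$. By Step 1, $C$ is essential.

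\textbf{Step 3: cut along $C$ and split into cases.} Cutting $\mathbb{T}^2$ along $C$ gives an annulus $A_0$ whose two boundary copies $C_\pm$ are transverse to ${\cal F}$. Leaves enter through $C_-$ and exit through $C_+$.

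\emph{Case (a): every leaf through $C_-$ reaches $C_+$.} Then the first-return map $f:C\to C$ is well defined and smooth by smooth dependence of the flow. It is injective and surjective by running the flow backwards, and orientation preserving because the annulus is oriented and the flow crosses $C$ in a fixed direction. Rescale time along each leaf so that the return time is constant $1$; this requires a smooth positive return-time function. Then define the map $S^1\times\mathbb{R}\to \mathbb{T}^2$, $(x,t)\mapsto$ the flow of $x\in C$ for time $t$. It is equivariant with respect to the $\mathbb{Z}$-action (\ref{eq:ZActSusp}), up to inverting $f$ to match the sign convention. It therefore descends to a diffeomorphism $\Sigma_f\to\mathbb{T}^2$ carrying ${\cal F}_f$ to ${\cal F}$.

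\emph{Case (b): some leaf starting on $C_-$ never reaches $C_+$.} By Poincar\'e--Bendixson in the annulus, since there are no singularities, its $\omega$-limit set is a closed leaf $\ell\subset A_0$. By Step 1 and the Jordan curve theorem on the annulus, $\ell$ is essential and parallel to $C$. Let ${\cal K}$ be the set of compact leaves, which is closed by the local structure of a foliation. Every component $U$ of $\mathbb{T}^2\setminus{\cal K}$ is an open annulus bounded by compact leaves, and its leaves spiral from one boundary leaf to the other. Compare the orientations induced on the two boundary leaves of $U$ (as oriented leaves, relative to a fixed orientation of their common homology class). If they are opposite for some $U$, the closure of $U$ is a Reeb component in the sense of Definition \ref{def:Reeb}. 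If they agree for every $U$, build a new essential closed transversal $C'$ that crosses every component of ${\cal K}$ and every complementary annulus once. This is possible because same-direction spiralling lets a transverse arc cross each $U$ monotonically. Then apply Case (a) to $C'$.

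\textbf{Main obstacle.} I expect the hardest step to be the construction in Case (b) of a global transversal $C'$ meeting every leaf when the compact leaves are infinite, possibly a Cantor-like family. There one must show uniformity: the transverse crossing arcs through the complementary annuli can be chosen with controlled geometry, so that they patch into a single smooth closed transversal. I would first try the standard averaging trick. Build a closed $1$-form $\alpha$ that is positive on $W$, defined locally near ${\cal K}$ via holonomy, and on each non-Reeb annulus by a cohomological (Schwartzman-type) argument. Then take $C'$ to be a level set, in the universal cover, of a primitive of a closed form transverse to $\alpha$. Failing that, I would simply invoke the proof in \cite[Chapter 1]{HH86}.
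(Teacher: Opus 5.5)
There is a genuine gap, and it sits where the theorem is hard. Case (a) is essentially the paper's Proposition~\ref{prop:SuspChara}, and the Reeb branch of Case (b) is fine. In the remaining branch, where compact leaves exist and all have the same orientation, you never produce a complete transversal.

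The remark ``same-direction spiralling lets a transverse arc cross each $U$ monotonically'' handles one complementary annulus at a time. It says nothing about crossing a non-isolated (e.g.\ Cantor-like) family of compact leaves at which the $U$'s accumulate. That is exactly where infinitely many local arcs would have to be patched with uniform control.

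The closed $1$-form fallback has the same patching problem built in (``near ${\cal K}$ via holonomy'' plus ``on each annulus''), unless you invoke a Schwartzman--Fried type cross-section criterion, which you do not set up. Its last step is also garbled: one would take closed level sets of a primitive of a rational perturbation of $\alpha$ itself, not of a form ``transverse to $\alpha$''. Deferring to \cite{HH86} is not a proof.

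There are also smaller problems:
\begin{itemize}
\item Step~2 breaks down when every leaf is compact. No leaf then returns to $\tau$ at a second point, so your argument never starts.
\item Even for a nonperiodic leaf, the leaf segment must be tilted into a curve crossing $W$ with the same sign as the closing piece of $\tau$, as in Lemma~\ref{lem:ExistTran}. Merely smoothing the corners is not enough.
\item Closedness of ${\cal K}$ does not follow from the local structure alone. It needs a Poincar\'e--Bendixson argument (Proposition~\ref{prop:OrbitSetClose}).
\end{itemize}

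The missing idea, used in the paper's Lemma~\ref{lem:NoReebTran}, avoids patching entirely. When a periodic orbit $C$ exists, cut along $C$ itself rather than along a transversal. This gives an annulus with boundary leaves $C_0,C_1$. Let ${\cal R}$ be the set of points reachable from $C_0$ by simple positively transverse arcs.
\begin{itemize}
\item ${\cal R}$ is open, and in a flow box at any frontier point it is exactly one open half-box.
\item Hence its frontier is a closed union of leaves, and in fact a single compact leaf $C''$.
\item $C''$ must be oriented oppositely to $C_0$, since otherwise the arc could be pushed across it. That gives a Reeb component by Proposition~\ref{prop:ReebCrit}.
\end{itemize}
So without Reeb components ${\cal R}$ reaches $C_1$, and closing up gives a closed transversal $\Sigma$ meeting $C$ exactly once.

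Completeness is then homological rather than constructive. A leaf missing $\Sigma$ would, by Poincar\'e--Bendixson, yield a periodic orbit $C'$ disjoint from $\Sigma$, so $I(\Sigma,C')=0$. This contradicts $[C']=\pm[C]$ together with $I(\Sigma,C)=\pm1$ (Proposition~\ref{prop:POTop}, Lemma~\ref{lem:IntNumForm}). The same argument also covers the all-compact case in which your Step~2 fails.
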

Theorem \ref{thm:1DFoliClass} was stated and proved by Hector-Hirsch \cite[Proposition 4.3]{HH86} under the homeomorphism setting. The smooth version was stated by Bonatti-Zhang \cite[Theorem 2.6]{BZ17} but the actual theorem the authors referred to was the continuous version of Hector-Hirsch. For completeness, in the appendix we provide a slef-contained proof of Theorem \ref{thm:1DFoliClass} under the diffeomorphism setting.

\subsection{\label{subsec:PoinMap}Transverse characteristic number of the Poincar\'e
map}

In this subsection, we recall the definition of a hyperbolic periodic
orbit, which will play an essential role in our later analysis for
the Reeb component regime. To this end, we first need to recall the
concepts of a Poincar\'e map and its transverse characteristic number. 

Let $W$ be a smooth, everywhere nonvanishing vector field on $\mathbb{T}^{2}.$
Let 
\[
\big\{\phi_{t}(x):x\in\mathbb{T}^{2},t\in\mathbb{R}\big\}
\]
be its flow of diffeomorphisms. 
\begin{defn}
For each $x\in\mathbb{T}^{2},$ the set $\{\phi_{t}(x):t\in\mathbb{R}\}$
is called the \textit{orbit} of $W$ through $x$. An orbit $\gamma$
is said to be \textit{periodic} if there exist $T>0$ and $x\in\gamma$
(equivalently, for all $x\in\gamma$) such that $\phi_{T}(x)=x$.
The smallest positive number satisfying the above relation is called
the \textit{period} of $\gamma$. A periodic orbit is \textit{noncontractible}
if its homology class in $H_{1}(\mathbb{T}^{2};\mathbb{Z})$ is nonzero. 
\end{defn}
Note that a periodic orbit needs not always exist. However, if it
exists it must be noncontractible (see \cite[Chapter 1, Proposition 4.2.2]{HH86} or Proposition \ref{prop:POTop}).

We now define the (local) Poincar\'e map. Suppose that $\gamma$
is a periodic orbit of $W$ through a given point $x$. Let $\Sigma$
be a smooth, 1D curve that intersects $\gamma$ transversally at $x$
(by definition, it means that $T_{x}\Sigma\oplus T_{x}\gamma=\mathbb{R}^{2}$).
There is a well-defined map 
\[
P:U\subseteq\Sigma\rightarrow\Sigma
\]
on a sufficiently small neighbourhood $U\subseteq\Sigma$ of $x$,
which is constructed as follows. Let $y\in\Sigma$. Consider the flow
line $t\mapsto\phi_{t}(y)$. Let $\tau(y)>0$ be the first time that
$\phi_{t}(y)$ returns to $\Sigma$. We then define 
\[
P(y):=\phi_{\tau(y)}(y).
\]
Since $P(x)=x$, by continuity the map $P$ is well-defined in a small
neighbourhood $U$ of $x$ (see Lemma \ref{lem:PoinMap}). In addition, since $\Sigma$ is one-dimensional,
the linear map $(dP)_{x}:T_{x}\Sigma\rightarrow T_{x}\Sigma$ is a
scalar multiple of the identity, i.e. $(dP)_{x}=\lambda_{\gamma}{\rm id}$
for some $\lambda_{\gamma}.$
\begin{defn}
The map $P$ constructed above is known as the \textit{Poincar\'e
map} at $x\in\gamma$ with respect to the transversal $\Sigma$.
The number $\lambda_{\gamma}$ is called the \textit{transverse characteristic
number} of $\gamma.$
\end{defn}
Although the Poincar\'e map depends on the point $x\in\gamma$ and
the transversal $\Sigma$, it is important to note that $\lambda_{\gamma}$
depends only on the periodic orbit $\gamma$ (see \cite[Sections 3.4, 3.5]{Per13} or Proposition \ref{prop:CharaNum}).

\begin{defn}
\label{def:HypOrb}Let $\gamma$ be a periodic orbit of a smooth,
everywhere nonvanishing vector field $W$ on $\mathbb{T}^{2}$. We
say that $\gamma$ is \textit{hyperbolic} if $\lambda_{\gamma}\neq1$.
\end{defn}

\subsection{The suspension case}

We now proceed to develop the proof of Theorem \ref{thm:2DPeriodic}.
Let $V=(V_{1},V_{2})$ be uniformly elliptic, smooth vector fields
on $\mathbb{T}^{2}.$ Given fixed $\alpha_{0}\in[0,2\pi)$, we consider
the vector field 
\begin{equation}
W_{\alpha_{0}}(x):=V(x)\cdot\left(\begin{array}{c}
\cos\alpha_{0}\\
\sin\alpha_{0}
\end{array}\right),\quad x\in\mathbb{T}^{2}.\label{eq:WAlp}
\end{equation}
By ellipticity, $W_{\alpha_{0}}$ is everywhere nonvanishing and thus
induces a 1D foliation ${\cal F}_{\alpha_{0}}$ on $\mathbb{T}^{2}.$
According to Theorem \ref{thm:1DFoliClass}, either ${\cal F}_{\alpha_{0}}$
\textit{contains a Reeb component} or \textit{it is conjugate to a
suspension}. In this subsection, we deal with the suspension case.
The dynamical mechanism for this case turns out to be rather simple;
as we will see, one is led precisely to the regime of Section \ref{sec:LowGauCondYoung}.
The Reeb component case is much more involved and will be handled
in the next two subsections.

\begin{proof}[Proof of Theorem \ref{thm:2DPeriodic}: Suspension case]We
therefore assume that there exists a diffeomorphism $\Psi:\mathbb{T}^{2}\rightarrow\Sigma_{f}$,
where $\Sigma_{f}$ is the suspension of an orientation-preserving
diffeomorphism $f:S^{1}\rightarrow S^{1},$ such that $\Psi({\cal F}_{\alpha_{0}})={\cal F}_{f}$
(see Definition \ref{def:Suspension} and Definition \ref{def:Conj}). 

\vspace{2mm}\noindent (i) Consider the projection 
\[
\rho:\Sigma_{f}\rightarrow S^{1}=\mathbb{R}/\mathbb{Z},\quad\rho([x,t])\mapsto t\ \ ({\rm mod}\ \mathbb{Z}).
\]
Let $d\theta$ denote the canonical volume form on $S^{1}$ such that
$\int_{S^{1}}d\theta=1$. Define the closed $1$-form $\beta:=\rho^{*}(d\theta)$
on $\Sigma_{f}.$ Let $Y$ be the suspension vector field on $\Sigma_{f}$;
the vertical vector field $\partial_{t}$ on $S^{1}\times\mathbb{R}$
descends to $Y$ on $\Sigma_{f}$ under the quotient of the $\mathbb{Z}$-action
(\ref{eq:ZActSusp}), which is tangential to the foliation ${\cal F}_{f}$.
By definition, one has 
\[
\langle\beta(\xi),Y(\xi)\rangle=\langle d\theta,(d\rho)(Y)\rangle_{\rho(\xi)}=\langle d\theta,\partial_{t}\rangle_{\rho(\xi)}=1
\]
for every $\xi\in\Sigma_{f}$.

\vspace{2mm}\noindent (ii) Since $\Psi({\cal F}_{\alpha_{0}})={\cal F}_{f}$,
there exists a nonvanishing function $\lambda\in C^{\infty}(\mathbb{T}^{2})$
such that
\[
(d\Psi)_{x}(W_{\alpha_{0}}(x))=\lambda(x)Y(\Psi(x))
\]
for every $x\in\mathbb{T}^{2}.$ By connectedness, one may assume without loss of generality that 
\[
m:=\inf_{x\in\mathbb{T}^{2}}\lambda(x)>0.
\]
Let $\eta:=\Psi^{*}(\beta).$ It follows that $\eta$ is closed and
one has 
\[
\langle\eta(x),W_{\alpha_{0}}(x)\rangle=\lambda(x)\langle\beta(\Psi(x)),Y(\Psi(x))\rangle=\lambda(x)
\]
for every $x\in\mathbb{T}^{2}.$ In particular, 
\begin{equation}
\inf_{x\in\mathbb{T}^{2}}\langle\eta(x),W_{\alpha_{0}}(x)\rangle=m>0.\label{eq:NonDegEta}
\end{equation}
(iii) We claim that the (de Rham) cohomology class $[\eta]$ is nontrivial.
Indeed, if $\eta$ were exact, say $\eta=dq$ for some $q\in C^{\infty}(\mathbb{T}^{2})$,
one would have $\eta(x^{*})=0$ at the location $x^{*}$ where $q$
is maximised. This contradicts the fact (\ref{eq:NonDegEta}). Therefore,
$[\eta]\neq0.$

\vspace{2mm}\noindent (iv) Let $[\alpha_{1}],[\alpha_{2}]$ denote
a basis of $H_{1}(\Sigma_{f};\mathbb{Z})$ such that 
\begin{equation}
\int_{[\alpha_{1}]}[\beta]=0,\quad\int_{[\alpha_{2}]}[\beta]=1.\label{eq:BetaBasis}
\end{equation}
One can construct them in the following way. The loop $\alpha_{1}=([x,t])_{x\in S^{1}}$
is a circle positively winding around the horizontal $S^{1}$-component
once. The loop $\alpha_{2}$ is defined by a vertical curve $\alpha_{2}':=[(x,t)]_{0\leqslant t\leqslant1}$
concatenated with a horizontal arc $\alpha_{2}''$ joining $[(f(x),0)]$
back to $[(x,0)]$ along the horizontal $S^{1}$-component. This concatenation
is needed since $\alpha_{2}'$ may fail to be a closed curve (leaves
of ${\cal F}_{f}$ needs not be compact). The relation (\ref{eq:BetaBasis})
is now obvious from the construction. 

Let $\{[l_{1}],[l_{2}]\}$ be the canonical basis of $H_{1}(\mathbb{T}^{2};\mathbb{Z})$
that is dual to $\{[dx^{1}],[dx^{2}]\}$. Let $\zeta_{1},\zeta_{2}$
be the preimages of $\alpha_{1},\alpha_{2}$ under the diffeomorphism
$\Psi$. Then $\{[\zeta_{1}],[\zeta_{2}]\}$ is a basis of $H_{1}(\mathbb{T}^{2};\mathbb{Z})$
and one has 
\[
\int_{[\zeta_{1}]}[\eta]=0,\quad\int_{[\zeta_{2}]}[\eta]=1.
\]
In particular, one can write 
\[
[\zeta_{1}]=a[l_{1}]+b[l_{2}],\quad[\zeta_{2}]=c[l_{1}]+d[l_{2}]
\]
with some matrix 
\[
A=\left(\begin{array}{cc}
a & b\\
c & d
\end{array}\right)\in{\rm GL}_{2}(\mathbb{Z}).
\]
It follows that 
\[
[\eta]=k_{1}[dx^{1}]+k_{2}[dx^{2}],
\]
where 
\[
\left(\begin{array}{c}
k_{1}\\
k_{2}
\end{array}\right):=A^{-1}\cdot\left(\begin{array}{c}
0\\
1
\end{array}\right)\in\mathbb{Z}^{2}\backslash\{(0,0)\}.
\]
As a result, one has
\[
\eta=k_{1}dx^{1}+k_{2}dx^{2}+d\varphi
\]
for some $\varphi\in C^{\infty}(\mathbb{T}^{2}).$

\vspace{2mm}\noindent (v) Consider the function 
\[
F(x^{1},x^{2}):=k_{1}x^{1}+k_{2}x^{2}+\tilde{\varphi}(x^{1},x^{2}),\quad x=(x^{1},x^{2})\in\mathbb{R}^{2},
\]
where $\tilde{\varphi}$ is the pullback of $\varphi$ (i.e. its periodic
extension) on $\mathbb{R}^{2}$. It is clear that $F$ has bounded
first and second derivatives on $\mathbb{R}^{2}$. In addition, let
$\eta:=(\cos\alpha_{0},\sin\alpha_{0})^{T}$ and let $\tilde{V}$
be the periodic extension of $V$ on $\mathbb{R}^{2}$ (same for $\tilde{W}_{\alpha_{0}}$).
Then one has 
\[
((\tilde{V}\cdot\eta)F)(x)=\langle dF(x),\tilde{W}_{\alpha_{0}}(x)\rangle=\langle\eta([x]),W_{\alpha_{0}}([x])\rangle\geqslant m>0
\]
for all $x\in\mathbb{R}^{2}$ ($[x]\in\mathbb{T}^{2}$ denotes the
projected image of $x$). As a consequence, Condition (H) in Section
\ref{sec:LowGauCondYoung} is satisfied with the above choices of
$\eta$ and $F$. The Gaussian lower tail estimate (\ref{eq:GaussLowTorus})
now follows from Theorem {\ref{thm:GLowH}}.

\end{proof}

\subsection{The Reeb component case}

We now investigate the Reeb component case. Suppose that the foliation
${\cal F}_{\alpha_{0}}$ of $W_{\alpha_{0}}$ (see (\ref{eq:WAlp}))
contains a Reeb component. By definition, there is an embedded annulus
$A\cong S^{1}\times[0,1]$ inside $\mathbb{T}^{2}$ with two compact
leaves from $\partial A$ (periodic orbits) and all leaves in $\mathrm{int}A$
are diffeomorphic to $\mathbb{R}$. The key insight here is that one
can perturb $\alpha_0$ into a new selection $W_{\alpha_*}$, whose flow indeed contains a \textit{hyperbolic} periodic orbit. The main lemma is
stated as follows.
\begin{lem}
\label{lem:HyperSelect}Let $\alpha_{0}\in[0,2\pi)$ be such that
the foliation of $W_{\alpha_{0}}$ contains a Reeb component. Then
there exists $\alpha_{*}\in[0,2\pi)$ such that $W_{\alpha_{*}}$
admits a hyperbolic periodic orbit.
\end{lem}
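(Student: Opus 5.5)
We propose to use the Reeb component $A$ of $\mathcal{F}_{\alpha_0}$ and rotate the direction $\alpha$ slightly. The rotated field $W_\alpha$ is then transverse to $W_{\alpha_0}$ in a controlled sense, and the annulus $A$ becomes a trapping region, since its two boundary leaves are oppositely oriented. This trapping will force a periodic orbit, and a transversality argument will make one of them hyperbolic.

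\emph{Step 1: boundary orbits.} Let $\gamma_0,\gamma_1$ be the compact leaves forming $\partial A$. These are periodic orbits of $W_{\alpha_0}$. If either of them is already hyperbolic, we take $\alpha_*=\alpha_0$ and stop. So assume $\lambda_{\gamma_0}=\lambda_{\gamma_1}=1$.

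\emph{Step 2: $A$ is trapping for nearby directions.} For $\alpha=\alpha_0+\epsilon$, ellipticity gives
\[
W_\alpha=\cos\epsilon\, W_{\alpha_0}+\sin\epsilon\, W_{\alpha_0+\pi/2},
\]
and $W_{\alpha_0+\pi/2}$ is everywhere transverse to $W_{\alpha_0}$, because $V(x)$ is invertible. Hence for small $\epsilon\neq0$, $W_\alpha$ crosses each boundary circle $\gamma_i$ transversally, always pointing to the same side of the oriented leaf. Since $\gamma_0$ and $\gamma_1$ have opposite orientations as boundary components of $A$, the same sign of $\epsilon$ makes $W_\alpha$ point into $A$ on both boundary circles. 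We choose that sign, so that $A$ is forward invariant for $W_\alpha$. The sign convention must be checked using the definition of a Reeb component, namely that the boundary leaves are in opposite homotopy classes.

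\emph{Step 3: a periodic orbit inside $A$.} $W_\alpha$ is nonvanishing on the compact annulus $A$ and $A$ is forward invariant. We apply Poincar\'e--Bendixson on the annulus: the $\omega$-limit set of any orbit is a periodic orbit in $\mathrm{int}A$. Alternatively, we use the return map on a transversal arc joining $\gamma_0$ to $\gamma_1$. Such an arc exists because $W_\alpha$ is close to $W_{\alpha_0}$, and the leaves of $\mathcal{F}_{\alpha_0}$ inside a Reeb annulus spiral toward both boundaries, so an arc transverse to $\mathcal{F}_{\alpha_0}$ is also transverse to $W_\alpha$. The first return map $P_\alpha$ is then a diffeomorphism of the arc into itself, mapping the arc into its interior, and so it has a fixed point.

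\emph{Step 4: hyperbolicity — the main obstacle.} The fixed points of $P_\alpha$ need not have derivative $\neq1$. We plan to use the one-parameter family $\alpha\mapsto W_\alpha$ with a Sard-type argument. The displacement $g(y,\alpha)=P_\alpha(y)-y$ on the transversal is smooth. Because $W_{\alpha_0+\pi/2}$ is transverse to the flow, $\partial_\alpha g$ has a sign wherever it is defined, as in Step 2. Therefore the zero set $\{g=0\}$ is a smooth curve $y\mapsto\alpha(y)$, obtained by solving for $\alpha$ via the implicit function theorem, or more robustly a compact set projecting onto an interval of $\alpha$.

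By Sard's theorem applied to $y\mapsto\alpha(y)$, for almost every $\alpha$ in that interval every fixed point $y$ satisfies $\alpha'(y)\neq0$. This is equivalent to $\partial_y g\neq0$, i.e.\ $(dP_\alpha)_y\neq1$, so the corresponding periodic orbit is hyperbolic in the sense of Definition \ref{def:HypOrb}. We then take $\alpha_*$ to be such a regular value in the nonempty range produced by Step 3. Two points need care: the sign of $\partial_\alpha g$ must be uniform on the whole transversal, and the return map must stay well defined over a genuine interval of $\alpha$ values. Both should follow from the uniform transversality bound given by ellipticity and compactness.
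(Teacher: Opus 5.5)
Steps 2 and 3 are correct as far as they go. With the right sign of $\epsilon$ the Reeb annulus is a trapping region for $W_{\alpha_0+\epsilon}$, and Poincar\'e--Bendixson then gives periodic orbits in $\mathrm{int}A$. But they never connect to Step 4, which is where the proof has to happen.

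For Sard to yield a hyperbolic orbit you need one transversal and one parameter range on which the return map is defined along the whole transversal. Then $\{g=0\}$ is the graph of a single smooth $y\mapsto\alpha(y)$ with \emph{nondegenerate} image. If $\alpha(\cdot)$ were constant, every fixed point could be degenerate, so non-constancy is the crux.

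The arc you propose for this, joining $\gamma_0$ to $\gamma_1$ transversally to $\mathcal{F}_{\alpha_0}$, does not exist. By the very orientation fact you use in Step 2, $\mathrm{int}A$ lies on the same side of both $W_{\alpha_0}$-oriented boundary leaves. Such an arc $c$ would therefore leave $\gamma_0$ with $\det(W_{\alpha_0},\dot c)$ of one sign and arrive at $\gamma_1$ with the opposite sign; a Reeb component admits no transversal crossing it. Ellipticity and compactness do not repair this. For $\alpha\neq\alpha_0$ a boundary-to-boundary transversal of $W_\alpha$ would have to be built separately, and it degenerates as $\alpha\to\alpha_0$.

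Without such an arc, the periodic orbits from Step 3 have no controlled location. No single chart contains them as fixed points, so the claim that $\{g=0\}$ projects onto an interval is unsupported.

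A smaller point: $\partial_\alpha g\neq0$ does not follow from pointwise transversality as in Step 2. It requires integrating the variational equation, which is the paper's Lemma \ref{lem:NonDegD}. With $\xi(t)=\partial_\alpha\phi^\alpha_t$ and $d(t)=\det(W_\alpha(\phi^\alpha_t),\xi(t))$, one gets $\dot d=(\mathrm{div}\,W_\alpha)\,d+\det V$ and $d(0)=0$, so $d$ is nonzero at the return time. Your claim is true, but this computation is its actual content.

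The missing idea is that the Reeb hypothesis itself supplies the non-constancy, locally at a boundary leaf; this is how the paper argues.
\begin{itemize}
\item Take a short transversal $\sigma$ through a point of one boundary leaf $\gamma$, with $\sigma((0,\varepsilon))\subset\mathrm{int}A$, and the parametric first-return map $P(u,\alpha)$ near $(0,\alpha_0)$.
\item Lemma \ref{lem:NonDegD} gives $\partial_\alpha P\neq0$ near $(0,\alpha_0)$. The implicit function theorem then yields a smooth $u\mapsto A(u)$ with $P(u,A(u))=u$ and $A(0)=\alpha_0$.
\item For $u>0$, the $W_{\alpha_0}$-orbit through $\sigma(u)$ is an interior leaf of the Reeb component, hence nonperiodic, so $A(u)\neq\alpha_0$.
\item By the mean value theorem there is a small $u_*>0$ with $A'(u_*)\neq0$. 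Differentiating $P(u,A(u))=u$ gives $\partial_uP(u_*,A(u_*))=1-\partial_\alpha P\cdot A'(u_*)\neq1$.
\end{itemize}
This is your Sard step carried out in a chart where it is legitimate, and the mean value theorem suffices. Steps 1--3 then become unnecessary.
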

We will prove Lemma \ref{lem:HyperSelect} in Section \ref{subsec:HyperSelectPf}.
In what follows, we complete the proof of Theorem \ref{thm:2DPeriodic}
assuming the correctness of this lemma. We first state a simple continuity
estimate for Young's differential equations. 
\begin{lem}
\label{lem:YoungCty}Let $p\in(1/H,2)$. Consider the differential equation 
\[
\begin{cases}
dY_{s}=\tilde{W}(Y_{s})ds+\tilde{V}(Y_{s})dv_{s}, & 0\leqslant s\leqslant T,\\
Y_{0}=y\in\mathbb{R}^{2},
\end{cases}
\]
where $\tilde{W},\tilde{V}$ are the periodic extension of $W,V\in C^{\infty}(\mathbb{T}^{2})$
respectively and $v\in C^{p\text{-var}}([0,T];\mathbb{R}^{2})$. For
any $M>0,$ there exists $C>0$ depending only on $V,W,p,T$ and $M,$
such that 
\[
\|v\|_{p\text{-var}}\leqslant M\implies\sup_{0\leqslant s\leqslant T}\big|Y_{s}-\tilde{\phi}_{s}(y)\big|\leqslant C\|v\|_{p\text{-var}}.
\]
Here $\tilde{\phi}_{s}(y)$ denotes the flow of $\tilde{W}$ on $\mathbb{R}^{2}$
starting from $y$. 
\end{lem}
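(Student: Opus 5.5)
We compare $Y$ with the flow $\tilde{\phi}_{s}(y)$, which solves the same equation with $v\equiv 0$. The plan is the standard two-step Young argument: first bound the $p$-variation of $Y$ on short intervals, then run a Gronwall-type estimate on the difference $Z_s:=Y_s-\tilde{\phi}_s(y)$. Periodicity enters only through the fact that $\tilde W,\tilde V$ and all their derivatives are bounded on $\mathbb{R}^2$, so every constant below is uniform in the starting point $y$.

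\emph{Step 1 (a priori bound).} We argue as in Lemma \ref{lem:WHCty} and in part (i) of the proof of Theorem \ref{thm:GLowH}. On any interval $I\subseteq[0,T]$, \cite[Theorem 6.8]{FV10} gives
\[
\|Y\|_{p\text{-var};I}\leqslant\|\tilde W\|_\infty|I|+C_p\big(\|\tilde V\|_\infty+\|D\tilde V\|_\infty\|Y\|_{p\text{-var};I}\big)\|v\|_{p\text{-var};I}.
\]
We partition $[0,T]$ greedily into intervals on which $\|v\|_{p\text{-var};I}\leqslant\delta$, with $\delta$ chosen so that $C_p\|D\tilde V\|_\infty\delta\leqslant 1/2$. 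Superadditivity of $\|v\|^p_{p\text{-var}}$ shows that the number of such intervals is at most $1+(M/\delta)^p$. On each interval we then get $\|Y\|_{p\text{-var};I}\leqslant C(|I|+\|v\|_{p\text{-var};I})$, and summing over the intervals yields $\|Y\|_{p\text{-var};[0,T]}\leqslant C_{M}$.

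\emph{Step 2 (difference estimate).} We write
\[
Z_t=\int_0^t\big(\tilde W(Y_s)-\tilde W(\tilde\phi_s(y))\big)ds+\int_0^t\tilde V(Y_s)dv_s.
\]
The first term is bounded by $\|D\tilde W\|_\infty\int_0^t|Z_s|ds$. For the Young integral term, \cite[Theorem 6.8]{FV10} together with Step 1 gives
\[
\sup_t\Big|\int_0^t\tilde V(Y_s)dv_s\Big|\leqslant C_p\big(\|\tilde V\|_\infty+\|D\tilde V\|_\infty C_M\big)\|v\|_{p\text{-var}}.
\]
Gronwall's inequality then yields $\sup_{s\leqslant T}|Z_s|\leqslant e^{\|D\tilde W\|_\infty T}C'_M\|v\|_{p\text{-var}}$, which is the claim.

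The Young integral estimate requires $1/p+1/p>1$; this holds because $p<2$. It is applied to $\tilde V(Y)$, whose $p$-variation is at most $\|D\tilde V\|_\infty\|Y\|_{p\text{-var}}$. The only mildly delicate point is Step 1, namely the localisation needed to absorb the $\|Y\|_{p\text{-var}}$ term when $M$ is large, and the bound on the number of subintervals.
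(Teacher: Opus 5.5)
Your proof is correct, but it takes a different route from the paper. The paper gives no argument beyond citing the standard local Lipschitz continuity of the It\^o--Lyons map for Young differential equations \cite[Theorem 10.38]{FV10}, comparing the driver $v$ with the zero driver. You instead prove the special case needed here by hand.

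The simplification you exploit is that the reference trajectory $\tilde\phi_s(y)$ is driven by $v\equiv 0$. The only rough term in $Z=Y-\tilde\phi(y)$ is therefore $\int_0^t\tilde V(Y_s)\,dv_s$ itself. Once you have the a priori bound $\|Y\|_{p\text{-var};[0,T]}\leqslant C_M$, this term is linear in $\|v\|_{p\text{-var}}$, and a sup-norm Gronwall on the Lebesgue drift finishes the proof. Your derivation of that bound is sound: the greedy localisation gives at most $1+(M/\delta)^p$ pieces, and $\|Y\|_{p\text{-var};[0,T]}\leqslant\sum_k\|Y\|_{p\text{-var};I_k}$ follows from the triangle inequality applied to the stopped paths. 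As a result you never need to control the $p$-variation of the difference $Y-\tilde\phi(y)$, or differences of Young integrals $\int(\tilde V(Y)-\tilde V(\tilde Y))\,dv$; that is where the general theorem does its real work.

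What your route buys is that it is self-contained and makes it transparent that $C$ is uniform in the initial point $y$. Only sup norms of $\tilde V,\tilde W$ and their first derivatives enter. This uniformity is exactly what the Reeb-case argument uses when it applies the lemma repeatedly at the varying starting points $X^{R,w}_{t_{j-1}}$.

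What the citation buys is brevity and a stronger conclusion: Lipschitz dependence on two arbitrary drivers, and closeness in $p$-variation rather than only in sup norm. To use it, one treats the drift through a space-time driver $(s,v_s)$ or a drift version of the estimate. Uniformity in $y$ then comes from the constants depending only on the $\mathrm{Lip}^\gamma$-norms of the periodic, hence globally bounded, vector fields.
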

\begin{proof}
This is immediate from standard continuity estimates for Young's differential
equations (see e.g. \cite[Theorem 10.38]{FV10}).
\end{proof}
Now let $\gamma$ be a hyperbolic periodic orbit of $W:=W_{\alpha_{*}}$
from Lemma \ref{lem:HyperSelect}. Let $T$ be the period of $\gamma.$
Recall from Proposition \ref{prop:CharaNum} that the transversal
characteristic number $\lambda_{\gamma}$ is strictly positive. Due
to hyperbolicity, one has either $\lambda_{\gamma}\in(0,1)$ or $\lambda_{\gamma}>1$.
By changing $W$ to $-W$ if necessary ($\lambda_{\gamma}$ will be
changed to its reciprocal), one may assume without loss of generality
that $\lambda_{\gamma}\in(0,1)$. 

For sufficiently small $\varepsilon,$ let $U_{\varepsilon}$ be a
tubular neighbourhood of $\gamma$ in $\mathbb{T}^{2}$. More precisely,
there is a diffeomorphism 
\[
\Phi:\mathbb{R}/T\mathbb{Z}\times(-\varepsilon,\varepsilon)\rightarrow U_{\varepsilon},\quad\Phi(t,r):=\gamma_{t}+r{\bf n}(t),
\]
where ${\bf n}$ is a fixed unit normal vector field along $\gamma.$
For $x=\Phi(t,r)\in U_{\varepsilon},$ we denote $z(x):=r$ as the
transversal coordinate of $x$ in $U_{\varepsilon}.$ Let $\tilde{\gamma}$
be a given lifting of $\gamma$ on $\mathbb{R}^{2}.$ The tubular
neighbourhood map $\Phi$ also lifts to a tubular neighbourhood map
$\tilde{\Phi}:\mathbb{R}\times(-\varepsilon,\varepsilon)\rightarrow\tilde{U}_{\varepsilon}$
of $\tilde{\gamma}$, whose transversal coordinate map is denoted
as $\tilde{z}:\tilde{U}_{\varepsilon}\rightarrow(-\varepsilon,\varepsilon)$.

The following lemma is a consequence of the hyperbolicity of $\gamma,$
which is crucial to us. 
\begin{lem}
\label{lem:OrbitStab}Let $\lambda^{*}\in(\lambda_{\gamma},1)$ be
a given fixed number. Then there exists $\delta<\varepsilon$, such
that 
\begin{equation}
\phi_{s}(U_{\delta})\subseteq U_{\varepsilon/4}\label{eq:Tub1}
\end{equation}
for all $s\in[0,T]$ and 
\begin{equation}
|z(\phi_{T}(x))|\leqslant\lambda^{*}|z(x)|\label{eq:Tub2}
\end{equation}
for all $x\in U_{\delta}$. Here $\phi_{s}$ denotes the flow of $W$.
\end{lem}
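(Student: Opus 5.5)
The plan is to linearise the Poincar\'e return dynamics near $\gamma$ in the tubular coordinates $(t,r)$ given by $\Phi$, and then use continuous dependence of the flow on initial data over the compact time interval $[0,T]$ to get both inclusions.

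First I will express the period-$T$ flow map in the transversal coordinate. The curve $\Sigma_0:=\Phi(\{0\}\times(-\varepsilon,\varepsilon))$ is a transversal to $\gamma$ at $\gamma_0$. On a neighbourhood of $\gamma_0$ in $\Sigma_0$ the Poincar\'e map $P$ is defined, and by definition of the transverse characteristic number it satisfies $P(0)=0$ and $P'(0)=\lambda_\gamma$ in the coordinate $r$. Since $\lambda_\gamma$ does not depend on the chosen point or transversal, the same holds along the transversal $\Sigma_t:=\Phi(\{t\}\times(-\varepsilon,\varepsilon))$ based at any $\gamma_t$.

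Next I pass from the Poincar\'e map to the time-$T$ map. For $x=\Phi(t,r)$, the point $\phi_T(x)$ is not exactly $P_t(x)$ but is obtained from it by flowing a small time $T-\tau(x)$, where $\tau$ is the return time with $\tau(\gamma_t)=T$. The map $(t,r)\mapsto z(\phi_T(\Phi(t,r)))$ is smooth on $\mathbb{R}/T\mathbb{Z}\times(-\delta_1,\delta_1)$ and vanishes at $r=0$. Its derivative at $r=0$ is the normal component of $d\phi_T(\gamma_t){\bf n}(t)$, which is a vector based at $\gamma_t$. Decompose $d\phi_T{\bf n}$ along $W(\gamma_t)$ and $\Sigma_t$. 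Flowing along the orbit does not change the transversal coordinate to first order, but it does change it to first order because $W$ and ${\bf n}$ are not orthogonal. To handle this cleanly I would choose the tubular coordinate so that $z$ is constant along $W$ to first order, or simply replace ${\bf n}$ by the transversal direction that the flow map preserves.

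The cleanest way to do this is to compute with flow box coordinates. In flat coordinates near $\gamma$ where $W=\partial_t$, the transversal coordinate is $r$ and $\phi_T(t,r)=(t+\ldots,P(r))$. Then
\[
\partial_r z(\phi_T)(t,0)=\lambda_\gamma \cdot c(t)
\]
for a continuous positive factor $c(t)$ comparing the two transversal scalings, and this factor equals $1$ at the base point after returning. I expect this bookkeeping to be the main obstacle. If it fails, I will redefine $z$ via $W$-adapted flow box coordinates, which is harmless for later use.

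Given that $\sup_t|\partial_r z(\phi_T)(t,0)|=\lambda_\gamma<\lambda^*$, uniform continuity of $\partial_r$ on the compact set $\mathbb{R}/T\mathbb{Z}\times[-\delta_1,\delta_1]$ yields $\delta\le\delta_1$ with $|\partial_r z(\phi_T)|\le\lambda^*$ on $|r|\le\delta$. The mean value theorem in $r$, starting from $z(\phi_T(\Phi(t,0)))=0$, then gives (\ref{eq:Tub2}).

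Finally, (\ref{eq:Tub1}) follows from continuity. The map $(s,x)\mapsto\phi_s(x)$ on $[0,T]\times\overline{U_{\delta}}$ is uniformly continuous, and $\phi_s(\gamma)=\gamma$. Shrinking $\delta$ therefore ensures that $\phi_s(U_\delta)$ stays within distance $\varepsilon/4$ of $\gamma$, i.e. inside $U_{\varepsilon/4}$, for all $s\in[0,T]$.
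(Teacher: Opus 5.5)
Your route is the paper's. You set $G(t,r):=z(\phi_T(\Phi(t,r)))$, use $G(t,0)=0$ and $\partial_rG(t,0)=\lambda_\gamma$, push $|\partial_rG|\leqslant\lambda^*$ to a thin strip by uniform continuity on the compact set $\mathbb{R}/T\mathbb{Z}\times[-\delta_1,\delta_1]$, and integrate in $r$. Likewise (\ref{eq:Tub1}) follows from continuity of $(s,x)\mapsto\phi_s(x)$ on $[0,T]\times\overline{U_{\delta_1}}$ together with $\phi_s(\gamma)=\gamma$.

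There is one gap. The only nontrivial input, $\partial_rG(t,0)=\lambda_\gamma$ for every $t$, is never established. You call it the main obstacle, introduce an undetermined factor $c(t)$, and then assume $\sup_t|\partial_rG(t,0)|=\lambda_\gamma$. Your reason for doubting the identity is also mistaken. Whether $\mathbf{n}$ and $W$ are orthogonal plays no role (and $\mathbf{n}$ is a unit normal anyway). Since $t'\mapsto z(\gamma_{t'})$ is identically zero, $dz_{\gamma_t}(W(\gamma_t))=0$. So any first-order displacement along $W$ at $\gamma_t$ is invisible to $z$.

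To close the gap, work on the transversal $\Sigma_t:=\Phi(\{t\}\times(-\varepsilon,\varepsilon))$. Write $\phi_T(x)=\phi_{T-\tau(x)}(P_t(x))$, where $P_t$ is the Poincar\'e map at $\gamma_t$ and $\tau$ is the return time, with $\tau(\gamma_t)=T$. Differentiate along $r\mapsto\Phi(t,r)$ at $r=0$. Using $(dP_t)_{\gamma_t}=\lambda_\gamma\,\mathrm{id}$ on $T_{\gamma_t}\Sigma_t$, which holds because $\lambda_\gamma$ is independent of base point and transversal (Proposition \ref{prop:CharaNum}), you get $(d\phi_T)_{\gamma_t}\mathbf{n}(t)=\lambda_\gamma\mathbf{n}(t)-\langle d\tau,\mathbf{n}(t)\rangle W(\gamma_t)$. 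Now apply $dz_{\gamma_t}$, which equals $1$ on $\mathbf{n}(t)$ and $0$ on $W(\gamma_t)$. This gives $\partial_rG(t,0)=\lambda_\gamma$ exactly, so $c(t)\equiv1$.

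Equivalently, in the basis $\{W(\gamma_t),\mathbf{n}(t)\}$ the matrix of $(d\phi_T)_{\gamma_t}$ is upper triangular. Its top-left entry is $1$ and its bottom-right entry is $\partial_rG(t,0)$, so the latter equals $\det(d\phi_T)_{\gamma_t}=\lambda_\gamma$.

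Drop the fallback of redefining $z$ via flow-box coordinates. The lemma is a statement about the given transversal coordinate $z$, and nothing needs changing. With this line inserted, your argument is the paper's proof.
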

\begin{proof}

The property (\ref{eq:Tub1}) is obvious by continuity. To prove the
second property, let 
\[
G(t,r):=z\big(\phi_{T}(\Phi(t,r))\big),\quad(t,r)\times\mathbb{R}/T\mathbb{Z}\times(-\varepsilon,\varepsilon).
\]
Note that $\partial_{r}G(t,0)=\lambda_{\gamma}.$ By continuity, one
has 
\[
\sup_{\substack{t\in\mathbb{R}/T\mathbb{Z}\\
r\in(-\delta,\delta)
}
}\big|\partial_{r}G(t,r)\big|\leqslant\lambda^{*},
\]
provided that $\delta$ is small enough. Since $G(t,0)=0,$ it follows
that 
\[
|z(\phi_{T}(x))|=\big|G(t,r)\big|\leqslant\int_{0}^{r}\big|\partial_{r}G(t,\rho)\big|d\rho\leqslant\lambda^{*}|r|=\lambda^{*}|z(x)|
\]
for any $x=\Phi(t,r)\in U_{\delta}$. This proves the property (\ref{eq:Tub2}). 

\end{proof}

\begin{rem}
\label{rem:LiftedTube}Due to periodicity, it is clear that both properties
(\ref{eq:Tub1}) and (\ref{eq:Tub2}) remain valid for the lifted
objects on $\mathbb{R}^{2}$.
\end{rem}
We are now in a position to prove Theorem \ref{thm:2DPeriodic} in
the Reeb component case. 

\begin{proof}[Proof of Theorem \ref{thm:2DPeriodic}: Reeb component case]
We divide the argument into several major steps. Let $\lambda^{*},U_{\delta}$
be given as in Lemma \ref{lem:OrbitStab}. Let $R>2T$. 

\vspace{2mm}\noindent (i) Consider the differential equation 
\begin{equation}
\begin{cases}
dX_{t}^{R,w}=R\tilde{W}(X_{t}^{R,w})dt+\tilde{V}(X_{t})dw_{t}, & 1/2\leqslant t\leqslant1,\\
X_{1/2}^{R,w}=x\in\tilde{U}_{\delta},
\end{cases}\label{eq:YoungDE}
\end{equation}
where $w\in C^{p\text{-var}}([0,1];\mathbb{R}^{2})$. We divide $[1/2,1]$
into equal subintervals 
\[
[t_{j-1},t_{j}]:=[1/2+(j-1)T/R,1/2+jT/R],\quad j=1,\cdots,\lfloor\frac{R}{2T}\rfloor:=N_{R}
\]
as well as the last subinterval $[t_{N_{R}},1].$ We claim that there
exists $M_{1}>0$ depending on $\delta,$ such that 
\begin{equation}
\|w\|_{p\text{-var};[1/2,1]}\leqslant M_{1}\implies X_{t_{j}}^{R,w}\in\tilde{U}_{\delta}\label{eq:TunStab}
\end{equation}
for all $j=1,\cdots,N_{R}.$

To see this, suppose that $X_{t_{j-1}}^{R,w}\in\tilde{U}_{\delta}$.
One can reparametrise the equation (\ref{eq:YoungDE}) on $[t_{j-1},t_{j}]$
as 
\[
\begin{cases}
dY_{s}^{j}=\tilde{W}(Y_{s}^{j})ds+\tilde{V}(Y_{s}^{j})dv_{s}^{j}, & 0\leqslant s\leqslant T,\\
Y_{0}^{j}=X_{t_{j-1}}^{R,w},
\end{cases}
\]
where $v_{s}^{j}:=w_{t_{j-1}+s/R}-w_{t_{j-1}}.$ Note that 
\[
\|v^{j}\|_{p\text{-var};[0,T]}=\|w\|_{p\text{-var};[t_{j-1},t_{j}]}\leqslant\|w\|_{p\text{-var};[1/2,1]}.
\]
By the relation (\ref{eq:Tub1}) (cf. Remark \ref{rem:LiftedTube}),
one has 
\[
\big\{\tilde{\phi}_{s}(X_{t_{j-1}}^{R,w}):0\leqslant s\leqslant T\big\}\subseteq\tilde{U}_{\varepsilon/4}.
\]
In addition, according to Lemma \ref{lem:YoungCty}, one has 
\[
\|w\|_{p\text{-var};[1/2,1]}\leqslant1\implies\sup_{0\leqslant s\leqslant T}\big|Y_{s}^{j}-\tilde{\phi}_{s}(X_{t_{j-1}}^{R,w})\big|\leqslant C_{1}\|w\|_{p\text{-var};[1/2,1]}.
\]
In particular, $X_{t}^{R,w}\in\tilde{U}_{\varepsilon/2}$ for all
$t\in[t_{j-1},t_{j}]$, provided that $\|w\|_{p\text{-var};[1/2,1]}\leqslant M_{1}$
and $M_{1}<\frac{\varepsilon}{4C_{1}}\wedge1.$ 

Note that the transversal coordinate map $z:U_{\varepsilon}\rightarrow(-\varepsilon,\varepsilon)$
is Lipschitz on $\bar{U}_{\varepsilon/2}$. Let $L$ denote its Lipschitz
constant (same constant for $\tilde{z}$). The previous continuity
estimate also yields that 
\[
\big|\tilde{z}(X_{t_{j}}^{R,w})-\tilde{z}(\tilde{\phi}_{T}(X_{t_{j-1}}^{R,w}))\big|\leqslant C_{1}L\|w\|_{p\text{-var};[1/2,1]},
\]
provided that $\|w\|_{p\text{-var};[1/2,1]}\leqslant M_{1}.$ It follows
from the contraction property (\ref{eq:Tub2}) that 
\begin{align*}
\big|\tilde{z}(X_{t_{j}}^{R,w})\big| & \leqslant\big|\tilde{z}(\tilde{\phi}_{T}(X_{t_{j-1}}^{R,w}))\big|+\big|\tilde{z}(X_{t_{j}}^{R,w})-\tilde{z}(\tilde{\phi}_{T}(X_{t_{j-1}}^{R,w}))\big|\\
 & \leqslant\lambda^{*}\delta+C_{1}L\|w\|_{p\text{-var};[1/2,1]}\leqslant\lambda^{*}\delta+C_{1}LM_{1}.
\end{align*}
By further requiring 
\[
M_{1}<\frac{(1-\lambda^{*})\delta}{C_{1}L},
\]
one obtains that 
\[
\big|\tilde{z}(X_{t_{j}}^{R,w})\big|<\delta\iff X_{t_{j}}^{R,w}\in\tilde{U}_{\delta}.
\]
Since $X_{1/2}^{R,w}=x\in\tilde{U}_{\delta}$ by assumption, this
proves the claim (\ref{eq:TunStab}) by induction on $j$. 

\vspace{2mm}\noindent (ii) Since $\gamma$ has nontrivial homology,
there exists $k\in\mathbb{Z}^{2}\backslash\{(0,0)\}$ such that 
\[
\tilde{\gamma}(s+T)=\tilde{\gamma}(s)+k
\]
for all $s\in\mathbb{R}.$ Consider the function 
\[
F(x):=\frac{k\cdot x}{|k|^{2}},\quad x\in\mathbb{R}^{2}.
\]
Note that $F(\tilde{\phi}_{T}(x)-x)=1$ for all $x\in\tilde{\gamma}$.
By shrinking $\delta$ if necessary, one can ensure that 
\begin{equation}
F\big(\tilde{\phi}_{T}(x)-x\big)\geqslant\frac{3}{4}\label{eq:LinDispPerOrb}
\end{equation}
for all $x\in\tilde{U}_{\delta}.$

Let $X_{t}^{R,w}$ be the solution to the equation (\ref{eq:YoungDE}).
We claim that there exists $M_{1}>0$, such that 
\begin{equation}
\|w\|_{p\text{-var};[1/2,1]}\leqslant M_{1}\implies F\big(X_{t_{j}}^{R,w}-X_{t_{j-1}}^{R}\big)\geqslant\frac{1}{2}\label{eq:LinDispTorus}
\end{equation}
for all $i=1,\cdots,N_{R}.$ Indeed, with $M_{1}$ chosen as in Part
(i) one has 
\[
\|w\|_{p\text{-var};[1/2,1]}\leqslant M_{1}\implies\big|F\big(X_{t_{j}}^{R,w}-\tilde{\phi}_{T}(X_{t_{j-1}}^{R,w})\big)\big|\leqslant\frac{C_{1}}{|k|}\|w\|_{p\text{-var};[1/2,1]}.
\]
Since $X_{t_{j-1}}^{R,w}\in\tilde{U}_{\delta},$ it follows from (\ref{eq:LinDispPerOrb})
that 
\begin{align*}
F\big(X_{t_{j}}^{R,w}-X_{t_{j-1}}^{R,w}\big) & \geqslant F\big(\tilde{\phi}_{T}(X_{t_{j-1}}^{R,w})-X_{t_{j-1}}^{R,w}\big)-\frac{C_{1}}{|k|}\|w\|_{p\text{-var};[1/2,1]}\geqslant\frac{3}{4}-\frac{C_{1}M_{1}}{|k|}.
\end{align*}
By further requiring 
\[
M_{1}<\frac{|k|}{4C_{1}},
\]
one concludes that (\ref{eq:LinDispTorus}) holds. As a consequence,
\[
F\big(X_{t_{N_{R}}}^{R,w}-X_{1/2}^{R,w}\big)\geqslant\frac{1}{2}N_{R}=\frac{1}{2}\lfloor\frac{R}{2T}\rfloor
\]
provided that $X_{1/2}^{R,w}\in\tilde{U}_{\delta}$ and $\|w\|_{p\text{-var};[1/2,1]}\leqslant M_{1}.$ 

Under the same assumptions as above, the last piece $[t_{N_{R}},1]$
is easily estimated as 
\begin{align*}
\big|F\big(X_{1}^{R,w}-X_{t_{N_{R}}}^{R,w}\big)\big|\leqslant & C_{2}\sup_{0\leqslant s\leqslant T}\Big[\big|Y_{s}^{N_{R}+1}-\tilde{\phi}_{s}(X_{t_{N_{R}}}^{R,w})\big|\\
 & \ \ \ +\big|\tilde{\phi}_{s}(X_{t_{N_{R}}}^{R,w})-X_{t_{N_{R}}}^{R,w}\big|\Big]\leqslant C_{3}.
\end{align*}
By using the Lipschitz property of $F$ again, one therefore obtains
that 
\begin{equation}
\big|X_{1}^{R,w}-X_{1/2}^{R,w}\big|\geqslant\frac{|k|}{2}\lfloor\frac{R}{2T}\rfloor-C_{3}|k|,\label{eq:LinearTorus}
\end{equation}
provided that $X_{1/2}^{R,w}\in\tilde{U}_{\delta}$ and $\|w\|_{p\text{-var};[1/2,1]}\leqslant M_{1}.$
The estimate (\ref{eq:LinearTorus}) is uniform for all large $R$
(with $M$ being chosen as before). 

\vspace{2mm}\noindent (iii) Recall that $x_{0}\in\mathbb{R}^{2}$
is the given initial condition of the SDE (\ref{eq:SDETorus}). Let
$x$ be any fixed point on $\tilde{\gamma}$ and let $\beta:[0,1/2]\rightarrow\mathbb{R}^{2}$
be any fixed smooth path (e.g. the straight line) such that $\beta_{0}=x_{0},$
$\beta_{1/2}=x$. Define 
\[
h_{\text{int}}(t):=\int_{0}^{t}V^{-1}(\beta_{s})\dot{\beta_{s}ds,\quad}0\leqslant t\leqslant\frac{1}{2}.
\]
By definition, the control $h_{{\rm int}}$ drives the dynamics 
\[
dy_{t}=V(y_{t})\dot{h}_{{\rm int}}(t)dt
\]
from $x_{0}$ to $x$ along $\beta$. By the same continuity estimate
leading to Lemma \ref{lem:YoungCty}, one has 
\[
||w\|_{p\text{-var};[0,1/2]}\leqslant M_{0}\implies X_{1/2}^{w+h_{{\rm int}}}\in\tilde{U}_{\delta}
\]
for some small $M_{0}$. 

\vspace{2mm}\noindent (iv) Let us summarise what we have obtained
so far. For each $R>2T,$ define 
\[
h^{R}(t)\triangleq\hat{h}_{{\rm int}}(t)+R\cdot\hat{h}_{{\rm main}}(t),\quad0\leqslant t\leqslant1,
\]
where 
\[
\hat{h}_{{\rm int}}(t):=\begin{cases}
h_{{\rm int}}(t), & 0\leqslant t\leqslant1/2,\\
h_{{\rm int}}(1/2), & 1/2\leqslant t\leqslant1,
\end{cases}\quad\hat{h}_{{\rm main}}(t):=\begin{cases}
0, & 0\leqslant t\leqslant1/2,\\
(t-1/2)\eta_{*}, & 1/2\leqslant t\leqslant1
\end{cases}
\]
and $\eta_{*}:=(\cos\alpha_{*},\sin\alpha_{*})^{T}.$ Let $M:=M_{0}\wedge M_{1}.$
What we have shown is that whenever $w\in C^{p\text{-var}}([0,1];\mathbb{R}^{2})$
satisfies $\|w\|_{p\text{-var};[0,1]}\leqslant M$, one has 
\[
\big|X_{1}^{w+h^{R}}-x_{0}\big|\geqslant\frac{|k|}{2}\lfloor\frac{R}{2T}\rfloor-C_{3}|k|-|x-x_{0}|
\]
for all large $R$. On the other hand, it is easy to see that $\hat{h}_{{\rm int}},\hat{h}_{{\rm main}}\in\bar{{\cal H}}$
and 
\[
\|h^{R}\|_{\bar{{\cal H}}}\leqslant\|\hat{h}_{{\rm int}}\|_{\bar{{\cal H}}}+R\|\hat{h}_{{\rm main}}\|_{\bar{{\cal H}}}.
\]
These are essentially the properties (\ref{eq:DispEstYoung}) and
(\ref{eq:hRNorm}). Therefore, one can carry through exactly the same
Cameron-Martin transformation argument from the proof of Theorem \ref{thm:GLowH}
to conclude that the solution to the SDE (\ref{eq:SDETorus}) at time
one admits a Gaussian lower tail. 

The proof of Theorem \ref{thm:2DPeriodic} is now complete. 

\end{proof}

\subsection{\label{subsec:HyperSelectPf}Proof of the hyperbolic orbit selection
lemma}

In this subsection, we develop the proof of Lemma \ref{lem:HyperSelect}. 

\subsubsection{Parametric Poincar\'e map}

We first set up some basic definitions. For $\alpha\in[0,2\pi)$,
we define 
\[
W_{\alpha}(x):=V(x)\cdot\left(\begin{array}{c}
\cos\alpha\\
\sin\alpha
\end{array}\right),\quad x\in\mathbb{T}^{2}.
\]
Suppose that $W_{\alpha_{0}}$ contains a Reeb component $A$ for
some given $\alpha_{0}\in[0,2\pi)$. Let $\gamma$ be one of the two
periodic orbits from $\partial A$ with period $T>0$ and fix $x\in\gamma$.
Let $\Sigma$ be a smooth curve intersecting $\gamma$ transversally
at $x$. Since our argument is local, we assume that $\Sigma$ is
parametrised by $\sigma:(-\varepsilon,\varepsilon)\rightarrow\Sigma$
with $\sigma(0)=x$. We also assume that the orientation of $\Sigma$
is chosen such that $\sigma((0,\varepsilon))$ is contained in the
interior of the Reeb component $A$. 

Recall that the Poincar\'e map is defined in Section \ref{subsec:PoinMap}.
For our purpose, we need a parametric version of the Poincar\'e map
with respect to the parameter $\alpha$ near $\alpha_{0}.$ More specifically,
we define two functions
\[
P:(-\delta,\delta)\times(\alpha_{0}-\theta,\alpha_{0}+\theta)\rightarrow(-\varepsilon,\varepsilon),
\]
\[
\tau:(-\delta,\delta)\times(\alpha_{0}-\theta,\alpha_{0}+\theta)\rightarrow(0,\infty)
\]
with small $\delta,\theta$ in the following way. Given $u\in(-\delta,\delta)$
and $\alpha\in(\alpha_{0}-\theta,\alpha_{0}+\theta)$, the number
$\tau(u,\alpha)$ is the first return time to $\Sigma$ of the flow
line $t\mapsto\phi_{t}^{\alpha}(\sigma(u))$, where $\phi_{t}^{\alpha}(y)$
denotes the flow of $W_{\alpha}$ starting from $y$. Correspondingly,
$P(u,\alpha)$ is the location of first return, i.e. 
\begin{equation}
\sigma\big(P(u,\alpha)\big)=\phi_{\tau(u,\alpha)}^{\alpha}(\sigma(u)).\label{eq:ParaPoinMap}
\end{equation}
Since 
\[
P(0,\alpha_{0})=0,\quad\tau(0,\alpha_{0})=T,
\]
by continuity the maps $P,\tau$ are both well-defined for sufficiently
small $\delta<\varepsilon$ and $\theta.$ In fact, $P$ and $\tau$
are smooth in both variables. We define the displacement $D(u,\alpha):=P(u,\alpha)-u$. 

A key observation is the following lemma, which is a consequence of
the special structure of the family $\alpha\mapsto W_{\alpha}$. 
\begin{lem}
\label{lem:NonDegD}One has $\partial_{\alpha}D(0,\alpha_{0})\neq0$.
\end{lem}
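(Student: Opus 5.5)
We need $\partial_\alpha D(0,\alpha_0)=\partial_\alpha P(0,\alpha_0)\neq 0$, since $D(0,\alpha)=P(0,\alpha)$. The plan is to differentiate the defining relation (\ref{eq:ParaPoinMap}) in $\alpha$ at $(u,\alpha)=(0,\alpha_0)$. This gives
\[
\sigma'(0)\,\partial_\alpha P(0,\alpha_0)=W_{\alpha_0}(x)\,\partial_\alpha\tau(0,\alpha_0)+\partial_\alpha\phi^{\alpha}_{T}(x)\big|_{\alpha=\alpha_0}.
\]
Because $\sigma'(0)$ and $W_{\alpha_0}(x)$ span $T_x\mathbb{T}^2$, it suffices to show that the variation $\xi(T):=\partial_\alpha\phi^\alpha_T(x)|_{\alpha_0}$ is not parallel to $W_{\alpha_0}(x)$. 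Equivalently, $\xi(T)$ must have nonzero component along the transversal direction, modulo the flow direction.

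The variation $\xi(t)$ solves the linearised equation $\dot\xi=DW_{\alpha_0}(\gamma_t)\xi+\partial_\alpha W_\alpha(\gamma_t)$ with $\xi(0)=0$. The special structure of the family is that $\partial_\alpha W_\alpha=V\cdot(-\sin\alpha,\cos\alpha)^T=W_{\alpha+\pi/2}$. By ellipticity, this vector is everywhere transversal to $W_\alpha$ and lies on a fixed side of it. Concretely, $\det(W_\alpha,\partial_\alpha W_\alpha)=\det V$, which has constant sign.

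To project onto the normal bundle of $\gamma$, I would use the standard variation-of-constants formula for the transversal component. Take a nonvanishing $1$-form $\nu$ along $\gamma$ annihilating $W_{\alpha_0}$, for instance $\nu(v)=\det(W_{\alpha_0}(\gamma_t),v)$. The scalar $\zeta(t):=\det(W_{\alpha_0}(\gamma_t),\xi(t))$ then satisfies a linear scalar ODE $\dot\zeta=c(t)\zeta+\det(W_{\alpha_0},\partial_\alpha W_\alpha)(\gamma_t)$. This follows from the identity $\frac{d}{dt}\det(W,\xi)=\operatorname{div}W\cdot\det(W,\xi)+\det(W,\partial_\alpha W)$, which holds because $\dot W(\gamma_t)=DW\,W$ and $\det(DW\,a,b)+\det(a,DW\,b)=\operatorname{tr}(DW)\det(a,b)$. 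Hence
\[
\zeta(T)=\int_0^T e^{\int_s^T \operatorname{div}W_{\alpha_0}(\gamma_r)dr}\,\det V(\gamma_s)\,ds,
\]
with $c=\operatorname{div}W_{\alpha_0}$. The integrand has constant strict sign, so $\zeta(T)\neq0$.

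It follows that $\xi(T)\notin\mathbb{R}W_{\alpha_0}(x)$. Taking the $\det(W_{\alpha_0}(x),\cdot)$ component of the differentiated relation gives
\[
\det(W_{\alpha_0}(x),\sigma'(0))\,\partial_\alpha P(0,\alpha_0)=\zeta(T)\neq 0,
\]
so $\partial_\alpha D(0,\alpha_0)\neq0$. The main step to verify carefully is the divergence identity for $\frac{d}{dt}\det(W,\xi)$, i.e.\ the $2\times2$ trace identity applied with the Euclidean determinant on the lifted coordinates. This is legitimate because the torus is flat, so periodicity causes no issue. Smoothness of $P$ and $\tau$ in $\alpha$ is assumed as stated.
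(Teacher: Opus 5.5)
Your proposal is correct and follows the paper's proof essentially step for step. Both arguments set $d(t)=\det(W_{\alpha_0}(\gamma_t),\xi(t))$ and use the $2\times 2$ trace identity to get the variation-of-constants formula, whose integrand $\det V(\gamma_s)$ has constant sign. Both then take the determinant of the $\alpha$-differentiated Poincar\'e relation against $W_{\alpha_0}(x)$ to conclude $\partial_\alpha P(0,\alpha_0)\det(W_{\alpha_0}(x),\sigma'(0))=d(T)\neq 0$.
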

\begin{proof}
Recall that $\gamma:[0,T]\rightarrow\mathbb{T}^{2}$ ($\gamma_{0}=x$)
is the chosen periodic orbit of $W_{\alpha_{0}}$ from a Reeb component.
Consider the function 
\[
d(t):=\det\big(W_{\alpha_{0}}(\gamma_{t}),\xi(t)\big)
\]
where $\xi(t):=\partial_{\alpha}\phi_{t}^{\alpha}(x)\big|_{\alpha=\alpha_{0}}$.
Let $Z(y):=\partial_{\alpha}W_{\alpha}(y)|_{\alpha=\alpha_{0}}$ for
$y\in\mathbb{T}^{2}.$ By using the flow equation of $\phi_{t}^{\alpha}$,
it is easily seen that 
\[
\dot{\xi}(t)=DW_{\alpha_{0}}(\gamma_{t})\xi(t)+Z(\gamma_{t}),\quad\xi(0)=0.
\]
Therefore, one has 
\begin{align}
\dot{d}(t)= & \det\big((DW_{\alpha_{0}}\cdot W_{\alpha_{0}})(\gamma_{t}),\xi(t)\big)+\det\big(W_{\alpha_{0}}(\gamma_{t}),DW_{\alpha_{0}}(\gamma_{t})\xi(t)\big)\nonumber \\
 & \ \ \ +\det\big(W_{\alpha_{0}}(\gamma_{t}),Z(\gamma_{t})\big)\nonumber \\
= & \big({\rm div}W_{\alpha_{0}}\big)(\gamma_{t})\cdot d(t)+\det\big(W_{\alpha_{0}}(\gamma_{t}),Z(\gamma_{t})\big).\label{eq:dtODE}
\end{align}
To reach the last equality, we used the simple relation that 
\[
\det(A\cdot{\bf u},{\bf v})+\det({\bf u},A\cdot{\bf v})={\rm tr}[A]\det({\bf u},{\bf v})
\]
for any $2\times2$ matrix $A$ and $2\times1$ row vectors ${\bf u},{\bf v}$.
Since $d(0)=0,$ it follows from (\ref{eq:dtODE}) that 
\[
d(T)=\int_{0}^{T}\exp\Big[\int_{s}^{T}({\rm div}W_{\alpha_{0}})(\gamma(r))dr\Big]\det\big(W_{\alpha_{0}}(\gamma_{s}),Z(\gamma_{s})\big)ds.
\]
Explicit computation shows that 
\[
\det\big(W_{\alpha_{0}}(y),Z(y)\big)=\det V(y)
\]
for all $y\in\mathbb{T}^{2}$. Since $V$ is uniformly elliptic, the above determinant has a constant sign. As a 
consequence, $d(T)\neq 0$.

On the other hand, by differentiating the relation (\ref{eq:ParaPoinMap})
with respect to $\alpha$ at $(u,\alpha)=(0,\alpha_{0})$, one finds
that 
\[
\xi(T)+\partial_{\alpha}\tau(0,\alpha_{0})W_{\alpha_{0}}(x)=\partial_{\alpha}P(0,\alpha_{0})\cdot\sigma'(0).
\]
By taking determinant with $W_{\alpha_{0}}(\gamma_{T})=W_{\alpha_{0}}(x),$
it follows that
\[
d(T)=\partial_{\alpha}P(0,\alpha_{0})\cdot\det\big(W_{\alpha_{0}}(x),\sigma'(0)\big).
\]
Since $\Sigma$ intersects $\gamma$ transversally at $x$, one knows
that 
\[
\det\big(W_{\alpha_{0}}(x),\sigma'(0)\big)\neq0.
\]
Therefore, 
\[
\partial_{\alpha}D(0,\alpha_{0})=\partial_{\alpha}P(0,\alpha_{0})\neq0.
\]
This completes the proof of the lemma. 
\end{proof}

\subsubsection{Hyperbolic orbit selection}

Recall that $D(0,\alpha_{0})=0$ and $\partial_{\alpha}D(0,\alpha_{0})\neq0$
by Lemma \ref{lem:NonDegD}. According to the implicit function theorem,
there exists $\delta_{1}<\delta$ and a smooth map 
\[
A:(-\delta_{1},\delta_{1})\rightarrow(\alpha_{0}-\theta,\alpha_{0}+\theta),
\]
such that 
\begin{equation}
D(u,A(u))=0\label{eq:IFT}
\end{equation}
for all $u\in(-\delta_{1},\delta_{1})$. In other words, the orbit
of $W_{A(u)}$ passing through $\sigma(u)$ is periodic for every
$u\in(-\delta_{1},\delta_{1})$. The following lemma shows that these
orbits must contain a hyperbolic one. 
\begin{lem}
There exists a $u_{*}\in(0,\delta_{1})$, such that the orbit of $W_{A(u_{*})}$
passing through $\sigma(u_*)$ is hyperbolic. 
\end{lem}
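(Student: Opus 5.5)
The plan is to argue by contradiction, using the implicit function relation \eqref{eq:IFT} together with the defining property (ii) of a Reeb component in Definition \ref{def:Reeb}.

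First I will make sure hyperbolicity of these orbits can be read off from $P$. For $u\in(0,\delta_1)$, the curve $\Sigma$ is still transversal to $W_{A(u)}$ at $\sigma(u)$; by continuity this holds after shrinking $\delta_1$ and $\theta$. Hence the map $v\mapsto P(v,A(u))$ is the Poincar\'e map of $W_{A(u)}$ at $\sigma(u)$ with respect to $\Sigma$, written in the coordinate $\sigma$. By \eqref{eq:IFT}, $\sigma(u)$ is a fixed point of this map. Since $\lambda_\gamma$ does not depend on the chosen point or transversal, the transverse characteristic number of the periodic orbit of $W_{A(u)}$ through $\sigma(u)$ equals $\partial_u P(u,A(u))$. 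So it suffices to find $u_*\in(0,\delta_1)$ with
\[
\partial_u P(u_*,A(u_*))\neq 1, \quad\text{equivalently}\quad \partial_u D(u_*,A(u_*))\neq0 .
\]

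Suppose instead that $\partial_u D(u,A(u))=0$ for every $u\in(0,\delta_1)$. Differentiating \eqref{eq:IFT} in $u$ gives
\[
\partial_u D(u,A(u))+\partial_\alpha D(u,A(u))\,A'(u)=0 .
\]
By Lemma \ref{lem:NonDegD} and continuity, $\partial_\alpha D(u,A(u))\neq0$ once $\delta_1$ is small enough. It follows that $A'\equiv0$ on $(0,\delta_1)$. Since $D(0,\alpha_0)=0$, the uniqueness part of the implicit function theorem gives $A(0)=\alpha_0$. By continuity of $A$ at $0$, we get $A(u)=\alpha_0$ for all $u\in[0,\delta_1)$. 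Consequently $D(u,\alpha_0)=0$ for every $u\in(0,\delta_1)$. In other words, the $W_{\alpha_0}$-orbit through $\sigma(u)$ is periodic, hence a compact leaf of $\mathcal{F}_{\alpha_0}$.

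To reach the contradiction, recall that $\Sigma$ was oriented so that $\sigma((0,\varepsilon))\subseteq\operatorname{int}A$. The boundary $\partial A$ is a union of leaves, and distinct leaves are disjoint. So the leaf through $\sigma(u)$ cannot meet $\partial A$. Being connected, it therefore lies entirely in $\operatorname{int}A$. This gives a compact leaf in the interior of the Reeb component, which violates Definition \ref{def:Reeb}(ii). Hence some $u_*\in(0,\delta_1)$ has $\partial_uP(u_*,A(u_*))\neq1$, and the corresponding periodic orbit of $W_{A(u_*)}$ is hyperbolic.

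The only delicate step is the identification in the first paragraph: checking that $P(\cdot,A(u))$ near $u$ really is the first-return map of $W_{A(u)}$, so that its derivative there is $\lambda_\gamma$. This holds because the first-return time $\tau$ is smooth and close to $T$ throughout the small parameter box. Everything else is a direct use of the implicit function theorem.
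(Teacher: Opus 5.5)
Your proposal is correct and is essentially the paper's argument. The paper also differentiates \eqref{eq:IFT} and uses the absence of compact leaves in the interior of the Reeb component to get $A(u)\neq\alpha_0$ on $(0,\delta_1)$, hence points where $A'\neq 0$; combined with $\partial_\alpha D\neq0$ this gives $\partial_u D(u_*,A(u_*))\neq 0$. You run the same reasoning by contradiction, and you spell out the identification of $\partial_u P(u,A(u))$ with the transverse characteristic number, which the paper treats as immediate from the definition.
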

\begin{proof}
By differentiating (\ref{eq:IFT}), one finds that 
\[
\partial_{u}D(u,A(u))+\partial_{\alpha}D(u,A(u))A'(u)=0
\]
for all $u\in(-\delta_{1},\delta_{1})$. For $u\in(0,\delta_{1})$,
recall that the orbit of $W_{\alpha_{0}}$ through $\sigma(u)$ is
nonperiodic by assumption (because it is a leaf inside the interior of the Reeb component
$A$). In particular, one has $D(u,\alpha_{0})\neq0$ and
therefore, $A(u)\neq\alpha_{0}$ for any $u\in(0,\delta_{1})$. Since
$A(0)=\alpha_{0}$, it follows that for any $\eta>0,$ there exists
$u_{\eta}\in(0,\eta)$ such that $A'(u_{\eta})\neq0$. On the other
hand, one knows from Lemma \ref{lem:NonDegD} and continuity that
$\partial_{\alpha}D(u,A(u))\neq0$ for all sufficiently small $u$.
As a consequence, there must exist $u_{*}\in(0,\delta_{1})$, such
that 
\[
\partial_{u}D(u_{*},A(u_{*}))\neq0,
\]
or equivalently, $\partial_{u}P(u_{*},\alpha_{*})\neq1$ where $\alpha_{*}:=A(u_{*})$.
By definition, one concludes that the periodic orbit of $W_{\alpha_{*}}$
through $\sigma(u_{*})$ is hyperbolic.
\end{proof}
The proof of Lemma \ref{lem:HyperSelect} is now complete.
\begin{rem}
Although the suspension case is relatively simple, one cannot expect
avoiding the Reeb component case by hoping that ``there always exists
$\alpha\in[0,2\pi)$ such that $W_{\alpha}$ does not contain a Reeb
component''. For instance, consider the elliptic system
\[
V(x)=\left(\begin{array}{cc}
\cos2\pi x^{1} & -\sin2\pi x^{1}\\
\sin2\pi x^{1} & \cos2\pi x^{1}
\end{array}\right),\quad x=(x^{1},x^{2})\in\mathbb{T}^{2}.
\]
One has 
\[
W_{\alpha}(x)=\left(\begin{array}{c}
\cos(2\pi x^{1}+\alpha)\\
\sin(2\pi x^{1}+\alpha)
\end{array}\right)
\]for every $\alpha\in[0,2\pi)$. 
In particular, the foliation of $W_{\alpha}$ contains precisely two
Reeb components bounded by the compact leaves (periodic orbits) 
\[
L_{+}:x^{1}=\frac{\pi/2-\alpha}{2\pi},\quad L_{-}:x^{1}=\frac{3\pi/2-\alpha}{2\pi}
\]
respectively, regardless of the value of $\alpha$.
\end{rem}

\begin{rem}

Both Theorem \ref{thm:2DCounter} and Theorem \ref{thm:2DPeriodic} are concerned with periodic systems. The essential reason behind their qualitatively different tail behaviours can be summarised as follows. 

For the radially periodic example, when the system propagates through annuli, one encounters new geometry and there is no simple ballistic tunnel that the system could use. Any large distance propagation has to cause sufficiently many effective turns for the control.

On the other hand, for the torus system, there is always either a globally monotone direction (suspension case) or a periodic orbit on the fundamental domain $\mathbb{T}^2$ with nontrivial homology,  which can be reused by the system precisely due to the repeating geometry  the system encounters when it propagates through different fundamental domains. Both cases lead to a global ballistic tunnel with a small growth of Cameron-Martin norm.
\end{rem}

\newpage
\begin{appendices}

\section{Classification of nondegenerate torus flows}

In this appendix, we provide a self-contained proof of Theorem \ref{thm:1DFoliClass}. The argument is essentially an adaptation of the proof given by Hector-Hirsch \cite{HH86} to the smooth setting.

\subsection{Nondegenerate torus flow as 1D foliation}

Let $M=\mathbb{T}^{2}:=\mathbb{R}^2/\mathbb{Z}^2$. We always fix a smooth,
 nonvanishing vector field $W$ on $\mathbb{T}^{2}$. Given $x\in\mathbb{T}^{2},$
define the flow $t\mapsto\phi_{t}(x)$ by 
\[
\frac{d}{dt}\phi_{t}(x)=W\big(\phi_{t}(x)\big),\quad\phi_{0}(x)=x.
\]
Since $\mathbb{T}^{2}$ is compact, the flow $\phi_{t}(x)$ is well-defined
for all $t\in\mathbb{R}$. Let ${\cal F}_{W}$ denote the collection
of integral curves (flow lines) of $W$, namely, 
\[
{\cal F}_{W}:=\big\{\{\phi_{t}(x)\}_{t\in\mathbb{R}}:x\in\mathbb{T}^{2}\big\}.
\]
The starting point of the whole discussion is the following basic
fact.
\begin{prop}
\label{prop:VFFolia}The family ${\cal F}_{W}$ defines a 1D foliation
on $\mathbb{T}^{2}$ in the sense of Definition \ref{def:Folia}.
\end{prop}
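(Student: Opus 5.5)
To prove Proposition \ref{prop:VFFolia} I would verify the conditions of Definition \ref{def:Folia} directly. Three things are needed: each orbit is a connected immersed 1D submanifold, distinct orbits are disjoint and together cover $\mathbb{T}^2$, and every point has a flat chart.

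\emph{Leaves.} Disjointness and covering are immediate from the group property $\phi_{t+s}=\phi_t\circ\phi_s$. If two orbits share a point, uniqueness of ODE solutions forces them to coincide, and every $x$ lies on its own orbit. Each orbit is the image of the smooth map $c_x:\mathbb{R}\to\mathbb{T}^2$, $t\mapsto\phi_t(x)$, and $c_x'(t)=W(\phi_t(x))\neq 0$, so $c_x$ is an immersion. Two cases arise:
\begin{itemize}
\item If $c_x$ is injective, the orbit is an injectively immersed copy of $\mathbb{R}$.
\item If not, then $\phi_T(x)=x$ for some $T\neq0$. The set of periods is then a closed subgroup of $\mathbb{R}$. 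It is not all of $\mathbb{R}$, since otherwise $W(x)=0$, so it equals $T_0\mathbb{Z}$ with $T_0>0$. Hence $c_x$ descends to an injective immersion of $\mathbb{R}/T_0\mathbb{Z}$, i.e.\ an embedded circle.
\end{itemize}
In both cases the orbit is a connected immersed 1D submanifold with the smooth structure carried from $\mathbb{R}$ or $S^1$.

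\emph{Flat charts.} This is the main step, and I would use the flow-box (straightening) theorem. Fix $x_0$ and a short smooth curve $\sigma:(-\epsilon,\epsilon)\to\mathbb{T}^2$ with $\sigma(0)=x_0$ and $\det(W(x_0),\sigma'(0))\neq0$. Define
\[
\Theta(t,u):=\phi_t(\sigma(u)).
\]
Then $d\Theta_{(0,0)}$ sends $\partial_t\mapsto W(x_0)$ and $\partial_u\mapsto\sigma'(0)$, so it is invertible. By the inverse function theorem $\Theta$ restricts to a diffeomorphism from a small square $(-\delta,\delta)^2$ onto a neighbourhood $U$ of $x_0$. Set $\varphi:=\Theta^{-1}$ on $U$, so that $\varphi(U)$ is a cube. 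In these coordinates the vector field becomes $\partial_t$, so the plaques $\{u=c\}$ are exactly pieces of orbits.

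It remains to check condition (ii): for each orbit $N$, the set $\varphi(N\cap U)$ is a countable union of slices $\{u=c\}$. Let $N$ meet $U$. If $y\in N\cap U$ has coordinates $(t,c)$, then the whole slice $\{u=c\}$ lies in $N$, because it is $\{\phi_s(\sigma(c)):|s|<\delta\}$. So $\varphi(N\cap U)$ is a union of slices, indexed by the set $C_N:=\{c:\sigma(c)\in N\}$.

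\emph{Countability.} This is the only subtle point. Choose a countable cover of $\mathbb{R}$ by intervals of length less than $\delta$. For each interval $J$, distinct values $c\neq c'$ in $C_N$ with $\sigma(c)=\phi_s(\sigma(c'))$ for some $s\in J$ would force two distinct plaques to be joined by a flow segment of length less than $\delta$. Shrinking $\delta$ if necessary (using that the return time to $\sigma$ is bounded below near $x_0$), each $J$ then contributes at most one value of $c$. Equivalently, $C_N$ is the image of the set $\{s\in\mathbb{R}:c_x(s)\in\sigma\}$, which is discrete in $\mathbb{R}$ by transversality, and a discrete subset of $\mathbb{R}$ is countable. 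Hence $C_N$ is countable, so $U$ is a flat chart and $\mathcal{F}_W$ is a 1D foliation.
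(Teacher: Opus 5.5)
Your proof is correct and follows the same route as the paper: the flow box $\Theta(t,u)=\phi_t(\sigma(u))$ through a transversal arc, made into a chart by the inverse function theorem, whose horizontal slices are orbit segments. You also spell out the immersed-submanifold structure of the leaves and the countability of plaques, which the paper leaves implicit; for the latter, injectivity of $\Theta$ on the square $(-\delta,\delta)^2$ already makes the hitting times of $\sigma$ $\delta$-separated, so no further shrinking of $\delta$ is needed.
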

\begin{proof}
It is clear that ${\cal F}_{W}$ is a family of immersed 1D manifolds
whose union is $\mathbb{T}^{2}.$ To prove the local flatness property,
let $x\in\mathbb{T}^{2}.$ Since $W(x)\neq0,$ one can pick a nonzero
vector $v\in T_{x}\mathbb{T}^{2}$ that is linearly independent of
$W(x).$ Let $\sigma:(-\delta,\delta)\rightarrow\mathbb{T}^{2}$ be
a smooth arc such that $\sigma(0)=x,$ $\dot{\sigma}(0)=v$. Define
\[
F:\mathbb{R}\times(-\delta,\delta)\rightarrow\mathbb{T}^{2},\quad F(t,r):=\phi_{t}(\sigma(r)).
\]
It is clear that $F(0,0)=x$ and $(dF)_{(0,0)}$ is nonsingular. Therefore,
$F$ is a local diffeomorphism from $V:=(-\varepsilon,\varepsilon)\times(-\delta',\delta')$
onto $F(V)$ with some small $\varepsilon$ and $\delta'<\delta$.
The map $F|_{V}$ provides a flat chart of $\mathbb{T}^{2}$ around
$x$; each horizontal arc $s\mapsto(s,r)$ corresponds to a flow line
by definition and every flow line inside $F(V)$ must come from a
horizontal arc since $W=\partial_{s}$ in $F(V)$.\textcolor{blue}{{} }
\end{proof}
Proposition \ref{prop:VFFolia} asserts that around every point $x\in\mathbb{T}^{2}$,
one can choose a \textit{flow box} $(U;s,r)$ ($s\in(-\delta_{1},\delta_{1})$,
$r\in(-\delta_{2},\delta_{2})$) such that $x=(0,0)$ and $W=\partial_{s}$
on $U$. In particular, flow lines of $W$ inside $U$ are precisely
the horizontal slices $r=c$ ($c\in(-\delta_{2},\delta_{2})$). This
is a basic but important (technical) fact that will be used for several
times. 
\begin{rem}
\label{rem:FlowVSFolia}The actual flow $\{\phi_{t}(x)\}$ carries
more information than the foliation ${\cal F}_{W}$ itself; the foliation
is determined by the \textit{1D distribution} $\{{\rm Span}W(x):x\in\mathbb{T}^{2}\}$
while the flow also records the orientation and the strength of the
field. 
\end{rem}
\begin{defn}
The leaf ${\cal O}_{x}:=\{\phi_{t}(x):t\in\mathbb{R}\}$ is called
the \textit{orbit} of $W$ passing through $x$. An orbit ${\cal O}_{x}$
is said to be \textit{periodic} if it is a compact leaf.
\end{defn}
It is a simple exercise that for a periodic orbit ${\cal O}_{x}$,
there exist a smallest $T>0$ such that $\phi_{t+T}(y)=\phi_{t}(y)$
for all $y\in{\cal O}_{x}$ and $t\in\mathbb{R}$. This number $T$
is called the \textit{period} of ${\cal O}_{x}$ (with respect to
$W$). Note that the period is not a foliation concept; it depends
on the actual flow $\phi_{t}(x)$ or equivalently, on the vector field
$W$ (cf. Remark \ref{rem:FlowVSFolia}). Another important feature
here is that the vector field $W$ orients every orbit ${\cal O}_{x}$
in a natural way.

\subsubsection{Periodic orbits}

In this subsection, we study basic topological and dynamical properties
of periodic orbits. This provides some essential tools for the later
proof of the main classification theorem.

\subsubsection*{Transversal intersection and the intersection number}

We first recall some notions from intersection theory (see \cite{Bre13,GP25}). 
\begin{defn}
Let $P,Q$ be two oriented, embedded 1D submanifolds of $\mathbb{T}^{2}.$
We say that $P$ is \textit{transversal} to $Q$ if $T_{x}P$ and
$T_{x}Q$ (as 1D subspaces of $T_{x}\mathbb{T}^{2}$) are linearly
independent at every $x\in P\cap Q$. In this case, the \textit{local
intersection number} of $P$ and $Q$ at $x\in P\cap Q$ is defined
by 
\[
i_{x}(P,Q):=\begin{cases}
1, & \text{if }\det(v_{x},w_{x})>0;\\
-1, & \text{if }\det(v_{x},w_{x})<0,
\end{cases}
\]
where $v_{x}$ (respectively, $w_{x}$) is any positively oriented
vector in $T_{x}P$ (respectively, $T_{x}Q$). The \textit{intersection
number} of $P$ and $Q$ is defined by 
\[
I(P,Q):=\sum_{x\in P\cap Q}i_{x}(P,Q),
\]
provided further that $P\cap Q$ is a finite set. 
\end{defn}
For our purpose, $P,Q$ are always assumed to be smooth, simple and
closed curves. Due to compactness, if they intersect transversally
their intersection must be a finite set and $I(P,Q)$ is thus well-defined.
In particular, $I(P,Q)=0$ if they do not intersect. It is also known
that the intersection number is a topological quantity, i.e. it depends
only on the homology classes of $P,Q$. Let $\{\sigma_{1},\sigma_{2}\}$
be the canonical basis of $H_{1}(\mathbb{T}^{2};\mathbb{Z})\cong\mathbb{Z}^{2}$.
In other words, 
\begin{equation}
\sigma_{1}:=\big[(t,0)_{0\leqslant t\leqslant1}\big],\quad\sigma_{2}:=\big[(0,t)_{0\leqslant t\leqslant1}\big].\label{eq:H1Basis}
\end{equation}
One has the following formula for the intersection number. 
\begin{lem}
\label{lem:IntNumForm}Let $P,Q$ be smooth, simple and closed curves
which intersect transversally and whose homology classes are given
by 
\begin{equation}
[P]=p_{1}\sigma_{1}+p_{2}\sigma_{2},\quad[Q]=q_{1}\sigma_{1}+q_{2}\sigma_{2},\label{eq:IntNumAssump}
\end{equation}
with $p_{i},q_{j}\in\mathbb{Z}$. Then $I(P,Q)=p_{1}q_{2}-p_{2}q_{1}$.
\end{lem}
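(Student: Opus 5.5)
We will reduce the intersection number to the homological intersection pairing on $\mathbb{T}^2$. The key tool is the de Rham description of Poincaré duality: to the oriented closed curve $Q$ we attach a closed $1$-form $\omega_Q$ supported in a thin tubular neighbourhood of $Q$. The construction is as follows. Take a tubular neighbourhood $N\cong Q\times(-\epsilon,\epsilon)$, oriented so that $(\text{tangent of }Q,\partial_r)$ is positively oriented. Pick a bump $\rho(r)$ with $\int\rho=1$ and compact support in $(-\epsilon,\epsilon)$, and set $\omega_Q:=\rho(r)\,dr$ on $N$, extended by zero elsewhere. This form is closed.

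First we show that $\int_P\omega_Q=\pm I(P,Q)$, with a fixed universal sign convention to be tracked. Since $P$ meets $Q$ transversally in finitely many points, we can shrink $\epsilon$ so that $P\cap N$ is a finite union of arcs crossing $N$ transversally, one arc through each $x\in P\cap Q$. On each such arc, $\int\rho(r)\,dr$ equals $\pm1$. The sign is $+1$ exactly when $P$ crosses in the direction of increasing $r$. In terms of determinants, this is $\det(w_x,v_x)>0$, i.e. $i_x(P,Q)=-1$ with the paper's convention $i_x(P,Q)={\rm sgn}\det(v_x,w_x)$. Summing over crossings gives $\int_P\omega_Q=-I(P,Q)$. (If instead we orient $N$ with the opposite normal, the sign flips; we just fix one choice consistently.)

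Next we identify the cohomology class of $\omega_Q$. It suffices to compute $\int_{\sigma_1}\omega_Q$ and $\int_{\sigma_2}\omega_Q$, since then $[\omega_Q]=a\,[dx^1]+b\,[dx^2]$ with $a,b$ these periods. By the first step, and since $\int_P\omega_Q$ depends only on $[P]$ because $\omega_Q$ is closed, these periods are $-I(\sigma_1',Q)$ and $-I(\sigma_2',Q)$ for transversal representatives $\sigma_i'$. To avoid circularity, we compute instead $\int_Q\eta$ for closed forms $\eta$, using the standard identity $\int_{\mathbb{T}^2}\eta\wedge\omega_Q=\int_Q\eta$ up to sign. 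This identity follows from Fubini in the tube coordinates: there $\eta\wedge\rho\,dr$ integrates to $\int\rho(r)\big(\int_{Q\times\{r\}}\eta\big)dr=\int_Q\eta$ by homotopy invariance. Taking $\eta=dx^1,dx^2$ gives $\int_{\mathbb{T}^2}dx^1\wedge\omega_Q=q_1$ and $\int_{\mathbb{T}^2}dx^2\wedge\omega_Q=q_2$. Since $\int_{\mathbb{T}^2}dx^1\wedge dx^2=1$, writing $\omega_Q\sim a\,dx^1+b\,dx^2$ yields $b=q_1$ and $-a=q_2$, up to the global sign from the identity. Finally,
\[
\int_P\omega_Q=a p_1+b p_2=-(p_1q_2-p_2q_1)
\]
up to that same sign. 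Comparing with the first step gives $I(P,Q)=\pm(p_1q_2-p_2q_1)$.

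The main obstacle is sign bookkeeping. We resolve the overall sign by testing one case. Take $P=\sigma_1$ (the horizontal circle) and $Q=\sigma_2$ (the vertical circle). They meet once at the origin with $v=e_1$ and $w=e_2$, so $\det(e_1,e_2)=1$ and $I(P,Q)=1=p_1q_2-p_2q_1$. Because both sides of the formula are bilinear in the homology classes and the derivation gives $I=c(p_1q_2-p_2q_1)$ with a universal $c\in\{\pm1\}$, this one example fixes $c=1$.
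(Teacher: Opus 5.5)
Your proof is correct, but it takes a different route from the paper's. The paper uses the group structure of $\mathbb{T}^2$. It forms the difference map $D(s,t)=P(s)-Q(t)$ from $S^1\times S^1$ to $\mathbb{T}^2$ and notes that transversality makes $0$ a regular value, with ${\rm sgn}\det(dD)_{(s_k,t_k)}=-i_{x_k}(P,Q)$. It then evaluates $\int_{S^1\times S^1}D^*\nu$, with $\nu=dx\wedge dy$, in two ways. Replacing $\nu$ by a cohomologous bump form concentrated near $0$ gives $-I(P,Q)$. The explicit formula $D^*\nu=(P_2'Q_1'-P_1'Q_2')\,ds\wedge dt$ factorises into periods of $P$ and $Q$ and gives $p_2q_1-p_1q_2$.

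In your argument the Thom form $\omega_Q=\rho(r)\,dr$ plays the role of the bump form. The localisation $\int_P\omega_Q=-I(P,Q)$ replaces the preimage count. The Fubini identity $\int_{\mathbb{T}^2}\eta\wedge\omega_Q=\int_Q\eta$, together with $\omega_Q=a\,dx^1+b\,dx^2+dg$, replaces the explicit factorisation of $D^*\nu$.

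The paper's version is shorter and reduces to a single degree computation, but it depends on the subtraction map of the torus. Yours needs tubular neighbourhoods and the description of $H^1_{dR}(\mathbb{T}^2)$. In exchange, it is the standard Poincar\'e-duality argument and carries over essentially unchanged to the intersection pairing on any closed oriented surface.

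Finally, your hedging ``up to sign'' is unnecessary. Let $\theta$ be the coordinate along $Q$ and orient the tube so that $(\partial_\theta,\partial_r)$ is positive. Then your Fubini computation gives exactly $+\int_Q\eta$. Hence $a=-q_2$, $b=q_1$ and $\int_P\omega_Q=-(p_1q_2-p_2q_1)$ hold on the nose. Combined with your first step, this already yields $I(P,Q)=p_1q_2-p_2q_1$, so the test case $P=\sigma_1$, $Q=\sigma_2$ is only a consistency check, not a needed step.
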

\begin{proof}
Let 
\[
P=\left(\begin{array}{c}
P_{1}\\
P_{2}
\end{array}\right),Q=\left(\begin{array}{c}
Q_{1}\\
Q_{2}
\end{array}\right):S^{1}\rightarrow\mathbb{T}^{2}
\]
be parametrised by $S^{1}:=\mathbb{R}/\mathbb{Z}.$ Consider the map
$D:S^{1}\times S^{1}\rightarrow\mathbb{T}^{2}$ defined by 
\[
D(s,t):=P(s)-Q(t)\quad({\rm mod\ \mathbb{Z}}).
\]
At each intersection point $(s,t)\in D^{-1}(0),$ by the transversal
intersection assumption one has 
\[
\det\big((dD)_{(s,t)}\big)=-\det\big(\dot{P}(s),\dot{Q}(t)\big)\neq0.
\]
Therefore, $0$ is a regular value of $D$. It follows that $D^{-1}(0)$
is a zero-dimensional submanifold of $S^{1}\times S^{1}$. Hence,
it is a finite set of points, say 
\[
D^{-1}(0)=\{(s_{1},t_{1}),\cdots,(s_{k},t_{k})\}.
\]
There exists a neighbourhood $U_{k}$ of $(s_{k},t_{k})$ for each
$k$ and a neighbourhood $V$ of $0$, such that $D|_{U_{k}}$ is
a diffeomorphism from $U_{k}$ onto $V$. In addition, one has 
\[
{\rm sgn}\det\big((dD)_{(s_{k},t_{k})}\big)=-{\rm sgn}\det\big(\dot{P}(s_{k}),\dot{Q}(t_{k})\big)=-i_{x_{k}}(P,Q),
\]
where $x_{k}:=P(s_{k})=Q(t_{k})$. 

Let $\omega$ be a smooth $2$-form that is compactly supported inside
$V$ and satisfies $\int_{V}\omega=1$. Since $D|_{U_{k}}:U_{k}\rightarrow V$
is a diffeomorphism, one has 
\[
\int_{U_{k}}D|_{U_{k}}^{*}\omega={\rm sgn}\det\big((dD)_{(s_{k},t_{k})}\big)\int_{V}\omega=-i_{x_{k}}(P,Q).
\]
Therefore, 
\[
\int_{S^{1}\times S^{1}}D^{*}\omega=\sum_{k=1}^{m}\int_{U_{k}}D|_{U_{k}}^{*}\omega=-I(P,Q).
\]
Let $\nu=dx\wedge dy\in\Omega^{2}(\mathbb{T}^{2})$. Since both $\omega$
and $\nu$ are closed and have the same integral on $\mathbb{T}^{2}$
(which is equal to one), they differ by an exact form. As a result,
\begin{equation}
\int_{S^{1}\times S^{1}}D^{*}\nu=\int_{S^{1}\times S^{1}}D^{*}\omega=-I(P,Q).\label{eq:IntNumPf1}
\end{equation}
Now it remains to compute the integral of $D^{*}\nu$.

By using the explicit expressions of $D$ and $\nu,$ it is plain
to check that 
\[
D^{*}\nu=\big(P_{2}'(s)Q_{1}'(t)-P_{1}'(s)Q_{2}'(t)\big)ds\wedge dt.
\]
Therefore, 
\begin{align*}
\int_{S^{1}\times S^{1}}D^{*}\nu & =\int_{0}^{1}\int_{0}^{1}\big(P_{2}'(s)Q_{1}'(t)-P_{1}'(s)Q_{2}'(t)\big)dsdt\\
 & =\Big(\int_{0}^{1}P_{2}'(s)ds\Big)\Big(\int_{0}^{1}Q_{1}'(t)dt\Big)-\Big(\int_{0}^{1}P_{1}'(s)ds\Big)\Big(\int_{0}^{1}Q_{2}'(t)dt\Big).
\end{align*}
Let $\tilde{P},\tilde{Q}$ be some lifts of $P,Q$ on the universal
cover $\mathbb{R}^{2}$. According to the assumption (\ref{eq:IntNumAssump}),
one has
\[
\int_{0}^{1}P'(s)ds=\int_{0}^{1}\tilde{P}'(s)ds=\tilde{P}(1)-\tilde{P}(0)=(p_{1},p_{2}),
\]
\[
\int_{0}^{1}Q'(t)dt=\int_{0}^{1}\tilde{Q}'(t)dt=\tilde{Q}(1)-\tilde{Q}(0)=(q_{1},q_{2}).
\]
As a consequence, 
\begin{equation}
\int_{S^{1}\times S^{1}}D^{*}\nu=p_{2}q_{1}-p_{1}q_{2}.\label{eq:IntNumPf2}
\end{equation}
The conclusion of the lemma follows from the relations (\ref{eq:IntNumPf1})
and (\ref{eq:IntNumPf2}). 
\end{proof}
\begin{rem}
\label{rem:IntNumNoInt}Lemma \ref{lem:IntNumForm} remains valid
if $P\cap Q=\emptyset.$ In this case, $I(P,Q)=p_{1}q_{2}-p_{2}q_{1}=0$.
\end{rem}

\subsubsection*{Topological properties of periodic orbits}

Next, we study periodic orbits of a smooth,  nonvanishing vector field
$W$. We first make the following definition.
\begin{defn}
A (continuous) closed curve $\gamma$ in $\mathbb{T}^{2}$ is said
to be \textit{essential} if its (singular) homology class $[\gamma]\in H_{1}(\mathbb{T}^{2};\mathbb{Z})$
is nonzero. It is \textit{primitive} if its homology class cannot
be expressed as $[\gamma]=m\sigma$ for some $\sigma\in H_{1}(\mathbb{T}^{2};\mathbb{Z})$
and $m\geqslant2.$ 
\end{defn}
Let $\{\sigma_{1},\sigma_{2}\}$ be the canonical basis of $H_{1}(\mathbb{T}^{2};\mathbb{Z})$
defined by (\ref{eq:H1Basis}). Write $[\gamma]=p\sigma_{1}+q\sigma_{2}$
for a closed curve $\gamma.$ Then $\gamma$ is primitive if and only
if $\gcd(|p|,|q|)=1$. This is also equivalent to saying that $[\gamma]$
can be extended to a basis of $H_{1}(\mathbb{T}^{2};\mathbb{Z})$.

The main topological properties of periodic orbits are summarised
in the following result. 
\begin{prop}
\label{prop:POTop}(i) Every periodic orbit is essential.

\vspace{2mm}\noindent (ii) Every essential, simple closed curve is
primitive. In particular, every periodic orbit is primitive. 

\vspace{2mm}\noindent (iii) For any pair of periodic orbits $\gamma_{1},\gamma_{2},$
one has $[\gamma_{1}]=\pm[\gamma_{2}]$.
\end{prop}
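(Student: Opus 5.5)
Parts (i) and (ii) both reduce to the fact that a simple closed curve on $\mathbb{T}^{2}$ is either null-homologous and bounds a disc, or it is primitive. Part (iii) then follows from (i), (ii) and the intersection formula of Lemma \ref{lem:IntNumForm}.

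\textbf{Part (i).} Suppose a periodic orbit $\gamma$ has $[\gamma]=0$. Lift $\gamma$ to $\mathbb{R}^{2}$. Since $[\gamma]=0$, the lift closes up and is a simple closed curve. By the Jordan--Schoenflies theorem it bounds a closed disc $D\subset\mathbb{R}^{2}$. The periodic lift $\tilde W$ of $W$ is nonvanishing on $D$ and tangent to $\partial D$. Hence the index of $\tilde W$ along $\partial D$ equals the rotation number of the tangent of a simple closed curve, which is $\pm1$. On the other hand, this index equals the sum of the indices of the zeros of $\tilde W$ inside $D$, which is $0$ because there are no zeros. This is a contradiction, so $[\gamma]\neq0$. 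Equivalently, one can use Poincar\'e--Bendixson on $D$: a flow-invariant disc forces a zero of $\tilde W$.

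\textbf{Part (ii).} Let $\gamma$ be an essential simple closed curve with $[\gamma]=(p,q)=m(p',q')$, where $\gcd(p',q')=1$ and $m\geqslant1$.
\begin{itemize}
\item Take a lift $\tilde\gamma:\mathbb{R}\to\mathbb{R}^{2}$ satisfying $\tilde\gamma(s+1)=\tilde\gamma(s)+(p,q)$.
\item If $m\geqslant2$, consider the translate $\tilde\gamma+(p',q')$. Simplicity of $\gamma$ on the torus means that the translates $\tilde\gamma+k$, $k\in\mathbb{Z}^{2}$, are either equal (as sets) or disjoint.
\item Suppose $\tilde\gamma+(p',q')$ equals $\tilde\gamma$ as a set. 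Then the deck translation by $(p',q')$ preserves the line $\tilde\gamma$ and acts on it as a fixed-point-free homeomorphism. Hence $\gamma$ is covered by a loop of class $(p',q')$, and $\gamma$ wraps only once, giving $m=1$.
\item Otherwise $\tilde\gamma$ and $\tilde\gamma+(p',q')$ are disjoint properly embedded lines in $\mathbb{R}^{2}$. Each separates the plane, so they are ordered by a transverse direction, i.e.\ one lies on a definite side of the other.
\item Translation by $(p',q')$ applied $m$ times returns $\tilde\gamma$ to itself, since $m(p',q')=(p,q)$ preserves $\tilde\gamma$. This contradicts the strict ordering.
\end{itemize}
The claim that "every periodic orbit is primitive" then follows because periodic orbits are embedded, hence simple, and essential by (i).

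A cleaner alternative for (ii) is to use Lemma \ref{lem:IntNumForm} directly. Push $\gamma$ off itself slightly along a normal field to get a disjoint curve $\gamma'$ homologous to $\gamma$; this gives self-intersection $0$ but no new information. Instead, choose a simple closed curve $Q$ of class $(r,s)$ with $p's-q'r=1$, transverse to $\gamma$. Then
\[
I(\gamma,Q)=m(p's-q'r)=m .
\]
One would then still need to show that a simple curve meets a suitable $Q$ in a single point, which is no easier. So I would keep the covering-space argument.

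\textbf{Part (iii).} Let $\gamma_1,\gamma_2$ be distinct periodic orbits. They are distinct leaves, hence disjoint. By Remark \ref{rem:IntNumNoInt},
\[
p_1q_2-p_2q_1=0,
\]
so $[\gamma_1]$ and $[\gamma_2]$ are parallel vectors in $\mathbb{Z}^{2}$. Both are nonzero by (i) and primitive by (ii), so $[\gamma_1]=\pm[\gamma_2]$. If $\gamma_1=\gamma_2$ the claim is trivial.

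\textbf{Main obstacle.} I expect the hardest step to be (ii), the primitivity of simple essential curves. It requires a careful planar separation argument with the lifts, namely that disjoint deck translates of a properly embedded line are strictly ordered and that this ordering is invariant under translation.
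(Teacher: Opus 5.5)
Your parts (i) and (iii) follow the paper. In (i) the paper uses the same index/Poincar\'e--Hopf contradiction on a disc bounded by $\gamma$. Working with the lift in $\mathbb{R}^2$, where Schoenflies applies directly, sidesteps the paper's unproved claim that a null-homologous simple closed curve bounds a disc in $\mathbb{T}^2$. Your Poincar\'e--Bendixson aside is not a substitute as stated: that theorem produces periodic orbits, not zeros (Brouwer would do it). You do not need it anyway. Part (iii) is the paper's argument verbatim.

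For (ii) you take a genuinely different route. The paper argues in three steps.
\begin{enumerate}
\item An essential simple closed curve does not separate $\mathbb{T}^2$; otherwise the closure of one side is a compact surface with boundary $\gamma$, forcing $[\gamma]=0$.
\item A path in the connected complement, joining the two sides of a collar of $\gamma$, closes up to a simple closed curve $\beta$ crossing $\gamma$ transversally exactly once.
\item Lemma \ref{lem:IntNumForm} gives $\pm1=I(\gamma,\beta)=m(ps-qr)$, so $m=1$.
\end{enumerate}
This is exactly the route you set aside as ``no easier''. The class of $\beta$ never has to be prescribed, only the single crossing, and non-separation supplies that cheaply. It also reuses the intersection lemma that (iii) needs anyway. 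Your covering argument avoids intersection numbers and smoothness, so it works for topological curves. The price is that all the difficulty moves into planar topology, at precisely the step you flagged.

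That step needs a real argument, and ``ordered by a transverse direction'' points the wrong way. Since $v=(p',q')$ is parallel to $(p,q)$ and $\tilde\gamma(s)-s(p,q)$ is bounded, all the lines $L_j:=\tilde\gamma+jv$ lie in one strip around $\tilde\gamma(0)+\mathbb{R}(p,q)$. So no transverse coordinate separates them. Knowing that $L_1$ lies in one component $A$ of $\mathbb{R}^2\setminus L_0$ also does not exclude a side swap: translation $T$ by $v$ could map $A$ onto the component of $\mathbb{R}^2\setminus L_1$ that contains $L_0$. Then $L_{-1}$ and $L_1$ lie on the same side of $L_0$, and nothing contradicts $L_m=L_0$.

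To rule this out, fix $n\perp(p,q)$ and $c$ large, and set $H^{\pm}:=\{y:\pm\langle y,n\rangle>c\}$.
\begin{itemize}
\item These half-planes miss every $L_j$ and are $T$-invariant.
\item They lie in different components of $\mathbb{R}^2\setminus L_0$. Indeed, $L_0$ projects to an embedded circle generating $\pi_1$ of the cylinder $\mathbb{R}^2/\mathbb{Z}(p,q)$. Such a circle separates the two ends, since otherwise Schoenflies on the two-point compactification would make it bound a disc in the cylinder, hence be null-homotopic.
\end{itemize}
Let $S_j^{\pm}$ be the component of $\mathbb{R}^2\setminus L_j$ containing $H^{\pm}$; then $T(S_j^{+})=S_{j+1}^{+}$. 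Suppose $L_1\subset S_0^{+}$ (the case $L_1\subset S_0^{-}$ is symmetric). The connected set $S_0^{-}\cup L_0$ avoids $L_1$ and meets $H^-$, so it lies in $S_1^{-}$. Hence $S_1^{+}\subsetneq S_0^{+}$. Iterating gives $S_m^{+}\subsetneq S_0^{+}$, contradicting $L_m=L_0$.

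With this inserted your proof of (ii) is complete. As written, the invariance of the ordering under $T$ is asserted rather than proved.
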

\begin{proof}
(i) Let $\gamma$ be a periodic orbit of $W$. Suppose on the contrary
that $[\gamma]=0.$ Since $\gamma$ is simple closed, it must bound
a topological disk $D$ in this case. According to the Poincar\'e-Hopf
theorem for the disk $D$ (see \cite[Chapter 3, Theorem 2]{Per13}),
one has 
\[
{\rm ind}_{W}(\partial D)=\sum_{z\in{\cal Z}(W)\cap \mathrm{int}D}{\rm ind}_{W}(z).
\]
Here ${\rm ind}_{W}(\partial D)$ denotes the index of $W$ on the
boundary $\partial D$ and ${\rm ind}_{W}(z)$ denotes the local index
of $W$ at the zero $z$ (${\cal Z}(W)$ denotes the set of zeros
of $W$). It is plain from the definition that ${\rm ind}_{W}(\partial D)=\pm1$
(the sign depends on the orientation convention of $\gamma$). As
a result, $W$ must have at least one zero in the interior of $D$.
This is a contradiction. Therefore, $[\gamma]\neq0$.

\vspace{2mm}\noindent (ii) Let $\gamma$ be an essential, simple
closed curve. We claim that $\mathbb{T}^{2}\backslash\gamma$ is connected.
Suppose on the contrary that $\mathbb{T}^{2}\backslash\gamma$ is
disconnected. Let $N\cong S^{1}\times(-\varepsilon,\varepsilon)$
be a tubular neighbourhood of $\gamma$. Then 
\[
N_{-}:=S^{1}\times(-\varepsilon,0),\quad N_{+}:=S^{1}\times(0,\varepsilon)
\]
must be contained in different components of $\mathbb{T}^{2}\backslash\gamma$;
otherwise, any other component $V$ would intersect this component
(because $\gamma\subseteq\bar{V}$) which contradicts disconnectedness.
Let $U_{+}$ be the component containing $N_{+}.$ Since $N_{-}$
is contained in a different component, $\overline{U_{+}}$ is a compact
surface with boundary $\gamma$. As a consequence, $\gamma=\partial c$
for some singular $2$-chain. It follows that $[\gamma]=0$ which
leads to a contradiction. Therefore, $\mathbb{T}^{2}\backslash\gamma$
is connected.

Pick two points $x_{+}\in N_{+}$ and $x_{-}\in N_{-}.$ Draw a smooth,
simple curve $\eta$ from $x_{+}$ to $x_{-}$ inside $\mathbb{T}^{2}\backslash\gamma$
(which is possible due to connectedness). Let $\sigma$ be a simple
arc which joins $x_{+}$ to $x_{-}$ inside $N$. By smoothing the
corners and removing loops from $\eta\sqcup\sigma$ if necessary,
one obtains a simple closed curve $\beta$ that has precisely one
(transversal) intersection with $\gamma$. Therefore, 
\begin{equation}
I(\gamma,\beta)=\pm1.\label{eq:PrimPf}
\end{equation}
Suppose that $\gamma$ is not primitive. Under the canonical basis
$\{\sigma_{1},\sigma_{2}\}$ of $H_{1}(\mathbb{T}^{2};\mathbb{Z})$,
one can write $[\gamma]=m(p,q)$ for some $m\geqslant2$ and $p,q\in\mathbb{Z}$.
Let $[\beta]=(r,s)$. According to Lemma \ref{lem:IntNumForm}, one
has 
\[
I(\gamma,\beta)=m(ps-qr).
\]
This contradicts the fact that (\ref{eq:PrimPf}). Therefore, $\gamma$
is primitive. 

\vspace{2mm}\noindent (iii) Let $\gamma_{1}=(p,q)$ and $\gamma_{2}=(r,s)$
be two periodic orbits. Since $\gamma_{1}$ is primitive by Part (ii),
the integers $p,q$ are coprime. Similarly, $r,s$ is also coprime.
Since $\gamma_{1}\cap\gamma_{2}=\emptyset$, one knows from Lemma
\ref{lem:IntNumForm} (cf. Remark \ref{rem:IntNumNoInt}) that 
\[
(p,q)=\frac{a}{b}(r,s)
\]
for some coprime pair $(a,b)\in\mathbb{Z}^{2}$. In particular, 
\[
ar=bp,\quad as=bq.
\]
It follows that $a|p$ and $a|q$ which implies that $a=\pm1$. Similarly,
$b=\pm1$. Therefore, $[\gamma_{1}]=\pm[\gamma_{2}]$.
\end{proof}

\subsubsection{\label{subsec:PoinMap}The Poincar\'e map }

In this subsection, we discuss an elementary but rather useful construction:
the Poincar\'e return map. This construction (or its variant) will
be used for several times in the proof of the main classification
theorem. 

Let $\gamma=\{\phi_{t}(x)\}_{t\in\mathbb{R}}$ be a periodic orbit
of $W$ with period $T$. Let $\Sigma$ be a short smooth arc that
intersects $\gamma$ transversally at $x$. Suppose that $\phi_{t}(x)\notin\Sigma$
for all $t\in(0,T)$. This is possible by shrinking $\Sigma$ if necessary;
for instance, in a suitable flow box $(U;s,r)$ around $x$ one can
assume that $\Sigma=\{s=0\}.$

We construction a map
\[
P:U\subseteq\Sigma\rightarrow\Sigma
\]
on a sufficiently small neighbourhood $U\subseteq\Sigma$ of $x$
in the following way. Pick $y\in\Sigma$. Consider the flow line $t\mapsto\phi_{t}(y)$.
Let $\tau(y)>0$ be the first time that $\phi_{t}(y)$ returns to
$\Sigma$. We then define 
\[
P(y):=\phi_{\tau(y)}(y).
\]

\begin{figure}[H] 
\begin{center}  
\includegraphics[scale=0.17]{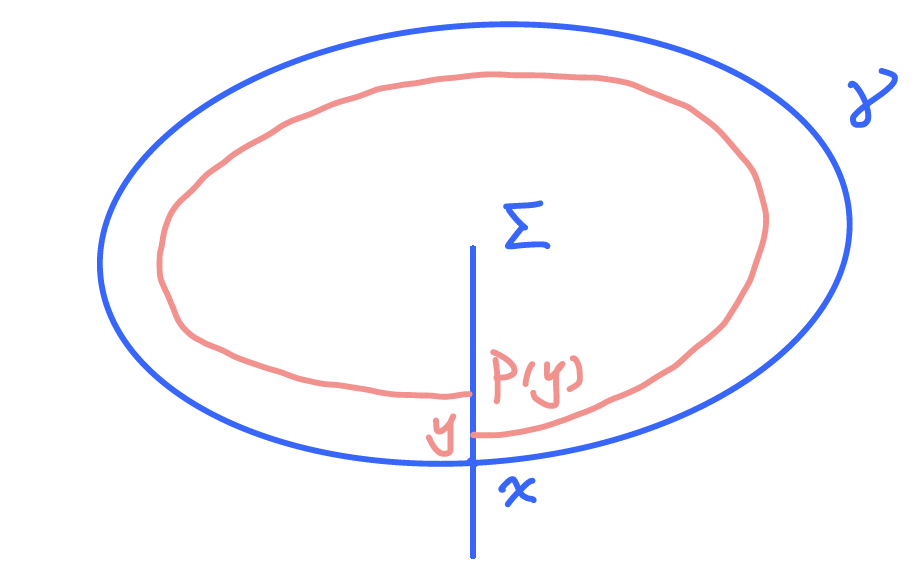} 
\end{center}
\end{figure}
\begin{lem}
\label{lem:PoinMap}There exists a small neighbourhood $U\subseteq\Sigma$
of $x$, such that $(\tau(y),P(y))$ is well-defined for all $y\in U$.
In addition, both $\tau(y)$, $P(y)$ are smooth on $U$ and one has
$\tau(x)=T,$ $P(x)=x.$ 
\end{lem}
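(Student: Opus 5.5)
The plan is to work in a flow box around $x$ and reduce everything to the implicit function theorem applied to a smooth "hitting-time" equation. By Proposition \ref{prop:VFFolia}, there is a flow box $(U_x;s,r)$ around $x$, with $x=(0,0)$ and $W=\partial_s$. Shrinking $\Sigma$ as in the preceding discussion, I may assume $\Sigma=\{s=0\}$ inside $U_x$, parametrised by $r\mapsto\sigma(r)$ with $\sigma(0)=x$. In these coordinates a point $y\in U_x$ lies on $\Sigma$ exactly when $s(y)=0$. So for $y$ near $x$ the return time should be characterised as the solution $t$ near $T$ of
\[
G(t,y):=s\big(\phi_t(y)\big)=0 .
\]
This makes sense because $\phi_T(x)=x\in U_x$, and by continuity of the flow $\phi_t(y)\in U_x$ for $(t,y)$ near $(T,x)$.

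First, I would apply the implicit function theorem to $G$. We have $G(T,x)=0$ and
\[
\partial_t G(T,x)=ds\big(W(x)\big)=ds(\partial_s)=1\neq0 .
\]
Since the flow is smooth in $(t,y)$, there are a neighbourhood $U\subseteq\Sigma$ of $x$ and a smooth function $\tilde\tau$ on $U$ with $\tilde\tau(x)=T$, $G(\tilde\tau(y),y)=0$. Moreover, this $\tilde\tau(y)$ is the unique zero of $G(\cdot,y)$ in a fixed window $(T-\epsilon,T+\epsilon)$. Then $\tilde P(y):=\phi_{\tilde\tau(y)}(y)$ is smooth, lies in $\Sigma$, and $\tilde P(x)=x$.

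The main obstacle is showing that $\tilde\tau(y)$ is genuinely the \emph{first} return time $\tau(y)$, i.e.\ that $\phi_t(y)\notin\Sigma$ for $t\in(0,\tilde\tau(y))$. I would split $[0,T+\epsilon]$ into three pieces.
\begin{itemize}
\item[(a)] For small times $t\in(0,\epsilon']$: the point $\phi_t(y)$ stays in the flow box with $s$-coordinate exactly $t>0$, so it is off $\Sigma$.
\item[(b)] For $t\in[\epsilon',T-\epsilon]$: by hypothesis $\phi_t(x)\notin\Sigma$ for $t\in(0,T)$, so the compact arc $\{\phi_t(x):t\in[\epsilon',T-\epsilon]\}$ has positive distance from the closure of a shrunken $\Sigma$. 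Strictly, $\Sigma$ should be replaced by a compact subarc, with $U$ taken inside it. By uniform continuity of the flow on compact time intervals, $\phi_t(y)$ also avoids $\Sigma$ there for $y$ close to $x$.
\item[(c)] For $t\in(T-\epsilon,\tilde\tau(y))$: uniqueness of the zero in the window, together with the flow-box description, rules out any earlier hit.
\end{itemize}
After shrinking $U$, this gives $\tau=\tilde\tau$ and $P=\tilde P$ on $U$, both smooth, with $\tau(x)=T$ and $P(x)=x$. For (b) and (c) I need that near $\Sigma$ the only way to meet $\Sigma$ is through $s=0$. This holds because the flow box is a chart in which $\Sigma$ is exactly $\{s=0\}$, and points near $x$ lie in that chart.
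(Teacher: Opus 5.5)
Your proposal is correct and follows essentially the same route as the paper: the implicit function theorem for the hitting-time equation at $(T,x)$, followed by the same three-window argument (short times via the flow box, intermediate times via compactness and the hypothesis $\phi_t(x)\notin\Sigma$ for $t\in(0,T)$, times near $T$ via uniqueness of the zero in the window). The differences are minor: you use the flow-box coordinate $s$ as the defining function of $\Sigma$ (so $\partial_t G=1$) where the paper uses a general local defining function $\rho$, and you explicitly pass to a compact subarc of $\Sigma$ for the compactness step, a point the paper leaves implicit.
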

\begin{proof}
First of all, there exists a neighbourhood $U_{0}\subseteq\Sigma$
of $x$, such that $U_{0}$ is represented by the level set $\{\rho=0\}$
of some smooth function $\rho$ near $x$. Now consider the function
\[
F:U_{0}\times(T-\delta_{0},T+\delta_{0})\rightarrow\mathbb{R},\quad F(y,t):=\rho(\phi_{t}(y))
\]
where $\delta_{0}$ is small. Note that $F(x,T)=0.$ In addition,
since $T_{x}\Sigma=\ker(d\rho)_{x}$ and $\Sigma$ intersects $\gamma$
transversally at $x$, one has 
\begin{equation}
\partial_{t}F(x,T)=\big\langle(d\rho)_{x},W(x)\big\rangle\neq0.\label{eq:PMPf}
\end{equation}
According to the implicit function theorem, there exists $U_{1}\subseteq U_{0}$,
$\delta_{1}<\delta_{0}$ and a smooth function $\tau:U_{1}\rightarrow(T-\delta_{1},T+\delta_{1})$
such that $\tau(x)=T$ and 
\[
F(y,\tau(y))=0
\]
for all $y\in U_{1}$. 

We claim that by shrinking $U_{1}$ to some smaller neighbourhood
$U$ of $x$ if necessary, one has 
\[
\tau(y)=\inf\{t>0:\phi_{t}(y)\in\Sigma\}
\]
for all $y\in U.$ First of all, according to (\ref{eq:PMPf}) there
exists $U_{2}\subseteq U_{1}$ and $\delta_{2}<\delta_{1},$ such
that $t\mapsto F(y,t)$ is strictly monotone on $[T-\delta_{2},T+\delta_{2}]$
for all $y\in U_{2}.$ As a result, $\tau(y)$ is the unique $t\in[T-\delta_{2},T+\delta_{2}]$
such that $F(y,t)=0$. We fix this $\delta_{2}$ in what follows.
Next, by using a flow box argument, there exists $U_{3}\subseteq U_{2}$
and $\varepsilon>0,$ such that $\phi_{t}(y)\notin\Sigma$ for all
$y\in U_{3}$ and $t\in(0,\varepsilon]$. In addition, since 
\[
\big\{\phi_{t}(x):\varepsilon\leqslant t\leqslant T-\delta_{2}\big\}\cap\Sigma=\emptyset,
\]
one knows from continuity that $\phi_{t}(y)\notin\Sigma$ for all
$y$ in some smaller $U\subseteq U_{3}$ and all $t\in[\varepsilon,T-\delta_{2}]$.
Combining the above three scenarios, it follows that $\tau(y)$ is
the first return time of $\phi_{t}(y)$ to $\Sigma$ for every $y\in U$. 

Consequently, the map 
\[
P:U\rightarrow\Sigma,\quad P(y):=\phi_{\tau(y)}(y)
\]
is well-defined. Its smoothness follows immediately from the smoothness
of $\tau$ and the joint smoothness of $(t,y)\mapsto\phi_{t}(y)$.
This completes the proof of the lemma. 
\end{proof}
\begin{defn}
The map $P$ constructed above is known as the \textit{Poincar\'e
map} at $x\in\gamma$ with respect to the transversal $\Sigma$. 
\end{defn}
Since $\Sigma$ is 1D and $P(x)=x,$ the linear map $(dP)_{x}:T_{x}\Sigma\rightarrow T_{x}\Sigma$
is a scalar multiple of the identity, i.e. $(dP)_{x}=\lambda_{\gamma}{\rm id}$
for some $\lambda_{\gamma}\in\mathbb{R}.$ Although the Poincar\'e
map $P$ depends on the point $x\in\gamma$ and the transversal $\Sigma$,
it is a remarkable fact that the number $\lambda_{\gamma}$ is intrinsic
to the periodic orbit $\gamma.$ 
\begin{prop}
\label{prop:CharaNum}The number $\lambda_{\gamma}$ does not depend
on the point $x\in\gamma$ or the transversal $\Sigma.$ In addition,
one has $\lambda_{\gamma}>0$.
\end{prop}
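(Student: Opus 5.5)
We need to prove that $\lambda_\gamma$ is independent of the base point $x\in\gamma$ and of the transversal $\Sigma$, and that $\lambda_\gamma>0$. The plan is to compute $\lambda_\gamma$ intrinsically through the linearised flow $(d\phi_T)_x$, which is defined without reference to any transversal.

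The first step is a formula for $(dP)_x$. Differentiate $P(y)=\phi_{\tau(y)}(y)$ at $y=x$ in the direction of a nonzero vector $v\in T_x\Sigma$. Since $\phi_T(x)=x$ and $\tau(x)=T$, this gives
\[
\lambda_\gamma v=(d\phi_T)_x v+(d\tau)_x(v)\,W(x).
\]
Two facts about $(d\phi_T)_x$ are needed. First, $(d\phi_T)_xW(x)=W(\phi_T(x))=W(x)$, because $\phi_T\circ\phi_s=\phi_s\circ\phi_T$. Second, the basis $(W(x),v)$ of $T_x\mathbb{T}^2$ puts $(d\phi_T)_x$ in upper triangular form:
\[
\begin{pmatrix}1&*\\0&\lambda_\gamma\end{pmatrix}.
\]
Hence $\det(d\phi_T)_x=\lambda_\gamma$. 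Equivalently, taking the determinant of both sides of the displayed identity against $W(x)$ gives $\det(W(x),(d\phi_T)_xv)=\lambda_\gamma\det(W(x),v)$. The eigenvalues of $(d\phi_T)_x$ are therefore $1$ and $\lambda_\gamma$, so $\lambda_\gamma=\det(d\phi_T)_x$, a quantity that depends only on $x$ and the flow, not on $\Sigma$.

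Independence of the base point comes next. For $x'=\phi_s(x)\in\gamma$ we have $\phi_T=\phi_s\circ\phi_T\circ\phi_{-s}$ near $x'$, so $(d\phi_T)_{x'}$ is conjugate to $(d\phi_T)_x$ via $(d\phi_s)_x$ and has the same determinant. Alternatively, Liouville's formula gives
\[
\det(d\phi_T)_x=\exp\Big(\int_0^T(\operatorname{div}W)(\phi_r(x))\,dr\Big),
\]
which is visibly independent of the starting point on the orbit; this is the same formula already used implicitly in the proof of Lemma~\ref{lem:NonDegD}. Either route works, and Liouville's formula additionally yields positivity at once.

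Positivity also follows independently: $t\mapsto\det(d\phi_t)_x$ is continuous, never zero since $\phi_t$ is a diffeomorphism, and equals $1$ at $t=0$, so it is positive at $t=T$. Here $\det$ is taken in the global coordinates of $\mathbb{R}^2$, which trivialise $T\mathbb{T}^2$, so the determinant is well defined.

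The main obstacle is bookkeeping rather than substance: making sure that ``$(dP)_x=\lambda_\gamma\,\mathrm{id}$'' is read via the decomposition $T_x\mathbb{T}^2=\mathbb{R}W(x)\oplus T_x\Sigma$, so that the $W$-component coming from $d\tau$ drops out, and that the triangular-matrix argument is carried out cleanly. Smoothness of $\tau$ is already supplied by Lemma~\ref{lem:PoinMap}, so the differentiation at $y=x$ is justified.
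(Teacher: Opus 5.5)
Your proposal is correct and follows essentially the same route as the paper. You differentiate $P(y)=\phi_{\tau(y)}(y)$ at $x$, use $(d\phi_T)_xW(x)=W(x)$ to put $(d\phi_T)_x$ in upper triangular form in the basis $\{W(x),v\}$ so that $\lambda_\gamma=\det(d\phi_T)_x$, and appeal to Liouville's formula for base-point independence and positivity. Your conjugation argument for base-point independence and continuity argument for positivity are harmless alternatives to the Liouville step, which the paper uses for both.
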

\begin{proof}
We give an alternative intrinsic construction of $\lambda_{\gamma}$. 

\vspace{2mm}\noindent (i) Define the function $J_{t}\in C^{\infty}(\mathbb{T}^{2})$
by 
\[
\phi_{t}^{*}\mu_{g}=J_{t}\mu_{g},
\]
where $\mu_{g}$ denotes the Riemannian volume form on the (oriented)
annulus $\mathbb{T}^{2}$. It is not hard to see that $J_{t}(x)=\det(d\phi_{t})_{x}$,
where the determinant is defined with respect to any positively oriented
orthonormal bases of $T_{x}\mathbb{T}^{2}$ and $T_{\phi_{t}(x)}\mathbb{T}^{2}$.
We claim that 
\begin{equation}
J_{t}(x)=\exp\Big[\int_{0}^{t}({\rm div}_{g}W)(\phi_{s}(x))ds\Big],\quad\text{(Liouville's formula)}\label{eq:Jtx}
\end{equation}
where ${\rm div}_{g}$ denotes the Riemannian divergence. To this
end, one first recalls the following standard Riemannian geometry
fact: 
\[
{\cal L}_{W}\mu_{g}=({\rm div}_{g}W)\mu_{g}.
\]
In addition, by the definition of the Lie derivative, one has 
\[
\frac{d}{dt}\big(\phi_{t}^{*}\mu_{g}\big)=\phi_{t}^{*}\big({\cal L}_{W}\mu_{g}\big).
\]
As a consequence, 
\begin{equation}
\dot{J}_{t}(x)=\big({\rm div}_{g}W\big)(\phi_{t}(x))J_{t}(x).\label{eq:JODE}
\end{equation}
Since $J_{0}(x)=1$, the desired relation (\ref{eq:Jtx}) follows
by solving the ODE (\ref{eq:JODE}). 

\vspace{2mm}\noindent (ii) Suppose that $\gamma$ is a periodic orbit
of $W$ with period $T.$ According to Part (i), one has 
\begin{equation}
\det(d\phi_{T})_{x}=\exp\Big[\int_{0}^{T}({\rm div}_{g}W)(\phi_{s}(x))ds\Big].\label{eq:Detdphi}
\end{equation}
In particular, $\det(d\phi_{T})_{x}$ is independent of $x$. 

\vspace{2mm}\noindent (iii) Under the assumption of Part (ii), let
$\Sigma$ be a transversal curve through $x\in\gamma.$ Let $v\in T_{x}\Sigma$
be a nonzero vector. By the definition of the Poincar\'e map, one
has 
\[
P(y)=\phi_{\tau(y)}(y),\quad y\in U\subseteq\Sigma.
\]
It follows from the definition of $\lambda_{\gamma}$ that 
\[
\lambda_{\gamma}v=(dP)_{x}v=(d\phi_{T})_{x}v+\langle d\tau,v\rangle_{x}W(x).
\]
Also note that $(d\phi_{T})_{x}W(x)=W(x)$ since $\gamma$ is a periodic
orbit. Therefore, the matrix of $(d\phi_{T})_{x}:T_{x}\mathbb{T}^{2}\rightarrow T_{x}\mathbb{T}^{2}$
with respect to the basis $\{W(x),v\}$ is given by 
\[
(d\phi_{T})_{x}=\left(\begin{array}{cc}
1 & -\langle d\tau,v\rangle_{x}\\
0 & \lambda_{\gamma}
\end{array}\right).
\]
This shows that $\det(d\phi_{T})_{x}=\lambda_{\gamma}.$ In particular,
$\lambda_{\gamma}$ is independent of $x$ and $\Sigma$. The positivity
of $\lambda_{\gamma}$ follows directly from the relation (\ref{eq:Detdphi}).
\end{proof}
\begin{defn}
The number $\lambda_{\gamma}$ is called the \textit{transverse characteristic
number} of $\gamma.$ A periodic orbit $\gamma$ is said to be hyperbolic
if $\lambda_{\gamma}\neq1$.
\end{defn}
\begin{rem}
Hyperbolic periodic orbits have strong dynamical stability (exponential
attraction of nearby orbits in the case $\lambda_{\gamma}\in(0,1)$).
Such a stability property is used in the paper \cite{BGW26} to establish
a Gaussian lower tail estimate for 2D periodic, uniformly elliptic
systems driven by a fractional Brownian motion. 
\end{rem}

\subsubsection{The Poincar\'e-Bendixson theorem}

In general, periodic orbits of $W$ need not always exist. However,
if one knows that an end of a nonperiodic orbit is trapped in a compact
region \textit{inside a planar domain}, its limit set must be a periodic
orbit. This is the content of the \textit{Poincar\'e-Bendixson theorem},
which provides a general way of identifying periodic orbits. 

To state the theorem, we first need to introduce some definitions.
\begin{defn}
Let $x\in\mathbb{T}^{2}$. The \textit{forward orbit} passing through
$x$ is defined by 
\[
{\cal O}_{x}^{+}:=\{\phi_{t}(x):t\geqslant0\}.
\]
The \textit{forward limit set} of $x$ is defined by 
\[
\omega(x):=\bigcap_{T>0}\overline{\{\phi_{t}(x):t\geqslant T\}}.
\]
One can define the \textit{backward orbit} ${\cal O}_{x}^{-}$ and
the \textit{backward limit set} $\alpha(x)$ in a similar way. A set
$\Lambda\subseteq\mathbb{T}^{2}$ is \textit{$W$-invariant} if $\phi_{t}\Lambda\subseteq\Lambda$
for all $t\in\mathbb{R}$. 
\end{defn}
By compactness, both $\omega(x)$ and $\alpha(x)$ are nonempty, $W$-invariant
compact subsets of $\mathbb{T}^{2}.$ In addition, a point $y\in\omega(x)$
(respectively, $y\in\alpha(x)$) if and only if $\phi_{t_{n}}(x)\rightarrow y$
for some sequence $t_{n}\rightarrow\infty$ (respectively, $t_{n}\rightarrow-\infty$).

\begin{thm}[The Poincar\'e-Bendixson Theorem]\label{thm:PB} Let $X$
be a smooth,  nonvanishing vector field on a planar domain $U\subseteq\mathbb{R}^{2}.$
Let $x\in U$ be given fixed. Suppose that the flow $\phi_{t}(x)$
of $X$ exists for all $t\geqslant0$ and 
\[
\{\phi_{t}(x):t\geqslant0\}\subseteq K
\]
for some compact set $K\Subset U$. Then there exists a periodic orbit
$C\subseteq K$ such that $\omega(x)=C$. A similar assertion holds
for $\alpha(x)$.

\end{thm}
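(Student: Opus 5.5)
The plan is the classical argument for a nonvanishing field on a planar domain. It rests on the Jordan curve theorem and on the monotonicity of successive crossings of a transversal (flow-box) segment.

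\textbf{Step 1 (nonemptiness, compactness, invariance).} Since the forward orbit lies in the compact set $K$, the set $\omega(x)$ is nonempty, compact, connected, contained in $K$, and invariant under the flow. It therefore contains no zeros of $X$, because $X$ is nonvanishing.

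\textbf{Step 2 (monotonicity lemma).} Let $\Sigma$ be a short transversal segment, for instance $\{s=0\}$ in a flow box $(U;s,r)$ as in Proposition~\ref{prop:VFFolia}. I will show that successive intersections $\phi_{t_1}(x),\phi_{t_2}(x),\dots$ with $t_1<t_2<\cdots$ are monotone along $\Sigma$. The argument goes as follows. The orbit arc from $\phi_{t_1}(x)$ to $\phi_{t_2}(x)$, together with the subsegment of $\Sigma$ between these two points, forms a simple closed curve. By Jordan, this curve bounds a region which the flow can enter or exit only through the segment, and always in the same direction because $X$ is transversal to $\Sigma$. So the orbit is trapped on one side, and the next crossing lies beyond $\phi_{t_2}(x)$. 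This is the step where planarity is essential, and it is the main obstacle to write carefully: one must check that the closed curve is simple and that the crossing direction along the segment is constant.

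\textbf{Step 3 (consequence of monotonicity).} It follows that $\omega(x)\cap\Sigma$ contains at most one point. Indeed, a monotone sequence on $\Sigma$ has a single limit, and any point of $\omega(x)\cap\Sigma$ is a limit of crossings, by flow-box continuity.

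\textbf{Step 4 (a periodic orbit inside $\omega(x)$).} Pick $y\in\omega(x)$ and $z\in\omega(y)\subseteq\omega(x)$, the inclusion holding by invariance and closedness. Take a transversal $\Sigma$ through $z$. The orbit of $y$ meets $\Sigma$ at times accumulating near $z$, and all these intersections lie in $\omega(x)\cap\Sigma$. By Step 3 they coincide, so the orbit of $y$ returns to the same point and is periodic; call it $C\subseteq\omega(x)\subseteq K$.

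\textbf{Step 5 ($\omega(x)=C$).} Suppose $\omega(x)\setminus C\neq\emptyset$. Since $\omega(x)$ is connected and $C$ is closed, there are points $y_n\in\omega(x)\setminus C$ with $y_n\to c\in C$. Take a flow box transversal $\Sigma$ at $c$. For large $n$, the orbit of $y_n$ crosses $\Sigma$ near $c$, giving points of $\omega(x)\cap\Sigma$ distinct from $c$, because these orbits are not $C$. This contradicts Step 3. Hence $\omega(x)=C$.

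The statement for $\alpha(x)$ follows by applying the same argument to $-X$.
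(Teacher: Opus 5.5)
Your proof is correct. It is the classical textbook argument, and it differs from the paper's route after the monotonicity step. Your Step 2 corresponds to the paper's Lemma \ref{lem:MonoLem}.

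\textbf{The paper's route.} It gets a periodic orbit inside $\omega(x)$ by taking a minimal compact invariant subset of $\omega(x)$, noting that each of its points $y$ satisfies $y\in\omega(y)$, and applying Lemma \ref{lem:PRPerOrb} (a positively recurrent point has a periodic orbit). It then proves $\omega(x)=C$ through Lemma \ref{lem:OmeLimPerOrb}. That lemma shows $d(\phi_t(x),C)\to0$ using the monotone convergence of crossings to a point of $C$ together with continuity of the Poincar\'e map.

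\textbf{Your route.} You instead extract one consequence of monotonicity, namely that $\omega(x)\cap\Sigma$ contains at most one point, and use it twice:
\begin{itemize}
\item with $y\in\omega(x)$ and $z\in\omega(y)\subseteq\omega(x)$, to force the orbit of $y$ to be periodic;
\item with connectedness of $\omega(x)$, to rule out points of $\omega(x)\setminus C$ accumulating on $C$.
\end{itemize}

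\textbf{Comparison.} Your argument is shorter. It avoids the existence of minimal invariant sets, a Zorn-type fact the paper quotes without proof, and it needs no Poincar\'e-map analysis. Its only extra input is that compactness of $K$ makes $\omega(x)$ connected, which costs nothing under the theorem's hypotheses. The paper's decomposition instead produces stand-alone statements that do not use compactness of the forward orbit: recurrence implies periodicity, and an $\omega$-limit set meeting a periodic orbit equals it, with spiralling convergence. These ingredients are reused later in the appendix: the torus version remarked after Lemma \ref{lem:OmeLimPerOrb}, and the flow-box/crossing arguments in Lemma \ref{lem:DistEndPO} and Lemma \ref{lem:ExistTran}.

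\textbf{A small point.} The Jordan-curve argument in Step 2 needs two distinct consecutive crossings. You should say explicitly that if $\mathcal{O}_x$ is itself periodic, its crossings with $\Sigma$ all coincide, and the theorem is then trivial because $\omega(x)=\mathcal{O}_x$.
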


\begin{rem}
The planar assumption is crucial here; it is certainly possible for
a vector field on $U=\mathbb{T}^{2}$ to have no periodic orbits. 
\end{rem}
Before proving Theorem \ref{thm:PB}, we first state a key lemma which
indicates the essential role of the planar assumption. 
\begin{lem}
\label{lem:MonoLem}Let $X$ be a smooth, nonvanishing vector field
on a planar domain $U\subseteq\mathbb{R}^{2}$. Let $\gamma$ be a
nonperiodic orbit of $X$ which exists in $U$ for all $t$. Suppose
that 
\[
V=\big\{(s,r):-\delta_{1}<s<\delta_{2},-\delta_{2}<r<\delta_{2}\big\}
\]
is a local flow box, so that $X=\partial_{s}$ in $V$ and 
\[
\Sigma:=\big\{(s,r):s=0,-\delta_{2}<r<\delta_{2}\big\}
\]
is the vertical transversal. Let $0\leqslant t_{1}<t_{2}<\cdots$
be a sequence of times such that $z_{j}:=\gamma_{t_{j}}\in\Sigma$
and $\gamma_{t}\notin\Sigma$ for all $t\in(t_{j-1},t_{j})$. Then
either 
\[
z_{1}<z_{2}<z_{3}<\cdots
\]
or 
\[
z_{1}>z_{2}>z_{3}>\cdots.
\]
Here we identify $\Sigma$ with its parametrisation interval $(-\delta_{2},\delta_{2})$.
\end{lem}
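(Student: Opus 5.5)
The plan is the classical Jordan-curve argument from the Poincar\'e--Bendixson theory. First I dispose of degenerate cases. If two consecutive hits coincide, $z_{j}=z_{j+1}$, then $\gamma_{t_{j}}=\gamma_{t_{j+1}}$ with $t_{j+1}>t_{j}$, so $\gamma$ would be periodic. This is excluded. Hence consecutive hits are distinct, and by induction it suffices to show the following: if $z_{1}<z_{2}$ (in the $r$-coordinate of $\Sigma$), then $z_{3}>z_{2}$. The case $z_{1}>z_{2}$ is symmetric, obtained by reversing the $r$-coordinate, and applying this step to every consecutive triple $(z_{j-1},z_{j},z_{j+1})$ gives the monotonicity.

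Next I build a closed curve. Let $C$ be the Jordan curve formed by the orbit arc $\{\gamma_{t}:t_{1}\leqslant t\leqslant t_{2}\}$ together with the closed segment $[z_{1},z_{2}]\subseteq\Sigma$. These two pieces meet only at their endpoints: the orbit arc meets $\Sigma$ only at $t_{1}$ and $t_{2}$ by the hypothesis on consecutive hits, and it is injective since $\gamma$ is nonperiodic. So $C$ is a simple closed curve in $U\subseteq\mathbb{R}^{2}$. By the Jordan curve theorem, $\mathbb{R}^{2}\backslash C$ has a bounded component $D$ and an unbounded component. This is the only place planarity enters.

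The key step is to show that along the segment $(z_{1},z_{2})$ the field $X=\partial_{s}$ crosses $C$ in one fixed direction, so that $C$ acts as a one-way gate. Say the flow exits $\bar{D}$ there; otherwise the flow enters, and the same argument runs with $D$ replaced by the unbounded component. The orbit arc is invariant, so the flow can cross $C$ only through the segment. Consequently, once the forward orbit leaves $\bar{D}$ it can never re-enter. Now look near $z_{2}$ inside the flow box $V$. Just before $t_{2}$ the orbit approaches $z_{2}$ from the side $s<0$. The side of $\Sigma$ just above $z_{2}$ (in $r$) lies in the component into which the flow points across the segment, so just after $t_{2}$ the orbit sits in that component, call it $E$, at points with small $s>0$.

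Since the flow across the segment points into $E$, the orbit can never cross back through the segment. It also cannot cross the orbit arc. Hence $\gamma_{t}\in E$ for all $t>t_{2}$. A point $z_{3}\in\Sigma$ reached at time $t_{3}$ must then lie in $\bar{E}\cap\Sigma$, and it is not on the segment $[z_{1},z_{2}]$: the open segment is excluded because the orbit cannot return to it, and the endpoints are excluded because $\gamma$ is not periodic. One more check is needed here: as the orbit arrives at $\Sigma$ at time $t_{3}$ from the side $s<0$, the adjacent points of $\Sigma$ on the orbit side must be in $E$. I expect this to be the main obstacle, namely making rigorous the identification of which portions of $\Sigma\backslash[z_{1},z_{2}]$ lie in $E$. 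I would handle it by showing, inside the flow box $V$, that $\Sigma$ above $z_{2}$ lies in $E$ while $\Sigma$ below $z_{1}$ lies in the other component. The argument is that a short horizontal segment crosses $C$ exactly once, through the segment. It then follows that $z_{3}>z_{2}$, which completes the induction.
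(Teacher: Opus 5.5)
Your argument is correct. It rests on the same tool as the paper: a Jordan curve made of an orbit arc between consecutive hits and a piece of $\Sigma$, with the sides read off in the flow box. The organization is different, though.

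The paper argues by contradiction, using its indexing $z_0,z_1,z_2$. Assuming $z_0<z_1$ and $z_2<z_1$, it splits into two cases:
\begin{itemize}
\item $z_2\in(z_0,z_1)$, handled with the curve through $\gamma_{(t_0,t_1)}$ in forward time;
\item $z_2<z_0$, handled with a second curve through $\gamma_{(t_1,t_2)}$ in backward time.
\end{itemize}
You use a single curve and the forward invariance of the component $E$. The paper's first case becomes your one-way-gate observation, and its second case becomes the identification of $\Sigma\cap E$.

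What your route gains is a direct proof, with positive invariance of $E$ as a byproduct. You also correctly leave open whether $E$ is bounded. The paper instead asserts without proof that $\gamma_{t_1+\varepsilon}$ lies in the unbounded component. That depends on the configuration and fails for an inward spiral, though it is harmless, since the paper only really uses that $\gamma_{t_1+\varepsilon}$ and $\gamma_{t_2-\varepsilon}$ lie in different components. What the paper's case split gains is that it never has to locate all of $\Sigma\setminus[z_1,z_2]$ relative to the curve.

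The step you flag as the main obstacle is routine. The set $\Sigma\setminus[z_1,z_2]$ consists of two connected pieces, both disjoint from $C$, so each lies in a single component.
\begin{itemize}
\item In a small disc around $z_2$, the curve $C$ is only the incoming horizontal orbit piece plus the segment; the rest of the compact arc stays away from $z_2$ by injectivity. So the complement there is the quadrant $\{s<0,\ r<z_2\}$ together with one connected remainder, which contains both the $s>0$ side of the segment and $\Sigma$ just above $z_2$. Hence $\Sigma$ above $z_2$ lies in $E$.
\item The mirror picture at $z_1$ (segment plus outgoing horizontal piece) puts $\Sigma$ below $z_1$ in the other component.
\end{itemize}
Since $\gamma_{t_3}\in E\setminus C$, this gives $z_3>z_2$.
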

\begin{proof}
Let $z_{0}<z_{1}$. Suppose on the contrary that $z_{2}<z_{1}$. There
are two cases to discuss.

\vspace{2mm}\noindent \textit{\uline{Case I: \mbox{$z_{2}\in(z_{0},z_{1}).$}}} 

\vspace{2mm} Let $\Gamma_{01}$ be the concatenation of $\gamma_{(t_{0},t_{1})}\subseteq\Sigma^{c}$
and the arc connecting $z_{1}$ to $z_{0}$ along $\Sigma$. Then
$\Gamma_{01}$ is a simple closed curve. According to the Jordan curve
theorem, $\Gamma_{0,1}$ divides the plane into a bounded domain $D_{1}$
and an unbounded domain $D_{2}$. Since $\gamma_{t_{1}+\varepsilon}\in D_{2}$
for small $\varepsilon$, one knows by connectedness that $\gamma|_{(t_{1},t_{2})}\subseteq D_{2}$.
However, since $z_{2}\in(z_{0},z_{1})$, one must have $\gamma_{t_{2}-\varepsilon}\in D_{1}$
due to the flow line pattern inside the flow box $V$. This is a contraction. 

\vspace{2mm}\noindent \textit{\uline{Case II: \mbox{$z_{2}<z_{0}$}.}} 

\vspace{2mm} Let $\Gamma_{12}$ be the concatenation of $\gamma_{(t_{1},t_{2})}\subseteq\Sigma^{c}$
and the arc connecting $z_{2}$ to $z_{1}$ along $\Sigma$. Then
$\Gamma_{12}$ divides the plane into a bounded domain $D_{1}$ and
an unbounded one $D_{2}.$ Note that $\gamma_{t_{1}-\varepsilon}\in D_{1}$
for small $\varepsilon$ and thus $\gamma_{(t_{0},t_{1})}\subseteq D_{1}.$
This is a contradiction since $\gamma_{t_{0}+\varepsilon}\in D_{2}$
for small $\varepsilon$. \begin{figure}[H] 
\begin{center}  
\includegraphics[scale=0.16]{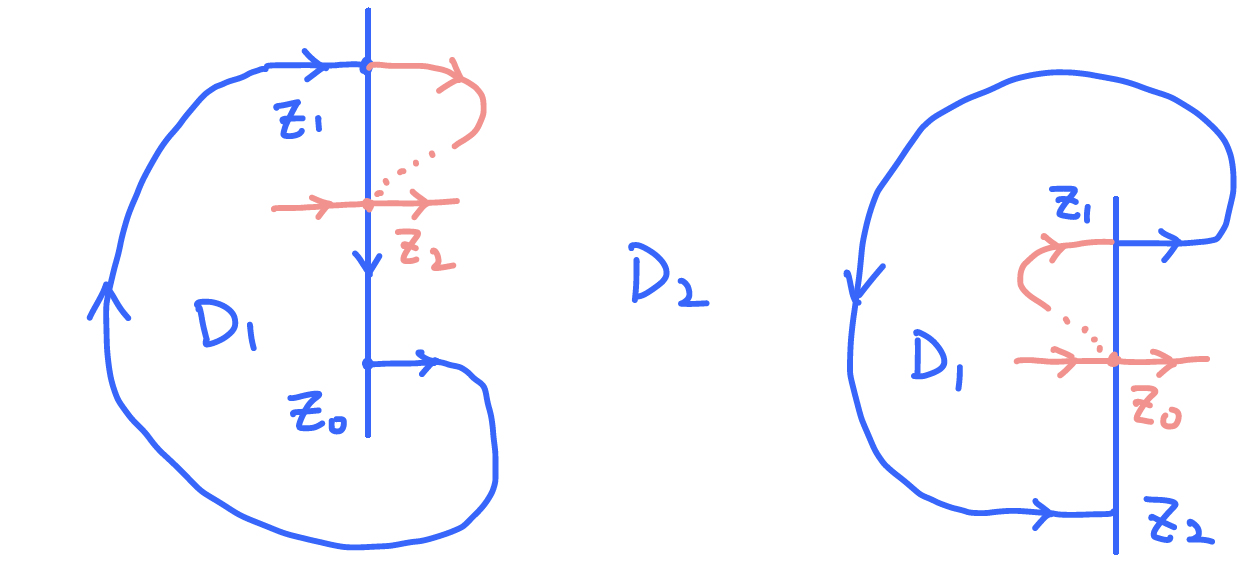} 
\end{center}
\end{figure}\noindent Therefore, one concludes that $z_{2}>z_{1}$. The case
when $z_{0}>z_{1}$ is treated in a similar way. 
\end{proof}
Lemma \ref{lem:MonoLem} has the following two important corollaries,
which will be used in the proof of Theorem \ref{thm:PB}. A point
$x$ is said to be \textit{positively recurrent} (respectively, \textit{negatively
recurrent}) if $x\in\omega(x)$ (respectively, if $x\in\alpha(x)$).
\begin{lem}
\label{lem:PRPerOrb}Let $X$ be a smooth,  nonvanishing vector field
on a planar domain $U\subseteq\mathbb{R}^{2}$. Let $y\in U$ be a
positively recurrent point of $X$ and assume that $\phi_{t}(y)$
exists in $U$ for all time. Then ${\cal O}_{y}$ is a periodic orbit.
\end{lem}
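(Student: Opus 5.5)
The approach is to use the monotonicity of successive returns to a transversal (Lemma \ref{lem:MonoLem}) and show that recurrence forces the orbit to close up. Suppose, for contradiction, that ${\cal O}_{y}$ is not periodic.

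\emph{Setting up the returns.} Choose a local flow box $V$ around $y$ with $X=\partial_{s}$, and let $\Sigma=\{s=0\}$ be the vertical transversal through $y$, identified with an interval in which $y$ corresponds to $0$. Since $y\in\omega(y)$, there are times $s_{n}\to\infty$ with $\phi_{s_{n}}(y)\to y$. For large $n$, $\phi_{s_{n}}(y)$ lies in $V$. Flowing a bounded time $|s|<\delta_{1}$ inside the box moves it onto $\Sigma$, so there are times $t_{n}\to\infty$ with $\phi_{t_{n}}(y)\in\Sigma$ and $\phi_{t_{n}}(y)\to y$ along $\Sigma$.

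\emph{Applying monotonicity.} Let $0=t_{0}<t_{1}<t_{2}<\cdots$ be the full sequence of successive hitting times of $\Sigma$ by the forward orbit, starting from $z_{0}=y$ at $t_{0}=0$. These hits are discrete, because of the flow box structure near $\Sigma$ and because the orbit is nonperiodic. There are infinitely many of them, since the times from the previous step occur among them. By Lemma \ref{lem:MonoLem}, applied to this sequence (which includes $z_{0}$), the points $z_{j}=\phi_{t_{j}}(y)$ are strictly monotone, say $0=z_{0}<z_{1}<z_{2}<\cdots$. Then every $z_{j}$ with $j\geqslant1$ satisfies $z_{j}\geqslant z_{1}>0$. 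Hence no subsequence of returns to $\Sigma$ converges to $z_{0}=0$, which contradicts the previous step. The decreasing case is symmetric. Therefore ${\cal O}_{y}$ must be periodic.

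\emph{Main obstacle.} The delicate point is checking that Lemma \ref{lem:MonoLem} really applies. That lemma assumes a nonperiodic orbit defined for all time, which is exactly what we have under the contradiction hypothesis together with the assumption that $\phi_{t}(y)$ exists in $U$ for all $t$. Its conclusion is stated for the successive crossings, so I will verify that the ``successive'' requirement holds: between consecutive hitting times the orbit stays off $\Sigma$, which is true by construction. I also need $z_{1}\neq z_{0}$. This holds because $z_{1}=z_{0}$ would give $\phi_{t_{1}}(y)=y$ with $t_{1}>0$, so the orbit would be periodic.
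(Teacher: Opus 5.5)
Your proof is correct and follows essentially the same route as the paper: move the recurrence times onto a flow-box transversal $\Sigma$ through $y$, apply the monotonicity Lemma~\ref{lem:MonoLem} to the hitting sequence starting at $z_0=y$, and contradict $\phi_{t_n}(y)\to y$. You apply the lemma to the full sequence of successive hits and explicitly check $z_1\neq z_0$. This is slightly more careful than the paper, which applies the lemma directly to $y,\phi_{s_{N+1}}(y),\phi_{s_{N+2}}(y),\dots$ after noting that there are only finitely many hits before $s_{N+1}$.
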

\begin{proof}
Suppose on the contrary that ${\cal O}_{y}$ is a nonperiodic orbit.
Let $V$ be a flow box centered at $y$. Since $y\in\omega(y),$ there
exists a sequence $t_{n}\rightarrow\infty$ such that $\phi_{t_{n}}(y)\rightarrow y$.
Inside the flow box $V$, one may adjust the time $t_{n}$ to some
$s_{n}$ (for all large $n$, say $n>N$) so that 
\[
s_{n}\rightarrow\infty,\quad\phi_{s_{n}}(y)\in\Sigma\text{ are all distinct},\quad\phi_{s_{n}}(y)\rightarrow y.
\]
Note that it takes a uniformly positive amount of time for the flow
line to leave the flow box once it is on $\Sigma$. As a result, there
are at most finitely many return times to $\Sigma$ between $0$ and
$s_{N+1}$. It follows from Lemma \ref{lem:MonoLem} that the sequence
\[
y=\phi_{0}(y),\phi_{s_{N+1}}(y),\phi_{s_{N+2}}(y),\cdots
\]
is monotone on $\Sigma$. This contradicts the fact that $\phi_{s_{n}}(y)\rightarrow y$
as $n\rightarrow\infty$. Therefore, ${\cal O}_{y}$ is a periodic
orbit.
\end{proof}
\begin{lem}
\label{lem:OmeLimPerOrb}Let $X$ be a smooth nonvanishing vector
field on a planar domain $U\subseteq\mathbb{R}^{2}.$ Let $x\in U$
be given fixed. Suppose that the flow $\phi_{t}(x)$ of $X$ exists
for all $t\geqslant0$. Let $C$ be a periodic orbit such that $C\cap\omega(x)\neq\emptyset$.
Then $\omega(x)=C.$
\end{lem}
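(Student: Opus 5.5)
We must show that if $C$ is a periodic orbit with $C\cap\omega(x)\neq\emptyset$, then $\omega(x)=C$. Since $C\cap\omega(x)\neq\emptyset$ and $\omega(x)$ is closed and invariant, the whole orbit $C$ lies in $\omega(x)$. It therefore suffices to show $\omega(x)\subseteq C$.

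If $x\in C$ this is immediate, because then $\omega(x)=C$ by periodicity. So assume $x\notin C$; then the orbit of $x$ is disjoint from $C$.

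\textbf{Step 1: transversal and monotone returns.} Pick $y\in C$ and a flow box $V$ centred at $y$ with vertical transversal $\Sigma$ through $y$. Shrinking $\Sigma$, the Poincar\'e map $P$ of $C$ is defined near $y$ on $\Sigma$ (Lemma \ref{lem:PoinMap}), with return time close to $T$.

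Since $y\in\omega(x)$, the forward orbit of $x$ hits $\Sigma$ at successive return times $t_1<t_2<\cdots\to\infty$ (adjusted inside the flow box as in the proof of Lemma \ref{lem:PRPerOrb}), and some subsequence of the hitting points $z_j$ converges to $y$. By Lemma \ref{lem:MonoLem} the sequence $(z_j)$ is monotone on $\Sigma$. A monotone sequence having a subsequence converging to $y$ must itself converge to $y$, so $z_j\to y$.

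\textbf{Step 2: uniform approach to $C$.} Once $z_j$ is in the domain of $P$, we have $z_{j+1}=P(z_j)$. Indeed, the first return of $z_j$ occurs at time $\tau(z_j)$, and no earlier crossing can occur because the return-time argument in Lemma \ref{lem:PoinMap} excludes hits in $(0,\tau(z_j))$. Hence $t_{j+1}-t_j=\tau(z_j)\to T$, and these gaps are bounded by, say, $2T$.

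By continuous dependence on initial conditions on the compact time interval $[0,2T]$,
\[
\sup_{0\le s\le 2T}\operatorname{dist}\bigl(\phi_s(z_j),\phi_s(y)\bigr)\to0 .
\]
Consequently $\operatorname{dist}(\phi_t(x),C)\to0$ as $t\to\infty$.

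\textbf{Step 3: conclusion.} Any point of $\omega(x)$ is a limit of $\phi_{t_n}(x)$ with $t_n\to\infty$. By Step 2 such a limit lies at distance zero from $C$, and since $C$ is compact it lies in $C$. Thus $\omega(x)\subseteq C$, and combined with $C\subseteq\omega(x)$ we get $\omega(x)=C$.

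The main obstacle is Step 2's identification $z_{j+1}=P(z_j)$. One must ensure the successive $\Sigma$-hits of the orbit of $x$ are exactly the Poincar\'e iterates, so that return times stay bounded. This relies on the choice of a short $\Sigma$ meeting $C$ only at $y$, together with the first-return characterisation of $\tau$ established in Lemma \ref{lem:PoinMap}.
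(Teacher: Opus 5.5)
Your proof is correct and follows essentially the same route as the paper. Closedness and invariance give $C\subseteq\omega(x)$. Lemma \ref{lem:MonoLem} makes the successive hits on a flow-box transversal monotone, which forces $z_j\to y$. Then $z_{j+1}=P(z_j)$, together with continuity of the flow over one return time, gives $\operatorname{dist}(\phi_t(x),C)\to0$.

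One point should be made explicit: Lemma \ref{lem:MonoLem} requires the orbit of $x$ to be nonperiodic. This holds because otherwise $\omega(x)$ would be that orbit, which is disjoint from $C$ when $x\notin C$, contradicting $C\cap\omega(x)\neq\emptyset$.
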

\begin{proof}
If ${\cal O}_{x}$ is periodic, one trivially has $\omega(x)={\cal O}_{x}=C$.
We therefore assume that ${\cal O}_{x}$ is nonperiodic. By assumption,
let $p\in C\cap\omega(x)$. Then $\phi_{t_{n}}(x)\rightarrow p$ for
some $t_{n}\rightarrow\infty.$ We first claim that $C\subseteq\omega(x)$.
Let $q\in C$. One can write $q=\phi_{t}(p)$ for some $t\geqslant0.$
It follows that $\phi_{t_{n}+t}(x)\rightarrow q$, which implies that
$q\in\omega(x)$. Therefore, $C\subseteq\omega(x)$. 

Conversely, let $q\notin C.$ Our aim is to show that $q\notin\omega(x)$.
Since $C$ is compact, it suffices to show that 
\begin{equation}
\lim_{t\rightarrow\infty}d(\phi_{t}(x),C)=0.\label{eq:OrbAsymC}
\end{equation}
To this end, let $V$ be a flow box centered at $p$ with transversal
$\Sigma$. Since $\phi_{t_{n}}(x)\rightarrow p$ and ${\cal O}_{x}$
is nonperiodic, similar to the proof of Lemma \ref{lem:PRPerOrb},
one can find a sequence $s_{n}\rightarrow\infty$ such that the following
properties hold true:

\vspace{2mm}\noindent (i) $z_{n}:=\phi_{s_{n}}(x)\in\Sigma$ and
$\phi_{s_{n}}(x)\rightarrow p$;

\vspace{2mm}\noindent (ii) $\phi_{t}(x)\notin\Sigma$ for $t\in(s_{n-1},s_{n})$;

\vspace{2mm}\noindent (iii) the sequence $z_{1},z_{2},z_{3},\cdots$
is monotone on $\Sigma.$\begin{figure}[H] 
\begin{center}  
\includegraphics[scale=0.14]{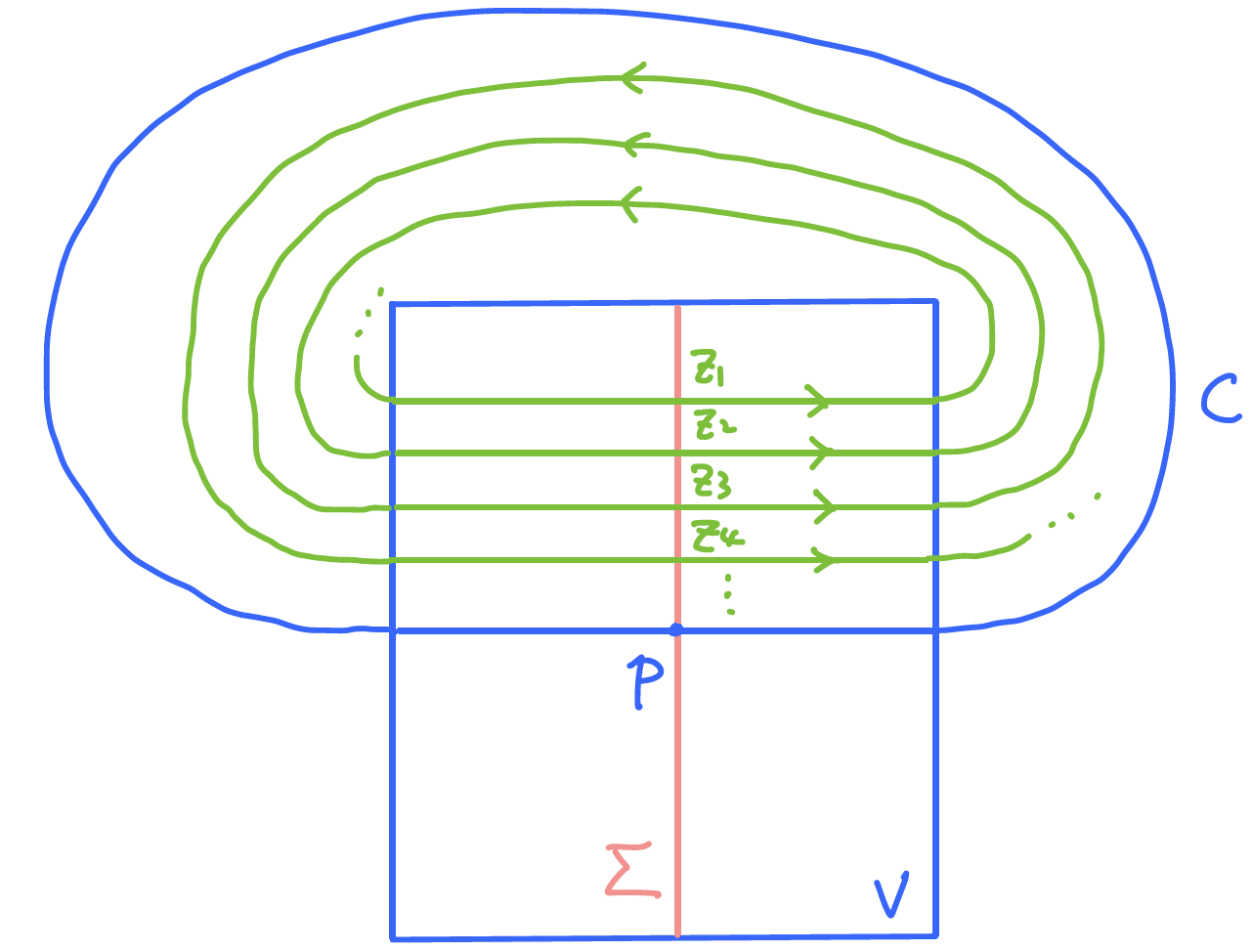} 
\end{center}
\end{figure}\noindent Consequently, one has 
\begin{equation}
z_{n}\downarrow p\quad\text{or}\quad z_{n}\uparrow p\quad\text{on }\Sigma.\label{eq:MonoPerObLemPf}
\end{equation}
Note that $z_{n+1}=P(z_{n})$ for all sufficiently large $n,$ where
$P$ is the Poincar\'e map defined in Section \ref{subsec:PoinMap}.
By the continuity of the Poincar\'e map, one has 
\begin{equation}
\lim_{z\rightarrow p}\sup_{0\leqslant t\leqslant\tau(z)}d(\phi_{t}(z),C)=0,\label{eq:1IterAsymp}
\end{equation}
where $\tau(z)$ is the first return time of $\phi_{t}(z)$ to $\Sigma$.
The claim (\ref{eq:OrbAsymC}) is now a simple consequence of the
monotone convergence property (\ref{eq:MonoPerObLemPf}) and the one-iteration
asymptotics (\ref{eq:1IterAsymp}). 
\end{proof}
\begin{rem}
Lemma \ref{lem:OmeLimPerOrb} remains valid on the torus $\mathbb{T}^{2}.$
In fact, by cutting $\mathbb{T}^{2}$ along the periodic orbit $C,$
one obtains an annulus $A\cong S^{1}\times[0,1]$ whose two boundary
curves are copies of $C$ and ${\cal O}_{x}\subseteq{\rm int}A$ (assuming
${\cal O}_{x}$ is nonperiodic). By thickening $A$ across its boundary
to an open annulus $U$ ($U$ is diffeomorphic to a planar domain),
one is now reduced to the situation of Lemma \ref{lem:OmeLimPerOrb}.
\end{rem}
We are now in a position to prove the Poincar\'e-Bendixson theorem.

\begin{proof}[Proof of Theorem \ref{thm:PB}] Suppose that 
\[
{\cal O}_{x}^{+}\subseteq K\Subset U.
\]
Then $\omega(x)\subseteq K.$ Since $\omega(x)$ is nonempty, compact
and $W$-invariant, it contains a minimal compact, $W$-invariant
subset (a nonempty, compact $W$-invariant subset which contains no
proper, nontrivial invariant subsets), say $C$. Let $y\in C.$ Then
\[
\omega(y)\subseteq C\implies\omega(y)=C
\]
since $C$ is minimal. In particular, $y\in\omega(y)$. According
to Lemma \ref{lem:PRPerOrb}, ${\cal O}_{y}$ is a periodic orbit.
Hence $\omega(x)$ contains a periodic orbit. It follows from Lemma
\ref{lem:OmeLimPerOrb} that $\omega(x)=C$.

\end{proof}
\begin{rem}
Here is a simple variant of the Poincar\'e-Bendixson theorem. Let
$W$ be a smooth, nonvanishing vector field on a compact surface $M$
without boundary. Suppose that there exists 
\[
K\ \text{compact},\quad U\ \text{open},\quad K\Subset U\subseteq M
\]
and $t_{0}\geqslant0,$ such that $U$ is diffeomorphic to a planar
domain and $\phi_{t}(x)\in K$ for all $t\geqslant t_{0}$. Then $\omega(x)$
is a periodic orbit of $W$. A similar assertion holds for $\alpha(x)$.
\end{rem}
We discuss a few applications of the Poincar\'e-Bendixson
theorem. Let $W$ be a smooth, nonvanishing vector field on $\mathbb{T}^{2}$.
\begin{lem}
\label{lem:DistEndPO} Let $A\cong S^{1}\times[0,1]$ be an embedded
annulus in $\mathbb{T}^{2}$ whose two boundary curves are compact
leaves of ${\cal F}_{W}$. Let $x\in{\rm int}A$ and assume that ${\cal O}_{x}$
is nonperiodic. Then $\alpha(x)$ and $\omega(x)$ are distinct periodic
orbits. 
\end{lem}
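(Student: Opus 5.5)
The plan is to reduce everything to the planar variant of the Poincar\'e--Bendixson theorem stated in the remark after Theorem \ref{thm:PB}. Let $C_0,C_1$ denote the two boundary leaves of $A$. Since they are leaves, hence invariant, and $x\in{\rm int}A$, connectedness of the orbit ${\cal O}_x$ gives ${\cal O}_x\subseteq{\rm int}A$ for all times. Consequently $\omega(x),\alpha(x)\subseteq A$.

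Next I will thicken $A$ slightly across its boundary to an open annulus $U\cong S^1\times(-\epsilon,1+\epsilon)$, which embeds in $\mathbb{R}^2\setminus\{0\}$ and is therefore diffeomorphic to a planar domain. The forward orbit stays in the compact set $K=A\Subset U$, so the remark after Theorem \ref{thm:PB} shows that $\omega(x)$ is a periodic orbit $C^+\subseteq A$. The same argument applied to $-W$ shows that $\alpha(x)=C^-$ is also a periodic orbit.

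It remains to show $C^+\neq C^-$, and I expect this to be the main step. Suppose instead that $C^+=C^-=C$. First I will rule out $C\subseteq{\rm int}A$ being essential but distinct from the boundary: such a $C$, together with $\partial A$, would cut $A$ into two sub-annuli, and the orbit ${\cal O}_x$ lies in one of them, say $A'$. Then one boundary of $A'$ is $C$ and the other is $C_0$ or $C_1$, so we are back in the same situation with $C$ a boundary leaf of $A'$. We may therefore assume $C$ is a boundary component of the annulus containing ${\cal O}_x$.

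Now take a flow box at $p\in C$ with transversal $\Sigma$, oriented so that the side $r>0$ faces into the annulus. By Lemma \ref{lem:MonoLem}, applied in the planar domain $U$, the successive forward intersections $z_n$ of ${\cal O}_x$ with $\Sigma$ form a monotone sequence converging to $p$, as in the proof of Lemma \ref{lem:OmeLimPerOrb}; since they lie on one side of $p$, they decrease to $p$. The backward intersections $z_{-n}$ also converge to $p$ from the same side. The whole bi-infinite sequence of crossings must be monotone by Lemma \ref{lem:MonoLem}: apply it to the forward orbit starting from any backward crossing. So $(z_n)_{n\in\mathbb{Z}}$ is strictly monotone in $n$ and tends to $p$ as $n\to\pm\infty$. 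This is impossible: a strictly decreasing sequence with limit $p$ at $+\infty$ lies above $p$, so its terms increase as $n\to-\infty$ and cannot converge to $p$ there. This contradiction shows that $\alpha(x)$ and $\omega(x)$ are distinct periodic orbits.
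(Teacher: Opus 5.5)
Your proposal is correct and follows essentially the same route as the paper: you apply Poincar\'e--Bendixson on a planar thickening of $A$ to see that both limit sets are periodic orbits, and if they coincided, the bi-infinite sequence of crossings of a transversal at $p\in C$ would be strictly monotone by Lemma~\ref{lem:MonoLem} yet converge to $p$ at both ends. Your preliminary reduction to the case where $C$ bounds the sub-annulus containing ${\cal O}_x$ is harmless but unnecessary, since a strictly monotone bi-infinite sequence cannot have the same limit at both ends, whichever side of $p$ it lies on.
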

\begin{proof}
Let $U\supseteq A$ be a slightly larger open annulus which is diffeomorphic
to a planar domain. According to the Poincar\'e-Bendixson theorem,
both $\alpha(x)$ and $\omega(x)$ are periodic orbits and they are
contained in $A$. Suppose on the contrary that $\alpha(x)=\omega(x)=C.$
Let $p\in C$. Then both tails of ${\cal O}_{x}$ are arbitrarily
close to $C$. In particular, there exists $T_{0}>0$ such that $\phi_{t}(x)\in U$
for all $|t|>T_{0}.$ 

Now choose a small flow box centered at $p$ with transversal $\Sigma$.
Similar to the proof of Lemma \ref{lem:PRPerOrb}, one can find a
two-sided sequence 
\[
-\infty\downarrow s_{-n}<\cdots<s_{-2}<s_{-1}<s_{1}<s_{2}<\cdots<s_{n}\uparrow\infty
\]
with the following properties:

\vspace{2mm}\noindent (i) $z_{n}:=\phi_{s_{n}}(x)\in\Sigma$ and
$\phi_{t}\notin\Sigma$ for all $t\in(s_{n-1},s_{n})$ and for all
$n\in\mathbb{Z}$;

\vspace{2mm}\noindent (ii) $z_{n}\rightarrow p$ as $n\rightarrow\pm\infty.$

\vspace{2mm}\noindent (iii) The sequence $\{z_{n}\}_{n=-\infty}^{\infty}$
is monotone on $\Sigma.$

\vspace{2mm}\noindent But the second and third properties force the
sequence to be constant, which would imply that ${\cal O}_{x}$ is
periodic. This leads to a contradiction. Therefore, $\alpha(x)$ and
$\omega(x)$ are distinct periodic orbits.
\end{proof}
\begin{rem}
Lemma \ref{lem:DistEndPO} may fail on the whole torus $\mathbb{T}^{2}$;
it is certainly possible for a nonperiodic orbit to approach the same
periodic orbit $C$ in both tails. However, if one cuts $\mathbb{T}^{2}$
along $C$ to obtain a compact annulus whose two boundary curves are
copies of $C$ (and they are considered as distinct periodic orbits),
Lemma \ref{lem:DistEndPO} is indeed applicable to this situation. 
\end{rem}
\begin{prop}
\label{prop:OrbitSetClose}(i) Let $A\cong S^{1}\times[0,1]$ be an
embedded annulus in $\mathbb{T}^{2}$ whose two boundary curves are
compact leaves of ${\cal F}_{W}$. Let ${\cal P}_{A}:=\{x\in A:{\cal O}_{x}\text{ is periodic}\}$.
Then ${\cal P}_{A}$ is a closed subset of $A$.

\vspace{2mm}\noindent (ii) Let ${\cal P}:=\{x\in\mathbb{T}^{2}:{\cal O}_{x}\text{ is periodic}\}$.
Then ${\cal P}$ is closed. 
\end{prop}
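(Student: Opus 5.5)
The plan is to show that the complement of ${\cal P}_A$ (respectively ${\cal P}$) is open. Both parts will be handled through the Poincar\'e return map of Lemma \ref{lem:PoinMap}. The key fact to exploit is that periodicity near a periodic orbit is detected by fixed points of the return map. Conversely, near a nonperiodic point, periodic orbits can only accumulate by winding close to some periodic orbit.

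For (ii), I will take a sequence $x_n\in{\cal P}$ with $x_n\to x$ and aim to show that ${\cal O}_x$ is periodic. Put a flow box $V$ around $x$ with transversal $\Sigma$, and shift $x_n$ along its orbit so that $x_n\in\Sigma$. Each ${\cal O}_{x_n}$ is a periodic orbit, hence essential and primitive by Proposition \ref{prop:POTop}(i),(ii). By Proposition \ref{prop:POTop}(iii) all these orbits lie in one homology class up to sign.

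Suppose ${\cal O}_x$ were not periodic. Two distinct periodic orbits ${\cal O}_{x_n},{\cal O}_{x_m}$ are disjoint and homologous. I will cut $\mathbb{T}^2$ along both to obtain two annuli, each bounded by compact leaves; this cutting step uses a Schoenflies-type fact on the torus. The point $x$ lies in one of these annuli, or on its boundary. Now pass to a subsequence so that $x_n$ converges monotonically to $x$ along $\Sigma$, say from one side. Then consecutive orbits ${\cal O}_{x_n},{\cal O}_{x_{n+1}}$ bound an annulus $A_n$. The side of $\Sigma$ adjacent to $x$ lies in the thin annuli $A_n$ whose widths shrink to zero near $x$.

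Next, I will apply Lemma \ref{lem:DistEndPO} and the Poincar\'e--Bendixson theorem (Theorem \ref{thm:PB}) on the annulus $A_n$ that contains a piece of ${\cal O}_x$. The conclusion I aim for is that the forward orbit of $x$ stays in the intersection $\bigcap_n A_n$ on the relevant side. This should force $\omega(x)\ni x$, or force $\omega(x)$ to be a periodic orbit through points arbitrarily close to $x$ on $\Sigma$. Combining with Lemma \ref{lem:OmeLimPerOrb} and the monotonicity in Lemma \ref{lem:MonoLem}, I then expect to conclude that $x$ is recurrent and hence periodic, via Lemma \ref{lem:PRPerOrb} applied in a planar neighbourhood.

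For (i), the same argument can be run inside the annulus $A$, using Lemma \ref{lem:DistEndPO} directly. Alternatively, (i) follows from (ii), since ${\cal P}_A={\cal P}\cap A$ and $A$ is closed. I would therefore prove (ii) first and deduce (i) immediately.

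The main obstacle is the topological step showing that the limit of the periodic orbits ${\cal O}_{x_n}$ is trapped, so that ${\cal O}_x$ cannot escape the shrinking annuli. This needs care when $x_n$ approach from both sides. I would first try a cleaner route through the return map: $x_n\in\Sigma$ near $x$ with orbits $C_n$. The first return time $\tau(x_n)$ is bounded uniformly, because each $C_n$ crosses $\Sigma$ at a point near $x$ and, being homologous to the others, has comparable length. I can then take a limit of $\phi_{\tau(x_n)}(x_n)=x_n$ to get $\phi_{\tau}(x)=x$ with $\tau>0$ bounded below by the flow box size. Justifying the uniform bound on the return times is the crux. I would attempt it by trapping $C_n$ between two fixed periodic orbits $C_{n_0},C_{m_0}$ and using continuity of the Poincar\'e map on the compact annulus between them.
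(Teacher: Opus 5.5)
Your deduction of (i) from (ii) via ${\cal P}_A={\cal P}\cap A$ is fine. However, neither of your two routes to (ii) closes, and the key idea is missing.

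In the first route the target is recurrence of $x$, and no step produces it. You never specify which of the two complementary annuli $A_n$ is, nor show that $x$ or any piece of ${\cal O}_x$ lies in it; each ${\cal O}_{x_n}$ may also cross $\Sigma$ many times. So the claim that ``the forward orbit of $x$ stays in $\bigcap_n A_n$'' is not established. Lemmas \ref{lem:MonoLem}, \ref{lem:OmeLimPerOrb} and \ref{lem:PRPerOrb} are then cited without anything supplying their hypotheses.

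The second route rests on a uniform bound that you do not have, and that is essentially circular:
\begin{itemize}
\item Homology gives no control on the length or period of a periodic orbit.
\item The first return time $\tau(x_n)$ to a short arc is not the period, since an orbit can meet $\Sigma$ several times per period. So $\phi_{\tau(x_n)}(x_n)=x_n$ is not available; what you would need is a bound on the periods.
\item The Poincar\'e map of Lemma \ref{lem:PoinMap} is only defined near a point already known to be periodic, which is exactly what you are trying to prove.
\item There is no global return map on the annulus between $C_{n_0}$ and $C_{m_0}$ whose continuity you could invoke. That annulus need not admit a transversal arc meeting every orbit; it may contain Reeb components.
\end{itemize}

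The missing idea is a separation argument applied to the orbit of $x$ itself, not to the sequence. Suppose ${\cal O}_x$ is nonperiodic and $x\notin{\cal O}_{x_1}$. Cut $\mathbb{T}^2$ along ${\cal O}_{x_1}$, so that $x$ lies in the interior of an annulus bounded by compact leaves, where the Poincar\'e--Bendixson machinery applies.
\begin{itemize}
\item Lemma \ref{lem:DistEndPO} says that $C_-:=\alpha(x)$ and $C_+:=\omega(x)$ are \emph{distinct} periodic orbits.
\item Being essential simple closed curves (Proposition \ref{prop:POTop}), they bound a subannulus $A'$ with $x\in{\rm int}A'$.
\item For large $n$ one has $x_n\in{\rm int}A'$. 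The leaf ${\cal O}_{x_n}$ is then disjoint from $C_\pm$, lies in ${\rm int}A'$, and, being an essential simple closed curve, separates $C_-$ from $C_+$.
\item But ${\cal O}_x$ is connected, disjoint from ${\cal O}_{x_n}$, and its two tails accumulate on $C_-$ and $C_+$ respectively. This is a contradiction.
\end{itemize}
This is the paper's argument: it is proved for (i), and (ii) follows by cutting along ${\cal O}_{x_1}$. It needs no monotone subsequence along $\Sigma$, no bound on periods or return times, and no recurrence of $x$.
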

\begin{proof}
(i) Let $x_{n}\in{\cal P}_{A}$ and $x_{n}\rightarrow x$. Suppose
on the contrary that ${\cal O}_{x}$ is nonperiodic. According to
Lemma \ref{lem:DistEndPO}, 
\begin{equation}
C_{-}:=\alpha(x),\quad C_{+}:=\omega(x)\label{eq:POClosednessPf}
\end{equation}
are distinct periodic orbits on $A$. Recall from Proposition \ref{prop:POTop}
that both $C_{\pm}$ are essential, simple closed curves. As a result,
they bound a subannulus $A'$ and $x\in{\rm int}A'$. Since $x_{n}\rightarrow x,$
there is a large $n$ such that the periodic orbit ${\cal O}_{x_{n}}$
is contained inside ${\rm int}A'$, hence dividing ${\rm int}A'$
into two connected components. The point $x$ lies inside one of these
two components. In order for the orbit ${\cal O}_{x}$ to approach
$C_{\pm}$ in both tails (see (\ref{eq:POClosednessPf})), it must
cross the periodic orbit ${\cal O}_{x_{n}}$. Hence ${\cal O}_{x}\cap{\cal O}_{x_{n}}\neq\emptyset$,
which is a contradiction. Therefore, ${\cal O}_{x}$ is periodic.\begin{figure}[H] 
\begin{center}  
\includegraphics[scale=0.15]{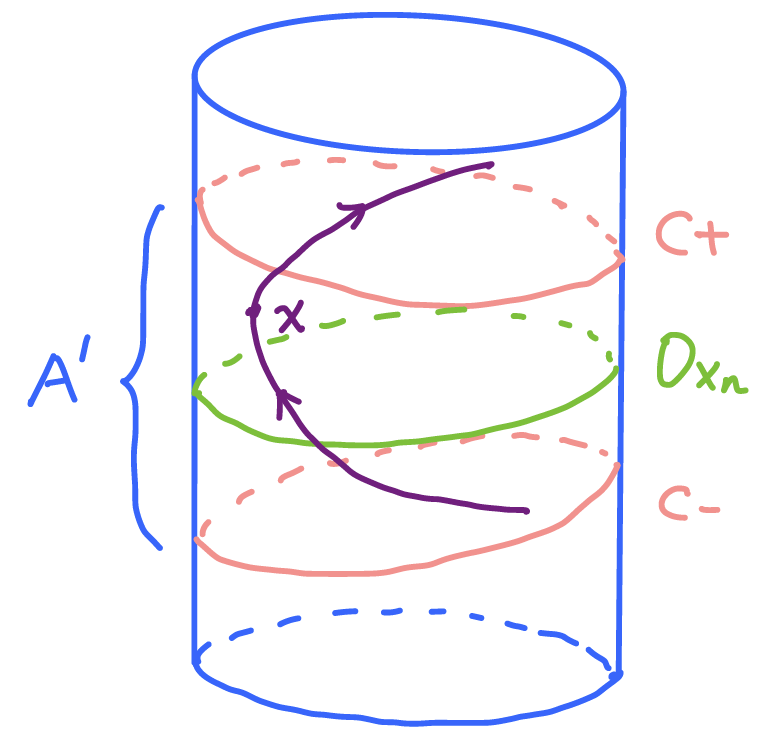} 
\end{center}
\end{figure}\noindent (ii) Let $x_{n}\in{\cal P}$ and $x_{n}\rightarrow x$.
Suppose that $x\notin{\cal O}_{x_{1}}$; otherwise, ${\cal O}_{x}$
is periodic and there is nothing to prove. Cutting $\mathbb{T}^{2}$
along ${\cal O}_{x_{1}}$ yields a compact annulus $A$ with $x_{n}\in{\cal P}_{A}$
and $x_{n}\rightarrow x\in{\rm int}A.$ The result of Part (i) then
implies that ${\cal O}_{x}$ is periodic. 
\end{proof}

\subsection{Suspension and Reeb component}

In this section, we define the concepts of a suspension and a Reeb
component, which are the two basic regimes appearing in the main classification
theorem. 

\subsubsection{Suspension}

We first define the notion of a suspension. Let $S^{1}:=\mathbb{R}/\mathbb{Z}$
and let $f:S^{1}\rightarrow S^{1}$ be an orientation-preserving diffeomorphism.
We define an equivalence relation on $S^{1}\times\mathbb{R}$ by identifying
\begin{equation}
(x,t)\sim(f^{k}(x),t-k)\label{eq:SuspEquiv}
\end{equation}
for any $(x,t)\in S^{1}\times\mathbb{R}$ and $k\in\mathbb{Z}$. Let
$\Sigma_{f}:=(S^{1}\times\mathbb{R})/_{f}\mathbb{Z}$ be the quotient
space under the above identification, whose quotient map is denoted
as $\pi_{f}:S^{1}\times\mathbb{R}\rightarrow\Sigma_{f}.$ It is clear
that $\Sigma_{f}$ is diffeomorphic to the torus $\mathbb{T}^{2}$.
The image of the vertical leaves $\{x\}\times\mathbb{R}$ under $\pi_{f}$,
i.e.
\[
{\cal F}_{f}:=\big\{\pi_{f}\big(\{x\}\times\mathbb{R}\big):x\in S^{1}\big\},
\]
defines a 1D foliation on $\Sigma_{f}$. This foliation is induced
by the vector field 
\[
\partial_{t}\big|_{[(x,t)]}:=\frac{d}{d\varepsilon}\Big|_{\varepsilon=0}[(x,t+\varepsilon)],\quad[(x,t)]\in\Sigma_{f}.
\]

\begin{defn}
\label{def:Suspension}The foliation $(\Sigma_{f},{\cal F}_{f})$
is called the \textit{suspension} of the diffeomorphism $f$. The
vector field $\partial_{t}$ is called the \textit{suspension field}. 
\end{defn}
\begin{figure}[H] 
\begin{center}  
\includegraphics[scale=0.16]{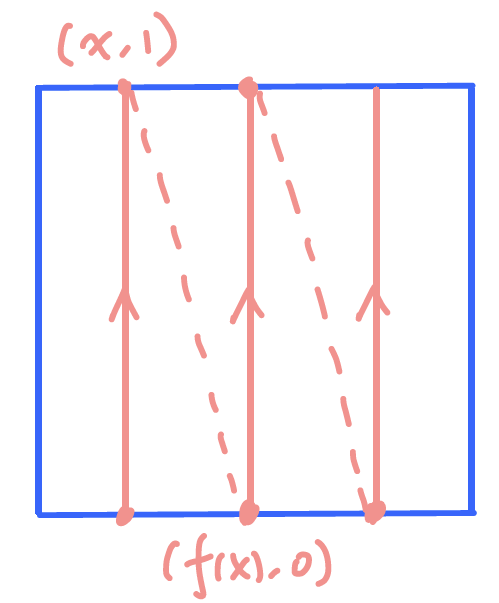} 
\end{center}
\end{figure}
\begin{example}
(i) Let $f={\rm id}_{S^{1}}$. The orbits of $\partial_{t}$ are the
vertical circles and they are all periodic with period one. 

\vspace{2mm}\noindent (ii) (Rational rotation) Let $f(x):=x+p/q$
$({\rm mod\ \mathbb{Z}})$ where $p,q$ are coprime integers ($q>0$).
Then $f^{q}={\rm id}_{S^{1}}$. All orbits are periodic with period
$q$.

\vspace{2mm}\noindent (iii) (Irrational rotation) Let $f(x):=x+\alpha$
$({\rm mod\ \mathbb{Z}})$ where $\alpha\notin\mathbb{Q}$. Every
orbit is diffeomorphic to $\mathbb{R}$ and is dense in $\mathbb{T}^{2}$. 

\vspace{2mm}\noindent (iv) Suppose that $f$ has precisely one fixed
point $[a]\in S^{1}$. Then $f$ can be viewed as a diffeomorphism
on $(a,a+1)$ with no fixed points. Suppose further that $f(x)>x$
for all $x\in(a,a+1)$. Then there is precisely one periodic orbit
\[
C:t\mapsto[(a,t)]
\]
which has period one. Every orbit other than $C$ is nonperiodic (hence
diffeomorphic to $\mathbb{R}$) and approaches $C$ in both ends.
 \begin{figure}[H] 
\begin{center}  
\includegraphics[scale=0.14]{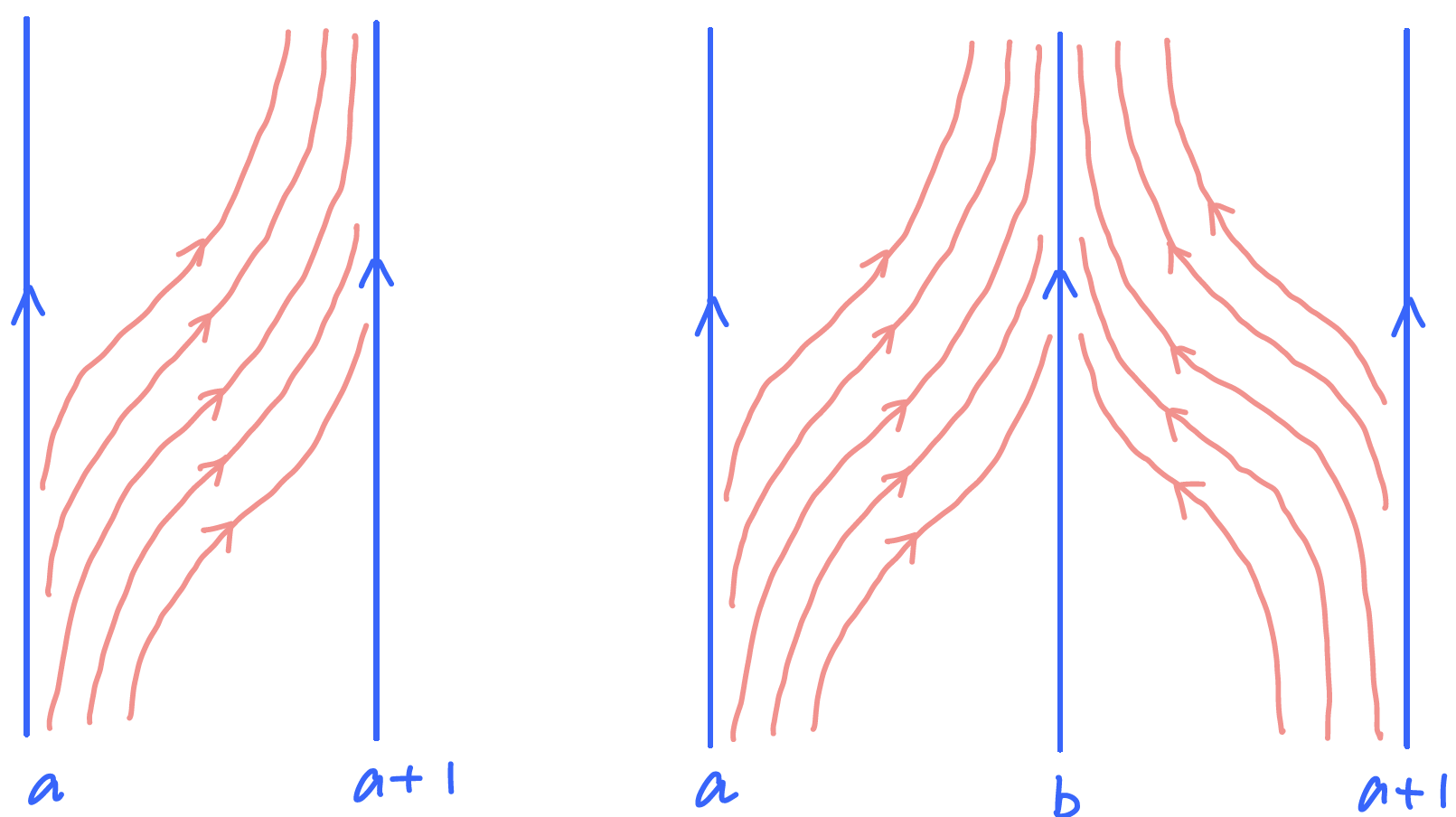}
\protect\caption{Suspension examples.}\label{fig:susp_exam}
\end{center}
\end{figure}\noindent (v) Suppose that $f$ has precisely two fixed points $[a],[b]\in S^{1}.$
With suitable choice of representatives, let us assume that $b\in(a,a+1)$.
Suppose that $f(x)>x$ on $(a,b)$ and $f(x)<x$ on $(b,a+1)$. Then
there are precisely two periodic orbits 
\[
C_{a}:t\mapsto[(a,t)],\quad C_{b}:t\mapsto[(b,t)].
\]
All other orbits are nonperiodic. Figure \ref{fig:susp_exam} sketches
the flow lines on the universal cover $\mathbb{R}^{2}$ for Cases
(iv) and (v).\textcolor{blue}{{} }
\end{example}
\begin{rem}
\label{rem:PeriodicPt}Periodic orbits of $(\Sigma_{f},{\cal F}_{f})$
correspond to periodic points of $f$. A point $x\in S^{1}$ is a
\textit{periodic point} of $f$ if there exists a positive integer
$q$ such that $f^{q}(x)=x$. The smallest such $q$ is called the
\textit{period} of $x.$ For an orientation-preserving diffeomorphism
$f\in{\rm Diff}(S^{1})$, the period of a periodic point is intrinsic
to $f$ and does not depend on the actual point $x$. 
\end{rem}
Let $W$ be a smooth, nonvanishing vector field $W$ on $\mathbb{T}^{2}.$
We give a simple criterion under which the foliation ${\cal F}_{W}$
is conjugate to a suspension (see Definition \ref{def:Conj}). 
\begin{defn}
A smooth, simple closed curve $\Sigma$ is a \textit{complete transversal}
if it intersects every orbit of $W$ transversally. 
\end{defn}
\begin{prop}
\label{prop:SuspChara}The foliation ${\cal F}_{W}$ is conjugate
to a suspension if and only if there exists a complete transversal. 
\end{prop}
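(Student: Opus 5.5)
One direction is immediate and the other is a first-return construction; I would treat them separately.

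\emph{Suspension $\Rightarrow$ complete transversal.} On $\Sigma_f$ the closed curve
\[
\Sigma_0:=\pi_f\big(S^{1}\times\{0\}\big)
\]
is smooth and simple. It is transverse to $\partial_t$ everywhere. Every leaf $\pi_f(\{x\}\times\mathbb{R})$ meets it, at $t=0$. If $\Psi:\mathbb{T}^{2}\to\Sigma_f$ is a conjugacy with $\Psi({\cal F}_W)={\cal F}_f$, then $\Psi^{-1}(\Sigma_0)$ is a smooth simple closed curve. Since transversality to a 1D distribution is preserved by diffeomorphisms, this curve is transverse to ${\cal F}_W$ and meets every orbit of $W$. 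So it is a complete transversal.

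\emph{Complete transversal $\Rightarrow$ suspension.} Let $\Sigma$ be a complete transversal. Because $\Sigma$ is connected and transverse to $W$, the sign of $\det(W,\dot\Sigma)$ is constant along it, so all orbits cross $\Sigma$ in the same direction. Reversing $W$ if needed, I assume every orbit crosses from one fixed side.

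First I would show that every point $y\in\Sigma$ returns to $\Sigma$ in finite positive time. Suppose $\phi_t(y)\notin\Sigma$ for all $t>0$. Take $z\in\omega(y)$. The orbit of $z$ meets $\Sigma$, so after flowing I may assume $z\in\Sigma$. A flow box around $z$, as in Proposition \ref{prop:VFFolia}, has $\Sigma$ as its vertical transversal, so orbits entering the box near $z$ must cross $\Sigma$. Since $\phi_{t_n}(y)\to z$ with $t_n\to\infty$, the forward orbit of $y$ meets $\Sigma$ again, which is a contradiction.

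This defines a first-return time $\tau:\Sigma\to(0,\infty)$ and a first-return map $P(y)=\phi_{\tau(y)}(y)$. The implicit function argument of Lemma \ref{lem:PoinMap} applies at every point (transversality gives $\partial_tF\neq0$), so $\tau$ and $P$ are smooth. Since $\tau$ is continuous on the compact set $\Sigma$, it is bounded above and below by positive constants. Running the flow backward gives a smooth inverse for $P$, so $P$ is a diffeomorphism of $\Sigma\cong S^{1}$. It preserves orientation, because flow boxes transport a neighbourhood of $y$ in $\Sigma$ to a neighbourhood of $P(y)$ continuously with positive Jacobian.

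Next I would identify $\Sigma$ with $S^{1}$ and set $f:=P^{-1}$, with the sign convention matching \eqref{eq:SuspEquiv}. To build the conjugacy, I rescale time so that the return time is identically one. I introduce a smooth function $s$ on $\mathbb{T}^2\setminus\Sigma$ recording the normalized time since the last crossing of $\Sigma$: for $y\in\Sigma$ and $0\le u\le\tau(y)$, put
\[
s\big(\phi_u(y)\big):=u/\tau(y)\in[0,1).
\]
Define $G:S^{1}\times\mathbb{R}\to\mathbb{T}^{2}$ on $S^{1}\times[0,1]$ by $G(y,t):=\phi_{t\tau(y)}(y)$, and extend it by requiring $G(y,t+k)=G(P^{k}(y),t)$. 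This makes $G$ constant on equivalence classes, so it descends to a bijection $\Sigma_f\to\mathbb{T}^{2}$ that sends vertical leaves to orbits. Surjectivity uses that every orbit hits $\Sigma$. Injectivity uses the first-return property.

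The main obstacle is smoothness of the descended map across $t\in\mathbb{Z}$. The naive reparametrisation $t\tau(y)$ is only continuous there, since it has a kink. I would fix this by replacing $t\tau(y)$ with $\rho(y,t)$, where $\rho(y,\cdot)$ is a smooth increasing map $[0,1]\to[0,\tau(y)]$ that is affine with slope $\tau(y)$ near $t=0$ and slope $\tau(P^{-1}y)\circ$(matching) near $t=1$. More precisely, I would choose $\rho$ to agree with the flow time of the adjacent fundamental domain in a collar. One way to arrange this is $\rho(y,t)=t\tau(y)$ composed with a bump that is the identity near the ends, using that $\phi_\cdot$ is smooth in $(u,y)$.

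With this choice, $d G$ is invertible. In the $t$ direction $\partial_tG$ is a nonzero multiple of $W$. In the $y$ direction $\partial_yG$ is transverse to $W$, because $d\phi_u$ carries $T\Sigma$ to a line transverse to $W$. A smooth bijective local diffeomorphism is a diffeomorphism, which gives the conjugacy $\Sigma_f\cong\mathbb{T}^{2}$ carrying ${\cal F}_f$ to ${\cal F}_W$.
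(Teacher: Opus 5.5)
Your overall route is the paper's: first-return map $P$ on $\Sigma$, the map $\phi_{t\tau(y)}(y)$ descended to the suspension, then a time reparametrisation to remove the kink at $t\in\mathbb{Z}$. Your proof that $\tau<\infty$ is cleaner than the paper's. Since $\omega(y)$ is nonempty and invariant, and every orbit meets $\Sigma$, you get $\omega(y)\cap\Sigma\neq\emptyset$ directly, and a flow box there forces a return. Neither essentiality of $\Sigma$ nor Poincar\'e--Bendixson is needed.

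\textbf{The gap.} It lies in the smoothing step, which you correctly flag as the main obstacle but do not resolve. With your extension rule, for small $s>0$ one has $G(y,1+s)=G(Py,s)=\phi_{\tau(y)+\rho(Py,s)}(y)$. So smoothness across $t=1$ means that $\rho(y,t)$ for $t<1$ and $\tau(y)+\rho(Py,t-1)$ for $t>1$ must glue to a smooth function of $(y,t)$.
\begin{itemize}
\item Your concrete recipe ($t\tau(y)$ composed with a bump that is the identity near the ends) agrees with $t\tau(y)$ near both ends. Its one-sided slopes at the junction are therefore still $\tau(y)$ and $\tau(Py)$, so the kink is untouched.
\item More generally, no reparametrisation of the form $\tau(y)b(t)$ works when $\tau\circ P\neq\tau$. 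Matching the $j$-th $t$-derivatives for all $y$ forces $b^{(j)}(0)=b^{(j)}(1)=0$ for all $j\geqslant1$. Then $\partial_tG$ vanishes at $t\in\mathbb{Z}$, and the invertibility of $dG$ that you invoke at the end fails.
\item Your other description (slope $\tau(y)$ near $0$) can be made to work, but the matching slope near $1$ is $\tau(Py)$, not $\tau(P^{-1}y)$. You would also still need an interpolation with $\rho(y,1)=\tau(y)$ and $\partial_t\rho>0$, which is not automatic when $\tau(y)$ and $\tau(Py)$ differ substantially.
\end{itemize}

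\textbf{The fix.} The missing idea is to make the behaviour of $\rho$ near the ends independent of $y$. Fix $0<c<\min_{\Sigma}\tau$ and a smooth nondecreasing $\beta:[0,1]\to[0,1]$ with $\beta\equiv0$ near $0$ and $\beta\equiv1$ near $1$, and set
\[
\rho(y,t):=ct+\big(\tau(y)-c\big)\beta(t).
\]
Then:
\begin{itemize}
\item $\rho(y,0)=0$, $\rho(y,1)=\tau(y)$, and $\partial_t\rho\geqslant c>0$;
\item $\rho(y,t)=ct$ near $t=0$, and $\rho(y,t)=\tau(y)+c(t-1)$ near $t=1$.
\end{itemize}
Hence near $t=1$ both $\rho(y,t)$ and $\tau(y)+\rho(Py,t-1)$ equal $\tau(y)+c(t-1)$. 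The glued map is therefore smooth in $(y,t)$, and the rest of your argument (nondegeneracy of $dG$, bijectivity) goes through. This is exactly the paper's choice.

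\textbf{A minor slip.} With the identification $(x,t)\sim(f^{k}(x),t-k)$ and your rule $G(y,t+k)=G(P^{k}(y),t)$, the map descends to $\Sigma_f$ for $f=P$, not $f=P^{-1}$. This is harmless for the conclusion, since $(x,t)\mapsto(x,-t)$ conjugates the suspension foliations of $f$ and $f^{-1}$.
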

\begin{proof}
(i) Necessity is obvious. In fact, the horizontal circle 
\[
S^{1}\ni t\mapsto[(t,0)]\in\Sigma_{f}
\]
is a complete transversal of the suspension foliation. It remains
to prove sufficiency. 

\vspace{2mm}\noindent (ii) Let $\Sigma$ be a given complete transversal.
For each $x\in\Sigma,$ we define the first return time
\[
\tau(x):=\inf\{t>0:\phi_{t}(x)\in\Sigma\}.
\]
We first show that $\tau(x)<\infty$. Suppose on the contrary that
$\phi_{t}(x)$ never meets $\Sigma$ again at any positive time. We
claim that there exists $T_{0}>0$ and a small neighbourhood $N$
of $\Sigma,$ such that $\phi_{t}(x)\notin N$ for all $t>T_{0}$.
Indeed, if this were not the case, there would exist a sequence of
times $t_{n}\rightarrow\infty$ such that $\phi_{t_{n}}(x)$ converges
to some $y\in\Sigma$. Draw a flow box $(V;s,r)$ centered at $y=(0,0)$
so that $\Sigma\cap V=\{s=0\}$. One can write $\phi_{t_{n}}(x)=(s_{n},r_{n})$
under the flow box coordinates and note that $(s_{n},r_{n})\rightarrow(0,0)$.
Since the flow lines are precisely $s\mapsto(s,r)$ inside $V,$ one
has 
\[
\phi_{-s_{n}}\big(\phi_{t_{n}}(x)\big)=\phi_{t_{n}-s_{n}}(x)=(0,r_{n})\in\Sigma
\]
for all large $n$. As a result, the time $t_{n}-s_{n}$ is a positive
return time for large $n$ (because $t_{n}\rightarrow\infty$ and
$s_{n}\rightarrow0$) which leads to a contradiction. Therefore, $\{\phi_{t}(x):t>0\}$
is contained outside a tubular neighbourhood $N$ of $\Sigma$ after
some short period of time. Similar to the proof of Proposition \ref{prop:POTop}
(i), it can be shown that any complete transversal must be essential.
As a result, $N^{c}$ (as a compact annulus) is contained in a planar
domain. By applying the Poincar\'e-Bendixson theorem, one obtains
a periodic orbit that is disjoint from $\Sigma.$ This contradicts
the completeness of $\Sigma.$ Consequently, $\tau(x)<\infty$.

We define the first return map
\begin{equation}
Q:\Sigma\rightarrow\Sigma,\quad Q(x):=\phi_{\tau(x)}(x).\label{eq:FirstRetMapQ}
\end{equation}
By realising a neighbourhood of $Q(x)$ on $\Sigma$ as a level set
$\{\rho=0\}$ for some smooth function $\rho,$ one can apply the
same argument as in the proof of Lemma \ref{lem:PoinMap} (construction
of the Poincar\'e map) to conclude that $\tau(x)$ is a smooth in
$x$. Therefore, $Q\in C^{\infty}(\Sigma)$. It is indeed a diffeomorphism
since $Q^{-1}$ is defined in the same way by the return-to-$\Sigma$
map of the inverse flow $-W$. Let $\sigma:S^{1}\stackrel{\cong}{\rightarrow}\Sigma$
be an $S^{1}$-parametrisation of $\Sigma.$ Then $f:=\sigma^{1}\circ Q\circ\sigma\in{\rm Diff}(S^{1})$.
\begin{figure}[H] 
\begin{center}  
\includegraphics[scale=0.12]{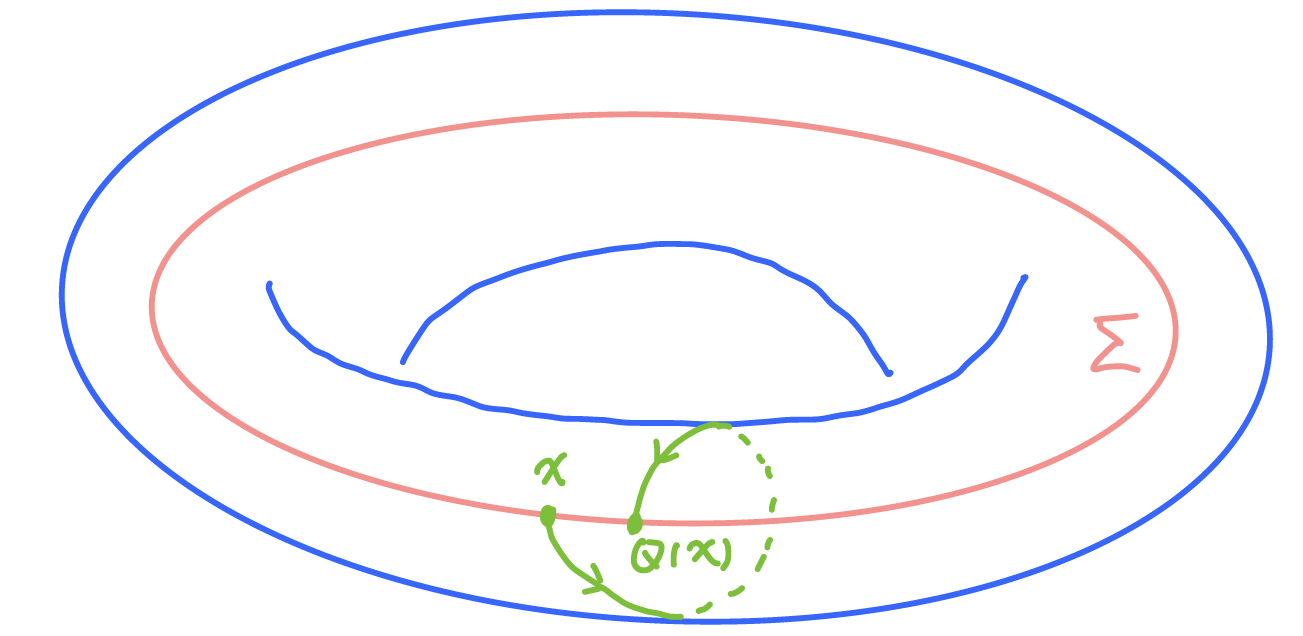} 
\end{center}
\end{figure}\noindent (iii) We claim that $f$ is orientation preserving, or
more precisely, $f'(r)>0$ for all $r$ (here we identify $S^{1}:=\mathbb{R}/\mathbb{Z}$
so that one has the canonical isomorphisms $T_{r}S^{1}\cong T_{f(r)}S^{1}\cong\mathbb{R}$).
The argument is in spirit similar to the proof of Proposition \ref{prop:CharaNum}
(positivity of the transversal characteristic number).

To this end, let $\Omega$ be a volume form on $\mathbb{T}^{2}$ such
that 
\[
g(r):=\Omega\big(W(\sigma(r)),\dot{\sigma}(r)\big)>0
\]
for all $r\in S^{1}$. This is possible since $\mathbb{T}^{2}$ is
orientable and $\Sigma$ is transversal to $W$. Let 
\begin{equation}
F(s,r):=\phi_{s\tau(\sigma(r))}(\sigma(r))\label{eq:Fsr}
\end{equation}
and define 
\[
J(s,r):=\Omega\big(W(F(s,r)),\partial_{r}F(s,r)\big).
\]
It is plain to see that 
\begin{align}
\partial_{s}F(s,r) & =\tau(\sigma(r))W(F(s,r)),\nonumber \\
\partial_{r}F(s,r) & =(d\phi_{s\tau(\sigma(r))})\dot{\sigma}(r)+s\tau'(r)W(F(s,r)).\label{eq:DrF}
\end{align}
Since $\dot{\sigma}(r)$ and $W(\sigma(r))$ are linearly independent,
so are 
\[
(d\phi_{s\tau(\sigma(r))})\dot{\sigma}(r)\quad\text{and}\quad(d\phi_{s\tau(\sigma(r))})W(\sigma(r))=W(F(s,r)).
\]
As a result, 
\[
J(s,r)=\Omega\big(W(F(s,r)),(d\phi_{s\tau(\sigma(r))})\dot{\sigma}(r)\big)
\]
has constant sign. Note that $J(0,r)=g(r)>0$. It follows that 
\[
J(1,r)=\Omega\big(W(\sigma\circ f(r)),f'(r)\dot{\sigma}(f(r))\big)=f'(r)g(f(r))>0.
\]
Therefore, $f'(r)>0$ and $f$ is orientation preserving.

Now let $\Sigma_{f}$ be the suspension of $f$. Define $\Psi:\Sigma_{f}\rightarrow\mathbb{T}^{2}$
by 
\[
\Psi([r,t]):=\phi_{t\tau(\sigma(r))}(\sigma(r)),\quad r\in S^{1},t\in[0,1].
\]
The definition of $\Psi$ is consistent at the junction $t=0=1$ (${\rm mod}\ \mathbb{Z}$).
In addition, $\Psi$ is a homeomorphism and maps the suspension foliation
to ${\cal F}_{W}$. The issue is that $\Psi$ may fail to be a diffeomorphism.
This is because 
\[
\frac{\partial}{\partial t}\Big|_{t=1^{-}}\phi_{t\tau(\sigma(r))}(\sigma(r))\quad\text{and}\quad\frac{\partial}{\partial t}\Big|_{t=0^{+}}\phi_{t\tau(\sigma\circ f(r))}(\sigma\circ f(r))
\]
need not coincide. They coincide precisely when $\tau(\sigma(r))=\tau(\sigma\circ f(r))$,
which needs not be true in general.

The above issue can be fixed by a simple leaf reparametrisation. Let
\[
\theta(r,t):=ct+(\tau(\sigma(r))-c)\beta(t),\quad r\in S^{1},t\in[0,1].
\]
Here 
\[
0<c<\inf_{r\in S^{1}}\tau(\sigma(r))
\]
and $\beta:[0,1]\rightarrow[0,1]$ is smooth, nondecreasing and satisfies
\[
\beta=0\text{ near }0,\quad\beta=1\text{ near }1.
\]
Then one has 
\[
\theta(r,0)=0,\quad\theta(r,1)=\tau(\sigma(r)),
\]
\[
\theta(r,t)=ct\text{ near }t=0,
\]
\[
\theta(r,t)=\tau(\sigma(r))+c(t-1)\text{ near }t=1.
\]
This ensures that all $t$-derivatives match at the junction $t=0=1$
(${\rm mod}\ \mathbb{Z}$). Now we adjust the definition of $\Psi$
to be 
\[
\hat{\Psi}:\Sigma_{f}\rightarrow\mathbb{T}^{2},\quad\hat{\Psi}([r,t]):=\phi_{\theta(r,t)}(\sigma(r)).
\]
This new map $\hat{\Psi}$ is now easily seen to be a diffeomorphism,
which provides a smooth conjugation between $\Sigma_{f}$ and $\mathbb{T}^{2}$.
\end{proof}

\subsubsection{Reeb component}

Next, we define the notion of a Reeb component. 
\begin{defn}
We say that ${\cal F}_{W}$ \textit{contains a Reeb component} if
there exists an embedded annulus $A\cong S^{1}\times[0,1]$ inside
$\mathbb{T}^{2}$ such that the following two properties are satisfied:

\vspace{2mm}\noindent (i) $\partial A$ is the union of two compact
leaves that are in opposite homology classes (under the $W$-orientation);

\vspace{2mm}\noindent (ii) there are no compact leaves of ${\cal F}_{W}$
in the interior of $A$.
\end{defn}
\begin{figure}[H] 
\begin{center}  
\includegraphics[scale=0.14]{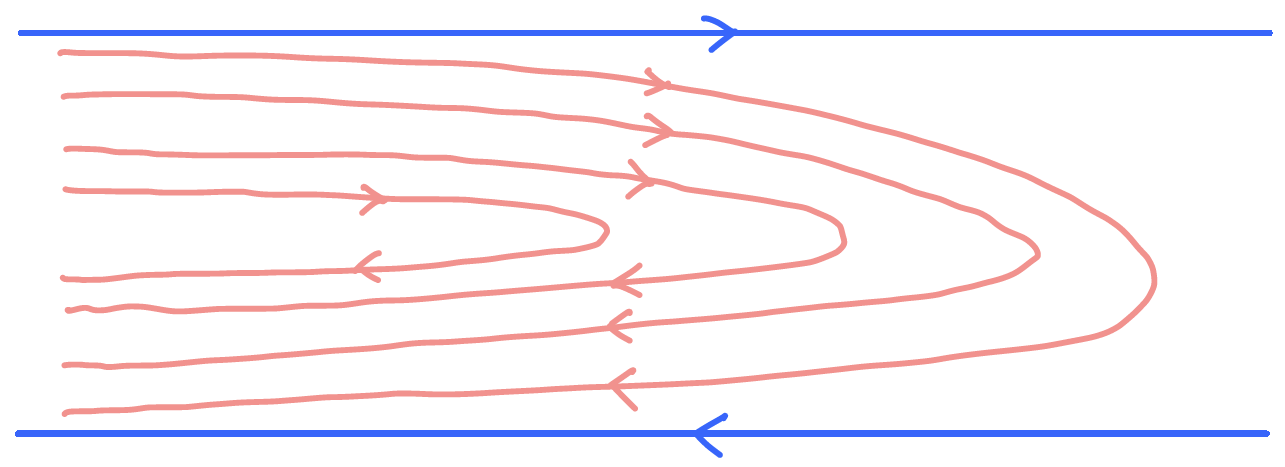} 
\end{center}
\end{figure}

The following proposition provides a simple criterion for the existence
of a Reeb component.
\begin{prop}
\label{prop:ReebCrit}The foliation ${\cal F}_{W}$ has a Reeb component
if and only if there exist two periodic orbits with opposite homology
classes. 
\end{prop}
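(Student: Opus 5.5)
Necessity is immediate from the definition: if $A$ is a Reeb component, its two boundary leaves are compact, hence periodic orbits, and by condition (i) they carry opposite homology classes under the $W$-orientation. The substance is sufficiency, which I would prove by cutting the torus along periodic orbits and applying Proposition \ref{prop:OrbitSetClose} and Proposition \ref{prop:POTop}.

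So suppose $\gamma_1,\gamma_2$ are periodic orbits with $[\gamma_1]=-[\gamma_2]$. By Proposition \ref{prop:POTop}(ii) both are essential, primitive simple closed curves, and they are disjoint (distinct orbits, and $[\gamma_1]\neq[\gamma_2]$ since $[\gamma_1]\neq0$ by Proposition \ref{prop:POTop}(i)). Two disjoint essential simple closed curves on $\mathbb{T}^2$ cobound an embedded annulus; I would take one of the two complementary annuli and call it $A_0\cong S^1\times[0,1]$, with $\partial A_0=\gamma_1\cup\gamma_2$. Each periodic orbit in $A_0$ is a simple closed curve with homology class $\pm[\gamma_1]$ (Proposition \ref{prop:POTop}(iii)), so it is isotopic to the core circle; hence the periodic orbits in $A_0$ are linearly ordered transversally. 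More precisely, they are the fibres of a closed set, by Proposition \ref{prop:OrbitSetClose}(i). To each periodic orbit $C\subseteq A_0$ I assign the sign $\epsilon(C)\in\{\pm1\}$ with $[C]=\epsilon(C)[\gamma_1]$. The ordering comes from the complementary region: each such $C$ separates $A_0$ into two subannuli, so I can order periodic orbits by inclusion of the subannulus lying between $\gamma_1$ and $C$.

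The key step is to find two \emph{adjacent} periodic orbits with opposite signs, i.e.\ a subannulus whose boundary orbits have opposite $W$-orientation and whose interior contains no periodic orbit. I would define
\[
C_*:=\text{the ``last'' periodic orbit with sign }+1\text{ before the first one of sign }-1.
\]
Concretely, let $\mathcal S_-$ be the union of periodic orbits in $A_0$ with sign $-1$; it contains $\gamma_2$. First I show that the sign is locally constant on the closed set $\mathcal P_{A_0}$ of periodic points. This holds because a limit of periodic orbits $C_n\to C$ is eventually $C^0$-close to $C$ with $W$ nearly parallel, so the orientations agree. Hence $\mathcal P_{A_0}^{\pm}$ are both closed. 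Then I take the subannulus $A'$ between $\gamma_1$ and the nearest (in the transversal order) orbit of $\mathcal P^-$. Inside $A'$, I take the outermost orbit $C_+$ of $\mathcal P^+$, which exists by closedness, and the innermost orbit $C_-$ of $\mathcal P^-$ beyond it. The annulus $A$ between $C_+$ and $C_-$ then has no periodic orbits in its interior and boundary leaves of opposite homology classes, which is exactly a Reeb component.

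The main obstacle is making ``nearest'' and ``between'' rigorous. I would parametrise a transversal arc $\Sigma$ crossing $A_0$ from $\gamma_1$ to $\gamma_2$, and argue via Lemma \ref{lem:IntNumForm} that every periodic orbit in $A_0$ meets $\Sigma$: it separates the two boundary components, since it has intersection number $\pm1$ with a closed curve crossing $\gamma_1$ once. With care, using the monotonicity Lemma \ref{lem:MonoLem}, it meets $\Sigma$ exactly once. This identifies $\mathcal P^\pm$ with closed subsets $K^\pm\subseteq[0,1]$ with $0\in K^+$ and $1\in K^-$, disjoint and with union closed. I then take $b=\min K^-$ and $a=\max(K^+\cap[0,b])$, so that $(a,b)$ contains no point of $K^+\cup K^-$, and the orbits through $a$ and $b$ bound the desired annulus.
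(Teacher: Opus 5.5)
Your overall plan (closedness of $\mathcal{P}^{\pm}_{A_0}$, then an extremal choice of a $+$ orbit and a $-$ orbit) is the same as the paper's. The paper simply picks $(x_*,y_*)\in\mathcal{P}^+_A\times\mathcal{P}^-_A$ realising the positive distance between these two compact sets.

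However, the step you propose for making ``between'' rigorous fails. If $\Sigma$ is meant to be transversal to $W$, as Lemma~\ref{lem:MonoLem} and the intersection-number argument require, then it cannot exist under the hypothesis. Write $A_0\cong S^1\times[0,1]$ orientation-preservingly. At both endpoints $\dot\Sigma$ points in the $+[0,1]$ direction, so the sign of $\Omega(W,\dot\Sigma)$ at the initial point on $\gamma_1$ and at the final point on $\gamma_2$ is the sign of the $W$-orientation of the respective boundary circle relative to the core. These signs are opposite precisely because $[\gamma_1]=-[\gamma_2]$, so $\Omega(W,\dot\Sigma)$ must vanish somewhere. This is exactly the obstruction behind Lemma~\ref{lem:NoReebTran}: a Reeb component admits no crossing transversal. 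That lemma cannot be used to build $\Sigma$ in any case, since its proof uses the present proposition.

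If instead $\Sigma$ is an arbitrary arc (e.g.\ a fibre), Lemma~\ref{lem:MonoLem} gives nothing, since it concerns successive returns of a single nonperiodic orbit to a flow-box transversal. A periodic orbit may then cross $\Sigma$ many times, even infinitely often and tangentially. So the identification of periodic orbits with points of $K^{\pm}$ that your last step relies on is not available.

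The fix is that you do not need ``exactly once''.
Take any arc $\Sigma:[0,1]\to A_0$ from $\gamma_1$ to $\gamma_2$ and put $K^{\pm}:=\Sigma^{-1}(\mathcal{P}^{\pm}_{A_0})$. These are disjoint closed sets with $0\in K^+$ and $1\in K^-$. Every periodic orbit in $A_0$ lies in $\mathcal{P}^+_{A_0}\cup\mathcal{P}^-_{A_0}$ by Proposition~\ref{prop:POTop}(iii). So your choice of $b$ and $a$ gives $a<b$ with $\Sigma((a,b))$ disjoint from every periodic orbit.

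Let $C_a,C_b$ be the (distinct, oppositely oriented) orbits through $\Sigma(a),\Sigma(b)$, and let $A\subseteq A_0$ be the subannulus they cobound. The connected set $\Sigma((a,b))$ misses $C_a\cup C_b$ and its closure meets both. Hence it lies in $\mathrm{int}A$, the only component of $A_0\setminus(C_a\cup C_b)$ whose closure meets both circles.

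A periodic orbit in $\mathrm{int}A$ would be essential by Proposition~\ref{prop:POTop}(i). It would therefore separate $C_a$ from $C_b$ in $A$ and have to meet $\Sigma((a,b))$, which is impossible. So $A$ is a Reeb component.

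Repaired this way, your argument is a one-dimensional version of the paper's closest-pair argument, with the ``nothing in between'' step reduced to a clean separation argument. The paper's version tacitly needs a shortest path from $x_*$ to $y_*$ staying inside $A$, i.e.\ the intrinsic metric of $A$. Your justification that the sign is locally constant on $\mathcal{P}_{A_0}$ is only heuristic, but the paper is equally terse at that point.
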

\begin{proof}
Necessity follows immediate from the definition and Proposition \ref{prop:POTop}.
We now prove sufficiency. Let $C_{+},C_{-}$ be two periodic orbits
with opposite homology classes. Let $A$ be a compact annulus bounded
by $C_{\pm}.$ Define ${\cal P}_{A}^{\pm}$ to be the set of points
in $A$ whose orbit is homologous to $C_{\pm}.$ According to Proposition
\ref{prop:OrbitSetClose}, it is easily seen that both ${\cal P}_{A}^{\pm}$
are closed subsets of $A$ (a sequence of periodic orbits with the
same homology class cannot converge to a periodic orbit with an opposite
direction). Therefore, 
\[
\inf_{x\in{\cal P}_{A}^{+},y\in{\cal P}_{A}^{-}}d(x,y)>0.
\]
Let $x_{*}\in{\cal P}_{A}^{+}$ and $y_{*}\in{\cal P}_{A}^{-}$ be
two points achieving the above infimum. It follows that there are
no periodic orbits between ${\cal O}_{x_{*}}$ and ${\cal O}_{y_{*}}$
inside $A$. As a consequence, they bound a Reeb component. 
\end{proof}

\subsubsection{Mutual exclusiveness of the two regimes}

We now show that suspension and Reeb component are mutually exclusive. 
\begin{prop}
Suppose ${\cal F}_{W}$ is conjugate to a suspension. Then it cannot
contain a Reeb component.
\end{prop}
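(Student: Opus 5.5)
We argue by contradiction. Assume ${\cal F}_{W}$ is conjugate to a suspension and also contains a Reeb component $A$. By Proposition \ref{prop:SuspChara} there is a complete transversal $\Sigma$: a smooth simple closed curve meeting every orbit of $W$ transversally. Since $\Sigma$ meets every orbit transversally and is connected, the sign of $\det(W,\dot\sigma)$ is constant along $\Sigma$. Hence every local intersection of $\Sigma$ with an oriented orbit carries the same sign, say $+1$.

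The plan is to compare intersection numbers. Let $C_{+},C_{-}$ be the two compact leaves bounding $A$, oriented by $W$. By the definition of a Reeb component, $[C_{+}]=-[C_{-}]$. Both are periodic orbits, so $\Sigma$ intersects each of them transversally. Because both curves are compact and the intersections are transversal, each intersection set is finite. Every local intersection number equals $+1$, so
\[
I(C_{+},\Sigma)=\#(C_{+}\cap\Sigma)\geqslant0,\qquad I(C_{-},\Sigma)=\#(C_{-}\cap\Sigma)\geqslant0.
\]
On the other hand, Lemma \ref{lem:IntNumForm} shows that $I$ depends only on homology classes and is bilinear. Therefore $I(C_{-},\Sigma)=-I(C_{+},\Sigma)$, and both counts must vanish, i.e. $C_{\pm}\cap\Sigma=\emptyset$. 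This contradicts completeness of $\Sigma$, which requires $\Sigma$ to meet every orbit. Note that we need $\Sigma$ to actually intersect $C_{+}$; this holds because ``intersects every orbit'' is part of the definition of a complete transversal. (Alternatively, in the suspension model every leaf crosses the horizontal circle $t=0$.)

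The main obstacle is justifying the constant sign of the local intersection numbers. I would argue it as follows. Parametrise $\Sigma$ by $\sigma:S^{1}\to\mathbb{T}^{2}$; then the function $r\mapsto\det(W(\sigma(r)),\dot\sigma(r))$ is continuous, never zero by transversality, and hence of one sign on the connected set $S^{1}$. A local intersection number $i_{x}(C,\Sigma)$ is exactly the sign of $\det(v_{x},w_{x})$, where $v_{x}$ is a positive multiple of $W(x)$ for $C$ oriented by $W$. So all local intersection numbers coincide.

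One should also check that the orientation conventions in Lemma \ref{lem:IntNumForm} are consistent with this reading. Swapping the order of $C$ and $\Sigma$ only changes the overall sign, which does not affect the argument. The claim that Lemma \ref{lem:IntNumForm} applies, i.e.\ that $\Sigma$ and the periodic orbits are smooth simple closed curves meeting transversally, is immediate from the hypotheses.
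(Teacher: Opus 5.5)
Your proof is correct. It follows a route that is dual to the paper's rather than identical to it.

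The paper pulls back the vertical form $dt$ of the suspension to the closed $1$-form $\eta=\Psi^{*}dt$ on $\mathbb{T}^{2}$. A periodic orbit $\gamma$ is carried to a periodic leaf of the suspension field, so $\int_{\gamma}\eta=q>0$. Since $\eta$ is closed, this integral depends only on $[\gamma]$, which rules out two periodic orbits with $[C_{+}]=-[C_{-}]$.

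You instead take the complete transversal $\Sigma$ from Proposition \ref{prop:SuspChara}, which is in effect $\Psi^{-1}(\{t=0\})$. You replace the de Rham pairing by the homological intersection number of Lemma \ref{lem:IntNumForm}:
\begin{itemize}
\item the sign of $\det(W,\dot\sigma)$ is constant along the connected curve $\Sigma$, so every local intersection number is $+1$;
\item completeness gives $I(C_{\pm},\Sigma)\geqslant 1$;
\item bilinearity in homology forces $I(C_{-},\Sigma)=-I(C_{+},\Sigma)$, a contradiction.
\end{itemize}
The two arguments are Poincar\'e dual: $\int_{\gamma}\Psi^{*}dt$ is exactly the signed number of crossings of $\gamma$ with $\Psi^{-1}(\{t=0\})$.

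Your version handles orientations explicitly. The paper tacitly uses that $\Psi$ preserves or reverses the orientation of all leaves simultaneously, by connectedness, so that $\int_{\gamma}\eta$ has the same sign for every periodic orbit. In your argument this is exactly the constant-sign observation along $\Sigma$. The paper's version, in exchange, avoids the transversality and counting bookkeeping behind Lemma \ref{lem:IntNumForm}. It only needs the homology invariance of integrals of a closed form over closed curves.
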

\begin{proof}
Let $\Psi:{\cal F}_{W}\rightarrow\Sigma_{f}$ be a conjugation to
a suspension 
\[
\Sigma_{f}=\big\{[x,t]:x\in S^{1},t\in\mathbb{R},(x,t)\sim(f^{k}(x),t-k)\big\}.
\]
Let $\eta:=\Psi^{*}dt,$ where $dt$ is the vertical form on $\Sigma_{f}$
defined by
\[
\langle dt,\partial_{x}\rangle=0,\quad\langle dt,\partial_{t}\rangle=1.
\]
The image of a periodic orbit $\gamma$ of $W$ under $\Psi$ is a
reparametrisation of a periodic orbit $\beta$ of the suspension field
$\partial_{t}$. The period of $\beta$ is a positive integer $q$,
whose value is uniquely determined by $f$ and is dependent of the
choice of $\gamma$ (cf. Remark \ref{rem:PeriodicPt}). It follows
that 
\[
\int_{\gamma}\eta=\int_{\beta}dt=q.
\]
As a consequence, having two periodic orbits with opposite homology
classes is not possible in this case. 
\end{proof}

\subsection{The main classification theorem}

In this section, we state and prove the main classification theorem
for nondegenerate torus flows. 
\begin{thm}
\label{thm:Classification}Let ${\cal F}_{W}$ be the 1D foliation
associated with a smooth,  nonvanishing vector field $W$ on $\mathbb{T}^{2}.$
Then precisely one of the following two mutually exclusive possibilities
holds:

\vspace{2mm}\noindent (i) ${\cal F}_{W}$ is conjugate to a suspension;

\vspace{2mm}\noindent (ii) ${\cal F}_{W}$ contains a Reeb component. 
\end{thm}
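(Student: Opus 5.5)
Mutual exclusiveness has already been established in the preceding proposition, so it remains to show that at least one of (i), (ii) holds. I would split according to the periodic orbits of $W$. If there are two periodic orbits with opposite homology classes (under the $W$-orientation), Proposition \ref{prop:ReebCrit} gives a Reeb component and we are done. Otherwise, by Proposition \ref{prop:POTop}(iii) all periodic orbits (if any) share one primitive homology class $\sigma$, with the same $W$-orientation. In this case the goal is to build a complete transversal $\Sigma$ and then invoke Proposition \ref{prop:SuspChara} to get a conjugacy to a suspension.

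\emph{No periodic orbits.} Take any point $x$ and a short transversal arc $\tau$ through $x$. By Poincar\'e--Bendixson (Theorem \ref{thm:PB} and its torus variant), the forward orbit of $x$ must return to $\tau$. Otherwise its $\omega$-limit set would avoid a neighbourhood of $\tau$, so it would lie in a planar region, and that would produce a periodic orbit. Closing the orbit segment from $x$ to its first return, together with the subarc of $\tau$, gives a closed curve transverse to $W$ after a standard smoothing near the junction. Here Lemma \ref{lem:MonoLem} ensures the return lies consistently on one side, so the corner can be rounded transversally. This curve $\Sigma_0$ is essential, by the Poincar\'e--Hopf argument of Proposition \ref{prop:POTop}(i). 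I would then show every orbit meets $\Sigma_0$. Cut $\mathbb{T}^2$ along $\Sigma_0$ to get an annulus which every orbit crosses in one direction only, since $W$ points out of one boundary and into the other. An orbit that never meets $\Sigma_0$ stays forever in the interior of the annulus, a planar domain. By Poincar\'e--Bendixson its $\omega$-limit is then a periodic orbit, which is a contradiction.

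\emph{Periodic orbits exist, all in class $\sigma$ with the same orientation.} By Proposition \ref{prop:OrbitSetClose}, the set $\mathcal{P}$ of periodic points is closed, and $\mathbb{T}^2\setminus\mathcal{P}$ is a union of open annuli $A_j$ bounded by periodic orbits. Each $A_j$ has its two boundary orbits oriented the same way, since otherwise a Reeb component would appear. By Lemma \ref{lem:DistEndPO}, every orbit in $A_j$ goes from one boundary orbit to the other, so $A_j$ is a ``monotone'' annulus and admits an arc transverse to $W$ crossing it once from boundary to boundary. Choose a primitive class $\beta$ with $I(\sigma,\beta)=1$ (Lemma \ref{lem:IntNumForm}). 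The plan is to build a simple closed curve in class $\beta$ that crosses each periodic orbit exactly once, transversally and with the same sign, and is transverse to $W$ inside each $A_j$. Near the periodic orbits, flow boxes along them give local transverse pieces with a consistent sign. Within each $A_j$, the coherent orientation lets us connect these pieces by an arc transverse to $W$, obtained by moving monotonically across the leaves in a flow-box adapted coordinate system. If there are infinitely many annuli, compactness together with uniform flow boxes around $\mathcal{P}$ should handle this. Once such a curve $\Sigma$ is built, every orbit meets it: periodic orbits by construction, and the others because they cross some $A_j$ between its boundary orbits. Proposition \ref{prop:SuspChara} then finishes the proof.

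The main obstacle I expect is this last gluing step: producing a single \emph{smooth} closed curve that is globally transverse to $W$ when $\mathcal{P}$ may be a Cantor-like set with infinitely many gap annuli. My first attempt would be to avoid gluing entirely. On the cover $S^1\times\mathbb{R}$ associated to $\sigma$, I would construct a closed $1$-form $\eta$ with $\eta(W)>0$ and pairing $1$ with $\sigma$, defined near $\mathcal{P}$ via the time coordinate along periodic orbits and extended across each $A_j$ using the monotone structure. The curve $\Sigma$ would then be taken as a level set of a primitive on a suitable cover. Failing that, I would fall back on the explicit curve construction described above.
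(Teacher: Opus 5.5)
\paragraph{The periodic-orbit case.} This case is the core of the theorem, and your proposal stops short there. You assert without argument that each gap annulus $A_j$ admits an arc transverse to $W$ from one boundary orbit to the other. You then leave open how to assemble these arcs into a single smooth closed transversal when $\mathcal{P}$ may be Cantor-like with infinitely many gaps. Neither the ``uniform flow boxes'' sketch nor the closed $1$-form with $\eta(W)>0$ is carried out, and the existence of such a form is essentially a restatement of the suspension conclusion.

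The missing idea is that no gluing is needed. Cut $\mathbb{T}^2$ along a \emph{single} periodic orbit $C$, obtaining an annulus $A$ with boundary copies $C_0,C_1$. Consider the set $\mathcal{R}$ of points reachable from $C_0$ by smooth, simple, positively transverse arcs (Lemma \ref{lem:NoReebTran}). A flow-box argument shows that $\mathcal{R}\cap\mathrm{int}\,A$ is open. It also shows that if $\mathcal{R}$ misses a collar of $C_1$, its frontier is locally a single flow line, hence a union of compact leaves, and in fact a single compact leaf $C'$. If $C'$ is oriented like $C_0$, you can push a transversal across it, contradicting $C'\cap\mathcal{R}=\emptyset$. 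If it is oppositely oriented, Proposition \ref{prop:ReebCrit} produces a Reeb component. This treats all periodic orbits in $A$ at once.

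Closing the resulting arc near $C$ gives a closed transversal $\Sigma$ with $I(\Sigma,C)=\pm1$, and completeness is then automatic. An orbit missing $\Sigma$ would, by Poincar\'e--Bendixson, yield a periodic orbit $C''$ disjoint from $\Sigma$. But $[C'']=\pm[C]$ by Proposition \ref{prop:POTop}(iii), so Lemma \ref{lem:IntNumForm} forces $I(\Sigma,C'')=\pm1$, a contradiction. So you never need to arrange crossings with every periodic orbit or every $A_j$ by hand.

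\paragraph{The no-periodic-orbit case.} This case contains a false step. The complement of a neighbourhood of a short arc $\tau$ in $\mathbb{T}^2$ is a punctured torus, not a planar region, so Poincar\'e--Bendixson gives nothing there. Indeed, the claim that the orbit of an \emph{arbitrary} point $x$ returns to a transversal through $x$ fails for $C^1$ Denjoy-type flows, and for smooth $W$ it is essentially Denjoy's theorem. The correct move (Lemma \ref{lem:ExistTran}) is to take an orbit accumulating at some point $y$, and use two consecutive crossings $z_0\neq z_1$ of a flow-box transversal through $y$.

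Moreover, an orbit segment closed up by a subarc of the transversal is tangent to $W$ along the segment, so rounding the junction does not make it transverse. The segment must be tilted across the flow tube (the curve $F(s,\rho(s))$ with $\rho'<0$ in the paper). The tilt direction must be chosen so that its transverse sign matches that of the return arc along the transversal. Lemma \ref{lem:MonoLem} is a planar statement; it is not available on $\mathbb{T}^2$, and it is not needed once the tilt is chosen this way.

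The rest of that case agrees with the paper: essentiality of the closed transversal, cutting along it, and applying Poincar\'e--Bendixson on the resulting annulus. One detail is needed there: a flow box keeps a non-crossing orbit off a collar of $\Sigma_0$.
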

In what follows, we develop the proof of Theorem \ref{thm:Classification}.
We will consider two cases separately according to whether $W$ has
periodic orbits. 

\subsubsection{\label{subsec:NoPO}Case I: No periodic orbits}

We first assume that $W$ does not have periodic orbits (in particular,
there are no Reeb components). Our aim is to prove that ${\cal F}_{W}$
is conjugate to a suspension in this case. According to Proposition
\ref{prop:SuspChara}, it is equivalent to showing that there exists
a complete transversal. 
\begin{lem}
\label{lem:ExistTran}Suppose that there exists a nonperiodic orbit.
Then there exists a smooth, simple closed curve $\Sigma$ that is
everywhere transversal to $W$. 
\end{lem}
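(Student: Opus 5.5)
We build $\Sigma$ by closing up a long piece of a nonperiodic orbit with a short transverse arc. This is the classical "closing" construction for flows on surfaces.

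First we find a recurrence. Let $x$ lie on a nonperiodic orbit. By compactness of $\mathbb{T}^{2}$, $\omega(x)\neq\emptyset$; pick $p\in\omega(x)$ and a flow box $(V;s,r)$ centred at $p$ with vertical transversal $\Sigma_{0}=\{s=0\}$. As in the proof of Lemma \ref{lem:PRPerOrb}, we adjust times so that the orbit of $x$ hits $\Sigma_{0}$ at points $z_{1},z_{2}\in\Sigma_{0}$ with $z_{1}\neq z_{2}$. These points are distinct: if they coincided, the orbit would be periodic.

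Next we close the loop. Let $\Gamma$ be the orbit segment from $z_{1}$ to $z_{2}$. We replace a short end portion of $\Gamma$ inside the flow box by a smooth arc. This arc starts on the horizontal leaf through $z_{1}$ slightly before $s=0$. It then moves monotonically in $s$ while its $r$-coordinate moves monotonically from $r(z_{2})$ to $r(z_{1})$. In the box coordinates the arc is the graph of a function of $s$ with bounded slope, so its tangent has positive $\partial_{s}$-component and it is transversal to $W=\partial_{s}$.

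The result is a closed curve $\Sigma'$. Along the orbit part, $\Sigma'$ is tangent to $W$. We must therefore tilt it. We perturb $\Sigma'$ within a thin tubular neighbourhood so that its tangent always makes a small positive $\Omega$-angle with $W$, i.e. $\Omega(W,\dot{\Sigma})>0$ everywhere. This is done by pushing the orbit piece slowly in the transverse direction using a flow-box chain along $\Gamma$. Equivalently, we take a curve $c(t)=\phi_{t}$ applied to a slowly drifting transverse parameter, i.e. a curve $\phi_{t}(\sigma(\epsilon g(t)))$ in flow-box coordinates patched along $\Gamma$. Its tangent is $W$ plus a small transverse term, which is everywhere transversal.

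The main obstacle is simplicity. Both the closing arc and the tilted curve must avoid self-intersections, and in particular the piece of $\Gamma$ must not be re-entered during the closing. I would handle this by choosing $z_{1},z_{2}$ to be consecutive returns to $\Sigma_{0}$ near $p$, so that $\Gamma$ meets $\Sigma_{0}$ only at its endpoints. Then only the local box needs care, and by shrinking $V$ and taking $\epsilon$ small, transversality and embeddedness are open conditions preserved under the small perturbation. Finally, we smooth the corners.
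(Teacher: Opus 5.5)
Your outline is the paper's construction. It finds a recurrence in a flow box giving consecutive returns $z_1\neq z_2$ to the transversal, and tilts the orbit arc between them by flowing a drifting transverse parameter (this is the paper's $\alpha_s=F(s,\rho(s))$ with $F(s,r)=\phi_{s\tau(r)}(\sigma(r))$). It then closes with a short transverse arc and smooths the corners. What is missing is the one step that makes the resulting closed curve transversal, and the justification you give in its place does not work.

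The issue is the \emph{direction} of the drift, which you never specify. Since $\mathbb{T}^2$ is oriented, a closed curve transversal to $W$ must have $\Omega(W,\dot c)$ of constant sign. A corner can be smoothed without losing transversality only if both one-sided tangents lie in the same open half-plane, $\{v:\Omega(W,v)>0\}$ or $\{v:\Omega(W,v)<0\}$.

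The sign of your closing arc is forced by the geometry. Write $z_1=\sigma(0)$ and $z_2=\sigma(d)$ with $d>0$. The closing arc runs from the level of $z_2$ to the level of $z_1$, i.e.\ in the $-\dot\sigma$ direction. So the drift must also start in the $-\dot\sigma$ direction, pushing the starting point to the side of $z_1$ away from $z_2$. You must then check two things:
\begin{itemize}
\item the tilted arc returns at a point $\sigma(f(\eta))$ with $f(\eta)>0$, which holds for small $|\eta|$ since $f(0)=d>0$;
\item the closing arc from there back to $z_1$ still runs in the $-\dot\sigma$ direction and meets the tilted arc only at its endpoints.
\end{itemize}
This is exactly what the paper does with $\rho'<0$, $\eta<0$, the closing segment $\sigma([0,f(\eta)])$ and the sign identity (\ref{eq:SignConsist}). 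Drifting the other way produces two pieces that cross $W$ in opposite senses, and no corner smoothing repairs that.

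Your appeal to openness cannot substitute for this step. $\Sigma'$ is tangent to $W$ along the entire orbit portion, so there is no transversality to preserve; openness only gives embeddedness of the perturbed curve.

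Relatedly, a graph $r=g(s)$ is transversal to $\partial_s$ iff $g'(s)\neq0$; a positive $\partial_s$-component is irrelevant. If you glue the graph tangentially to the two leaves to make $\Sigma'$ smooth, it is tangent to $W$ at its ends. That is a second place where the junction analysis is needed.

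Simplicity, which you flag as the main obstacle, is the easy part once consecutive returns are used.
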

\begin{proof}
Let $\gamma$ be a nonperiodic orbit. There exists $t_{n}\rightarrow\infty$
such that $\gamma_{t_{n}}$ converges to some $y$. By drawing a flow
box $V$ around $y$ and adjusting $t_{n}$ if necessary (cf. Proof
of Lemma \ref{lem:PRPerOrb}), one can find $s_{0}<s_{1}$ such that
$z_{i}=\gamma_{s_{i}}\in\Sigma$ ($i=0,1$) and $\gamma_{t}\notin\Sigma$
for all $t\in(s_{0},s_{1})$. Here $\Sigma$ denotes the vertical
transversal in $V$ passing through $y$. Note that $z_{0}\neq z_{1}.$

Let us fix a parametrisation $\sigma:(-c,c)\rightarrow\Sigma$ so
that $z_{0}=\sigma(0)$ and $z_{1}=\sigma(d)$ for some $d\in(0,c)$.
Define the first return map $f:(-\delta,\delta)\rightarrow(-c,c)$
by the relation 
\[
\phi_{\tau(r)}(\sigma(r))=\sigma(f(r)),\quad r\in(-\delta,\delta),
\]
where $\delta$ is a sufficiently small number and $\tau(r)$ is the
first return time of $\phi_{t}(\sigma(r))$ to $\Sigma$ (cf. (\ref{eq:FirstRetMapQ})).
Note that $f(0)=d$. Consider the function
\[
F(s,r):=\phi_{s\tau(r)}(\sigma(r))
\]
where $s\in[0,1]$ and $r\in(-\delta,\delta)$ (cf. (\ref{eq:Fsr})).
It is clear that 
\[
F(0,r)=\sigma(r),\quad F(1,r)=\sigma(f(r)).
\]
By shrinking $\delta$ if necessary, the map 
\[
F:[0,1]\times(-\delta,\delta)\rightarrow\mathbb{T}^{2}
\]
is injective and 
\[
F\big((0,1)\times(-\delta,\delta)\big)\cap\Sigma=\emptyset.
\]
Similar to the proof of Proposition \ref{prop:SuspChara}, one can
show that $f'(r)>0$ for all $r\in(-\delta,\delta)$. As a result,
there exists $\eta\in(-\delta,0)$ such that $f(\eta)>0$. 

Fix a function $\rho:[0,1]\rightarrow[\eta,0]$ such that $\rho(0)=0$,
$\rho(1)=\eta$ and $\rho'(s)<0$ for all $s\in[0,1]$. Define the
curve 
\[
\alpha_{s}:=F(s,\rho(s)),\quad0\leqslant s\leqslant1.
\]
Note that $\alpha$ is a simple curve that joins $z_{0}$ to $\sigma(f(\eta))$
outside $\Sigma$. By definition, one has 
\[
\dot{\alpha}_{s}=\partial_{s}F(s,\rho(s))+\rho'(s)\partial_{r}F(s,\rho(s)).
\]
The vector $\partial_{s}F(s,\rho(s))\in{\rm Span}\{W(\alpha_{s})\}$.
In addition, since $\rho'(s)<0,$ the vector $\partial_{r}F(s,\rho(s))$
is linearly independent of $\partial_{s}F(s,\rho(s))$ (cf. (\ref{eq:DrF})).
As a consequence, $\alpha$ is everywhere transversal to $W$. Let
$\beta:[1,2]\rightarrow\Sigma$ be the arc joining $\sigma(f(\eta))$
back to $z_{0}$ along $\Sigma$. Apparently, $\beta$ is also everywhere
transversal to $W$. The concatenated curve 
\[
\zeta_{s}:=\begin{cases}
\alpha_{s}, & 0\leqslant s\leqslant1;\\
\beta_{s}, & 1\leqslant s\leqslant2
\end{cases}
\]
is now a simple closed curve, which is smooth and everywhere transversal
to $W$ except at the two corners $z_{0}$ and $\sigma(f(\eta))$.
\begin{figure}[H] 
\begin{center}  
\includegraphics[scale=0.14]{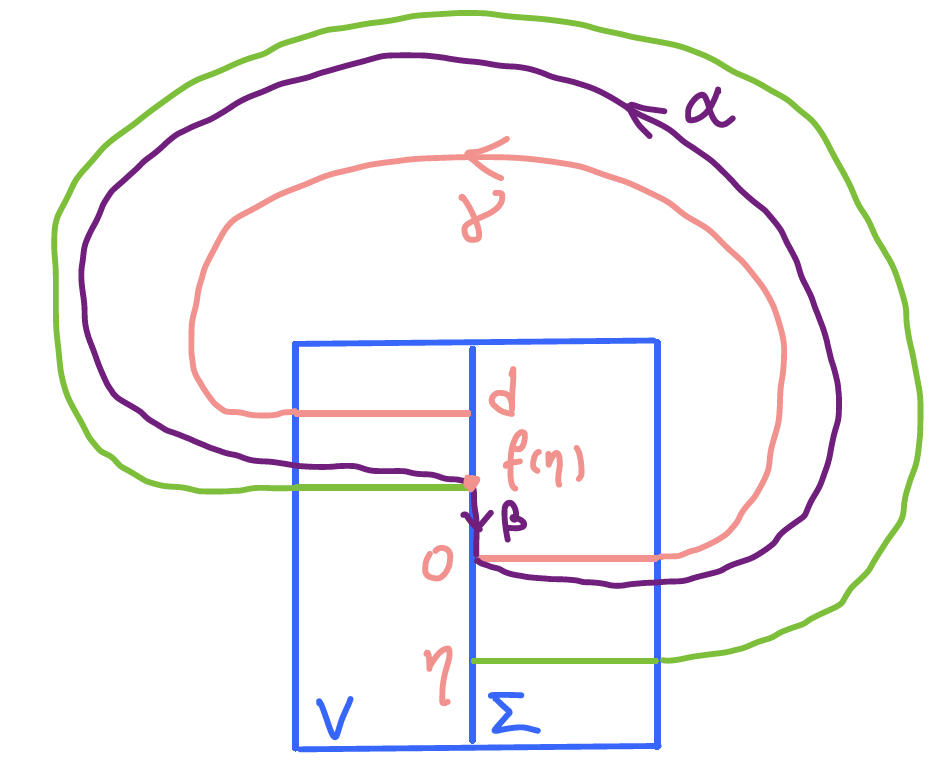} 
\end{center}
\end{figure}

It remains to see that the two corners can be smoothed locally without
losing the transversal property. Let $\Omega$ be a volume form on
$\mathbb{T}^{2}$ such that $\Omega(W(z_{0}),\dot{\sigma}(0))>0$.
The function 
\[
\psi(s):=\Omega\big(W(\zeta_{s}),\dot{\zeta}_{s}\big)
\]
has constant sign on $(0,1)$ and $(1,2)$. We claim that 
\begin{equation}
{\rm sgn}\psi(1-)={\rm sgn}\psi(1+)={\rm sgn}\psi(2-)={\rm sgn}\psi(0+).\label{eq:SignConsist}
\end{equation}
Indeed, since $\rho'(0)<0$ one has 
\begin{align*}
{\rm sgn}\psi(1-) & ={\rm sgn}\psi(0+)={\rm sgn}\Omega\big(W(z_{0}),\rho'(0)\dot{\sigma}(0)\big)\\
 & =-{\rm sgn}\Omega\big(W(z_{0}),\dot{\sigma}(0)\big).
\end{align*}
In addition, by the definition of $\beta$ one also has 
\[
{\rm sgn}\psi(1+)={\rm sgn}(2-)=-{\rm sgn}\Omega\big(W(z_{0}),\dot{\sigma}(0)\big)
\]
As a consequence, the relation (\ref{eq:SignConsist}) holds. This
relation shows that at any of the two corners $s_{*}=0,1$, both vectors
$\dot{\zeta}_{s_{*}-}$ and $\dot{\zeta}_{s_{*}+}$ ($0-$ is the
same as $2-$) lie in the cone 
\[
C_{s_{*}}:=\big\{ v:\Omega\big(W(\zeta_{s_{*}}),v\big)<0\big\}.
\]
It is clearly possible to smooth the corner $\zeta_{s_{*}}$ locally
in a way such that the resulting new smooth arc $\tilde{\zeta}_{s}$
still satisfies $\Omega(W(\tilde{\zeta}_{s}),v)<0$ when $s$ is near
$s_{*}$. One thus obtains a smooth, simple closed curve which is
everywhere transversal to $W$. 
\end{proof}
\begin{proof}[Proof of Theorem \ref{thm:Classification}: No periodic orbits case]

Suppose that $W$ has no periodic orbits. According to Lemma \ref{lem:ExistTran},
there exists a smooth, simple closed transversal curve $\Sigma$.
If $\Sigma$ does not intersect some orbit $\gamma,$ the tails of
$\gamma$ will be trapped outside a tubular neighbourhood of $\Sigma.$
The Poincar\'e-Bendixson theorem then produces a periodic orbit,
which leads to a contradiction (cf. Part (ii) in the proof of Proposition
\ref{prop:SuspChara}). Therefore, $\Sigma$ is complete. It follows
from Proposition \ref{prop:SuspChara} that ${\cal F}_{W}$ is conjugate
to a suspension. 

\end{proof}

\subsubsection{Case II: At least one periodic orbit}

Now we assume that $W$ has at least one periodic orbit. In this case,
we aim at showing that 
\[
{\cal F}_{W}\text{ has no Reeb component}\implies{\cal F}_{W}\text{ is conjugate to a suspension.}
\]
The key lemma for this part is stated as follows. 
\begin{lem}
\label{lem:NoReebTran}Let $A\cong S^{1}\times[0,1]$ be an embedded
annulus whose boundary consists of two compact leaves $C_{0}$ and
$C_{1}$. Suppose that $A$ does not contain any Reeb component. Then
there exists a smooth, simple curve $c:[0,1]\rightarrow A$ such that
$c_{0}\in C_{0},$ $c_{1}\in C_{1}$ and $c$ is everywhere transversal
to $W$ on $A$.
\end{lem}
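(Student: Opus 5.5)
We plan to argue by induction on the "complexity" of the periodic orbits inside $A$, using the closedness of the set ${\cal P}_A$ of periodic points (Proposition \ref{prop:OrbitSetClose}) to decompose $A$ into closed sub-annuli foliated by periodic orbits and open sub-annuli free of periodic orbits. The transversal $c$ will be built piece by piece on each sub-annulus and then concatenated, with corners smoothed as at the end of the proof of Lemma \ref{lem:ExistTran}.

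\textbf{Step 1: orientation of all periodic orbits in $A$.} By Proposition \ref{prop:POTop}(iii), every periodic orbit in $A$ has homology class $\pm[C_0]$. Since $A$ contains no Reeb component, Proposition \ref{prop:ReebCrit}, applied to sub-annuli of $A$, shows that all periodic orbits in $A$, including $C_0$ and $C_1$, carry the same homology class under the $W$-orientation. Fix a volume form $\Omega$ and a transverse coordinate $u\in[0,1]$ on $A\cong S^1\times[0,1]$. Every periodic orbit is then an essential simple closed curve separating the boundaries, and the periodic orbits are linearly ordered in $u$.

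\textbf{Step 2: local transversals.} The transversal is constructed in three situations.
\begin{itemize}
\item Near any periodic orbit $C$, a short arc of the flow-box transversal $\Sigma$ through a point of $C$ crosses $C$ transversally.
\item On a sub-annulus $A'$ bounded by two periodic orbits $C_-$, $C_+$ with no periodic orbits in its interior, Lemma \ref{lem:DistEndPO} shows that every interior orbit has $\alpha$-limit one boundary and $\omega$-limit the other. Because $C_\pm$ carry the same orientation, the orbits spiral consistently. Take a flow-box transversal $\Sigma_-$ at a point of $C_-$ and $\Sigma_+$ at a point of $C_+$. Follow an interior orbit $\gamma$ from a point of $\Sigma_-$ to a point of $\Sigma_+$, and push it off transversally using the device $\alpha_s=F(s,\rho(s))$ from Lemma \ref{lem:ExistTran}: we tilt along a one-parameter family of nearby orbits, with $\rho$ monotone. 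This yields a curve that is everywhere transversal to $W$ and joins $C_-$ to $C_+$. Its sign $\Omega(W,\dot c)$ must be arranged to match the sign used at the boundary crossings. Here the no-Reeb hypothesis enters: in a Reeb component the two boundary orbits would force opposite signs at the two ends.
\item On a region of $A$ foliated by periodic orbits, i.e.\ an interval of $u$ where every orbit is periodic, a compactness argument with flow boxes covering a transversal gives a single transversal arc crossing the whole region.
\end{itemize}

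\textbf{Step 3: gluing.} The complement of ${\cal P}_A$ is a countable union of open sub-annuli. Steps 1 and 2 show that on each of them, and near each periodic orbit, the sign of $\Omega(W,\cdot)$ along the constructed transversal is the same global sign. This consistency is again a consequence of all periodic orbits being co-oriented. Infinitely many gaps could accumulate, so we use compactness: cover the transverse segment by finitely many flow boxes around periodic orbits together with finitely many gap annuli of non-small width. The small gaps lie inside the flow boxes of their accumulation orbits, where a flow-box transversal already crosses them. Concatenating finitely many pieces and smoothing the corners inside the same open cone, as in Lemma \ref{lem:ExistTran}, gives a smooth simple transversal curve from $C_0$ to $C_1$.

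\textbf{Main obstacle.} The main obstacle is the second item of Step 2. We must show that the pushed-off curve actually reaches $C_+$ while staying injective and transversal. The tail of the orbit spirals infinitely often around $C_+$, so we should stop at the first hit of $\Sigma_+$ and then continue along $\Sigma_+$. We must also check that the monotone Poincaré-map behaviour given by Lemma \ref{lem:MonoLem} makes the tilt direction compatible with the required sign at both ends. I would first attempt this in the universal cover of $A$, where the orbits become graphs over the transverse direction.
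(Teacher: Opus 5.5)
Your proposal is sound in outline, but it takes a genuinely different route from the paper. Your Step 1 matches the paper's first line. After that, the paper never splits $A$ into periodic bands and gap annuli. It takes the set $\mathcal{R}$ of points reachable from $C_0$ by simple, positively transversal arcs, and shows that $\mathcal{R}\cap\mathrm{int}A$ is open and nonempty. It then shows that if $\mathcal{R}$ missed a collar of $C_1$, then $\bar{\mathcal{R}}\setminus\mathcal{R}$ would be a single compact leaf $C$ that positive transversals cannot cross. This forces $C$ to be oppositely oriented to $C_0$, and hence, by Proposition~\ref{prop:ReebCrit}, there would be a Reeb component in $A$. That connectedness argument handles accumulating periodic orbits and gaps uniformly. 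It needs neither Lemma~\ref{lem:DistEndPO} nor the structure of $A\setminus\mathcal{P}_A$, and it never crosses a gap explicitly. Your argument is more constructive and makes transparent that co-orientation is the only obstruction, at the price of bookkeeping you should make explicit.

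\emph{The sign issue in the gap step is automatic.} $\Omega(W,\partial_rF)$ has constant sign along the thin strip and $\partial_sF\parallel W$, so $\Omega(W,\dot\alpha)=\rho'\,\Omega(W,\partial_rF)$. Choosing $\mathrm{sgn}\,\rho'$ therefore makes the push-off match the exit from $C_-$ along $\Sigma_-$. Moving along $\Sigma_+$ towards $C_+$ then has the same sign precisely because $C_\pm$ are co-oriented. What you actually need Lemma~\ref{lem:MonoLem} for is injectivity, i.e.\ keeping the strip off the initial segment of $\Sigma_-$. More simply, start from the last hit of $\Sigma_-$ before the first hit of $\Sigma_+$.

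\emph{Consecutive pieces must be matched.} They generally meet a shared periodic orbit $D$ at different points. Replace a piece $\Sigma$ by $\phi_t(\Sigma)$ for a suitable $t$; flow images of transversal arcs are transversal and cross the same orbits. A positively transversal arc crosses each co-oriented periodic orbit at most once, so the truncated pieces lie in successive closed sub-annuli and the concatenation is simple.

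\emph{Phrase the compactness step via the ordered set of periodic orbits.} This set is compact in the Hausdorff sense by Proposition~\ref{prop:OrbitSetClose}. Finitely many flow-box transversals then cross all periodic orbits. A chosen arc entering a gap through its lower boundary can only leave through the upper one. Hence only the finitely many gaps containing an endpoint of one of these arcs require your gap construction.

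\emph{Do not rely on the universal-cover picture.} Lifted orbits are not graphs over the transverse direction in general. That holds only after a conjugation to a suspension annulus, whose construction already requires the transversal you are building.
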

\begin{proof}
First of all, the periodic orbits $C_{0},C_{1}$ must have the same
homology class; otherwise, there would be a Reeb component inside
$A$ according to Proposition \ref{prop:ReebCrit}, which contradicts
the assumption. Let $\Omega$ be a volume form on $A$ such that $\Omega(W(x),v)>0$
when $x\in C_{0}$ and $v\in T_{x}\mathbb{T}^{2}$ is a transversal
vector pointing into $A$. A vector $v\in T_{y}\mathbb{T}^{2}$ is
said to be \textit{positively transversal} to $W$ at $y\in A$ if
$\Omega(W(y),v)>0$. A smooth curve $c_{t}$ in $A$ is \textit{positively
transversal} to $W$ if $\Omega(W(c_{t}),\dot{c}_{t})>0$ for all
$t.$ Let us define 
\begin{align*}
{\cal R}:= & \big\{ y\in A:\exists\text{ a smooth, simple, positively transversal }\\
 & \quad\quad\quad c:[0,1]\rightarrow A\text{ s.t. }c_{0}\in C_{0},c_{1}=y\big\}.
\end{align*}
Our aim is to show that ${\cal R}\cap C_{1}\neq\emptyset$. We divide
the argument into several steps.

\vspace{2mm}\noindent (i) Let $N_{1}$ be an open half-tubular neighbourhood
of $C_{1}$ in $A$ (a lower collar). We claim that ${\cal R}\cap N_{1}\neq\emptyset$.
Suppose this is true and let $y\in{\cal R}\cap N_{1}$. By definition,
there is a simple, positively transversal curve $c$ from $C_{0}$
to $y$. By travelling along the fiber in $N_{1}$, one can easily
continue $c$ to the boundary $C_{1}$. The resulting curve is smooth
and positively transversal to $W$ except at the junction. Similar
to the proof of Lemma \ref{lem:ExistTran}, one can smooth the corner
to obtain a smooth, simple, positively transversal curve from $C_{0}$
to $C_{1}$. Now it remains to show that ${\cal R}\cap N_{1}\neq\emptyset$.
In what follows, we assume on the contrary that ${\cal R}\subseteq A\backslash N_{1}.$

\vspace{2mm}\noindent (ii) We first show that ${\cal R}\cap{\rm int}A$
is an open and nonempty subset of $A$. Indeed, any point inside a
half-tubular neighbourhood of $C_{0}$ in $A$ (an upper collar) is
clearly reachable from $C_{0}$. As a result, ${\cal R}\cap{\rm int}A\neq\emptyset$.
Next, let $y\in{\cal R}\cap{\rm int}A$. Draw a flow box $(V;s,r)\subseteq{\rm int}A$
centered at $y=(0,0).$ We assume that the transversal coordinate
$r$ is chosen such that a curve $r_{t}$ being positively transversal
means $\dot{r}_{t}>0$. Let $c$ be a smooth, simple and positively
transversal arc from $C_{0}$ to $y$. Let $z\in V\cap c$. By positive
transversality, one must have $r(z)<0.$ It follows that all points
in 
\[
V':=\{x\in V:r(x)>r(z)\}
\]
is reachable from $z$ by a simple positive transversal. By smoothing
the corner if necessary, one concludes that $V'\subseteq{\cal R}$.
Therefore, ${\cal R}\cap{\rm int}A$ is an open subset of $A$. 

\vspace{2mm}\noindent (iii) Define $\partial_{A}{\cal R}:=\bar{{\cal R}}\backslash{\cal R}$.
Note that $\partial_{A}{\cal R}\neq\emptyset$ under the current assumption
that ${\cal R}\subseteq A\backslash N_{1}$, since ${\cal R}\cap{\rm int}A$
is open. We claim that $\partial_{A}{\cal R}$ is a single compact
leaf of ${\cal F}_{W}$. 

To this end, let $y\in\partial_{A}{\cal R}$. Draw a flow box $(V;s,r)$
centered at $y=(0,0),$ whose transversal coordinate $r$ is oriented
in the same way as in Part (ii). The key observation is that 
\begin{equation}
V\cap{\cal R}=V\cap\{r>0\}.\label{eq:ReaSetFB}
\end{equation}
Let $z\in V\cap\{r>0\}$. Since $y\in\partial_{A}{\cal R}$ and ${\cal R}$
is open in ${\rm int}A$, there exists $y'\in V\cap{\cal R}$ such
that $0<r(y')<r(z)$. It follows that $z$ is reachable by a positive
transversal from $y'$. Upon smoothing the corner at $y'$, one concludes
that $z\in{\cal R}$. To prove the other direction, one observes that
$V\cap\{r<0\}$ is not reachable; otherwise, $y$ would be reachable
which is a contradiction. This also implies that $V\cap\{r=0\}$ is
not reachable, since the only way to reach $V\cap\{r=0\}$ is through
$V\cap\{r<0\}$. As a consequence, $V\cap\{r\leqslant0\}\subseteq{\cal R}^{c}$.
The relation (\ref{eq:ReaSetFB}) thus follows. An immediate corollary
of (\ref{eq:ReaSetFB}) is that $V\cap\partial_{A}{\cal R}=V\cap\{r=0\}.$
In particular, $\partial_{A}{\cal R}$ is a 1D submanifold of $A$. 

The next observation is that $\partial_{A}{\cal R}$ is a disjoint
union of leaves. Indeed, let $x\in\partial_{A}{\cal R}$ and define
\[
I_{x}:=\{t\in\mathbb{R}:\phi_{t}(x)\in\partial_{A}{\cal R}\}.
\]
Since $\partial_{A}{\cal R}$ is closed in $A$, it is obvious that
$I_{x}$ is closed. To prove that $I_{x}$ is open, let $y=\phi_{t}(x)\in\partial_{A}{\cal R}$.
By applying the local relation (\ref{eq:ReaSetFB}) to $y$, one sees
that $\phi_{s}(x)\in\partial_{A}{\cal R}$ for $s$ near $t$ (because
$V\cap\{r=0\}$ is a flow line). Therefore, $I_{x}$ is both open
and closed, hence being equal to $\mathbb{R}$. 

Since $\partial_{A}{\cal R}$ is a closed, embedded 1D submanifold
of the compact annulus $A$, every connected component of $\partial_{A}{\cal R}$
has to be compact. As a consequence, $\partial_{A}{\cal R}$ is a
disjoint union of compact leaves. Suppose that there are at least
two such leaves, say $C',C''$ (we assume that $C'$ is closer to
$C_{0}$). Since a small lower collar of $C''$ is reachable, any
such positive transversal would intersect $C'$. But $C'\subseteq\partial_{A}{\cal R}\subseteq{\cal R}^{c}$.
This leads to a contradiction. Therefore, $\partial_{A}{\cal R}$
has only one connected component, i.e. a compact leaf. 

\vspace{2mm}\noindent (iv) Let $\partial_{A}{\cal R}=C$ for some
compact leaf $C$ according to Part (iii). We claim that $C$ and
$C_{0}$ are in opposite homology classes. Suppose this is not true.
Due to our orientation convention, positively transversal vectors
at $y\in C$ are pointing towards the $C_{1}$-side. Let $N$ be a
tubular neighboorhood of $C.$ Note that $C$ divides $N$ into two
halves: $N_{+}$ (the component on the $C_{1}$-side) and $N_{-}$
(the component on the $C_{0}$-side). Let $z\in N_{+}$ and $y\in C=\partial_{A}{\cal R}$.
Then a sufficiently close-to-$y$ element $y'\in N_{-}$ is reachable
by a positive transveral from $C_{0}$. By continuing this transversal
towards $z$ inside $N$ in the obvious way (along fiber directions
towards $C_{1}$),one obtains a positive transversal that will intersect
$C$. This contradicts the fact that $C\subseteq{\cal R}^{c}$. Therefore,
$C$ and $C_{0}$ must have opposite directions. It follows from Proposition
\ref{prop:ReebCrit} that there is a Reeb component inside the annulus
bounded by $C_{0}$ and $C$. This contradicts the assumption of the
lemma. \begin{figure}[H] 
\begin{center}
\includegraphics[scale=0.11]{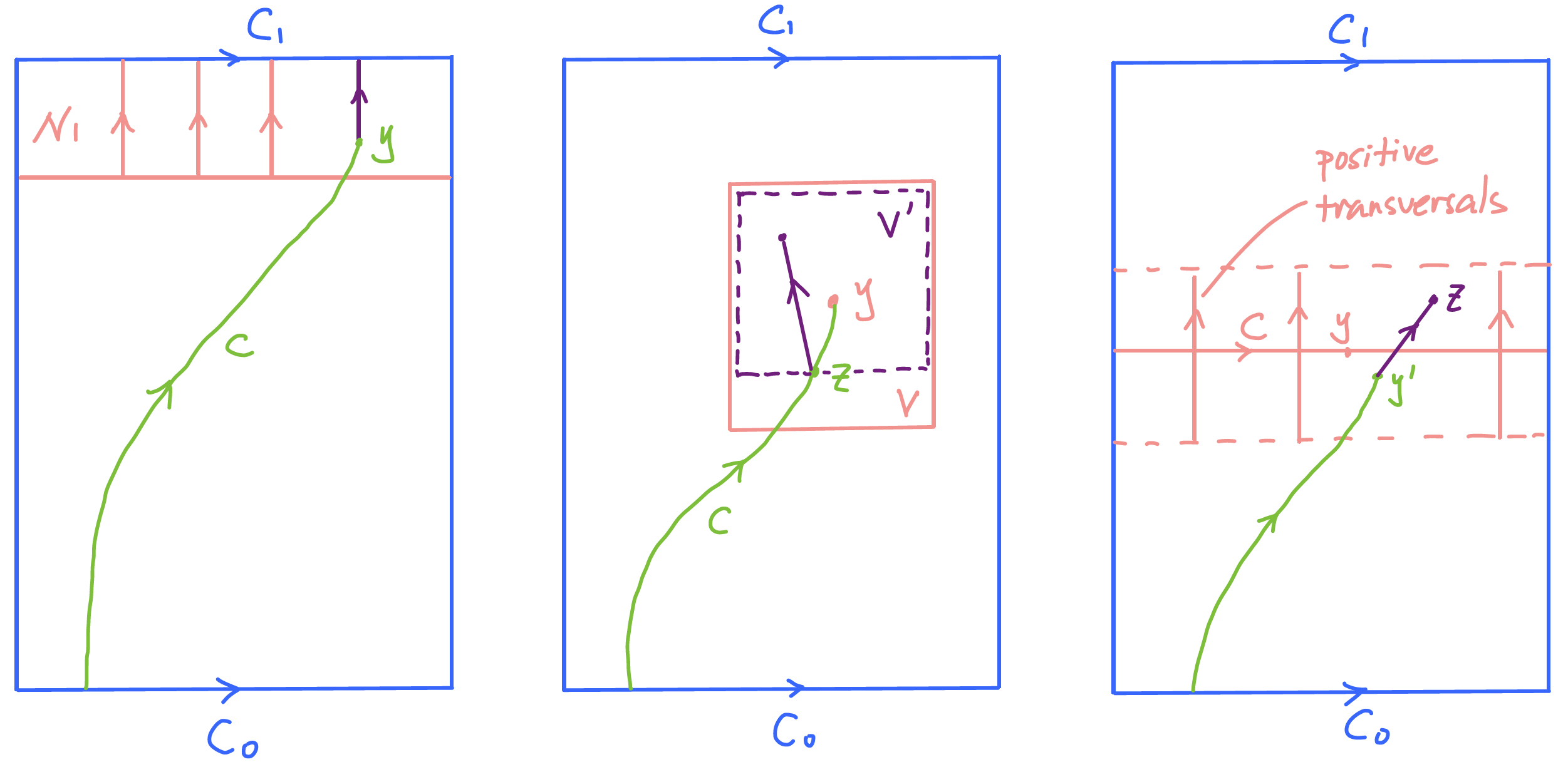} 
\protect\caption{The left, middle and right figures correspond to Steps (i), (ii) and (iv) respectively.} 
\end{center}
\end{figure}

To summarise, one arrives at the conclusion that the assumption ${\cal R}\subseteq A\backslash N_{1}$
cannot hold. In other words, one must have ${\cal R}\cap N_{1}\neq\emptyset.$
The proof of the lemma is now complete. 
\end{proof}
\begin{proof}[Proof of Theorem \ref{thm:Classification}: At least one periodic orbit case]

Suppose that ${\cal F}_{W}$ has no Reeb component and there is a
periodic orbit $C$ of $W$. Cutting the torus $\mathbb{T}^{2}$ along
$C$ produces a compact annulus $A$ whose boundary consists of two
compact leaves $C_{0}$ and $C_{1}$ (they are copies of $C$). Since
$A$ does not contain any Reeb component, according to Lemma \ref{lem:NoReebTran},
there exists a smooth, simple transversal $c:[0,1]\rightarrow A$
from $c_{0}\in C_{0}$ to $c_{1}\in C_{1}$. By gluing back the torus
along $C$, the curve $c$ becomes a smooth, simple transversal such
that $c_{0},c_{1}\in C$ and $c_{t}\notin C$ for all $t\in(0,1).$
Let $\beta$ be the arc joining $c_{1}$ to $c_{0}$ along $C$.

Similar to the proof of Lemma \ref{lem:ExistTran}, by suitably deforming
$\beta$ along transversal directions (e.g. using the fiber coordinate
in a tubular neighbourhood of $C$) and corner smoothing, one obtains
a smooth, simple closed transversal $\Sigma$ that intersects $C$
precisely once. In particular, $I(\Sigma,C)=\pm1$.

\begin{figure}[H] 
\begin{center}  
\includegraphics[scale=0.11]{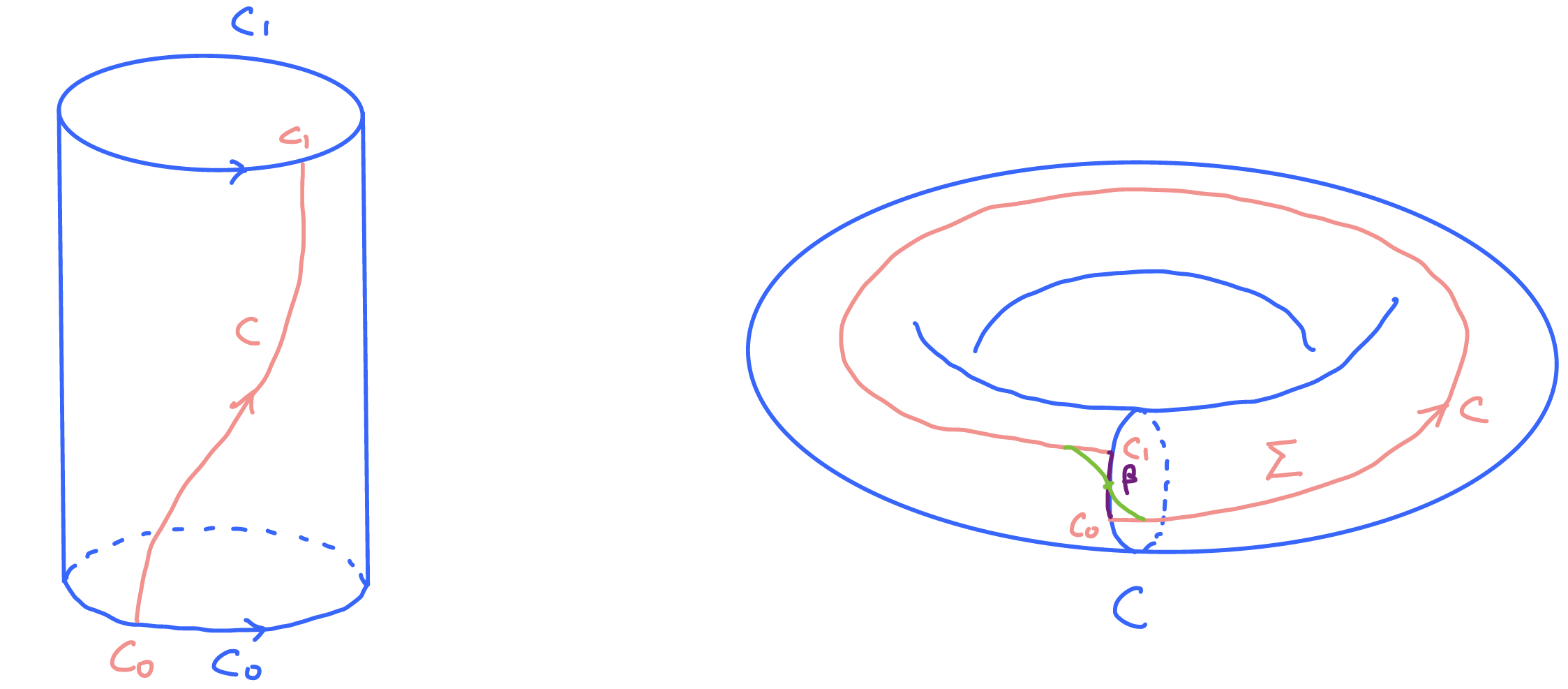} 
\end{center}
\end{figure}

Suppose that $\Sigma$ is not complete. The Poincar\'e-Bendixson
theorem then produces a periodic orbit $C'$ which does not intersect
$\Sigma.$ In particular, $I(\Sigma,C')=0$. But one knows from Proposition
\ref{prop:POTop} that $[C']=\pm[C].$ This leads to a contradiction.
Therefore, $\Sigma$ is a complete transversal. It follows from Proposition
\ref{prop:SuspChara} that ${\cal F}_{W}$ is conjugate to a suspension.

\end{proof}

\subsection{\label{sec:TorDecomp}The dynamical picture of nondegenerate torus
flow}

Let us summarise the dynamical picture of a nondegenerate torus flow.
Let $W$ be a smooth, nonvanishing vector field on $\mathbb{T}^{2}$.
Recall from Proposition \ref{prop:OrbitSetClose} that the set 
\[
{\cal P}:=\{x:{\cal O}_{x}\text{ is periodic}\}
\]
is closed. As a result, one can decompose $\mathbb{T}^{2}\backslash{\cal P}$
into an at most countable union of disjoint, connected, open subsets
$U_{\alpha}$:
\[
\mathbb{T}^{2}\backslash{\cal P}=\bigsqcup_{\alpha}U_{\alpha}.
\]

\begin{lem}
Suppose that ${\cal P}\neq\emptyset,\mathbb{T}^{2}.$ Then each $U_{\alpha}$
is an open annulus whose boundary consists of two periodic orbits
(which may coincide if ${\cal P}$ is a single periodic orbit). All
orbits inside $U_{\alpha}$ are nonperiodic.
\end{lem}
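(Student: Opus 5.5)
Fix a connected component $U_\alpha$ of $\mathbb{T}^2\backslash{\cal P}$. The plan is to cut the torus along one periodic orbit, so that the problem lives on a compact annulus, and then to identify $U_\alpha$ as the region strictly between two consecutive periodic orbits.

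First, since ${\cal P}\neq\emptyset$, pick a periodic orbit $C\subseteq{\cal P}$. Cut $\mathbb{T}^2$ along $C$ to obtain a compact annulus $A$ whose boundary curves $C_0,C_1$ are copies of $C$. Since $U_\alpha\cap C=\emptyset$, the set $U_\alpha$ lifts homeomorphically to a connected open subset of ${\rm int}A$. By Proposition \ref{prop:OrbitSetClose} (i), ${\cal P}_A$ is closed in $A$. By Proposition \ref{prop:POTop}, every periodic orbit in $A$ is essential, simple and homologous to $\pm[C]$.

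The periodic orbits in $A$ are pairwise disjoint, essential curves. Any such curve in the annulus separates it into two subannuli, one on the $C_0$-side and one on the $C_1$-side. This induces a linear order on the periodic orbits in $A$ ("distance from $C_0$").

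Now fix $x\in U_\alpha$. Define $C_-$ to be the closure of the union of the subannuli bounded by $C_0$ and periodic orbits lying on the $C_0$-side of $x$, and let $D_-$ be the outer boundary of $C_-$. Define $D_+$ symmetrically from the $C_1$-side. The point of this construction is that $D_\pm$ are the periodic orbits closest to $x$ from either side.

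The main obstacle is showing that $D_\pm$ are genuine periodic orbits, i.e. that a monotone limit of periodic orbits is a periodic orbit. I would handle this as follows.

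1. Take a transversal arc $\Sigma$ through $x$, crossing $A$ from $C_0$ to $C_1$. Such an arc exists locally via flow boxes; alternatively one can work on a fibre of the annulus parametrisation, using only the intersection points with periodic orbits.
2. Each periodic orbit on the $C_0$-side of $x$ meets $\Sigma$. Let $p_-$ be the supremum of these intersection points along $\Sigma$.
3. Since ${\cal P}_A$ is closed, $p_-$ lies in ${\cal P}_A$, so its orbit $D_-$ is periodic.
4. Because $D_-$ is a limit from one side, it lies on the $C_0$-side of $x$ and does not pass through $x$. The reason is that $x\notin{\cal P}$ while ${\cal P}_A$ is closed, so $x$ has a neighbourhood free of periodic points.

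If one worries whether every periodic orbit actually crosses a given short arc, one can replace the arc by a fibre $\{\theta\}\times[0,1]$ of $A\cong S^1\times[0,1]$. Each essential simple closed curve separating $C_0$ from $C_1$ must meet every such fibre.

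Finally, let $A'$ be the open subannulus strictly between $D_-$ and $D_+$. By construction it contains $x$ and contains no periodic orbits, so $A'\subseteq\mathbb{T}^2\backslash{\cal P}$. Since $A'$ is connected, $A'\subseteq U_\alpha$. Conversely, $U_\alpha$ is connected and disjoint from $D_\pm$, so it cannot leave $A'$; hence $U_\alpha=A'$.

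Thus $U_\alpha$ is an open annulus with boundary $D_-\cup D_+$, and every orbit inside it is nonperiodic by definition of ${\cal P}$. When projected back to $\mathbb{T}^2$, $D_\pm$ may coincide with $C$ (this happens for $C_0$ and $C_1$). In particular, if ${\cal P}=C$ is a single orbit, both boundary curves are $C$, which accounts for the parenthetical case in the statement.
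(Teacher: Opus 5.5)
Your strategy is the same as the paper's. You cut along a periodic orbit, produce for $x\in U_\alpha$ an extremal periodic orbit on each side of $x$ by a limit argument using closedness of ${\cal P}_A$ (you take a supremum along an arc, the paper a point of $S_-$ nearest to $x$), and identify $U_\alpha$ with the annulus between them.

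The gap is in step 4. Closedness of ${\cal P}_A$ shows that $p_-$ lies on \emph{some} periodic orbit $D_-$, and the periodic-free neighbourhood of $x$ shows $x\notin D_-$. Neither shows that $D_-$ is on the $C_0$-side of $x$. You need that both for $x\in A'$ and for ``by construction $A'$ contains no periodic orbits''.

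For a genuine transversal from $C_0$ to $C_1$ through $x$ this would be automatic, since each periodic orbit meets it exactly once and the order along the arc is the order of the orbits. But such an arc need not exist. If $U_\alpha$ is a Reeb component, no arc transversal to $W$ can cross it from one boundary leaf to the other: the two leaves are oppositely oriented, so the arc would cross $W$ with opposite signs at its two ends.

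Your fallback, a fibre $\{\theta\}\times[0,1]$, does not fix this. Periodic orbits may cross the fibre many times and on both sides of $x$, so the supremum must at least be restricted to the segment from $C_0$ to $x$. More seriously, as far as topology and closedness of ${\cal P}_A$ go, a limit of points of $C_0$-side orbits can lie on a $C_1$-side orbit. Disjoint essential curves below $x$ can send ever thinner fingers up towards a curve above $x$ while $x$ stays between them.

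What rules this out is dynamical. Suppose periodic orbits $E_n$ pass through points $q_n\to p\in D$, with $D$ periodic of period $T$, and move $q_n$ along its orbit onto a flow-box transversal $\Sigma_p$ at $p$.
\begin{itemize}
\item The Jordan-curve argument of Lemma \ref{lem:MonoLem}, run in a planar neighbourhood of $A$, shows that a periodic orbit meets $\Sigma_p$ at most once. So $E_n$ closes up at its first return.
\item By continuity of the flow on $[0,2T]$, $E_n$ then lies in an arbitrarily thin collar of $D$ for large $n$. Since $x\notin D$, $E_n$ is on the same side of $x$ as $D$.
\end{itemize}
Consequently the union of the $C_0$-side periodic orbits is closed, and likewise on the $C_1$-side. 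With this, $p_-$ is attained on a $C_0$-side orbit and the rest of your argument goes through. A $C_0$-side orbit inside $A'$ would separate $p_-$ from $x$ and hence meet the fibre strictly between them; the other side is symmetric. The paper's proof asserts the corresponding membership $C_-(x)\in{\cal P}_-(x)$ at exactly this point, so this is where the real content lies.

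Alternatively, you can avoid extremal orbits altogether. The frontier of $U_\alpha$ in $A$ is nonempty, contained in ${\cal P}_A$ and flow-invariant, hence a union of periodic orbits.
\begin{itemize}
\item Two frontier orbits cannot lie on the same side of $U_\alpha$, because the inner one separates the outer one from $\overline{U_\alpha}$.
\item If, say, no frontier orbit lay on the $C_0$-side, the frontier would be a single orbit $D$. Then $U_\alpha$ would be open and closed in the component of $A\setminus D$ containing $C_0$, hence would contain $C_0\subseteq{\cal P}_A$, a contradiction.
\end{itemize}
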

\begin{proof}
Fix a periodic orbit $C_{*}$ and cut the torus along $C_{*}$. This
produces a compact annulus $A\cong S^{1}\times[0,1]$ whose boundary
consists of two copies $C_{0},C_{1}$ of $C_{*}$ and whose interior
is diffeomorphic to $\mathbb{T}^{2}\backslash C$ (we view $C_{0},C_{1}$
as distinct periodic orbits). Let ${\cal C}$ be the collection of
periodic orbits on $A$. A given periodic orbit $C\in{\cal C}\backslash\{C_{0},C_{1}\}$
divides $A$ into two connected components: $A_{-}(C)$ (the one containing
$C_{0}$) and $A_{+}(C)$ (the one containing $C_{1}$). As a convention,
$A_{-}(C_{0})=\emptyset$ and $A_{+}(C_{0})=A\backslash C_{0}$ (respectively,
$A_{-}(C_{1})=A\backslash C_{1}$ and $A_{+}(C_{1})=\emptyset$).

Now let $U_{\alpha}$ be a connected component of $A\backslash{\cal P}.$
Given $x\in U_{\alpha}$, we define 
\begin{align*}
{\cal P}_{-}(x) & :=\big\{ C\in{\cal C}:x\in A_{+}(C)\big\},\\
{\cal P}_{+}(x) & :=\big\{ C\in{\cal C}:x\in A_{-}(C)\big\}
\end{align*}
respectively. Let 
\[
S_{-}:=\bigcup_{C\in{\cal P}_{-}(x)}C
\]
and define 
\[
d_{-}:=\inf_{y\in S_{-}}d(x,y).
\]
Note that $d_{-}\in(0,\infty)$ because $S_{-}$ is nonempty ($C_{0}\in S_{-}$)
and $U_{\alpha}\subseteq{\cal P}^{c}$ is open. Let $y_{n}\in S_{-}$
be a sequence such that $d(x,y_{n})\rightarrow d_{-}.$ Since $A$
is compact, by passing to a subsequence one can find some $z$ with
$y_{n}\rightarrow z$ and thus $d(x,z)=d_{-}.$ It follows from Proposition
\ref{prop:OrbitSetClose} that $C_{-}(x):={\cal O}_{z}$ is a periodic
orbit and one has $C_{-}(x)\in{\cal P}_{-}(x)$. One can identify
another extremal periodic orbit $C_{+}(x)\in{\cal P}_{+}(x)$ in a
similar way. The two orbits $C_{-}(x)$ and $C_{+}(x)$ bound an annulus
whose interior is precisely $U_{\alpha}$. By construction, there
are no periodic orbits inside $U_{\alpha}$.
\end{proof}
\begin{figure} 
\begin{center}  
\includegraphics[scale=0.14]{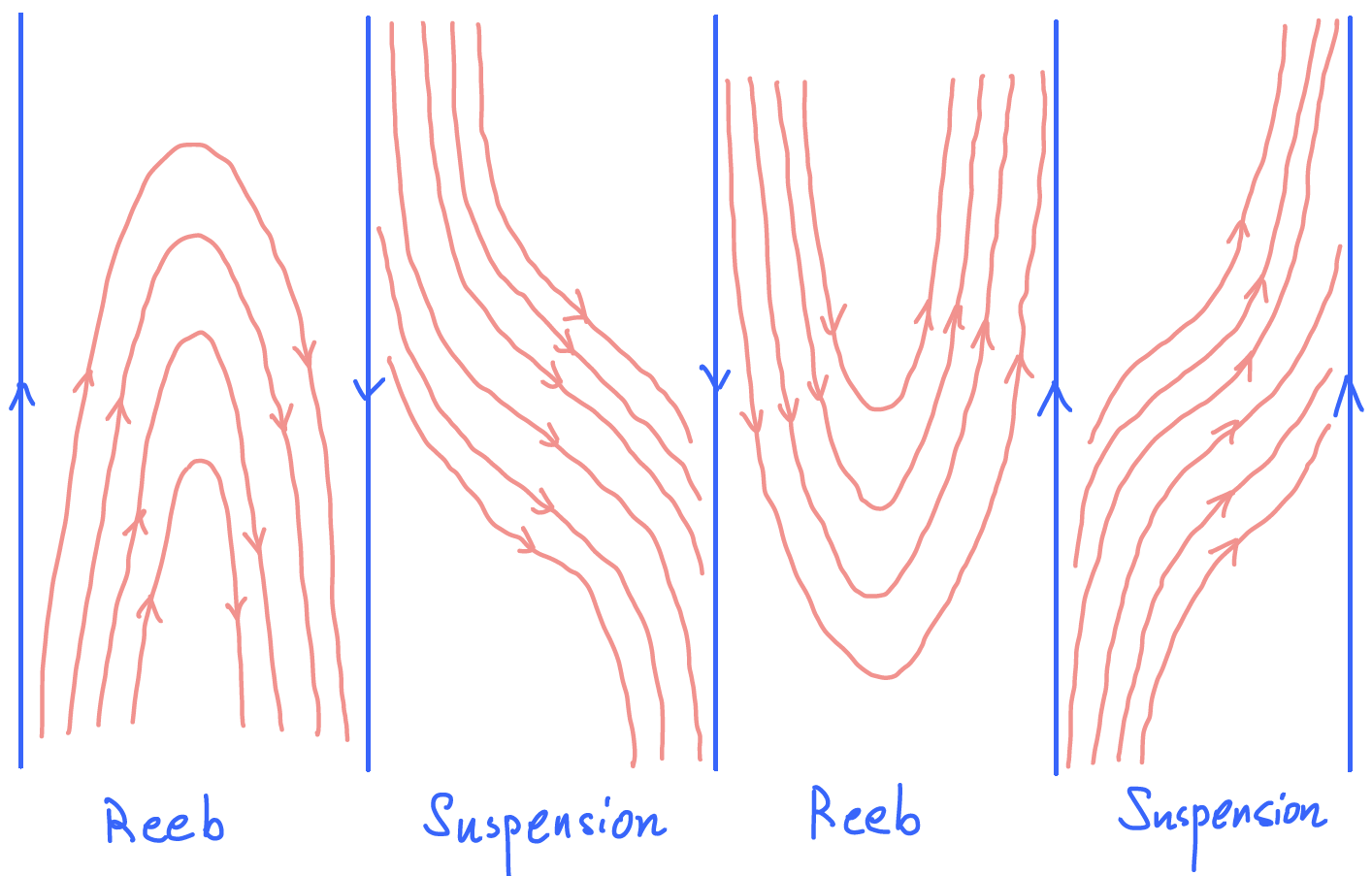} 
\protect\caption{Dynamical picture of torus flow.}\label{fig:torus_dyn}
\end{center}
\end{figure}
Suppose that ${\cal P}$ is not a single periodic orbit nor the whole
torus. Let $\partial U_{\alpha}=C_{1}\sqcup C_{2}$ and let $x\in U_{\alpha}$.
According to Lemma \ref{lem:DistEndPO}, one has either $\alpha(x)=C_{1},$
$\omega(x)=C_{2}$ or $\alpha(x)=C_{2},$ $\omega(x)=C_{1}$. In other
words, every orbit inside $U_{\alpha}$ approaches the two distinct
boundary leaves in both ends. If $C_{1},C_{2}$ have opposite homology
classes, $\overline{U_{\alpha}}$ is a Reeb component. In this case,
it is clear from the proof of Proposition \ref{prop:ReebCrit} that
there is at least another Reeb component inside $\overline{U_{\alpha}}^{c}$. 

On the other hand, if $C_{1},C_{2}$ have the same orientation, $\overline{U_{\alpha}}$
is conjugate to a \textit{suspension annulus}. The notion of a suspension
annulus is similar to a suspension, except that one does not close
the circle. More precisely, let $f:[0,1]\rightarrow[0,1]$ be a diffeomorphism
such that $f(0)=0$ and $f(1)=1$. Define the quotient space
\[
\Xi_{f}:=([0,1]\times\mathbb{R})/_{\sim}
\]
where the equivalence relation $\sim$ is defined by (\ref{eq:SuspEquiv}).
The foliation of vertical leafs $t\mapsto[(x,t)]$ is the suspension
annulus with respect to $f$. Topologically, $\Xi_{f}$ is a compact
annulus whose boundary curves are the two periodic leaves 
\[
t\mapsto[(0,t)],\quad t\mapsto[(1,t)].
\]
Suppose that $\partial U_{\alpha}$ consists of two periodic orbits
with the same homology class. By using the same argument as in Section
\ref{subsec:NoPO}, one can show that $\overline{U_{\alpha}}$ is
conjugate to a suspension annulus. 

If ${\cal P}=C$ is a single periodic orbit, the torus $\mathbb{T}^{2}$
is conjugate to a suspension of some $f\in{\rm Diff}^{+}(S^{1})$
with exactly one periodic point. Every nonperiodic orbit approaches
$C$ in both tails. If ${\cal P}=\emptyset,$ every orbit is nonperiodic
and none of their tails will be trapped in a compact subset of a planar
region. In this case, ${\cal F}_{W}$ is also conjugate to a suspension
whose suspension map $f\in{\rm Diff}^{+}(S^{1})$ does not have periodic
points. 

A typical dynamical picture of the foliation ${\cal F}_{W}$ (viewed on the universal cover $\mathbb{R}^2$) is sketched
in Figure \ref{fig:torus_dyn}. 

\begin{rem}
It is possible to have a continuum family of periodic orbits, e.g.
${\cal P}$ can be a Cantor-like set. 
\end{rem}
\end{appendices}

\section*{Acknowledgement}


S. Wang gratefully acknowledges the support by the
European Research Council (ERC) through the project
\emph{Stochastic Quantum Gauge Theories} (SQGT), Grant Agreement
No.~101116964.

\newpage
\bibliographystyle{plainurl}
\bibliography{bibliography}

@inproceedings{Adl90,
  title={An introduction to continuity, extrema, and related topics for general Gaussian processes},
  author={Adler, Robert J},
  year={1990},
  organization={IMS}
}

@article{BNOT16,
  title={On probability laws of solutions to differential systems driven by a fractional Brownian motion},
  author={Baudoin, F. and Nualart, E. and Ouyang, C. and Tindel, S.},
  journal = {Ann. Probab.},
  pages={2554--2590},
  year={2016},
}

@article{BOT14,
  author = {Baudoin, F. and Ouyang, C. and Tindel, S.},
  title = {Upper bounds for the density of solutions to stochastic differential equations driven by fractional Brownian motions},
  journal = {Ann. Inst. Henri Poincar\'e Probab. Stat.},
  volume = {50},
  number = {1},
  pages = {111--135},
  year = {2014},
}

@article{BKT16,
  title={Gaussian-type lower bounds for the density of solutions of SDEs driven by fractional Brownian motions},
  author={Besal\'u, M. and Kohatsu-Higa, A. and Tindel, S.},
  journal={Ann. Probab.},
  volume={44},
  number={1},
  pages={399--443},
  year={2016},
}

@book{BHOZ08,
  title={Stochastic calculus for fractional Brownian motion and applications},
  author={Biagini, F. and Hu, Y. and {\O}ksendal, B. and Zhang, T.},
  year={2008},
  publisher={Springer},
  edition={}
}

@article{BG23,
  title={$\mathrm{SL}_2(\mathbb{R})$-developments and signature asymptotics for planar paths with bounded variation},
  author={Boedihardjo, H. and Geng, X.},
  journal={Rev. Mat.
Iberoamericana},
  volume={293},
  pages={720--737},
  year={2016},
}

@article{BG24,
  title={On the lack of Gaussian tail for rough line integrals along fractional Brownian paths},
  author={Boedihardjo, H. and Geng, X.},
  journal={Probab. Theory Relat. Fields.},
  volume={188},
  number={3},
  pages={1287--1313},
  year={2024},
}

@article{BGW26,
  title={Optimal tail estimates for differential equations driven by fractional Brownian motion},
  author={Boedihardjo, H. and Geng, X. and Wang, S.},
  journal={ArXiv preprint},
  year={2026},
}

@article{BZ17,
  title={Transverse foliations on the torus $\mathbb{T}^2$ and partially hyperbolic diffeomorphisms on $3$-manifolds},
  author={Bonatti, C. and Zhang, J.},
  journal={Comment. Math. Helv.},
  volume={92},
  pages={513--550},
  year={2017},
}

@book{Bre13,
  title={Topology and geometry},
  author={Bredon, G. E.},
  year={2013},
  publisher={Springer-Verlag}
}

@article {CLL13,
    author = {Cass, T. and Litterer, C. and Lyons, T.},
     title = {Integrability and tail estimates for Gaussian rough differential equations},
     journal = {Ann. Probab.},
     pages = {3026--3050},
     year = {2013},
}

@article{CQ02,
  title={Stochastic analysis, rough path analysis and fractional Brownian motions},
  author={Coutin, L. and Qian, Z.},
  journal={Probab. Theory Relat. Fields.},
  volume={122},
  number={1},
  pages={108--140},
  year={2002},
}

@article{DU99,
  title={Stochastic analysis of the fractional Brownian motion},
  author={Decreusefond, L. and \"Ust\"unel},
  journal={Potential Anal.},
  volume={10},
  pages={177--214},
  year={1999},
}

@article{DPV12,
  title={Hitchhiker's guide to the fractional Sobolev spaces},
  author={Di Nezza, E. and Palatucci, G. and Valdinoci, E.},
  journal={Bull. Sci. Math.},
  volume={136},
  number={5},
  pages={521--573},
  year={2012},
}

@article{ER06,
  title={Variational formulation for the stationary fractional advection dispersion equation},
  author={Ervin, V.J. and Roop, J.P.},
  journal={Numerical Methods for Partial Differential Equations},
  volume={22},
  number={3},
  pages={558--576},
  year={2006},
}

@book{FH14,
  title={A course on rough paths},
  author={Friz, P.K. and Hairer, M.},
  year={2014},
  publisher={Springer},
}

@book{FV10,
  title={Multidimensional stochastic processes as
rough paths},
  author={Friz, P.K. and Victoir, N.},
  year={2010},
  publisher={Cambridge University Press},
  edition={}
}

@book{GP25,
  title={Differential topology},
  author={Guillemin, V. and Pollack, A.},
  volume={370},
  year={2025},
  publisher={American Mathematical Society}
}

@book{HH86,
  title={Introduction to the geometry of foliations. Part A: Foliations on compact surfaces, fundamentals for arbitrary codimension, and holonomy},
  author={Hector, G. and Hirsch, U.},
  year={2013},
  publisher={Springer-Verlag}
}

@book{KMS93,
  title={Fractional integrals and derivatives: theory and applications},
  author={Kilbas, A.A. and Marichev, O.I. and Samko, S.G.},
  year={1993},
  publisher={Gordon and Breach, Switzerland}
}

@article{KS85,
  title={Applications of the Malliavin calculus, Part II},
  author={Kusuoka, S. and Stroock, D.},
  journal={J. Fac. Sci. Univ. Tokyo},
  volume={32},
  pages={1--76},
  year={1985},
}

@article{KS87,
  title={Applications of the Malliavin calculus, Part III},
  author={Kusuoka, S. and Stroock, D.},
  journal={J. Fac. Sci. Univ. Tokyo},
  volume={34},
  pages={391--442},
  year={1987},
}

@incollection{Led94,
  title={Isoperimetry and Gaussian analysis},
  author={Ledoux, M.},
  booktitle={\'Ecole d\'Et\'e de Probabilit\'es de Saint-Flour},
  year={1994},
  publisher={Springer},
}

@book{Lee13,
  title={Introduction to smooth manifolds},
  author={Lee, J.M.},
  year={2013},
  publisher={Springer}
}

@article{Lyo94,
  title={Differential equations driven by rough signals (I): An extension of an inequality of LC Young},
  author={Lyons, T.J.},
  journal={Math. Res. Lett.},
  volume={1},
  number={4},
  year={1994},
}

@article{Lyo98,
  title={Differential equations driven by rough signals},
  author={Lyons, T.J.},
  journal={Rev. Mat. Iberoamericana},
  volume={14},
  numer={2},
  pages={215--310},
  year={1998},
}

@book{LQ02,
  title={System control and rough paths},
  author={Lyons, T.J. and Qian, Z.},
  year={2002},
  publisher={Oxford University Press},
}

@article{NO02,
  title={Regularization of Differential Equations by Fractional Noise},
  author={Nualart, D. and Ouknine, Y.},
  journal={Stoch. Proc. Appl.},
  volume={102},
  pages={103--116},
  year={2002},
}

@book{Per13,
  title={Differential equations and dynamical systems},
  author={Perko, L.},
  year={2013},
  publisher={Springer Science \& Business Media}
}

@incollection{Pic10,
  title={Representation formulae for the fractional Brownian motion},
  author={Picard, J.},
  booktitle={S{\'e}minaire de probabilit{\'e}s XLIII},
  pages={3--70},
  year={2010},
  publisher={Springer}
}

\end{document}